\documentclass[reqno,12pt]{amsart}
\usepackage[margin=1in,footskip=0.5in]{geometry}

\usepackage{enumitem}

\usepackage[latin1]{inputenc}
\usepackage[english]{babel}
\usepackage{amsaddr}
\usepackage{subfigure}
\usepackage{amsmath,amssymb,amsthm}
\usepackage{ae}
\usepackage[T1]{fontenc}
\usepackage{graphicx}
\usepackage{epstopdf}
\usepackage{color}

\newcommand{\Wert}{{\vert\kern-0.25ex\vert\kern-0.25ex\vert}}
\renewcommand{\i}{{\mathrm i}}
\newcommand{\e}{{\mathrm e}}

\newcommand{\be}{\begin{eqnarray}}
\newcommand{\ee}{\end{eqnarray}}
\newcommand{\ba}{\begin{align}}
\newcommand{\ea}{\end{align}}
\newcommand{\bi}{\begin{itemize}}
\newcommand{\ei}{\end{itemize}}
\usepackage{fullpage}

\newcommand{\secref}[1]{Section~\ref{sec:#1}}

\newcommand{\seclab}[1]{\label{sec:#1}}
\newcommand{\eqlab}[1]{\label{eq:#1}}
\renewcommand{\eqref}[1]{(\ref{eq:#1})}

\newcommand{\figref}[1]{Fig.~\ref{fig:#1}}
\newcommand{\figlab}[1]{\label{fig:#1}}
\newcommand{\propref}[1]{Proposition~\ref{proposition:#1}}
\newcommand{\proplab}[1]{\label{proposition:#1}}
\newcommand{\lemmaref}[1]{Lemma~\ref{lemma:#1}}
\newcommand{\lemmalab}[1]{\label{lemma:#1}}
\newcommand{\remref}[1]{Remark~\ref{remark:#1}}
\newcommand{\remlab}[1]{\label{remark:#1}}
\newcommand{\thmref}[1]{Theorem~\ref{theorem:#1}}
\newcommand{\thmlab}[1]{\label{theorem:#1}}

\newcommand{\appref}[1]{Appendix~\ref{app:#1}}

\newcommand{\applab}[1]{\label{app:#1}}

\usepackage{graphicx}

\newtheorem{theorem}{Theorem}[section]
\newtheorem{proposition}[theorem]{Proposition}

\newtheorem{lemma}[theorem]{Lemma}

\newtheorem{remark}[theorem]{Remark}

\numberwithin{equation}{section}

\title{A geometric approach to inner problems associated with exponentially small splitting phenomena in local bifurcations}
\author{K. Uldall Kristiansen}
\address{Department of Applied Mathematics and Computer Science,
Technical University of Denmark,
2800 Kgs. Lyngby,
Denmark }
\begin{document}
 \begin{abstract}

 In this paper, we study analytic nonlinear partial differential equations for $\mathbf y=\mathbf y(x,\theta)\in \mathbb C^n$ of the form $x^2 \mathbf y'_x(1+\mathcal O(x)) + \mathbf y'_\theta+x \mathbf A \mathbf y = \mathcal O(x^2)$, $()'_z=\frac{\partial}{\partial z}$, $z=x,\theta$, with $x\in \mathbb C$ and $\theta\in \mathbb C /(2\pi \mathbb Z)$ denoting the independent variables. We show that solutions are $1$-sums (with respect to $x$) of Fourier series with coefficients of type Gevrey-$1$. The motivation for studying these equations is that so-called inner problems, associated with two-dimensional formal connections in unfoldings of local bifurcations, can be brought into this form (by looking for invariant manifolds in blowup coordinates). In the present paper, we give two examples: The zero-Hopf bifurcation and the resonant Hopf-Hopf bifurcation, both in the reversible settings. Importantly, these inner problems are given by the unperturbed problem (i.e. at the bifurcation) and the invariant manifolds are expressed directly in phase space (through blowup coordinates associated with the local bifurcation). This contrasts the Lazutkin-based formulation of inner problems which is based upon a blowup of singularities (with respect to complex time) of approximate solutions; for local bifurcations this requires (artificial) coordinate transformations. We solve the PDE by extending the Banach-convolution-algebra-approach to Borel-Laplace by Bonckaert and De Maesschalck (2008) to account for analytic Fourier series. In further details, we apply the Borel transform with respect to $x$ (keeping $\theta$ fixed) and solve the resulting equation in an appropriate Banach space of Fourier series with coefficients that have exponential growth in the Borel plane. The solutions of the PDE are then obtained through application of the Laplace transformation. %We believe that this approach is interesting in itself.
 We see our results as prerequisites for the (forthcoming) phase space-analysis  of exponentially small phenomena associated with local bifurcations, without following Lazutkin's approach. %We believe that inner problems date back to the work of Lazutkin on exponentially small splitting in the standard map. Nowadays, inner problems play a central role in exponentially small phenomena in analytic ODEs and certain PDEs. Following Lazutkin they are derived by zooming in on singularities of approximative solutions and the general hypothesis is that these problems describe the invariant manifolds near these singularities. In this paper, we show that the inner problems in our local bifurcations are equivalent to the unperturbed problems (i.e. at the bifurcations). We obtain invariant manifolds of these unperturbed system in phase space by writing the system in our normal form PDE. Blowup? Phase space methods?% to pursue phase space methods for exponentially small phenomena, using blowup as the main technical tool. %Although this is a relatively simple fact, we believe that it has important consequences.  %We believe that this identification of the inner problem as the unperturbed problem

\noindent \textbf{Keywords.} Inner problems, exponentially small splitting, invariant manifolds, blowup, Zero-Hopf bifurcation, Hopf-Hopf bifurcation, reversible systems, center manifolds.

\noindent \textbf{Mathematics Subject Classification.} 34C23,  34D15, 34C45,  37G10

 \end{abstract}
 \bigskip
\smallskip

% \maketitle
\maketitle
\tableofcontents
% \note{use $[]$ for maps (like $\mathcal L[\widehat W]$}
\section{Introduction}
In this paper, we consider the following partial differential equation for $\mathbf y=\mathbf y(x,\theta)$:
% \begin{align}\eqlab{main}
%  s
% \end{align}
\begin{align}\eqlab{main}
 x^2 \mathbf y'_x (1+xF_0+x^2 F_1(x,\mathbf y,\theta))+\mathbf y'_\theta+ x \mathbf A \mathbf y = x^2 \mathbf G_1(x,\mathbf y,\theta),\quad x\in B_\kappa,\,\theta\in \mathbb T_\zeta,
\end{align}
with $()'_z=\frac{\partial}{\partial z}$, $z=x,\theta$.
% $\mathbf y = \mathbf y(x,\theta)$ of
Here $F_0\in \mathbb C$, and
\begin{align}\eqlab{Adiag}
 \mathbf A=  \operatorname{diag}(\lambda^1,\ldots,\lambda^n)\in \mathbb R^{n\times n},\quad \forall\,j\in \{1,\ldots,n\}\,:\,\lambda^j>0.
\end{align}
Importantly, $$F_1:B_\kappa\times B_{\kappa}^n\times \mathbb T_\zeta\rightarrow \mathbb C,\quad \mbox{and}\quad \mathbf G_1:B_\kappa\times B_\kappa^n\times \mathbb T_\zeta\to \mathbb C^n,$$
 with $B_\kappa^n\subset \mathbb C^n$, $n\in \mathbb N$, denoting the open ball of radius $\kappa>0$ centered at the origin, are assumed to be analytic functions. For simplicity, we write $B_\kappa:=B_\kappa^1$ throughout. Finally, we note that $$\mathbb T_\zeta:=\left\{\theta\in \mathbb C/(2\pi \mathbb Z)\,:\,0\le \operatorname{Im}(\theta)\vert <\zeta\right\},$$ with $\zeta>0$.
% denotes $\mathbb T=\mathbb R/(2\pi\mathbb Z)$ extended into the complex plane such that $\theta\in \mathbb T_\xi$ if and only if $\operatorname{Re}(\theta)\in \mathbb R/(2\pi \mathbb Z)$ and $0\le \vert \operatorname{Im}\theta \vert<\xi$.
%  The invariant manifolds satisfy the PDE:

We emphasize that solutions of \eqref{main} define invariant manifold solutions of
\begin{equation}\eqlab{mainvf}
\begin{aligned}
 \dot x &=   x^2 (1+xF_0+ x^2 F_1(x,\mathbf y,\theta)),\\
 \dot{\mathbf y} & =  x\left(- \mathbf A {\mathbf y} +x \mathbf G_1(x,{\mathbf y},\theta)\right),\\
 \dot \theta &=1.
\end{aligned}
\end{equation}
The main motivation for looking at \eqref{mainvf} is that such systems appear as so-called \textit{inner problems} associated with exponentially small phenomena related to formal connections in formal normal forms of local bifurcations. In this paper, we will give two main examples: The two-dimensional formal connections in the zero-Hopf bifurcation and the resonant Hopf-Hopf bifurcation. We believe that the reversible setting is the interesting case and will therefore restrict attention to reversible systems.

% \note{read Inma's inner problem paper}
Our first main result on \eqref{main} is the following:
\begin{theorem}\thmlab{main}
Let $S^{\pm}(\delta,\chi)\subset \mathbb C$ denote the local open sectors, centered along the positive ($+$) respectively negative $(-$) real axis, of radius $\delta>0$ and opening $\pi+\chi\in (\pi,2\pi)$:
\begin{align}\eqlab{Spm}
 S^{\pm}(\delta,\chi)=\left\{x\in \mathbb C\,:\,0< \vert x\vert< \delta\, \mbox{ and }\, \vert \operatorname{Arg}(\pm x)\vert<  \frac{\pi+\chi}{2}\right\},
%  S_{-,\theta,\rho}&=\left\{x\in \mathbb C\,:\,0< \vert x\vert< \rho\,\, \mbox{and}\,\, \vert \operatorname{Arg}(-x)-\pi\vert<  \frac{\theta}{2}\right\}
\end{align}
see \figref{Spm}.
% Let $S^{\pm}=S^\pm(\nu,\pi+\eta) \subset \mathbb C$ denote the sectors centered along the positive (negative, respectively) real axis and with openings that are greater than $\pi$
Fix any $\chi\in (0,\pi)$. Then for $\delta>0$, $\xi>0$, both small enough, there
exists two analytic solutions:
\begin{align*}
 \mathbf y^\pm\,:\,S^\pm \times \mathbb T_\xi \to \mathbb C^n,
\end{align*}
respectively,
 of \eqref{main}. %Moreover, $\mathbf y^\pm$ are unique.
 In particular, upon writing $\mathbf y^\pm$ as Fourier series:
\begin{align*}
 \mathbf y^\pm(x,\theta) =: \sum_{\alpha\in \mathbb Z} \mathbf y_\alpha^\pm(x) \e^{i\alpha\theta},
\end{align*}
respectively,
we have that the Fourier coefficients
\begin{align*}
\mathbf y_\alpha^\pm(x) =\mathcal O(x\e^{-\vert \alpha\vert\xi}) ,\quad \alpha\in \mathbb Z,\,x\in S^\pm,
\end{align*}
(the estimate being uniform)
are $1$-sums of a Gevrey-$1$ series:
\begin{align}\eqlab{gevrey1series}
%  \mathbf y^\pm(x,\theta) \sim_1 \sum_{\alpha \in \mathbb Z}\mathbf y_\alpha(x)   \e^{i\alpha \theta},\quad \mathbf y_\alpha(x)
\mathbf y_\alpha(x) \sim_1 \sum_{\beta=1}^\infty \mathbf y_{\alpha,\beta} x^\beta\quad \forall\,\alpha\in \mathbb Z,
\end{align}
(the series being independent of $\pm$)
in the directions defined by $S^\pm$, respectively.
% with
% \begin{align*}
%  \vert \mathbf y_\alpha^\pm(x)\vert \le c_1 \e^{-
% \end{align*}

\end{theorem}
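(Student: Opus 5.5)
We follow the strategy indicated in the abstract: a Borel transform in $x$ for fixed $\theta$, a fixed-point problem in a Banach convolution algebra of Fourier series, and a Laplace transform back. The first step is to normalise \eqref{main}. Dividing by $1+xF_0+x^2F_1$ is awkward because $F_1$ depends on $\mathbf y$, so we keep the equation in the form
\begin{align*}
x^2\mathbf y'_x+\mathbf y'_\theta+x\mathbf A\mathbf y=x^2\mathbf G_1-x^3(F_0+xF_1)\mathbf y'_x.
\end{align*}
We look for $\mathbf y=x\mathbf z$; a first formal step gives $\mathbf y'_\theta=\mathcal O(x^2)$ after averaging. We then write $\mathbf y=\sum_\alpha \mathbf y_\alpha(x)\e^{i\alpha\theta}$.

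Next we take the formal Borel transform $\mathcal B[x^{\beta+1}]=\xi^{\beta}/\beta!$, with $\xi$ the Borel variable. Under it, multiplication by $x$ becomes integration $\int_0^\xi$, the operator $x^2\partial_x$ becomes multiplication by $\xi$, and products become convolutions $*$. The $\alpha$-th Fourier mode of the linear part then reads
\begin{align*}
(\xi+i\alpha)\hat{\mathbf y}_\alpha+\mathbf A\,(1*\hat{\mathbf y}_\alpha)=\text{(nonlinear convolution terms)}.
\end{align*}
The linear operator $L_\alpha:\hat{\mathbf y}\mapsto(\xi+i\alpha)\hat{\mathbf y}+\mathbf A\int_0^\xi\hat{\mathbf y}$ is inverted explicitly. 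Solving the ODE $(\xi+i\alpha)u'+(1+\lambda^j)u=f$, $u=\int_0^\xi\hat y$, gives a kernel of the form $\big((s+i\alpha)/(\xi+i\alpha)\big)^{1+\lambda^j}/(s+i\alpha)$. For $\alpha\neq0$ this kernel is analytic away from $\xi=-i\alpha$. For $\alpha=0$, the factor $\xi^{-1-\lambda^j}$ together with $\lambda^j>0$ still gives a bounded inverse once the right-hand side vanishes at $\xi=0$; here the assumption $\lambda^j>0$ is used.

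The function space is built on a domain in the Borel plane consisting of a neighbourhood of $0$ together with a sector around $\mathbb R_+$ (for $S^+$) or $\mathbb R_-$ (for $S^-$) of small opening $\chi$. This domain avoids the singular points $\xi=-i\alpha$, $\alpha\in\mathbb Z\setminus\{0\}$, at distance $\ge c\vert\alpha\vert$. The norm, following Bonckaert--De Maesschalck, is
\begin{align*}
\Vert\hat{\mathbf y}\Vert=\sum_\alpha \e^{\vert\alpha\vert\xi_0}\sup_\xi \vert\hat{\mathbf y}_\alpha(\xi)\vert(1+\vert\xi\vert^2)\e^{-\mu\vert\xi\vert}.
\end{align*}
With this weight the space is a Banach algebra under $*$ in $\xi$ combined with discrete convolution in $\alpha$, and composition with analytic $\mathbf G_1,F_1$ is controlled by their majorant series. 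The key estimates are $\Vert L_\alpha^{-1}\Vert$ bounded uniformly in $\alpha$, and the integral operator $\mathcal B[x\cdot]$ having norm $\mathcal O(\mu^{-1})$. Taking $\mu$ large therefore makes the nonlinear terms a contraction, as in Bonckaert--De Maesschalck.

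The main obstacle is the derivative term $x^3F\mathbf y'_x$, which in the Borel plane becomes a convolution with $\xi\hat{\mathbf y}$. This is not obviously small. We handle it by applying $L_\alpha^{-1}$ first, which divides by $\xi+i\alpha$. In effect the operator $\hat{\mathbf y}\mapsto L_\alpha^{-1}[\hat F*(\xi\hat{\mathbf y})]$ carries an extra integration, hence a factor $\mu^{-1}$, because $\vert\xi/(\xi+i\alpha)\vert$ is bounded on the domain. A further difficulty is that, even for $\alpha$ fixed, the nonlinearity mixes modes through $\theta$-dependence. This requires choosing $\xi_0$ smaller than the analyticity width $\zeta$; this is the reason $\xi<\zeta$ is shrunk in the statement.

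Once the fixed point is obtained, the Laplace transform $\mathbf y_\alpha^\pm(x)=\int_0^{\infty e^{i\phi}}\hat{\mathbf y}_\alpha(\xi)\e^{-\xi/x}d\xi$ is taken along directions $\phi$ near $0$ respectively $\pi$. Varying $\phi$ within the sector of opening $\chi$ yields analyticity on $S^\pm(\delta,\chi)$ with $\delta<\mu^{-1}$. The bound $\mathcal O(x\e^{-\vert\alpha\vert\xi})$ follows from the weight $\e^{\vert\alpha\vert\xi_0}$. The Gevrey-$1$ $1$-summability follows from Watson's lemma, since $\hat{\mathbf y}_\alpha$ is analytic at $0$ with exponential growth along the sector. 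The asymptotic series is the same for $\pm$ because both transforms come from the same germ at $\xi=0$.
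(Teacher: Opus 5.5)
Your architecture is the paper's: Borel transform in $x$ for fixed $\theta$, the mode operators $(\xi+i\alpha)\hat{\mathbf y}_\alpha+\mathbf A(1\star\hat{\mathbf y}_\alpha)$, the bound $\vert\xi+i\alpha\vert\ge c(\vert\xi\vert+\vert\alpha\vert)$ on the Borel domain, a contraction in an exponentially weighted space of Fourier series, and Laplace transforms near $0$ and $\pi$. Your treatment of the derivative term also matches \lemmaref{wWstarZest} combined with $\vert w\vert/\vert w+i\alpha\vert\le c_1^{-1}$. The gap is in where the contraction gets its smallness in the zeroth Fourier mode. You combine two claims: $L_\alpha^{-1}$ is bounded uniformly in $\alpha$, and $1\star$ has norm $\mathcal O(\mu^{-1})$. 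For $\alpha=0$ the first claim fails: $L_0^{-1}$ is not bounded, only $L_0^{-1}\circ(1\star)$ is. That composition is $H\mapsto\int_0^1 s^{\lambda^j}H(\xi s)\,ds$ componentwise (for $F_0=0$), and its norm is $\mathcal O(1)$, not $\mathcal O(\mu^{-1})$; a constant $H$ is sent to $H/(1+\lambda^j)$. This is why \lemmaref{haty0est} only gives $\Vert\widehat{\mathbf y}_0\Vert_\eta\le C(\eta^{-1}\Vert\mathbf W_0\Vert_\eta+\Vert\mathbf Z_0\Vert_\eta)$, with no gain on the $\mathbf Z$-part, which is the Borel transform of $x\mathbf G_1$. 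For the derivative term, the factor $\xi$ in $\xi\hat{\mathbf y}$ restores the gain, so that part of your argument survives. For the $\mathbf G_1$-term, however, the Lipschitz constant of $\hat{\mathbf y}\mapsto\widehat{(x.\mathbf G_1)}(\hat{\mathbf y})$ must itself be $\mathcal O(\mu^{-1})$.

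The paper gets this from the Bonckaert--De Maesschalck weight $(1+\eta^2\vert w\vert^2)$, in your notation $(1+\mu^2\vert\xi\vert^2)$. With it every convolution has norm at most $4\pi\eta^{-1}$ (\lemmaref{convolution1}, \lemmaref{convolution2}), hence $D\widehat Q=\mathcal O(\eta^{-1})$ in \lemmaref{final}. The weight $(1+\vert\xi\vert^2)$ you wrote gives only the $\mu$-independent convolution constant $4\pi$, so your Banach-algebra estimate yields no smallness there. It also controls the composition $\sum_\beta\widehat Q_\beta\star\hat{\mathbf y}^{\star\beta}$ only for $\Vert\hat{\mathbf y}\Vert<(4\pi\tau)^{-1}$. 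That is not enough, because the $\alpha=0$ component of the fixed point is genuinely of order one: its value at $\xi=0$ is $(1+\lambda^j)^{-1}$ times the $\theta$-average of $G_1^j(0,0,\theta)$. With the correct weight you fix a ball of arbitrary radius $C_1$ first and then take $\mu>\mu_0(C_1)$, as the paper does. Your other deviations are harmless: putting the $F_0$-term on the right-hand side, and inverting $L_\alpha$ by an explicit kernel instead of a Neumann series. Finally, ``the same germ at $\xi=0$'' needs one line: the Taylor coefficients of $\hat{\mathbf y}_\alpha$ at $0$ are fixed recursively by the equation, using $\lambda^j>0$ when $\alpha=0$.
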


\begin{figure}[h!]
\begin{center}
% \subfigure[$\sigma<0$]{\includegraphics[width=.45\textwidth]{x2rho2_neg.pdf}}
{\includegraphics[width=.65\textwidth]{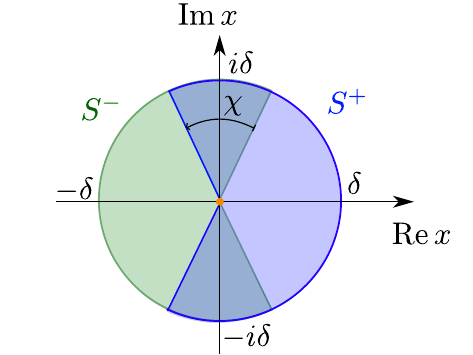}}
% \subfigure[$\sigma>0$]{\includegraphics[width=.45\textwidth]{x2rho2_pos.pdf}}
\end{center}
\caption{Illustration of the local sectorial domains $S^\pm$ (with radius $\delta>0$ and opening $\pi+\chi\in (\pi,2\pi)$.}
\figlab{Spm}
% Remark: If $U$ smooth for $|u|<\nu$, the compact manifolds lie inside $|u|<\nu$ for large $n$
\end{figure}

Recall that a Gevrey-$1$ series $\widetilde W\in \mathbb C[[x]]$ is
 said to be $1$-summable in the direction $\varphi\in \mathbb T:=\mathbb R/(2\pi \mathbb Z)$ if (a) the Borel transform $w\mapsto \widehat W(w)\in \mathbb C,\,w\in B_\kappa\subset \mathbb C$, of the series (which is a convergent series, see \cite{balser2000a} and \remref{borellaplace} below) can be endlessly continued to an analytic function $\widehat W^\varphi$, along the ray defined by $w=r\e^{i\varphi}$, $r\ge 0$, and (b) it is of at most exponential growth of order $1$ along this ray, i.e.
\begin{align*}
 \e^{-\eta \vert w\vert} \vert \widehat W^\varphi(w)\vert =\mathcal O(1) \quad \mbox{for}\quad w\to \infty\e^{i\varphi},
\end{align*}
for some $\eta>0$ large enough. Then the Laplace-transform $W^\varphi:=\mathcal L^{\varphi}[\widehat W^\varphi]$ is well-defined on a local sector $S^\varphi$ centered along $\varphi$ and with opening greater than $\pi$. $W^\varphi$ is called the $1$-sum of $\widetilde W$ in the direction $\varphi$ and it is unique (due to the opening being greater than $\pi$, see Watson's lemma \cite[Proposition 11]{balser2000a}). In particular, $W^\varphi$ is Gevrey-$1$ asymptotic to $\widetilde W$ (which we write as $W^\varphi\sim_1\widetilde W$) on $S^\varphi$. For further details, we refer to \cite{balser2000a}, see also \secref{borellaplace}. %said to be $1$-summable if its Borel transform (which is a convergent series) can be endlessly continued along an infinite sector with direction $\varphi$ with at most exponential growth. The resulting function can then be Laplace transformed and the resulting function, which is well-defined on a local sectorial region (like $S^\pm$) centered along $\varphi$ with opening greather than $\pi$, is called the $1$-sum.

Now, the formal Gevrey-$1$ series in \eqref{gevrey1series} is in general divergent (since the manifolds will be center-like manifolds) and the difference
\begin{align}
\Delta \mathbf y (x,\theta):= \mathbf y^+(x,\theta)-\mathbf y^-(x,\theta)\sim_1 0,\quad x\in S^+\cap S^-,
\end{align}
uniformly with respect to $\theta\in \mathbb T_\xi$,
is therefore generically nonzero, see e.g. \cite{balser2000a,sauzin2015}. (The divergence can be understood in different ways but one possible characterization is through singularities of the Borel transform, see \cite{costin2009,kristiansen2025a}.)
Notice that $S^+\cap S^-$ is the union of two sectors of the complex plane centered along the directions defined by $\pm \frac{\pi}{2}$, each with opening $\chi\in (0,\pi)$.

In the following, we define
\begin{align*}
q^{-\mathbf A}:=\operatorname{diag}(q^{-\lambda^1},\cdots,q^{-\lambda^n})\in \mathbb C^{n\times n},\quad q\ge 0,
\end{align*}
recall \eqref{Adiag}.
% In our next result, we therefore suppose that the series \eqref{gevrey1series} with $\alpha=-1$ and $\alpha=1$:
% \begin{align*}
% \sum_{\beta=1}^\infty \mathbf y_{-1,\beta} x^\beta,\quad \sum_{\beta=1}^\infty \mathbf y_{1,\beta} x^\beta
% % \sum_{\beta=1}^\infty \mathbf y_{\alpha,\beta} x^\beta \quad \mbox{for} \quad \alpha\in \{-1,1\},
% \end{align*}
% are divergent series. Then we have the following:
\begin{theorem}\thmlab{main2}
Consider $x=ip$ with $p\in [0,p_0]$ with $p_0>0$ small enough. Then there exists a constant vector
\begin{align*}
 \mathbf C_{-1}\in \mathbb C^n,
\end{align*}
such that
\begin{align}
\Delta \mathbf y(ip,\theta) = \e^{-\frac{1}{p}-iF_0\log p}p^{-\mathbf A} \left(\mathbf C_{-1}\e^{-i\theta}+ p \mathbf R(p,\theta)\right)\quad \forall\,p\in (0,p_0],\,\theta\in \mathbb T_\xi.\eqlab{Deltay}
\end{align}
Here $\mathbf R\,:\,[0,p_0]\times \mathbb T_\xi\to \mathbb C^n$ is $C^\infty$-smooth with respect to $p$ and analytic with respect to $\theta\in \mathbb T_\xi$. In particular,
\begin{align*}
 \mathbf R(p,\theta)  = \sum_{\alpha\in \mathbb Z} \mathbf R_\alpha (p)\e^{i\alpha\theta},
\end{align*}
with $\vert \mathbf  R_\alpha(p)\vert = \mathcal O(\e^{-\xi \vert \alpha\vert})$ uniformly with respect to $p$.
%  There exists two invariant manifold solutions:
\end{theorem}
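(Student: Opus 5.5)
The plan is to exploit that $\Delta\mathbf y$ solves a \emph{linear} homogeneous PDE and is Gevrey-$1$ flat. Subtracting the equations \eqref{main} for $\mathbf y^+$ and $\mathbf y^-$ on $S^+\cap S^-$ and using the mean value theorem (Hadamard's lemma) on $F_1$ and $\mathbf G_1$, I get
\begin{align*}
 x^2(1+xF_0+x^2\widetilde F(x,\theta))\,\Delta\mathbf y'_x+\Delta\mathbf y'_\theta+x\mathbf A\Delta\mathbf y = x^2 \mathbf B(x,\theta)\Delta \mathbf y .
\end{align*}
Here $\widetilde F(x,\theta)=F_1(x,\mathbf y^+,\theta)$, and $\mathbf B$ is an $n\times n$ matrix built from $\int_0^1 \partial_{\mathbf y}\mathbf G_1(x,\mathbf y^-+s\Delta\mathbf y,\theta)\,ds$ and from $\int_0^1\partial_{\mathbf y}F_1\,ds$ multiplied by $(\mathbf y^-)'_x$. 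By \thmref{main}, $\widetilde F$ and $\mathbf B$ are analytic on $(S^+\cap S^-)\times\mathbb T_\xi$, bounded, and Gevrey-$1$ asymptotic to formal series. In particular they are $C^\infty$ up to $p=0$ along $x=ip$.

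The first step is to understand the unperturbed operator. I will look for solutions $f_\alpha(x)\e^{i\alpha\theta}$ of $x^2(1+xF_0)f'+i\alpha f+x\mathbf A f=0$. These give
\begin{align*}
f_\alpha(x)=\exp\Big(i\alpha\int\frac{dx}{x^2(1+xF_0)}\Big)x^{-\mathbf A}(1+\mathcal O(x))=\e^{-i\alpha/x-i\alpha F_0\log x}x^{-\mathbf A}(1+\mathcal O(x)).
\end{align*}
For $x=ip$ the modulus is $\e^{-\alpha/p}$ (after absorbing the constants coming from $\log(ip)$ and $(ip)^{-\mathbf A}$). Hence only modes with $\alpha\le -1$ are flat. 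The mode $\alpha=-1$ gives exactly $\e^{-1/p-iF_0\log p}p^{-\mathbf A}\e^{-i\theta}$, and the modes $\alpha\le -2$ are $\mathcal O(\e^{-2/p})$. The modes $\alpha\ge0$ are not exponentially small, so flatness ($\Delta\mathbf y\sim_1 0$) must kill their coefficients.

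The second step is to handle the perturbation terms $x^3\widetilde F$ and $x^2\mathbf B$, which depend on $\theta$. I will first construct a near-identity normalizing transformation $\Delta\mathbf y=(\mathbf I+x\mathbf P(x,\theta))\mathbf z$ together with a change of the time-like variable $\theta\mapsto\theta+x\,\sigma(x,\theta)$. These are to be chosen so that the characteristics of the linear PDE become those of $x^2(1+xF_0)\partial_x+\partial_\theta$, and the matrix becomes $\theta$-independent modulo flat terms. The coefficients $\mathbf P$ and $\sigma$ solve homological equations of the same type as \eqref{main}, linear in the unknowns. I will obtain them by the same Borel--Laplace/Banach-algebra argument used for \thmref{main}, or at least as smooth functions along $x=ip$ via Gevrey asymptotics.

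Equivalently, and perhaps more cleanly, I will integrate along characteristics. The $x$-equation $\dot x=x^2(1+xF_0+x^2\widetilde F)$ defines, along each characteristic, a function $\tau(x,\theta)=\theta+\frac{1}{x}-F_0\log x+\mathcal O(x)$ that is constant on characteristics. Then $\Delta \mathbf y=\Phi(x,\theta)\,\mathbf c(\e^{i\tau})$, where $\Phi=x^{-\mathbf A}(\mathbf I+\mathcal O(x))$ is a fundamental matrix and $\mathbf c$ is a $2\pi$-periodic function of $\tau$. Since $\Delta\mathbf y$ is analytic for $\theta\in\mathbb T_\xi$, $\mathbf c$ is analytic on a strip, so $\mathbf c(\e^{i\tau})=\sum_\alpha \mathbf C_\alpha \e^{i\alpha\tau}$. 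On $x=ip$ we have $|\e^{i\alpha\tau}|\approx \e^{\alpha\operatorname{Im}(i/x)}\cdot\ldots=\e^{-\alpha/p}$ up to the factor from $\operatorname{Im}\theta$.

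The third step is to impose flatness. Flatness forces $\mathbf C_\alpha=0$ for $\alpha\ge0$, since those terms are not $o(x^N)$. Keeping $\alpha=-1$ gives the leading term $\mathbf C_{-1}\e^{-i\theta}$. The remainder consists of the $\mathcal O(x)$ corrections in $\tau$ and $\Phi$ (which are $C^\infty$ in $p$ by the Gevrey asymptotics) together with $\sum_{\alpha\le-2}$. The latter is $\mathcal O(\e^{-1/p})$ relative to the leading term and hence also $C^\infty$ and divisible by $p$. Together these make up $p\mathbf R(p,\theta)$. The Fourier bound $|\mathbf R_\alpha|=\mathcal O(\e^{-\xi|\alpha|})$ follows from analyticity of $\mathbf R$ in $\theta\in\mathbb T_\xi$ (after shrinking $\xi$ slightly) and the Cauchy estimate.

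I expect the main obstacle to be the second step. Specifically, I need to show that the characteristic invariant $\tau$ and the fundamental matrix $\Phi$ exist on the whole region $x=ip$, $p\in(0,p_0]$, $\theta\in\mathbb T_\xi$, uniformly, with $\mathcal O(x)$ corrections that are $C^\infty$ at $p=0$ and $2\pi$-periodic in $\theta$, so that $\mathbf c$ is a well-defined periodic function. The characteristic flow moves $\theta$ by large amounts as $x\to 0$. I would first try to construct $\tau-\theta-1/x+F_0\log x$ as the solution of a linear PDE of the form \eqref{main}, invoking \thmref{main} for existence on sectors.
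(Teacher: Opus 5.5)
Your route differs from the paper's. You build a global first integral $\tau$ and a fundamental matrix $\Phi$, then remove the modes $\alpha\ge 0$ by a Laurent expansion of $\mathbf c$ in $q=\e^{i\tau}$ at $q=\infty$. The paper instead restricts to $x=ip$ and treats the Fourier coefficients as an infinite-dimensional linear ODE in $t$ with $\dot p=-p^2$. It uses boundedness to restrict to the center-stable manifold of \propref{Wcs}, which is where the modes $\alpha\ge 0$ disappear. It then isolates the $\alpha=-1$ block by a center manifold and a strong-stable foliation before integrating it. Your Step~3 is sound once $\tau$ and $\Phi$ exist. There are sign slips in your formulas: the modes are $\e^{i\alpha/x}$, and $\tau=\theta+1/x+F_0\log x+\mathcal O(x)$. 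Your conclusions match these corrected formulas, not the ones you wrote.

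The gap is Step~2. It carries all of the analysis, and you offer no tool that works. \thmref{main} cannot be invoked, because the coefficients $\widetilde F$ and $\mathbf B$ are built from $\mathbf y^\pm$. They are analytic only on $S^+\cap S^-$, not on $B_\kappa\times B_\kappa^n\times\mathbb T_\zeta$, so the Borel transforms of Section~\ref{sec:borellaplace} are not available. ``Gevrey asymptotics'' gives nothing either, since $\tau$ and $\Phi$ are not defined by any series. What does work is a mode-by-mode construction along $x=ip$. The modes $\alpha\ge0$ are integrated from $t=\infty$ and the modes $\alpha<0$ from $t=0$, using the gain of one $\theta$-derivative from \lemmaref{mathbbT}. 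That is the paper's machinery, and once you have it, applying it directly to $\Delta\mathbf y$ is shorter than building $\tau$ and $\Phi$.

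For $n\ge 2$ there is a further obstruction: a fundamental matrix $\Phi=x^{-\mathbf A}(\mathbf I+\mathcal O(x))$ with $C^\infty$ corrections need not exist. The correction $\mathbf P$ in $\Phi=(\mathbf I+\mathbf P)x^{-\mathbf A}$ is governed by $x[\mathbf A,\cdot\,]$, whose eigenvalues $\lambda^i-\lambda^j$ can be negative, including negative integers, which produce resonances and logarithms. Here is a concrete case. Take $n=2$, $F_0=0$, $F_1=0$, $\mathbf A=\operatorname{diag}(1,2)$ and $\mathbf G_1=(y^2,\,x\e^{-i\theta})$.
\begin{itemize}
\item The $\e^{-i\theta}$-coefficient of $y^2$ has a Borel transform with a pole at $w=i$. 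Hence $\Delta y^2=C_2x^{-2}\e^{-i/x}\e^{-i\theta}$ with $C_2\neq 0$.
\item Solving the first component then gives $\Delta y^1=x^{-1}\e^{-i/x}(C_1+C_2\log x)\e^{-i\theta}$.
\item On $x=ip$, the first component of $p\mathbf R$ would have to contain $-iC_2\e^{-i\theta}\log p$, so \eqref{Deltay} fails.
\end{itemize}
So the statement as written is false for general diagonal $\mathbf A$, and no argument can close your Step~2 in that generality. The paper's proof has the same hidden assumption in its last step. There the formula $\widetilde{\Delta\mathbf y}_{-1}(t)=\e^{-(\cdots)}(p(0)/p(t))^{\mathbf A}\Phi(p(t),p(0))\widetilde{\Delta\mathbf y}_{-1}(0)$ holds only if $\widetilde{\mathbf H}_{-1}(p)$ commutes with $\mathbf A$. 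For $n=1$, or scalar $\mathbf A$, both obstructions disappear; this covers the zero-Hopf application. In that case your route can be completed with $\Phi=x^{-\lambda}\e^{\psi}$, where $\psi$ and $\tau$ are built by the mode-by-mode estimates above.
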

There is obviously a similar expression for the difference for $p<0$. It takes the same form as \eqref{Deltay}  after replacing $(p,F_0,\theta,\mathbf C_{-1})$ by $(-p,-F_0,-\theta,\mathbf C_{1})$. In particular, if the equations are real-analytic (which is often the interesting case), then the expression for $p<0$ is simply obtained from \eqref{Deltay} by conjugation (so that $\mathbf C_{1}=\overline{\mathbf C}_{-1}$). Moreover, if \eqref{gevrey1series} with $\alpha=-1$ is convergent then $\Delta \mathbf y_{-1} = 0$. Hence $\mathbf C_{-1} \ne 0$ implies that \eqref{gevrey1series} with $\alpha=-1$ is divergent.  It seems likely that the other direction does not hold true in general, but we are not able to address this problem at the moment. %so that ``if \eqref{gevrey1series} with $\alpha=-1$ is divergent then $\mathbf C_{-1} \ne 0$''.

We will study \eqref{main} using Borel-Laplace, extending the Banach-convolution-algebra-approach of \cite{bonckaert2008a}. We believe that Borel-Laplace is the natural framework in problems with exponentially small slitting because the beyond all order phenomena are intrinsically related to the divergence of formal series solutions. Borel-Laplace deals with the resummation of such series.

\subsection{A new perspective on inner problems}
% Another motivation is also to give a new perspective on inner problems.
In this paper, we also use \eqref{main} as a platform to promote a new perspective (initiated in \cite{bkt}) on inner problems and more broadly on exponentially small splitting phenomena in local bifurcations.

Inner problems are known to be central in the rigorous description of exponentially small phenomena in analytic singular perturbation problems. Following the work of Lazutkin \cite{gelfreich1999a,lazutkin2005a}, these problems are ``suitable approximations'' of (invariant manifold) solutions near singularities of unperturbed (or approximate) solutions, see also \cite{baldoma2012a}.
They are typically derived through a blowup of poles and appropriate scalings of the variables.

For the purpose of this introduction, we will explain such derivation within the context of the real-analytic unfolding of the zero-Hopf bifurcation with the normal form:
\begin{equation}\eqlab{zerohopf0}
% \begin{equation}\eqlab{model0}
\begin{aligned}
 x' &=-x^2+\epsilon-a (y^2 +z^2) + F(x,y,z,\epsilon),\\
 y'&= b x y -z+G(x,y,z,\epsilon),\\
 z'&= b x z+ y+H(x,y,z,\epsilon),
\end{aligned}
% \end{equation}
\end{equation}
see \cite[Eq. (6)]{baldom2013a} and \cite[Eq. (1.1)]{bkt}. We will then subsequently explain a different perspective on the inner problem.

Notice in comparison with \cite[Eq. (1.1)]{bkt} that we have replaced their $(x,y,z,\mu)$ by $(-x,z,y,\epsilon)$  in \eqref{zerohopf0}. Moreover, we restrict attention to the reversible case (setting $\sigma=0$ in \cite{bkt}), assuming that
\begin{align*}
\begin{cases}
 F(-x,-z,-y,\epsilon)=F(x,y,z,\epsilon)\\
 G(-x,-z,-y,\epsilon)=H(x,y,z,\epsilon)
 \end{cases}\quad \forall\,(x,y,z)\in B_\tau^3, \,\epsilon\in (-\epsilon_0,\epsilon_0),
\end{align*}
with $\tau>0$, $\epsilon_0>0$, both small enough. It then follows that $(x(t),y(t),z(t))$ is a solution if and only if $(-x(-t),-z(-t),-y(-t))$ is a solution. We let
\begin{align}
 \mathcal S\,:\,(x,y,z)\mapsto (-x,-z,-y),\eqlab{symmetry0}
\end{align}
denote the associated symmetry.
 The functions $F,G$ and $H$ are assumed to be real-analytic, defined in a neighborhood of $(x,y,z,\epsilon)=(0,0,0,0)$ in $\mathbb C^4$, and each function is third order:
\begin{align}\eqlab{Wthird}
 W(x,y,z,\epsilon)  = \mathcal O(\vert (x,y,z,\epsilon)\vert^3),\quad W=F,G,H,
\end{align}
with respect to $(x,y,z,\epsilon)\to (0,0,0,0)$. We are mainly interested in $$a>0,\quad b>0.$$
The parameter $\epsilon\sim 0$ is the unfolding parameters. (Recall that the general non-reversible case has two unfolding parameters, see \cite{baldom2013a}). For $\epsilon=0$, we have a locally unique singularity at the origin with eigenvalues $0,\pm i$ of the linearization.
% Following \cite{}, we consider a region of parameter space defined by
% \begin{align}\eqlab{parameterbu}
%  \begin{cases}
%   \mu = \epsilon^2,\\
%   \nu = \epsilon b \sigma,
%  \end{cases}\quad 0\le \epsilon\ll 1,\,\sigma\in (-1,1).
% \end{align}

Consider now the change of coordinates $(x_2,y_2,z_2)\mapsto (x,y,z)$ defined by
%
% First, the authors define $(x_2,y_2,z_2)$ by
\begin{align}\eqlab{x2y2z20}
\begin{cases} x=r_2 x_2, \\ y=r_2 y_2, \\ z=r_2 z_2,\\
\epsilon = r_2^2.
\end{cases}
 \end{align}
%  where
%  \begin{align*}
%   r_2 = \sqrt{\epsilon}.
%  \end{align*}
%
% \begin{align}
%  \begin{cases}
%   s
%  \end{cases}
% \end{align}
This brings the system into the following slow-fast form
% \begin{align}
  \begin{equation}\eqlab{zerohopf20}
\begin{aligned}
 \dot x_2 &=-x_2^2+1+ a (y_2^2+z_2^2)+r_2 F_2(x_2, y_2, z_2,r_2),\\
 r_2 \dot y_2&=r_2 b x_2y_2- z_2+r_2^2 G_2(x_2, y_2, z_2,r_2),\\
 r_2 \dot z_2&= r_2 b x_2 z_2+y_2+r_2^2 H_2(x_2,y_2,z_2,r_2),
\end{aligned}
\end{equation}
where $\frac{d}{dt}=\dot{(\cdot)}=r_2^{-1} (\cdot )'$
and
\begin{align*}
 W_2(x_2, y_2, z_2,r_2^2):=r_2^{-3} W(r_2 x_2,r_2  y_2,r_2  z_2,r_2^2),\quad W=F,G,H,
\end{align*}
which are well-defined by \eqref{Wthird}.
For $r_2\to 0$, we obtain the reduced problem:
\begin{align}\eqlab{reduced}
 \dot x_2 &= -x_2^2+1,
\end{align}
on the normally elliptic critical manifold defined by $(y_2,z_2)=(0,0)$. The reduced problem \eqref{reduced} has a homoclinic orbit $\gamma_0$ with the following parametrization:
\begin{align}\eqlab{unperturbed_sol}
x_2 = \tanh (t),\quad t\in \mathbb R.
\end{align}
For $b>0$, a simple calculation (based upon the implicit function theorem) shows that there are two saddle-focus equilibria $E_2^\pm(r_2)$ near $(\pm 1,0,0)$ for all $0<r_2\ll 1$. However, for $r_2>0$ but small, the unperturbed homoclinic $\gamma_0$ will in general break up, see \cite{baldom2013a}. The splitting is beyond all orders and exponentially small with respect to $r_2\to 0$ (i.e. $\epsilon\to 0$ cf. \eqref{x2y2z20}).

The central observation of Lazutkin is that the splitting of the invariant manifolds is not beyond all orders near the  singularities $t_2=\pm \frac{i\pi}{2}$ (closest to the real axis) of the unperturbed connection. Although Lazutkin's work centered around the standard map, this framework has subsequently proven to be very powerful for the description of exponentially small phenomena in a range of different analytic differential equations, see e.g. \cite{MR4455359,MR4621957,MR4940205,MR4743478,gaivao2011a,MR4892796} and references therein.
%pursued this characterization within the context of maps, see \cite{}.)

To describe the splitting of the one-dimensional invariant manifolds in the zero-Hopf bifurcation, the authors of \cite{baldom2013a} derive an inner problem in the following way: %The unperturbed solution \eqref{unperturbed_sol} has a pole at $t=\frac{i\pi}{2}$.
Let $s$ be defined as $$t=t(s):=\frac{i\pi}{2}+r_2 s.$$ Notice that this is a blowup of $t=\frac{i\pi}{2}$ for $r_2=0$. Through \eqref{unperturbed_sol} and \eqref{x2y2z20} we obtain a change of coordinates $(s,y,z)\mapsto (x_2,y_2,z_2)$ defined by
\begin{align}\eqlab{x2ts}
 x_2 = \tanh (t(s)) = {\operatorname{coth}(r_2 s)}=\frac{1}{r_2 s} \left(1+\mathcal O(r_2)\right),\quad s\ne 0,
\end{align}
 which brings the equations \eqref{zerohopf0} into the following form:
\begin{equation}\eqlab{inner_temp}
\begin{aligned}
 \left(-r_2^2 \operatorname{coth}^2(r_2 s)-a(y^2+z^2)+ F\right)\frac{dy}{ds}&=-r_2^2 \operatorname{csch}^2(r_2 s)\left(-z+b{r_2 \operatorname{coth}(r_2 s)}y + G\right),\\
 \left(-r_2^2 \operatorname{coth}^2(r_2 s)-a(y^2+z^2)+ F\right)\frac{dz}{ds}&=-r_2^2 \operatorname{csch}^2(r_2 s)\left(y+b{r_2 \operatorname{coth}(r_2 s)}z + H\right).
\end{aligned}
\end{equation}
The inner problem of \cite[Eq. (33)]{baldom2013a} is then the $r_2 \to 0$ limit of \eqref{inner_temp}:
\begin{equation}\eqlab{inner_fucked}
\begin{aligned}
 \left(-s^{-2} -a(y^2+z^2)+ F\right)\frac{dy}{ds}&=-s^{-2} \left(-z+bs^{-1}y + G\right),\\
 \left(-s^{-2} -a(y^2+z^2)+ F\right)\frac{dz}{ds}&=-s^{-2} \left(y+bs^{-1}z + H\right),
\end{aligned}
\end{equation}
with $s\ne 0$ and
where $W=W(s^{-1},y,z,0)$, $W=F,G,H$. Here we have used that
\begin{align}\eqlab{sech_limit}
 r_2 \operatorname{coth}(r_2 s) \to s^{-1},\quad r_2 \operatorname{csch}(r_2 s) \to s^{-1},
\end{align}
as $r_2\to 0$, $s\ne 0$.
In \cite[Theorem 4]{baldom2013a} the authors are concerned with invariant manifolds of \eqref{inner_fucked} for $s\to\pm \infty$ in certain sectors of the complex plane.

We now notice the following simple fact:  \textit{The system \eqref{inner_fucked} is equivalent with \eqref{zerohopf0} for $\epsilon = 0$:
% \begin{align}
\begin{equation}\eqlab{inner}
% \begin{equation}\eqlab{model0}
\begin{aligned}
 x' &=-x^2-a (y^2 +z^2) + F(x,y,z,0),\\
 y'&=b x y -  z+G(x,y,z,0),\\
 z'&= b x z +  y+H(x,y,z,0),
\end{aligned}
% \end{equation}
\end{equation}
% \end{align}
through the change of coordinates defined by $x=s^{-1}$.} Notice in particular, that by \eqref{unperturbed_sol}
$x_2(t(s)) = \frac{1}{r_2 s} +\mathcal O(1)$
so that $$x=s^{-1} \quad \mbox{as}\quad r_2=\sqrt{\epsilon} \to 0, \,s\ne 0,$$ recall \eqref{x2y2z20}.

In conclusion, the inner problem associated with \eqref{zerohopf0} is just the unperturbed system. This will also be the case in our subsequent examples, and we believe that this a general phenomena in local bifurcation problems. (At present, we will not attempt to make this into a rigorous statement. We also remark that this connection between the inner problem and the unperturbed system has also been noted (but not exploited) by experts in the field, see e.g. \cite[Remark 2.9]{baldoma2018a} and \cite[p. 518]{gelfreich2001a}.) Therefore it seems unnatural to follow the approach of Lazutkin for these kind of local bifurcation problems. %Another issue with using $t\in \mathbb C$ as a cooordinate is that $x_2(t)\in (-1,1)$ for $t\in \mathbb R$; therefore if the equilibria $E_2^\pm$ move outside this interval for $\epsilon>0$, then adjustments have to be made (see \cite[]{}).
% \note{solution vs phase space}

Notice that for \eqref{inner}, we look for invariant manifolds in the $(x,y,z)$-phase space as graphs over $x\in S^{\pm}$. The existence of such manifolds follows from the theory of generalized saddle-nodes, see e.g. \cite{bonckaert2008a}. Indeed, the linearization of \eqref{inner} has eigenvalues $0,\pm i$ and the invariant manifolds are center-like manifolds, being graphs over the zero eigenspace along the sectors $x\in S^\pm$.

In \cite{bonckaert2008a}, generalized saddle-nodes are described through Borel-Laplace. Since their invariant manifolds are intrinsically related to summability, it is  (in the opinion of the author) the most natural approach to study such manifolds.  %This approach is natural because it relates formal series etc.
In this paper, we will extend the Borel-Laplace approach of \cite{bonckaert2008a} to study the PDE \eqref{main}. We believe that this extension is interesting in itself. We are confident that the approach can be extended to inner problems associated with maps (as in \cite{gelfreich2001a}).

\subsection{Exponentially small splitting}

Let us briefly explain how the inner problem in \cite{baldom2013a} relates to the exponentially small splitting of the one-dimensional connection \eqref{unperturbed_sol} for $\epsilon\to 0$: In line with Lazutkin's approach, the authors of \cite{baldom2013a} first parameterize the one-dimensional stable and unstable manifolds of $E_2^\pm$ in terms of $t\in \mathbb C$ (using \eqref{x2ts} as a change of coordinates $t\mapsto x_2 = \tanh (t)$) up close to the poles $t=\pm \frac{i\pi}{2}$ for all $0<\epsilon\ll 1$. (Notice that $\tanh(t)\in (-1,1)$ for $t\in \mathbb R$, and consequently additional adjustments have to made to ensure that $E_2^\pm \in (-1,1)$, see \cite[p.342]{baldom2013a}.) Near the poles, the authors show that the invariant manifolds are given by the invariant manifolds of \eqref{inner} to leading order. The authors refer to this as \textit{matching} (on an $\epsilon$-dependent domain) of the ``outer expansion'' (away from the poles) and the ``inner expansion'' (on the blowup of the pole). %This leads to sub-optimal estimates of the form $\mathcal O(\epsilon^{1-\gamma})$ with $\gamma\in (0,1)$ (see \cite[Theorem 5]{}) which are not part of the problem but a consequence of the approach. In any case,
Since the invariant manifolds for $s\to \pm \infty$ are different in general on overlapping doamins  (which is quantified in terms of a Stokes constant in \cite{baldom2013a}), this essentially leads to a splitting of the invariant manifolds for $\epsilon>0$ small enough.
To determine an asymptotic formula for the real splitting for $t=0$ (corresponding to $x_2=0$), a linear boundary value problem is derived. In this way, the separation close to the pole is carried down to the real axis; in this step one also uses the fact that the equations are real analytic. %information at the pole at $t_2=-\frac{i\pi}{2}$.

% CUT: In the context of \eqref{zerohopf0}, one obvious disadvantages of using \eqref{unperturbed} as a change of coordinates is that for $\epsilon>0$ the equilibria $E_2^\pm$ will in general move away from $\pm 1$ and for this reason the equations have to be modified slightly. This creates some technicalities in \cite{} which are not essential but a result of the method.

In \cite{bkt}, the present author provided an alternative more geometric approach to the splitting problem of the one-dimensional invariant manifolds of $E_2^\pm$. This approach builds upon the fact that the inner problem is just the unperturbed system and therefore works exclusively in the (complex) phase space. Importantly, the author treats the scaling \eqref{x2y2z20} as a $\breve \epsilon=1$-chart of an associated blowup transformation:
\begin{align}\eqlab{blowup0}
r\ge 0,\,(\breve x,\breve y,\breve z,\breve \epsilon) \in \mathbb S^4\,\mapsto \begin{cases}
        x = r\breve x,\\
                        y = r\breve y,\\
                        z=r\breve z,\\
                        \epsilon = r^2 \breve \epsilon.\end{cases}
\end{align}
(Strictly, speaking the authors use a slightly different scaling, but this is a technicality and not really important here.) First, the author describes the stable and unstable manifolds of $E_2^\pm$ in the coordinates $(x_2,y_2,z_2)$ as graphs over $x_2$ in compacts subsets for $0<r_2\ll 1$. The invariant manifolds are then extended by the flow to $x=\mathcal O(1)$ by working in the separate coordinate chart defined by $\breve x=1$:
\begin{align*}
\begin{cases}
        x = r_1,\\
                        y = r_1 y_1,\\
                        z=r_1 z_1,\\
                        \epsilon = r_1^2 \epsilon_1.\end{cases}
\end{align*}
The coordinates $(r_1,y_1,z_1)$ are natural coordinates for the parametrization of the invariant center-like manifolds of \eqref{inner} within $\epsilon=0$ (corresponding to $\epsilon_1=0$). Indeed, in these coordinates it is elementary to bring \eqref{inner} into the form
\begin{align*}
 r_1^2 \frac{d\zeta_1}{dr_1} =\begin{pmatrix} 0 & -1\\
                                    1 & 0
                                   \end{pmatrix} \zeta_1+\mathcal O(\vert(r_1,\zeta_1)\vert^2),
\end{align*}
with $\zeta_1=(y_1,z_1)$. Moreover, it follows from the change of coordinates between $\breve x=1$ and $\breve \epsilon=1$:
\begin{align*}
\begin{cases}
 r_1 =r_2x_2,\\
 y_1 = y_2x_2^{-1},\\
 z_1 = z_2 x_2^{-1},\\
 \epsilon_1 = x_2^{-2},
\end{cases}
\end{align*}
that the invariant manifolds of the $\breve \epsilon=1$-chart are also (partially) visible in the $\breve x=1$-chart (for $r_1=\mathcal O(\sqrt{\epsilon})$ and with $x_2$ bounded uniformly away from zero).
%                                    The invariant manifolds therefore take the graph form
%                                    \begin{align}\eqlab{zeta1man}
%                                     (y,z) = x^3 m^\pm (x),\quad r_1 \in S^\pm.
%                                    \end{align}
The results of \cite{bkt} show that the extensions of the local stable and unstable manifolds of $E_2^\pm$ are $\mathcal O(r_2)$-close to the invariant manifolds of the unperturbed system \eqref{inner} within appropriate small but compact sets of $x\in \mathbb C$ that are uniformly bounded away from $x=0$. The paper \cite{bkt} also presents an alternative geometric approach for the difference by deriving a Fenichel normal form, see \cite{jones_1995}. In this way, the results of \cite{bkt} directly \textit{relate the exponentially small splitting with the lack of analyticity of the center-like manifolds of \eqref{inner}}.

We emphasize that \cite{bkt} works directly in phase space, with the outer (inner) problem of the Lazutkin-based approach in \cite{baldom2013a} corresponding to $x=\mathcal O(\sqrt \epsilon)$ ($x=\mathcal O(1)$, respectively). (From the perspective of blowup, it feels most natural to refer to the unperturbed system \eqref{zerohopf0} as the outer system and \eqref{zerohopf20} as inner system; after all, in order to obtain \eqref{zerohopf20} from \eqref{zerohopf0} we zoom in on $(x,y,z,\epsilon)=(0,0,0,0)$ through the scaling \eqref{x2y2z20}. Unfortunately, within the Lazutkin-framework it is the other way around. To avoid (further) confusion, we stick to the terminology from Lazutkin in this paper.) %It is well-documented that blowup is a suitable tool for matching ``blowup for matching''
It is by now well-established that blowup is a powerful systematic technique for connecting different scaling regimes in dynamical systems, see e.g. \cite{Gucwa2009783,kristiansen2024a,krupa_extending_2001,kuehn2015a}. We believe that \cite{bkt} supports this further. In this reference, we also see that blowup allows for a very detailed information about the $\epsilon$-dependency (in line with previous work on blowup, see e.g. \cite{DMS2016} and \cite{de2021a} more broadly) of the splitting of the one-dimensional invariant manifolds of the zero-Hopf.
\subsection{Discussion}
The papers \cite{MR4855745,MR4445442} demonstrate a different connection between splitting phenomena and lack of analyticity of center manifolds. In particular, in \cite{MR4855745} the authors consider the analytic unfolding of the planar saddle-node bifurcation and study the properties of the analytic weak-stable manifold as the system approaches the saddle-node (coming from the side of the bifurcation where a saddle and a node co-exist). The problem is singular in the sense that the analytic weak-stable manifold is only well-defined when the node is nonresonant and these resonances accumulate as the system approaches the bifurcation. However, the main result of \cite{MR4855745} shows (under some hypothesis which by the subsequent work \cite{kristiansen2025a} can be relaxed) that if a certain Stokes-like constant is nonzero then the center manifold at the bifurcation is nonanalytic. Moreover, it is demonstrated that this quantity (qualitatively) determines the position of the analytic weak-stable manifold. In particular, it is shown (under the assumption of a nonzero Stokes constant) that the analytic weak-stable manifold never coincides with the invariant manifolds of the saddle. Although this problem is clearly different from the zero-Hopf bifurcation in many aspects, it does bear some important similarities. For example, \cite{MR4855745} treats the unperturbed problem as an inner-like system having a center manifold (in the usual sense of a partially hyperbolic singularity). As for the zero-Hopf in \cite{bkt}, this manifold is then connected to an invariant manifold (the analytic weak-stable manifold of the nonresonant node) of the system in the scaled (blowup) coordinates.  %which is naturally described in scaled coordinates (which uniformly separates the node from the saddle).

In \cite{new}, the present author applies the geometric method from \cite{bkt} (on the one-dimensional splitting associated with the zero-Hopf bifurcation) to a general class of co-dim $k$, $k\ge 3$, (non-reversible) zero-Hopf bifurcations. Although these bifurcations are more esoteric, the paper \cite{new} perhaps illustrates the method in \cite{bkt} more clearly, by also making connections to Stokes and anti-Stokes curves (drawing inspiration from \cite{hayes2016a,neishtadt1987a,neishtadt1988a}).

Looking ahead, the geometric viewpoint in \cite{bkt,new} opens up for potential applications in general problems where the explicit time dependency of unperturbed solutions are unknown.
% also from the perspective of blow-up, the wording ``inner'' and ``outer'' used by \cite{} (within the Lazutkin framework) is slightly confusing. Indeed, the inner problem \eqref{
However, a central component in the development of phase space space methods for the splitting of more complicated connection problems, is to describe invariant manifolds of the unperturbed systems directly in phase space.
We address this issue in the present paper by studying existence of two-dimensional invariant manifolds of \eqref{mainvf}.

 \subsection{The zero-Hopf and Hopf-Hopf bifurcations}
In the present paper, we will demonstrate that the inner problems associated with the two-dimensional formal connections in the following local bifurcations, can be brought into the form \eqref{mainvf}:
\begin{enumerate}
 \item  The zero-Hopf in $\mathbb R^3$ (see $\Gamma_0$ in \figref{x2rho2} and \secref{zero} for further details).
 \item The reversible and resonant Hopf-Hopf bifurcation in $\mathbb R^4$ (see $\Gamma_0$ in \figref{hopfhopf} and \secref{hopfhopf} for further details).
 \end{enumerate}
%  can both be brought into the form \eqref{mainvf}. %In contrast to the approaches of \cite{}, the particular expression for the difference in \thmref{main2} is not important in the geometric approach of \cite{} for $\epsilon>0$. However, for completeness we include it here. Our approach to proving \thmref{main2} is also geometric and mimics the approach used for $\epsilon>0$ small enough.

% We will demonstrate our perspective on inner problems further in the present paper by considering (a) the two-dimensional formal heteroclinic connection between $E_2^\pm$ in the zero-Hopf bifurcation and (b) the two-dimensonal formal homoclinic connection in
For the reversible zero-Hopf singularity, we will work with the normal form \eqref{zerohopf0}, whereas for the reversible and resonant Hopf-Hopf bifurcation in $\mathbb R^4$, we will consider the following normal form:
\begin{equation}\eqlab{nfhopfhopf0}
\begin{aligned}
\dot x &=-(1+\Omega)y+z+F,\\
\dot y &=(1+\Omega)x+w+G,\\
\dot z &= -(1+\Omega)w+\Gamma x+H, \\
\dot w &=(1+\Omega )z+\Gamma y+J,
\end{aligned}
\end{equation}
with
\begin{align*}
W=W(x,y,z,w,\epsilon)=\mathcal O(\vert (x,y,z,w,\epsilon)\vert^5),\quad W=F,G,H,J,
\end{align*}
and where
\begin{align}\nonumber
\begin{cases}
\Gamma(\rho^2,L,\epsilon) =  \epsilon - b \rho^2 +c L,\\
 \Omega(\rho^2 ,L,\epsilon)=\alpha \epsilon + \beta \rho^2 +\gamma L,
 \end{cases}
\end{align}
for
\begin{align*}
  \rho^2:=x^2+y^2,\,L:=wx-yz,\end{align*}
see \cite[Lemma 3.17, p.  215]{haragus2011a}. (Notice that in comparison with \cite{haragus2011a}, we write their $(A,B)$ as $(x+iy,z+iw)$ to obtain a real normal form.) Here $\epsilon\sim 0$ is the unfolding parameter.

Notice that for $\epsilon=0$, the linearization of the origin has eigenvalues $\pm i$ each with algebraic multiplicity $2$ and geometric multiplicity $1$. This bifurcation is also known as the $1:1$ resonance bifurcation or $(i\omega)^2$ in symbols, see \cite[p. 214]{haragus2011a}.
Importantly, the system \eqref{nfhopfhopf0} is assumed to be reversible with respect to the symmetry:
\begin{align*}
\mathcal S\,:\,(x,y,z,w)\mapsto (x,-y,-z,w).
\end{align*}

This type of bifurcation occurs in the stationary
generalized Swift-Hohenberg equation:
\begin{align*}
   \kappa u^2 -u^3 -(1+\partial_x^2)^2 u =\epsilon u,
\end{align*}
see \cite{gaivao2011a}, and in travelling waves of the fifth order KdV-equation, see \cite{yang1997a}. Both cases have a Hamiltonian structure, see also \cite{glebsky1995a} for a Hamiltonian analysis of the Swift-Hohenberg equation (and generalizations hereof). We do not cover Hamiltonian systems but are confident that our approach carries over completely analagously.

In the context of \eqref{nfhopfhopf0}, we again define an inner problem as the unperturbed system, i.e. \eqref{nfhopfhopf0} with $\epsilon=0$.

For further details, we refer to \secref{zero} and \secref{hopfhopf} where we also compare with the Lazutkin-based approach for the inner problem.

We summarize our findings on the two examples as follows:
\begin{proposition}\proplab{prop13}
 There exist changes of coordinates (of blowup-type) that bring the inner problems (i.e. the unperturbed problems with $\epsilon=0$) associated with \eqref{zerohopf0} (for any $b\in \mathbb R\setminus [-1,0]$, $a\in \mathbb R \setminus\{0\}$) and \eqref{nfhopfhopf0} (for any $b\in \mathbb R\setminus\{0\}$) into the normal form \eqref{mainvf}.
\end{proposition}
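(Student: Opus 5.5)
The plan is to treat the two normal forms separately but by the same recipe. First, pass to polar-type coordinates that single out the rotation angle $\theta$. Second, use a directional blowup chart in which the radial variable plays the role of $x$ in \eqref{mainvf}. Third, locate a (possibly complex) equilibrium of the resulting reduced system whose linearization gives the diagonal matrix $\mathbf A$ with positive entries. Finally, remove the $\theta$- and $\mathbf y$-dependence from the low-order terms of $\dot x$ by near-identity transformations.

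\emph{Zero-Hopf.} I would set $y+iz=\rho\e^{i\phi}$ in \eqref{inner}. Then $\rho'=bx\rho+\mathcal O(3)$, $\phi'=1+\mathcal O(2)$, $x'=-x^2-a\rho^2+\mathcal O(3)$. Next I would work in the chart $\rho=r$, $x=r\bar x$, so that to leading order
\begin{align*}
 r'=b\bar x r^2+\mathcal O(r^3),\qquad \bar x'=-r\big((1+b)\bar x^2+a\big)+\mathcal O(r^2).
\end{align*}
Since the system is analytic we may work in $\mathbb C$. Because $a\neq 0$ and $1+b\neq 0$, there is an equilibrium $\bar x_0$ with $\bar x_0^2=-a/(1+b)$ of the reduced flow on $r=0$; it may be complex. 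Linearizing at $\bar x_0$ gives $\bar x'\approx -2(1+b)\bar x_0\,r(\bar x-\bar x_0)$ and $r'\approx b\bar x_0 r^2$. After rescaling $r\mapsto cr$ with $c=b\bar x_0$ (complex allowed), so that the $r^2$-coefficient is $1$, the decay rate becomes $\lambda=2(1+b)/b$, and $\lambda>0$ is exactly the condition $b\notin[-1,0]$. I would then take $\theta=\phi$ and rescale time so that $\dot\theta=1$. The factor $\phi'=1+\mathcal O(r^2)$ only modifies terms of order $r^3$ in $\dot x$ and $r^2$ in $\dot{\mathbf y}$, which is harmless. This yields $\dot x=x^2(1+\ldots)$ and $\dot{\mathbf y}=x(-\lambda\mathbf y+\ldots)$ with $\mathbf y=\bar x-\bar x_0$.

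\emph{Cleaning up $\dot x$.} The form \eqref{mainvf} demands $\dot x=x^2(1+xF_0+x^2F_1)$ with no $\mathbf y$-dependence at order $x^2$ and no $\theta$-dependence at order $x^3$. The $x^2$-coefficient is in fact $1+\mathcal O(\mathbf y)$, coming from $b(\bar x_0+\mathbf y)$. I would remove this by replacing $x$ with $x\,k(\mathbf y)$, with $k$ analytic and $k(0)=1$, chosen so that the $\mathcal O(x^2\mathbf y)$ terms cancel. This reduces to a linear first-order ODE for $k$ along $\dot{\mathbf y}=-x\lambda\mathbf y$; explicitly $\lambda y k'(y)=k(y)\,(\text{linear in }y)$, which is solvable by analytic $k$. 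The order-$x^3$ terms depend on $(\theta,\mathbf y)$. For these I would use $x\mapsto x+x^2\psi(\theta,\mathbf y)$ and solve the homological equation $\partial_\theta\psi=(\text{term})-\langle\text{term}\rangle$. This removes the $\theta$-dependence and leaves the mean $F_0$; any remaining $\mathbf y$-dependence of that mean can be removed as in the $k$-step. All higher-order contributions are then absorbed into $F_1,\mathbf G_1$.

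\emph{Hopf-Hopf.} For \eqref{nfhopfhopf0} at $\epsilon=0$ I would follow the same route. Here $A=x+iy$ and $B=z+iw$, and the phase of $A$ supplies $\theta$; the $1{:}1$ resonance means $B$ rotates at the same frequency. I would factor out the common rotation and blow up with a weighted scaling adapted to the nilpotent part, with $|A|\sim r$, $|B|\sim r^2$, using $\rho^2$ and $L$ as natural quantities. In the chart where $r$ is the radial variable, the reduced dynamics on $r=0$ should have a nondegenerate (possibly complex) equilibrium when $b\neq0$; there are $n=2$ remaining real variables, or one complex one. After diagonalizing the linearization there and rescaling $r$ by a complex constant, the same cleanup steps apply.

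The main obstacle I expect is this Hopf-Hopf step. I need to choose weights and a chart so that $\dot r\propto r^2$ and, simultaneously, the transverse linearization is diagonalizable with real positive eigenvalues in the time where $\dot\theta=1$. I would first try the weights $(1,2)$ for $(A,B)$ suggested by the Jordan block, and check the eigenvalue signs by direct computation, using reversibility to rule out non-real eigenvalues.
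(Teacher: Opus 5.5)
Your route differs from the paper's. The paper builds the formal invariant manifold $\rho_1=\Phi(r_1,\theta)$ (resp.\ $(\Phi^\psi,\Phi^L)$), truncates it at order $2M$ and sets $\rho_1=\Phi^{2M}+r_1^M\widetilde\rho_1$ (resp.\ \eqref{blowuprho1hopfhopf}), following Bonckaert--De Maesschalck. Every nonlinear or coupling occurrence of the transverse variable then carries a factor $r_1^M$. So the $\mathcal O(r_1)$ part of $\dot{\widetilde\rho}_1$ is automatically linear, constant and diagonal, and $\dot r_1$ is independent of $\widetilde\rho_1$ to high order; the price is the formal series and a shift of $\mathbf A$ by $M$.

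Your direct normalization avoids formal series, but as stated it does not reach \eqref{mainvf}, because you only clean up $\dot x$. That form also requires the $\mathcal O(x)$ part of $\dot{\mathbf y}$ to be exactly $-x\mathbf A\mathbf y$. In your chart $\dot{\bar x}=-r((1+b)\bar x^2+a)+\dots$ is a Riccati flow, so $\dot{\mathbf y}=x\bigl(-\lambda\mathbf y-\tfrac{1+b}{b\bar x_0}\mathbf y^2\bigr)+\mathcal O(x^2)$, and the $x\mathbf y^2$ term is never removed.

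Your $k$-step makes this worse. After $\tilde x=xk(\mathbf y)$ one has $\dot{\mathbf y}=-\lambda\tilde x\mathbf y/k(\mathbf y)$. Moreover, your linear equation $\lambda yk'=k\cdot(\text{linear})$ yields $\dot{\tilde x}=x\tilde x=\tilde x^2/k$, not $\tilde x^2$. The common factor $1/k$ could only be removed by a time change, which $\dot\theta=1$ forbids.

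The leading-order normalization must act on $x$ and $\mathbf y$ jointly. Write the leading part as $\dot x=x^2g(\mathbf y)$, $\dot{\mathbf y}=xh(\mathbf y)$. Take $\tilde x=x/m(\mathbf y)$ with $hm'=gm-1$, $m(0)=1$; this is a regular-singular linear ODE, solvable since $1+j\lambda\neq0$. Then take $u=\phi(\mathbf y)$ with $\phi'/\phi=-\lambda/(mh)$. After this your averaging step works, but the ansatz must be $x\mapsto x+x^3\psi(\theta,\mathbf y)$: with $x+x^2\psi$ you create $x^2\partial_\theta\psi$ at order $x^2$.

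That averaging step is a genuine asset of your route. In the paper's chart the $r_1^3$-coefficient of $\dot r_1$ is $F_3(1,\Phi_0\cos\theta,\Phi_0\sin\theta)$, where $F_3$ is the cubic part of $F(x,y,z,0)$ and $\Phi_1=0$. Reversibility only forces this coefficient to have zero $\theta$-mean; for example, $F=x(y^2-z^2)$ gives $\Phi_0^2\cos2\theta$. So the appeal to reversibility in the proof of \lemmaref{lemma24} needs exactly such a step, or a preliminary cubic normal form.

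The Hopf--Hopf half of the statement is not proved: chart, equilibrium and spectrum are left as things that ``should'' work. Your weights $(1,2)$ on $(A,B)$, in the frame rotating with $\arg A$, are precisely the paper's chart \eqref{entry1hopfhopf} in the coordinates \eqref{symplecticpolar}, since $B\e^{-i\theta}=r_1^2(\psi_1+iL_1)$. They give \eqref{innerhopfhopf}. On $\{r_1=0\}$ the equilibrium is $\psi_1=\psi_{10}:=i\sqrt{b/2}$, $L_1=0$, with transverse linearization $r_1\operatorname{diag}(-4\psi_{10},-3\psi_{10})$ against $\dot r_1=\psi_{10}r_1^2$. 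With $x=\psi_{10}r_1$ this gives $\mathbf A=\operatorname{diag}(4,3)$ for every $b\neq0$, by direct computation.

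Reversibility cannot be what makes these eigenvalues real: $\mathcal S_1$ maps $\psi_{10}$ to $-\psi_{10}$, so it does not fix this equilibrium. In your route two further tasks remain. First, you must linearize the two-dimensional order-$r_1$ flow $(-b-2\psi_1^2+L_1^2,\,-3\psi_1L_1)$ jointly with $\dot x$; this is possible because $(4,3)$ is nonresonant and in the Poincar\'e domain. Second, you must remove the $\mathbf y$-dependent $x^3$-term $-\psi_{10}L_1r_1^3$ created by dividing by $\dot\theta=1+r_1L_1+\dots$. In the paper both are absorbed by the $r_1^M$-blowup, since $L_1=\Phi^{L,2M}+r_1^M\widetilde L_1$ with $\Phi^{L}=\mathcal O(r_1^3)$.
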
\proplab{zerohopf}

In turn, we have the following corollaries of \thmref{main}:
\begin{proposition}
 Consider \eqref{zerohopf0} with $\epsilon=0$ for any $b\in \mathbb R\setminus [-1,0]$, $a\in \mathbb R \setminus\{0\}$, in the cylindrical coordinates $(x,\rho,\theta)$ defined by
\begin{align*}
\begin{cases}
 y = \rho \cos \theta,\\
 z = \rho \sin \theta.
\end{cases}\end{align*}
 Then there are invariant manifolds of the graph form
\begin{align*}
 \rho = x\Phi^\pm (x,\theta),\quad x\in S^\pm,\quad \theta\in \mathbb T_\xi,
\end{align*}
where $\Phi^\pm\,:\,S^\pm \times \mathbb T_\xi\to \mathbb C$ with $$\Phi^\pm(0,\theta)  \equiv i \sqrt{\frac{1+b}{a}},$$
are $1$-sums (with respect to $x$, uniformly with respect to $\theta\in \mathbb T_\xi$) of a Gevrey-$1$ formal series along the directions defined by $x\in S^\pm$, respectively.
% \note{think about symmetry}
% The invariant manifolds are related by the symmetry $\mathcal S$.
\end{proposition}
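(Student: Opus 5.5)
The plan is to reduce the statement to \thmref{main}. First I will bring \eqref{inner} in cylindrical coordinates into the normal form \eqref{mainvf} by a blowup-type change of coordinates (the $\breve x=1$ chart) followed by a translation and a rescaling. Then I will apply \thmref{main} and undo the transformations.

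\textbf{Step 1: cylindrical coordinates.} With $y=\rho\cos\theta$, $z=\rho\sin\theta$, the system \eqref{inner} becomes
\begin{align*}
 x' &= -x^2-a\rho^2+F,\\
 \rho' &= bx\rho+\cos\theta\, G+\sin\theta\, H,\\
 \theta' &= 1+\rho^{-1}\left(\cos\theta\, H-\sin\theta\, G\right).
\end{align*}
Here $F,G,H$ are evaluated at $(x,\rho\cos\theta,\rho\sin\theta,0)$. They are analytic in $\theta$ on a complex strip and are $\mathcal O(\vert(x,\rho)\vert^3)$ by \eqref{Wthird}.

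\textbf{Step 2: the $\breve x=1$ chart.} I set $\rho=x\rho_1$. By the third-order property, $\theta'=1+\mathcal O(x^2)$, uniformly for $\rho_1$ bounded. A short computation gives
\begin{align*}
 x' &= -x^2\left(1+a\rho_1^2\right)+\mathcal O(x^3),\\
 \rho_1' &= x\rho_1\left(1+b+a\rho_1^2\right)+\mathcal O(x^2).
\end{align*}
The $\mathcal O(x^2)$ remainder in the $\rho_1$-equation comes from $(x\rho'-\rho x')/x^2$ with cubic corrections. I then divide the vector field by $\theta'$, which is nonzero near $x=0$. This gives $\dot\theta=1$ and changes only the higher order terms, so they remain analytic in $(x,\rho_1,\theta)$.

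\textbf{Step 3: the base point and linearization.} The line $x=0$ carries the points $\rho_1^*=\pm i\sqrt{(1+b)/a}$. These are the nonzero solutions of $1+b+a\rho_1^2=0$; I take the root stated in the proposition. Here $a\neq0$ and $b\neq-1$ are used. At $\rho_1=\rho_1^*$ the $x$-equation reads $\dot x=bx^2+\mathcal O(x^3)$.

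Writing $\rho_1=\rho_1^*+u$, the linear part of the $u$-equation is
\begin{align*}
 x\left(1+b+3a\rho_1^{*2}\right)u=-2(1+b)xu.
\end{align*}
I rescale $\tilde x=bx$, which needs $b\ne0$. This gives
\begin{align*}
 \dot{\tilde x} &= \tilde x^2\left(1+\tilde xF_0+\tilde x^2F_1\right),\\
 \dot u &= \tilde x\left(-\lambda u+\tilde x G_1\right),
\end{align*}
with $\lambda=2(1+b)/b$. Any terms of the form $\tilde x\,\mathcal O(u^2)$, or the $u$-dependence of the $\tilde x^2$-coefficient in $\dot{\tilde x}$, need care. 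They must be absorbed into $G_1$ and $F_1$, or removed by a preliminary near-identity change of $u$. If necessary I will use a finite normal-form step so that everything beyond the linear part is $\mathcal O(\tilde x^2)$.

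\textbf{Step 4: positivity of $\lambda$.} The condition $\lambda>0$ in \eqref{Adiag} is exactly $b>0$ or $b<-1$, that is, $b\notin[-1,0]$. This is the step that pins down the hypotheses of the proposition, and I expect it to be the main obstacle. The delicate part is the careful bookkeeping that the remainders really have the structure of \eqref{mainvf}, uniformly on a strip $\mathbb T_\zeta$ and a complex ball in $u$. In particular, the division by $\rho$ in $\theta'$ is harmless only because $\rho=x\rho_1$ with $\rho_1$ bounded away from $0$.

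\textbf{Step 5: applying \thmref{main}.} \thmref{main} yields $u^\pm(\tilde x,\theta)=\mathcal O(\tilde x)$ on $S^\pm\times\mathbb T_\xi$. These are $1$-sums, in the directions of $S^\pm$, of a Gevrey-$1$ series, uniformly in $\theta$ by the estimates on the Fourier coefficients. Returning to $x=\tilde x/b$ gives $\Phi^\pm(x,\theta)=\rho_1^*+u^\pm(bx,\theta)$. This function has $\Phi^\pm(0,\theta)\equiv\rho_1^*$ and gives invariant graphs $\rho=x\Phi^\pm$.

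If $b<-1$, the scaling $\tilde x=bx$ reverses the real axis, so the two sectors are interchanged and I relabel them. Summability along the corresponding directions is preserved under this linear change of variable. Finally, the time-reparametrization by $\theta'$ preserves invariant manifolds, so these graphs are invariant manifolds of \eqref{inner}.
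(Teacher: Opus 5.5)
The gap is in your Step 3, which is where the real work of the reduction lies; Step 4 is a one-line check. Substituting $\rho_1=\rho_1^*+u$ into \eqref{entry1eqns} gives
\begin{align*}
 \dot x &= x^2 g(u)+\mathcal O(x^3), & g(u)&:=b-2a\rho_1^*u-au^2,\\
 \dot u &= x f(u)+\mathcal O(x^2), & f(u)&:=-2(1+b)u+3a\rho_1^*u^2+au^3.
\end{align*}
The $x^3$-coefficient of $\dot x$ is $F_1(0,\rho_1^*+u,\theta)$, which depends on both $u$ and $\theta$. The form \eqref{mainvf} needed by \thmref{main} imposes three requirements: $\mathbf y$-dependence in $\dot x$ may appear only from order $x^4$ on, the $x^3$-coefficient must be a constant $F_0$, and the order-$x$ part of $\dot{\mathbf y}$ must be exactly $-x\mathbf A\mathbf y$. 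Your suggested fixes do not achieve this.
\begin{itemize}
\item The terms $x^2u$ and $xu^2$ cannot be absorbed into $x^4F_1$ and $x^2\mathbf G_1$, because $u/x^2$ and $u^2/x$ are not analytic at $x=0$.
\item A finite-order normal form in $u$ always leaves some $x\,\mathcal O(u^N)$, which is still not of the form $x^2\mathbf G_1$.
\item No change of $u$, even one depending on $x$ and $\theta$, removes the $u$-dependence of the coefficient $g$ of $x^2$ in $\dot x$, since $g'(0)=-2a\rho_1^*\neq 0$. Dividing the vector field by $g/b$ instead destroys $\dot\theta=1$.
\end{itemize}

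Your route can be repaired, but it needs an idea that is missing from the proposal. First, make a $u$-dependent change of the base variable, $X=xh(u)$, where $q=1/h$ solves $q'f=gq-b$. This is analytically solvable, because the only resonances $b=-2j/(1+2j)$ lie in $(-1,0)$. Next, apply a convergent Koenigs linearization in $u$ of the order-$X$ field $fq$. Then use $X\mapsto X+X^3\ell(u,\theta)+X^2m(u)$ to make the $X^3$-coefficient constant. Finally, you must argue that $1$-summability survives undoing $X=xh(u)$ when you blow down.

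The paper avoids the $u$-dependence altogether. It first constructs the formal invariant manifold $\rho_1=\Phi(r_1,\theta)$; this is where $b\notin[-1,0]$ is used, through the formal saddle of \eqref{formel}. It then truncates $\Phi$ at order $2M$ and sets $\rho_1=\Phi^{2M}+r_1^M\widetilde\rho_1$, as in \lemmaref{lemma24}. Since $\widetilde\rho_1$ then enters only through $r_1^M\widetilde\rho_1$, three things follow at once: its effect on $\dot r_1$ is $\mathcal O(r_1^{M+2})$, the order-$r_1$ part of $\dot{\widetilde\rho}_1$ is linear with constant coefficient $-2(1+b)-Mb$, and the inhomogeneous term is $\mathcal O(r_1^{M+1})$. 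So none of the $u$-dependent normal-form steps are needed. Moreover $\mathbf A=2(1+b)/b+M$ is positive automatically for $M$ large, so positivity of $\lambda$ is not the issue there either.
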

In the following, we let $\upsilon S^\pm$, $\upsilon\in \mathbb C$, denote the sets
\begin{align*}
 \upsilon S^\pm : =\{x\in \mathbb C\,:\,\upsilon^{-1}x\in S^\pm\},
\end{align*}
recall \eqref{Spm}.
% \begin{proposition}
%  Suppose that $b\in \mathbb R\setminus\{0\}$ and
%
\begin{proposition}\proplab{hopfhopf}
 Consider \eqref{nfhopfhopf0} with $\epsilon=0$ for any $b\in \mathbb R\setminus\{0\}$ in the coordinates $(\rho,\theta,\psi,L)$ defined by
% \begin{align*}
% \begin{cases}
 \begin{align}\nonumber
\begin{cases}
 x = \rho \cos \theta,\\
 y =\rho \sin \theta,\\
 z =\psi \cos \theta - L\rho^{-1} \sin \theta,\\
 w = \psi \sin \theta+ L\rho^{-1} \cos \theta.
 \end{cases}
\end{align}
 Let
 \begin{align}
  \upsilon:=i^{-1}\sqrt{\frac{2}{b}}.
 \end{align}
 Then there are invariant manifolds of the graph form
\begin{align*}
\begin{cases}
 \psi = \rho^2 \Phi^{\psi,\pm} (\rho,\theta),\\
 L = \rho^3\Phi^{L,\pm}(\rho,\theta),
\end{cases}\quad \quad \rho \in \upsilon S^\pm,\quad \theta\in \mathbb T_\xi,
\end{align*}
where $\Phi^{q,\pm}\,:\,\upsilon S^\pm \times \mathbb T_\xi\to \mathbb C$ with $$\begin{cases}\Phi^{\psi,\pm}(0,\theta)  \equiv i \sqrt{\frac{b}{2}},\\
\Phi^{L,\pm}(0,\theta)\equiv 0,\end{cases}$$
% \end{cases}\end{align*}
are $1$-sums (with respect to $x$, uniformly with respect to $\theta\in \mathbb T_\xi$) of a Gevrey-$1$ formal series along the directions defined by $x\in \upsilon S^\pm$, respectively.
\end{proposition}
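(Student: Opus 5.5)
The plan is to reduce to \thmref{main}: I will bring the unperturbed system \eqref{nfhopfhopf0} (with $\epsilon=0$) into the normal form \eqref{mainvf}, with a diagonal $\mathbf A$ having positive entries, and then translate the conclusion back to the coordinates $(\rho,\theta,\psi,L)$.

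\textbf{Step 1: polar-type coordinates.} I use complex notation $A=x+iy=\rho\e^{i\theta}$ and $B=z+iw$. The change of coordinates in the statement is exactly $B=(\psi+iL\rho^{-1})\e^{i\theta}$, so that
\begin{align*}
\rho\psi=\operatorname{Re}(\bar AB),\qquad L=\operatorname{Im}(\bar AB).
\end{align*}
The truncated normal form reads $A'=i(1+\Omega)A+B$ and $B'=i(1+\Omega)B+\Gamma A$. From this one obtains $\tfrac{d}{dt}(\bar AB)=\vert B\vert^2+\Gamma\rho^2$, which is real. Hence, up to the $\mathcal O(\vert\cdot\vert^5)$ remainders $F,G,H,J$:
\begin{align*}
\dot\rho=\psi,\qquad \dot\theta=1+\Omega+L\rho^{-2},\qquad \dot L=0,\qquad \rho\dot\psi=\psi^2+L^2\rho^{-2}+\Gamma\rho^2,
\end{align*}
with $\Gamma=-b\rho^2+cL$ at $\epsilon=0$. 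The remainders are the only source of $\theta$-dependence. They remain analytic in $\theta\in\mathbb T_\xi$ after the substitution below, because they are of high order.

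\textbf{Step 2: blowup-type scaling and leading balance.} I insert $\psi=\rho^2(\Phi_0+u)$ and $L=\rho^3v$. Since $\Omega,\ \Gamma,\ L\rho^{-2}=\mathcal O(\rho)$, I divide the vector field by $\dot\theta=1+\mathcal O(\rho)$ so that $\dot\theta=1$. The constant $\Phi_0$ is fixed by requiring that the $\mathcal O(\rho)$ term in the $u$-equation vanishes at $u=v=0$. This quadratic leading-order balance yields $\Phi_0\propto i\sqrt b$, which is nonzero and purely imaginary for $b>0$ and real for $b<0$; this is where $b\neq 0$ is used. I then set $x:=\Phi_0\rho$, so that $\dot x=x^2(1+\mathcal O(x))$; this explains $\rho\in\upsilon S^\pm$ with $\upsilon=\Phi_0^{-1}$. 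A direct computation of the linear parts gives
\begin{align*}
\dot u=-2xu+\cdots,\qquad \dot v=-3xv+\cdots,
\end{align*}
where the $v$-equation follows from $\dot L=0$ together with $\dot\rho=\rho^2\Phi_0$. So $\mathbf y=(u,v)$ satisfies the $\mathbf y$-equation of \eqref{mainvf} with $\mathbf A=\operatorname{diag}(2,3)$ (possibly up to a triangular coupling), with positive entries as required.

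\textbf{Step 3: normalizing remaining terms.} The system must match \eqref{mainvf} exactly. There are three requirements:
\begin{enumerate}
\item the $\mathbf y$-equation is $x(-\mathbf A\mathbf y+x\mathbf G_1)$, so it has no pure $x\cdot\mathrm{const}$ forcing and no nonlinear $\mathcal O(x\vert\mathbf y\vert^2)$ terms;
\item the $x$-equation is $x^2(1+xF_0+x^2F_1)$ with $F_0$ constant;
\item $\mathbf A$ is diagonal.
\end{enumerate}
For the first two, I plan near-identity transformations $\mathbf y\mapsto \mathbf y+x\mathbf h(\theta)+\dots$ and $x\mapsto x+x^2k(\theta)$. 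These are solved by averaging in $\theta$, i.e. homological equations $\partial_\theta h=\cdots$ whose zero Fourier mode is absorbed by $F_0$ or by the choice of $\Phi_0$. Terms quadratic in $\mathbf y$ carry an extra factor of $\rho$ after the scaling, so they already fit into $x^2\mathbf G_1$. A remaining triangular linear coupling (for instance from $cL$ in $\Gamma$) is removed by a constant linear change of $\mathbf y$, since the eigenvalues $2$ and $3$ are distinct.

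\textbf{Step 4 and main obstacle.} Once the normal form holds, \thmref{main} yields $\mathbf y^\pm$ on $S^\pm\times\mathbb T_\xi$ as $1$-sums. Undoing the analytic changes of coordinates gives $\Phi^{\psi,\pm}=\Phi_0+u^\pm$ and $\Phi^{L,\pm}=v^\pm$ with the stated values at $\rho=0$. The $1$-summability is preserved because the transformations are analytic in $x$, and $1$-summable series form an algebra closed under analytic substitution. I expect the main obstacle to be Step 3: verifying carefully that, after the time reparametrization, every residual term really carries the factor $x^2$ in the $\mathbf y$-equation and $x^3$ (beyond $x^2$) in the $x$-equation. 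This also requires the high-order remainders $F,\dots,J$ to be divisible by the required powers of $\rho$ in the new variables. That divisibility should follow from the $\mathcal O(\vert\cdot\vert^5)$ assumption, but the $L\rho^{-1}$ in the coordinate change must be checked term by term.
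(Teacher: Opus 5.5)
\textbf{There is a genuine gap in Step 3.} Your overall strategy (reduce to \thmref{main}, then blow down) is the paper's, but Step 3 does not produce the form \eqref{mainvf}. Your claim that ``terms quadratic in $\mathbf y$ carry an extra factor of $\rho$'' is false.

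First, a slip in Step 1. Since $\tfrac{d}{dt}(\rho\psi)=\psi^2+\rho\dot\psi$, your formula for $\rho\dot\psi$ has a spurious $\psi^2$. The correct equation is $\dot\psi=-b\rho^3+c\rho L+\rho^{-3}L^2$. This gives $\Phi_0^2=-b/2$, so $\Phi_0=i\sqrt{b/2}$ and $\upsilon=\Phi_0^{-1}=i^{-1}\sqrt{2/b}$ as stated; your version gives $\Phi_0^2=-b$ and the wrong constants. Inserting $\psi=\rho^2(\Phi_0+u)$, $L=\rho^3v$, $x=\Phi_0\rho$ gives
\begin{align*}
\dot x &= x^2\left(1+\Phi_0^{-1}u\right)+\mathcal O(x^5),\\
\dot u &= -4xu+\Phi_0^{-1}x\left(v^2-2u^2\right)+\mathcal O(x^2),\\
\dot v &= -3xv-3\Phi_0^{-1}xuv+\mathcal O(x^3).
\end{align*}
So the $u$-rate is $4$, not $2$. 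Your $v$-equation dropped the $uv$ term because you replaced $\dot\rho=\rho^2(\Phi_0+u)$ by $\rho^2\Phi_0$. Dividing by $\dot\theta=1+\rho v+\mathcal O(\rho^2)$ only adds more $\mathbf y$-dependence to $\dot x$ at order $x^3$.

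Hence the $\mathbf y$-equation contains $x\,\mathcal O(\vert\mathbf y\vert^2)$ terms, and $\mathbf y$ enters $\dot x$ already at order $x^2$. By contrast, \eqref{mainvf} requires the form $x(-\mathbf A\mathbf y+x\mathbf G_1)$, with $\mathbf y$ entering $\dot x$ only through $x^4F_1$. These offending terms are $\theta$-independent and come from the truncated normal form itself. The transformations you list ($\mathbf y\mapsto\mathbf y+x\mathbf h(\theta)$, $x\mapsto x+x^2k(\theta)$, averaging) cannot remove them. Your ``main obstacle'' is therefore misplaced: the remainders $F,\dots,J$ are harmless because $L\rho^{-1}=\rho^2v$; the obstruction is the leading-order dynamics.

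\textbf{The missing idea is the weighted blowup of \lemmaref{lemma33}.}
\begin{enumerate}
\item Take the formal invariant manifold $\Phi^\psi,\Phi^L\in\mathbb C\{\theta\}[[r_1]]$. The paper obtains it from the formal normal form \eqref{nfhopfhopfformal} and the formal stable manifold of \eqref{formelhopfhopf}; your $\theta$-homological equations could also produce it order by order.
\item Truncate it at order $2M$.
\item Set $\psi_1=\Phi^{\psi,2M}+r_1^M\widetilde\psi_1$ and $L_1=\Phi^{L,2M}+r_1^M\widetilde L_1$.
\end{enumerate}
The factor $r_1^M$ does four things:
\begin{itemize}
\item it turns the quadratic terms into $\mathcal O(r_1^{M+1}\vert\widetilde{\mathbf y}\vert^2)$;
\item it pushes the $\widetilde{\mathbf y}$-dependence of $\dot r_1$ to order $r_1^{M+2}$, so $M\ge 2$ suffices;
\item it leaves a forcing of order $r_1^{M+1}$ times the residual of the truncation;
\item it shifts the rates to $\mathbf A=\operatorname{diag}(4+M,3+M)$, still diagonal and positive.
\end{itemize}
After dividing by $\dot\theta$ and setting $x=i\sqrt{b/2}\,r_1$, the system has the form \eqref{mainvf}. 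Your Step 4 then goes through, since the blow-down is polynomial in $r_1$ with analytic $\theta$-coefficients.
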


\propref{prop13} is a consequence of \lemmaref{lemma24} and \lemmaref{lemma33} below. These results are each obtained in the same way, by working in charts associated with the problem-dependent blowups. In further details, we first show the existence of formal series solutions. We then truncate this series and subsequently apply a (separate) blowup (following the preparation of the generalized saddle-nodes in \cite[Proposition 1]{bonckaert2008a}).

\propref{zerohopf} and \propref{hopfhopf} follow from more detailed statements below (in charts), see \propref{prop25} respectively \propref{prop34}. For further details, see \secref{zero} and \secref{hopfhopf}.

We view these results as prerequisites for our upcoming treatment of the associated exponentially small splitting phenomena in these problems as $\epsilon\to 0$.

\subsection{Outline}
The paper is organized as follows. In \secref{zero}, we first consider the two-dimensional formal connection of \eqref{zerohopf0}. We derive the associated inner problem by following Lazutkin's approach and show that the system is equivalent with the unperturbed system. Moreover, we show that the unperturbed system (in polar coordinates) can be written in the form \eqref{mainvf}. Next in \secref{hopfhopf}, we perform a similar analysis on the reversible and resonant Hopf-Hopf bifurcation with normal form \eqref{nfhopfhopf0}. In particular, we again identify two-dimensional formal connections and bring the unperturbed system (which agrees with Lazutkin's version of the inner problem) into the form \eqref{mainvf}. In both cases, blowup plays a crucial role.

In \secref{borellaplace}, we then review the Banach-convolution-algebra-approach to Borel-Laplace by Bonckaert and De Maesschalck, see \cite{bonckaert2008a}, and extend it so that it can be applied to \eqref{main} (with the $\theta$-dependency). In \secref{equations}, we then recast \eqref{main} into an equation in the Borel-plane for the Borel transform $\widehat{\mathbf y}$ of $\mathbf y$. We solve this equation in the appropriate Banach space of exponentially growing solutions along an infinite sector, using the theory from \secref{borellaplace} and a fixed-point argument. The Laplace transformation $\mathcal L^\varphi[\widehat{\mathbf y}]$ of $\widehat{\mathbf y}$ then solves \eqref{main} along sectors (defined by $\varphi$). In this way, we prove \thmref{main} (taking $\varphi=0$ and $\varphi=\pi$). Finally, in \secref{diff} we study the difference $\Delta \mathbf y(x,\theta)$ for $x\in S^+\cap S^-$ and prove \thmref{main2}. Our approach for the difference is also geometric (using invariant manifolds). %We conclude the paper in \secref{conclusion}.

\section{The reversible zero-Hopf bifurcation}\seclab{zero}
In this section, we describe the inner problem associated with the two-dimensional formal connection between the saddle-focus singularities $E_2^\pm$ of the reversible zero-Hopf bifurcation \eqref{zerohopf20} and bring it into the general form \eqref{mainvf}.% Lazutkin's approach and then relate this to the unperturbed system of \eqref{zerohopf0}.

In order to introduce the two-dimensional formal  connection for \eqref{zerohopf20}, we start by ignoring the higher order terms of \eqref{zerohopf0}:
% \begin{align*}
\begin{equation}\eqlab{zerohopf0trunc}
% \begin{equation}\eqlab{model0}
\begin{aligned}
 x' &=-x^2+\epsilon-a (y^2 +z^2),\\
 y'&= bx y -z,\\
 z'&= bx z+ y,
\end{aligned}
% \end{equation}
\end{equation}
and write the resulting system in polar coordinates $(x,\rho,\theta)$ defined by
\begin{align}\eqlab{polar}
 \begin{cases}
  y = \rho \cos \theta,\\
  z =\rho\sin \theta.
 \end{cases}
\end{align}
This gives the following system:
\begin{equation}\eqlab{xrho}
\begin{aligned}
 x' &=-x^2+\epsilon -a \rho^2,\\
 \rho' &=bx\rho,\\
 \epsilon' &=0,
\end{aligned}
\end{equation}
and $\dot \theta = 1$, which decouples. %Here we have also used the parameter blowup  \eqref{parameterbu}.
In anticipation of the blowup:
\begin{align}\eqlab{blowup0polar}
 r\ge 0,\,(\breve x,\breve \rho,\breve \epsilon)\in \mathbb S^3\mapsto \begin{cases}
                                                  x =r \breve x,\\
                                                  \rho = r\breve \rho,\\
                                                  \epsilon = r^2\breve \epsilon
                                                 \end{cases}
\end{align}
recall \eqref{blowup0},
we have also augmented an equation for $\epsilon$.
We are primarily interested in the case
$$a>0,\quad b>0.$$

In the $\breve\epsilon=1$-chart associated with \eqref{blowup0polar}, having the chart-specific coordinates $(x_2,\rho_2,r_2)$ defined by
\begin{align*}
 \begin{cases}
  x =r_2 x_2,\\
  \rho =r_2 \rho_2,\\
  \epsilon =r_2^2,
 \end{cases}
\end{align*}
we find that
\begin{equation}\eqlab{x2rho2}
\begin{aligned}
 \dot x_2 &=-x_2^2+1 -a \rho_2^2,\\
 \dot \rho_2 &=bx_2\rho_2,
\end{aligned}
\end{equation}
and $\dot r_2=0$ after dividing the right hand side by the common factor $r_2$.
%
%
% \note{maybe start with outer system in $(x,\rho,\theta)$-coordinates and apply blowup: we first look in scaling chart and infer that we have invariant manifolds of the unperturbed problem that are best described in the entry chart}
% \begin{equation}\eqlab{zerohopf2}
% \begin{aligned}
%  \dot x_2 &= -x_2^2+1-a (y_2^2+z_2^2),\\
%  \epsilon \dot y_2&= -z_2+\epsilon b ( x_2+\sigma)y_2,\\
%  \epsilon \dot z_2 &=y_2+\epsilon b(x_2+\sigma)z_2,
% \end{aligned}
% \end{equation}
% and introduce polar coordinates in the $(y_2,z_2)$-plane:
% \begin{align*}
%  \begin{cases}
%   y_2 = \rho_2 \cos \theta,\\
%   z_2 =\rho_2\sin \theta.
%  \end{cases}
% \end{align*}
% This gives
% \begin{equation}\eqlab{x2rho2}
% \begin{aligned}
%  \dot x_2 &=-x_2^2+1 -a \rho_2^2,\\
%  \dot \rho_2 &=b(x_2+\sigma)\rho_2,
% \end{aligned}
% \end{equation}
% and $\dot \theta = \epsilon^{-1}$ which decouples.
We then notice that $(x_2,\rho_2)=(\pm 1,0)$ (corresponding to $E_2^\pm$ from the introduction) are hyperbolic saddles of \eqref{x2rho2} for any $b>0$, with the linearization having eigenvalues
\begin{align*}
 \mp 2,\quad \pm b,
\end{align*}
respectively.
 The set  $\gamma_0$ defined by $\rho_2=0,\,x_2\in (-1,1)$, is therefore a heteroclinic connection (corresponding to \eqref{unperturbed_sol}) for any $b>0$, see \figref{x2rho2}. On the other hand for $a>0$ and $b>0$, we also have local stable and unstable manifolds of $(x_2,\rho_2)=(\mp 1,0)$, respectively, transverse to $\rho_2=0$ that also coincide (due to the symmetry $\mathcal S$) to form a separate heteroclinic connection $\Gamma_0$ contained within $\rho_2>0$. We illustrate this situation
in \figref{x2rho2}.
% for $a>0$, $b>0$ and $\sigma=0$. Notice that $\gamma_0\,:\,\rho_2=0,x_2\in (-1,1)$ defines heteroclinic connection (corresponding to \eqref{unperturbed_sol}). But there is also another connection within $\rho_2>0$ for these values of the parameters given by:
An easy computation shows that the connection $\Gamma_0$ takes the following graph form:
\begin{align}\eqlab{Gamma0eqn}
\Gamma_0\,:\,\rho_2 = \sqrt{\frac{1+b}{a}(1-x_2^2)},\quad x_2\in (-1,1).
\end{align}
% $\Gamma_0$ breaks up with nonzero speed for $\sigma\ne 0$, see \figref{x2rho2}(a) and (b) for $\sigma<0$ and $\sigma>0$, respectively. This follows from a simple Melnikov computation.
Finally, we note that on $\Gamma_0$ we have
\begin{align*}
 \dot x_2 &=b (x_2^2-1),
\end{align*}
and from this we deduce that the heteroclinic connection $\Gamma_0$ has the following time parametrization:
\begin{align}\eqlab{unperturbed_sol2}
 \begin{cases}
  x_2(t) = -\tanh(bt),\\
  \rho_2(t) =\sqrt{\frac{1+b}{a}} \operatorname{sech}(bt),
 \end{cases}
\end{align}
with poles closest to the real axis given by $t=\pm \frac{i\pi}{2b}$.

\begin{remark}\remlab{homoclinic}
It is easy to see that the higher order terms in \eqref{zerohopf0}, ignored in \eqref{zerohopf0trunc}, lead to regular perturbations of \eqref{rho2psi2L2eqns} of order $\mathcal O(r_2)$ in the $(x_2,\rho_2,\theta)$-coordinates (in compact domains with $\rho_2$ bounded uniformly away from zero). The symmetry $\mathcal S$ takes the following form $(x_2,\rho_2,\theta)\mapsto (-x_2,\rho_2,-\theta-\frac{\pi}{2})$ in the $(x_2,\rho_2,\theta)$-coordinates. From this one can deduce the existence of two symmetric homoclinic orbits for all $0<r_2\ll 1$ (due to the intersection of stable and unstable manifolds with $x_2=0$, $\theta = -\frac{\pi}{4}+n\pi$, $n\in \mathbb Z$, being the fixed-point set of the symmetry). The interesting problem (from the perspective of exponentially small phenomena) is then an asymptotic formula for the splitting of tangent spaces and whether there are other non-symmetric homoclinic orbits. This question is addressed in \cite{baldoma2018a} (also within the non-reversible case). In further details, \cite[Theorem 1.1]{baldoma2018a} gives an asymptotic formula for the difference between the two-dimensional stable and unstable invariant manifolds within $\{x_2=0\}$. We aim to study this problem using the geometric approach of the present paper and \cite{bkt,new} in future work.

\end{remark}

\begin{figure}[h!]
\begin{center}
% \subfigure[$\sigma<0$]{\includegraphics[width=.45\textwidth]{x2rho2_neg.pdf}}
\subfigure{\includegraphics[width=.65\textwidth]{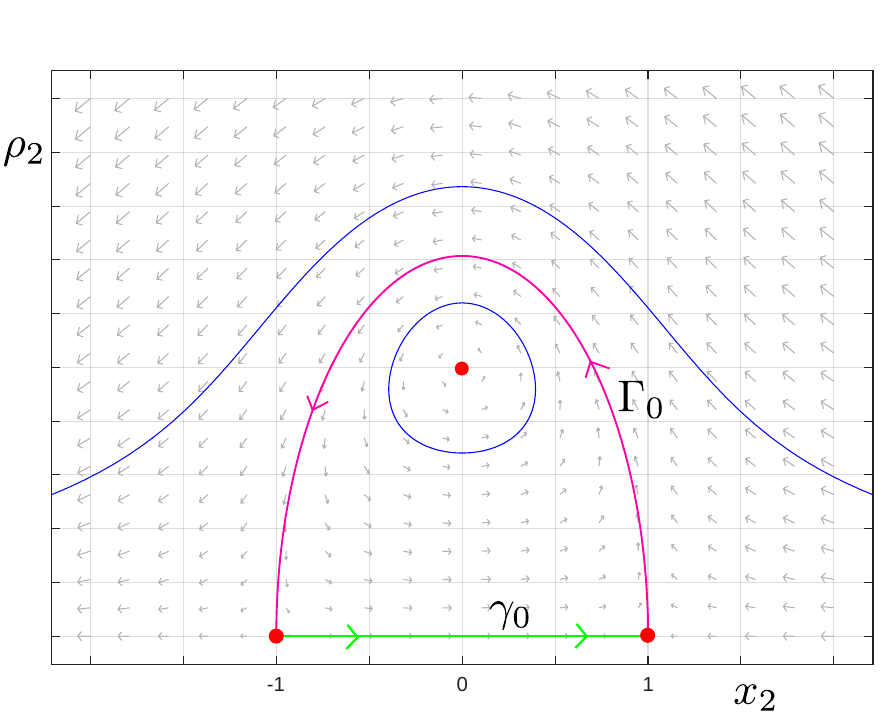}}
% \subfigure[$\sigma>0$]{\includegraphics[width=.45\textwidth]{x2rho2_pos.pdf}}
\end{center}
\caption{Phase portrait of \eqref{x2rho2} for $a>0$, $b>0$. There are two heteroclinic connections $\gamma_0\subset \{\rho_2=0\}$ and $\Gamma_0\subset \{\rho_2>0\}$.}
\figlab{x2rho2}
% Remark: If $U$ smooth for $|u|<\nu$, the compact manifolds lie inside $|u|<\nu$ for large $n$
\end{figure}

We first follow Lazutkin's approach for the derivation of an inner problem associated with $\Gamma_0$ (see also \cite{baldoma2018a}):
Let $s$ be such that
\begin{align}\eqlab{ts}t=t(s):=\frac{i\pi}{2b}+r_2 \frac{s}{b}.\end{align}
Then \eqref{unperturbed_sol2} becomes
\begin{align}\eqlab{unperturbed_sol2s}
 \begin{cases}
  x_2(t(s)) = - \coth(r_2 s),\\
  \rho_2(t(s)) =-i \sqrt{\frac{1+b}{a}}\operatorname{csch}(r_2 s),
 \end{cases}
\end{align}
We write this set in the $(x,\rho,\theta)$ coordinates defined by $(y,z)=\rho (\cos(\theta),\sin(\theta))$, recall \eqref{x2y2z20}:
\begin{align*}
%  \begin{align}\eqlab{unperturbed_sol2s}
 \begin{cases}
  x(t(s)) := r_2 x_2(t(s))\to  -s^{-1},\\
  \rho(t(s)) :=r_2 \rho_2(t(s))) \to  -i\sqrt{\frac{1+b}{a}} s^{-1},
 \end{cases}
% \end{align}
\end{align*}
as $r_2 \to 0$, $s\ne 0$. It follows that
\begin{align}
 \rho = i\sqrt{\frac{1+b}{a}} x,\quad x\in \mathbb C,\eqlab{unperturbed_sol20}
\end{align}
is an invariant manifold solution of \eqref{xrho} within $r_2=0$. This can also be directly verified. There is obviously also an invariant manifold of the form $\rho = -i\sqrt{\frac{1+b}{a}} x$. Notice moreover that the equation for $\Gamma_0$ in \eqref{Gamma0eqn} becomes
\begin{align*}
 \rho =  \sqrt{\frac{1+b}{a}(r_2^2-x^2)}\to \pm i\sqrt{\frac{1+b}{a}}x\quad \mbox{for}\quad r_2 \to 0,
\end{align*}
in the $(x,\rho)$-coordinates. In conclusion, \eqref{unperturbed_sol20} is an approximation (nonuniform) of the invariant manifolds of \eqref{zerohopf0trunc} in the phase space variables $(x,\rho,\theta)$. (In the language of \cite{baldoma2018a} (inspired by Lazutkin) it is an approximation of the invariant manifolds near the poles of the unperturbed solution \eqref{unperturbed_sol2}).

This motivates the definition of the inner problem associated with $\Gamma_0$ as the unperturbed problem \eqref{inner} (with $F, G, H$ added) written in polar coordinates:
\begin{equation}\eqlab{inner2}
\begin{aligned}
x' &=-x^2 - a \rho^2 + F,\\
\rho' &=b x \rho + G\cos \theta+H\sin \theta,\\
\theta' &=1+\rho^{-1}\left(-G\sin \theta+H\cos \theta\right),
\end{aligned}
\end{equation}
where $W=W(x,\rho \cos \theta,\rho \sin \theta,0)$, $W=F,G,H$.
% \begin{remark}\remlab{rho1}
% The set given by \eqref{unperturbed_sol20} defines an invariant manifold solution of \eqref{zerohopf0} within $\epsilon=0$. Therefore when written in the chart $\breve x = 1$ associated with the blowup \eqref{blowup0} with the chart-specific coordinats $(r_1,\rho_1,\theta,\epsilon_1)$ defined by
% \begin{align*}
%   \begin{cases}
%    x = r_1,\\
%    \rho = r_1 \rho_1,\\
%    \epsilon = r_1 \epsilon_1,
%   \end{cases}
% \end{align*}
% takes the form
% \begin{align*}
%  \rho_1= i\sqrt{\frac{1+b}{a}},\,r_1\in \mathbb C,\,\epsilon_1=0.
% \end{align*}
% \end{remark}
%
% We now argue that the inner problem associated to the connection $\Gamma_0$ is the unperturbed system \eqref{inner}, repeated here in polar coordinates $(x,\rho,\theta)$ defined by
% $(y,z)=\rho(\cos \theta,\sin \theta)$:
% We therefore define the system \eqref{inner2} as the inner system associated with $\Gamma_0$.
In contrast to the inner problem associated with $\gamma_0$ (treated in the introduction), we are now interested in manifolds that are graphs over $(x,\theta)$ of the form:
\begin{align}
 \rho =\rho(x,\theta)=\pm i\sqrt{\frac{1+b}{a}}x (1+\mathcal O(x))\quad \mbox{for}\quad x\to 0.\eqlab{rho_ansatz_zerohopf}
\end{align}

It is natural to study \eqref{inner2} in the coordinates $(r_1,\rho_1)$ of the $\breve x=1$-chart defined by
\begin{align}\eqlab{entry1}
  \begin{cases}
   x = r_1,\\
   \rho = r_1 \rho_1,\\
   \epsilon = r_1^2 \epsilon_1.
  \end{cases}
  \end{align}
  Indeed, \eqref{unperturbed_sol20} takes the following form
%
% \end{align}
% Indeed, here it takes the following form
\begin{align*}
 \rho_1= i\sqrt{\frac{1+b}{a}},\,r_1\in \mathbb C,\,\epsilon_1=0,
\end{align*}
in these coordinates. The change of coordinates defined by \eqref{entry1} brings
\eqref{inner2} into the following form:%This gives
\begin{equation}\eqlab{entry1eqns}
\begin{aligned}
 \dot r_1  &=r_1^2 \left(-1-a \rho_1^2 + r_1 F_1\right),\\
 \dot \rho_1 &=r_1\rho_1\left(1+b+a\rho_1^2-r_1 F_1\right)+r_1^2 G_1 \cos \theta+r_1^2 H_1\sin \theta,\\
 \dot \theta &=1+\rho_1^{-1} r_1^2 \left(-G_1\sin \theta+H_1\cos \theta\right),
\end{aligned}
\end{equation}
within $\epsilon_1=0$. Here the functions $W_1=W_1(r_1,\rho_1,\theta)$, $W=F,G,H$, are defined as
\begin{align}\eqlab{W1defn}
W_1(r_1,\rho_1,\theta):=r_1^{-3}W(r_1,r_1\rho_1\cos \theta,r_1\rho_1\sin \theta,0),\quad r_1\in B_\tau,\,\rho_1\in D,\,\theta\in \mathbb T_\xi,
\end{align}
with $D\subset \mathbb C$ a fixed compact set bounded away from zero and $\tau>0$, $\xi>0$, both small enough. Notice in particular that each $W_1$ extends analytically to $r_1=0$ by \eqref{Wthird}. For \eqref{entry1eqns} we are interested in invariant manifold solutions $\rho_1=\rho_1(r_1,\theta)$ with
\begin{align*}
 \rho_1(0,\theta) =\pm i\sqrt{\frac{1+b}{a}}.
\end{align*}
We will only focus on ``$+$''. % (the other case, is obtained by conjugation).
% From such solutions, we can generate new ones by applying the symmetry
% %
% % another one (with $\rho_1(0,\theta) = -i\sqrt{\frac{1+b}{a}}$) by applying the symmetry:
% \begin{align}
% \eqlab{symmetryzerohopf}
% \mathcal S_0\,:\,(r_1,\rho_1,\theta)\mapsto (r_1,-\rho_1,\theta+\pi),
% \end{align}

The system \eqref{entry1eqns} is reversible with respect to the symmetry
\begin{align}
 \mathcal S_1\,:\, (r_1,\rho_1,\theta)\mapsto (-r_1,\rho_1,-\theta+\pi/2).\eqlab{symmetryzerohopf}
\end{align}
This is derived from \eqref{symmetry0} using \eqref{polar} and \eqref{entry1}, see also \remref{homoclinic}.

% and the reversible symmetry
% \note{rewrite everything below in terms of $\rho_1$ instead}
% To motivate \eqref{rho_ansatz_zerohopf} further recall that $x=\epsilon x_2$ and $\rho=\epsilon \rho_2$.

We note that the inner problem associated with the connection $\Gamma_0$ is cast in a different form in \cite{baldoma2018a} (since the authors follow Lazutkin's approach). Here the authors again view the $x_2$-solution of the truncated system: $x_2 = -\tanh(bt)$ as a change of coordinates $t\mapsto x_2$ and zoom in on the pole at $t=\frac{i\pi}{2b}$ ($t=-\frac{i\pi}{2b}$ is related by conjugation) through \eqref{ts}.
% In further details, let $s$ be such that $$t=t(s):=-\frac{i\pi}{2b}+\epsilon \frac{s}{b}.$$ Notice by \eqref{unperturbed_sol2} that
% \begin{align}\eqlab{x2rho2s}
%  \begin{cases}
%   x_2(t(s)) = -\frac{1}{\epsilon s} + \mathcal O(\epsilon s),\\
%   \rho_2(t(s)) =i \sqrt{\frac{1+b}{a}} \frac{1}{\epsilon s} + \mathcal O(\epsilon s),
%  \end{cases}
% \end{align}
% for $s\ne 0$ and $\epsilon\to 0$.  This motivates the change of coordinates $(s,\rho,\theta)\mapsto (x_2,y_2,z_2)$
%  with $$x_2 = -\tanh(t(s))= -\operatorname{coth}(\epsilon s),\quad\rho=\epsilon \rho_2,$$ which
This brings \eqref{zerohopf0} into the following form
\begin{align*}
 \left(-r_2^2 \operatorname{coth}^2(r_2 s) +r_2^2 -a \rho^2 + F\right)  \frac{d\rho}{ds} &=r_2^2 \operatorname{csch}^2(r_2 s) \left( b\rho\left(- r_2 \operatorname{coth}(r_2 s) +r_2 \sigma\right) + G\cos \theta+H\sin \theta\right),\\
 \left(-r_2^2 \operatorname{coth}^2(r_2 s) +r_2^2 -a \rho^2 + F\right)  \frac{d\theta}{ds} &=r_2^2 \operatorname{csch}^2(r_2 s) \left( 1 +\rho^{-1}\left(- G\sin \theta+H\cos \theta\right)\right),
\end{align*}
with $W=W(-r_2 \coth(r_2 s),\rho \cos \theta,\rho \sin \theta,r_2^2)$, $W=F,G,H$.
The inner problem (based upon Lazutkin's approach) is then defined as the $r_2\to 0$ limit:
\begin{equation}\eqlab{inner_fucked2}
\begin{aligned}
% \begin{align*}
 \left(-s^{-2} -a \rho^2 + F\right)  \frac{d\rho}{ds} &=s^{-2} \left( b\rho s^{-1} + G\cos \theta+H\sin \theta\right),\\
 \left(-s^{-2}  -a \rho^2 + F\right)  \frac{d\theta}{ds} &=s^{-2} \left( 1 +\rho^{-1}\left(- G\sin \theta+H\cos \theta\right)\right),
% \end{align*}
\end{aligned}
\end{equation}
with $s\ne 0$ and where we redefine $W=W(-s^{-1},\rho \cos \theta,\rho \sin \theta,0)$, $W=F,G,H$.
Here we have used \eqref{sech_limit}.
But then as $x(t(s))\to -\frac{1}{s}$ for $r_2\to 0$, $s\ne 0$, we conclude that: \textit{\eqref{inner_fucked2} is equivalent with \eqref{inner2} upon the change of coordinates defined by $x=-s^{-1}$.}
% Notice that the form \eqref{rho_ansatz_zerohopf} also follows from \eqref{x2rho2s} (since $\rho=\epsilon \rho_2 \to i \sqrt{\frac{1+b}{a}} \frac{1}{ s}$ as $\epsilon \to 0$ for $s\ne 0$).
\begin{remark}
The inner equation in \cite[Eq. (25)]{baldoma2018a} is not exactly \eqref{inner_fucked2}. However, we can obtain \cite[Eq. (25)]{baldoma2018a} from \eqref{inner_fucked2} upon applying the change of coordinates $(s,\psi,\theta)\mapsto (s,\rho,\theta)$ defined by
\begin{align*}
 \rho = \sqrt{2\left(- \frac{1+b}{2a}\frac{1}{s^{2}}+\psi\right)},
\end{align*}
and writing the invariance equation for $\psi=\psi (s,\theta)$.
This follows from a simple calculation. %Notice that we here use $\widetilde \rho$ instead of the $r_1$ used in \cite[Eq. (25)]{} (since $r_1$ has a different meaning in the present manuscript).

\end{remark}

%
% Let $\rho=\epsilon \rho_2$. Then the inner problem of \cite{} then look for solutions
% % $b>0$ and $\sigma \in (-1,1)$ fixed
%
% A central component in the development of phase space methods for the splitting of the two-dimensional heteroclinic connections, is to describe two-dimensional invariant manifolds for the unperturbed system:
% \begin{equation}\eqlab{model0_unperturbed}
% \begin{aligned}
%  x' &=x^2-a (y^2 +z^2) + F(x,y,z,0,0),\\
%  y'&= -bx y +  z+G(x,y,z,0,0),\\
%  z'&= -bx z -  y+H(x,y,z,0,0),
% \end{aligned}
% \end{equation}
% corresponding to \eqref{model0} with $\mu=0$ and $\epsilon=0$. These manifolds correspond to invariant manifolds of the ``inner equation'' in \cite{}. Here we paramatrize these manifolds in phase space.

\subsection{Normal form}\seclab{normalformzerohopf}

In this section, we bring \eqref{entry1eqns} into the general form \eqref{mainvf}. For this we first recall from normal form theory, see e.g. \cite[Lemma 1.12, p. 104]{haragus2011a}, that for any $N\in \mathbb N$ there is a near-identity diffeomorphism $(x,y,z)\mapsto (\widetilde x,\widetilde y,\widetilde z)$ of the form
\begin{align}\eqlab{near_identity}
(\widetilde x,\widetilde y,\widetilde z) = (x,y,z)+T_N(x,y,z),\quad T_N(x,y,z)=\mathcal O(\vert (x,y,z)\vert^3),
%  \begin{pmatrix}
%   \widetilde x\\
%   \widetilde y\\
%   \widetilde z
%  \end{pmatrix} &=\begin{pmatrix}
%   x\\
%   y\\
%   z
%  \end{pmatrix}+\widetilde T(,
\end{align}
which conjugates \eqref{zerohopf0} with
\begin{equation}\eqlab{model0_nfN}
\begin{aligned}
 \widetilde x' &=-\widetilde x^2-a (\widetilde y^2 +\widetilde z^2) + \widetilde X_N(\widetilde x^2,\widetilde y^2+\widetilde z^2)+\widetilde F_N(\widetilde x,\widetilde y,\widetilde z),\\
 \widetilde y'&= b\widetilde x \widetilde y -  \widetilde z+\widetilde x\widetilde \Lambda_N(\widetilde x^2,\widetilde y^2+\widetilde z^2) \widetilde y - \widetilde \Omega_N(\widetilde x^2,\widetilde y^2+\widetilde z^2) \widetilde z+\widetilde G_N(\widetilde x,\widetilde y,\widetilde z),\\
 \widetilde z'&= b\widetilde x \widetilde z +  \widetilde y+\widetilde x \widetilde \Lambda_N(\widetilde x^2,\widetilde y^2+\widetilde z^2) \widetilde z + \widetilde \Omega_N(\widetilde x,\widetilde y^2+\widetilde z^2) \widetilde z+\widetilde H_N(\widetilde x,\widetilde y,\widetilde z),
\end{aligned}
\end{equation}
where $\widetilde W_N(\widetilde x,\widetilde y,\widetilde z) = \mathcal O(\vert (\widetilde x,\widetilde y,\widetilde z)\vert^{N+1})$ for $W=F,G,H$,
% \begin{align*}
%  \widetilde Z_N(\widetilde x^2,\widetilde y^2+\widetilde z^2) = \mathcal O(\vert (\widetilde x,\widetilde y,\widetilde z)\vert^2),\quad Z=\Lambda,\Omega,
% \end{align*}
$\widetilde Z_N(0,0)=0$ for $Z=\Lambda,\Omega$,
and finally $\widetilde X_N(0,0)=0$, $D\widetilde X_N(0,0)=(0,0)$. The transformation \eqref{near_identity} is equivariant with respect to the symmetry $\mathcal S$.
Notice also that we have used the reversible symmetry to simplify the normal form.

For $N\to \infty$, we obtain the ``formal'' normal form
\begin{equation}\eqlab{model0_nfinfty}
\begin{aligned}
 \widetilde x' &=-\widetilde x^2-a (\widetilde y^2 +\widetilde z^2) + \widetilde X_\infty(\widetilde x^2,\widetilde y^2+\widetilde z^2),\\
 \widetilde y'&= b\widetilde x \widetilde y -  \widetilde z+\widetilde x \widetilde \Lambda_\infty(\widetilde x^2,\widetilde y^2+\widetilde z^2) \widetilde y - \widetilde \Omega_\infty(\widetilde x^2,\widetilde y^2+\widetilde z^2) \widetilde z,\\
 \widetilde z'&= b\widetilde x \widetilde z +  \widetilde y+\widetilde x\widetilde \Lambda_\infty(\widetilde x^2,\widetilde y^2+\widetilde z^2) \widetilde z+ \widetilde \Omega_\infty(\widetilde x^2,\widetilde y^2+\widetilde z^2) \widetilde y,
\end{aligned}
\end{equation}
with $\widetilde W_\infty\in \mathbb R [[x,y,z]]$, $W=X,\Lambda,\Omega$ and $T_{\infty}\in \mathbb R^3[[x,y,z]]$.
% \begin{align*}
%  T_\infty(x,y,z) \in \mathbb C[[x= \sum_{ \alpha+\beta+\gamma \ge 3} T_{\alpha,\beta,\gamma} x^\alpha y^\beta z^\gamma,\quad T_{\alpha,\beta,\gamma}\in \mathbb R^3.
% \end{align*}
(The divergence of \eqref{near_identity} is intrinsically related to the exponentially small phenomena.) We henceforth drop the $\infty$-subscripts. Consider now the formal system \eqref{model0_nfinfty} in the cylindrical coordinates $(\widetilde x,\widetilde \rho,\widetilde \theta)$ defined by
\begin{align*}
 \begin{cases} \widetilde y = \widetilde \rho \cos \widetilde \theta,\\
\widetilde z = \widetilde \rho\sin \widetilde \theta.
 \end{cases}
\end{align*}
Then
\begin{equation}\eqlab{ninfty_xr}
\begin{aligned}
 \widetilde x' &=-\widetilde x^2-a \widetilde \rho^2 + \widetilde X(\widetilde x^2,\widetilde \rho^2),\\
 \widetilde \rho' &=b \widetilde x \widetilde \rho +\widetilde x \widetilde \Lambda(\widetilde x^2,\widetilde \rho^2)\widetilde \rho,
\end{aligned}
\end{equation}
and $\widetilde \theta' =1+\widetilde \Omega(\widetilde x,\widetilde \rho^2)$, which decouples. The linear part of \eqref{ninfty_xr} vanishes and the origin is therefore fully degenerate.  We then introduce the (directional) blowup of $(\widetilde x,\widetilde \rho)=(0,0)$
by
\begin{align*}
 \begin{cases}
  \widetilde x = \widetilde r_1,\\
\widetilde \rho = \widetilde r_1 \widetilde \rho_1,
 \end{cases}
\end{align*}
in line with \eqref{entry1},
which brings \eqref{ninfty_xr} into
\begin{equation}\eqlab{formel}
\begin{aligned}
 (\widetilde r_1^2)' &= 2\widetilde r_1^2 \left(-1-a \widetilde \rho_1^2 +\widetilde r_1^2 \widetilde X_1(\widetilde r_1^2,\widetilde \rho_1^2)\right),\\
  (\widetilde \rho_1^2)' &=  2\widetilde \rho_1^2\left(1+b+a \widetilde \rho_1^2 +\widetilde r_1^2 \widetilde \Lambda_1(\widetilde r_1^2,\widetilde \rho_1^2)\right),
\end{aligned}
\end{equation}
written as a formal system in terms of $(\widetilde r_1^2,\widetilde \rho_1^2)$,
after division of the right hand side by the common factor $\widetilde r_1$ (desingularization). Here $$\widetilde W_1\in \mathbb R\{\widetilde \rho_1^2\}[[\widetilde r_1^2]], \quad W=X,\Lambda,$$ are new formal series with respect to $\widetilde r_1^2$ having real-analytic $\widetilde \rho_1^2$-dependent coefficients. (We will use a similar notation for power series with analytic coefficients henceforth.) Given that $a\in \mathbb R \setminus\{0\}$, we see that $(\widetilde r_1^2,\widetilde \rho_1^2)=(0,-\frac{1+b}{a})$ is a hyperbolic saddle for any $b \in \mathbb R\setminus \{-1,0\}$, with the linearization having eigenvalues
\begin{align*}
 2b,\quad -2(1+b).
\end{align*}
The associated eigenvectors are given by $(1,0)$ and $(0,1)$, respectively. For any $b>0$, we therefore have a formal stable manifold given as a formal series:
\begin{align}\eqlab{formal}
 \widetilde \rho_1^2 = \widetilde \Psi(\widetilde r_1^2),\quad \widetilde \Psi(0)=-\frac{1+b}{a},
\end{align}
or $\widetilde \rho^2 =\widetilde x^2 \widetilde \Psi(\widetilde x^2)$ in terms of $(\widetilde x,\widetilde \rho)$. Here $\widetilde \Psi\in \mathbb R[[\widetilde r_1^2]]$. %For convenience we illustrate the phaseportrait of \eqref{formel} (supposing that it is convergent) for $b>0$ in \figref{formel}(b). The stable manifold is in magenta.
For further details on formal stable and unstable manifolds, we refer to \appref{formalinvman} and \lemmaref{formalinvman}.  For $b<-1$, we similarly have an unstable manifold of the form \eqref{formal}. %, see \figref{formel}(a) (magenta curve).
In this way, through the formal change of coordinates, see \eqref{near_identity} with $N=\infty$, we obtain the following:
\begin{lemma}
Suppose that $b\in \mathbb R\setminus [-1,0]$ and $a\in \mathbb R \setminus\{0\}$. Then there is a formal invariant manifold of \eqref{entry1eqns} of the form
\begin{align}\eqlab{psi}
 \rho_1 =  \Phi(r_1,\theta),\quad \Phi(r_1,\theta) = \sum_{\alpha=0}^\infty \Phi_{\alpha}(\theta) r_1^\alpha\in \mathbb C\{\theta\}[[r_1]],\, \Phi_0(\theta)\equiv i \sqrt{\frac{1+b}{a}}.
\end{align}
The manifold \eqref{psi} is symmetric with respect to $\mathcal S_1$ in the following sense:
\begin{align*}
 \Phi(-r_1,-\theta+\pi/2)=\Phi(r_1,\theta),
\end{align*}
with equality understood in $\mathbb C\{\theta\}[[r_1]]$.
% for any $b\in \mathbb R\setminus [-1,0]$.
\end{lemma}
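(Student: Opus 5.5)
The plan is to build the formal invariant manifold in the normal-form coordinates, where it is essentially already at hand, and then carry it back to $(r_1,\rho_1,\theta)$ through the formal near-identity transformation \eqref{near_identity} with $N=\infty$. The argument has four steps.

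\textbf{Step 1: formal stable/unstable manifold.} For $b\in\mathbb R\setminus[-1,0]$ and $a\neq 0$, the point $(\widetilde r_1^2,\widetilde\rho_1^2)=(0,-\frac{1+b}{a})$ of \eqref{formel} is a hyperbolic saddle with eigenvalues $2b$ and $-2(1+b)$. For $b>0$ there is a formal stable manifold $\widetilde\rho_1^2=\widetilde\Psi(\widetilde r_1^2)$. For $b<-1$ the signs of both eigenvalues flip and there is a formal unstable manifold of the same graph form. I would obtain this from \lemmaref{formalinvman}; alternatively, inserting $\widetilde\Psi=\sum_k\widetilde\Psi_k\widetilde r_1^{2k}$ into the invariance equation gives at order $k$ the recursion
\begin{align*}
\left(-2(1+b)-2kb\right)\widetilde\Psi_k=\text{(terms in }\widetilde\Psi_0,\ldots,\widetilde\Psi_{k-1}).
\end{align*}
This is solvable since $(1+b)+kb\neq0$ for all $k\ge0$ in both ranges of $b$. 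All coefficients are real.

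\textbf{Step 2: pass to $\widetilde\rho$ and choose the root.} In the original normal-form coordinates the manifold reads $\widetilde\rho^2=\widetilde x^2\widetilde\Psi(\widetilde x^2)$. Taking the branch of the square root with $\sqrt{\widetilde\Psi(0)}=i\sqrt{\frac{1+b}{a}}$ gives the formal series $\widetilde\rho=\widetilde x\,\widetilde\Phi(\widetilde x)$. The square root is legitimate formally because $\widetilde\Psi(0)\neq0$. Since $\widetilde\Psi$ depends only on $\widetilde x^2$, the series $\widetilde\Phi$ is even in $\widetilde x$, and $\widetilde\theta$ is free.

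\textbf{Step 3: transform back — the main obstacle.} The coordinate change $(x,y,z)\mapsto(\widetilde x,\widetilde y,\widetilde z)$ is a formal series in $(x,y,z)$ with correction $T=\mathcal O(|(x,y,z)|^3)$. On the manifold we have $y,z=\mathcal O(x)$, so I substitute $x=r_1$, $y=r_1\rho_1\cos\theta$, $z=r_1\rho_1\sin\theta$.
\begin{itemize}
\item Each term of $T$ then becomes $r_1^3$ times a polynomial in $(\rho_1\cos\theta,\rho_1\sin\theta)$, with a power of $r_1$ that grows with the degree. The blown-up transformation is therefore a well-defined formal series in $r_1$ whose coefficients are polynomial (hence analytic) in $(\rho_1,\cos\theta,\sin\theta)$, and it is $\mathcal O(r_1^2)$-close to the identity in $(\widetilde r_1,\widetilde\rho_1,\widetilde\theta)$.
\item The manifold is written as $F(r_1,\rho_1,\theta)=0$ with $F:=\widetilde\rho_1-\widetilde\Phi(\widetilde r_1)$ composed with this transformation.
\item I solve $F=0$ for $\rho_1$ order by order in $r_1$. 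At order $r_1^0$ this gives $\rho_1=\Phi_0$, and $\partial_{\rho_1}F=1+\mathcal O(r_1)$ there. A formal implicit function theorem in $\mathbb C\{\theta\}[[r_1]]$ then yields $\Phi$ with $\Phi_0\equiv i\sqrt{\frac{1+b}{a}}$.
\item At each order, $\Phi_\alpha(\theta)$ is determined by lower-order data through polynomial expressions in $\cos\theta$, $\sin\theta$ and the constant $\Phi_0\neq0$. Hence each $\Phi_\alpha$ is entire in $\theta$, and $\Phi$ belongs to $\mathbb C\{\theta\}[[r_1]]$.
\end{itemize}
Invariance under \eqref{entry1eqns} holds formally because the transformation conjugates the vector fields formally and invariance is an algebraic identity among the coefficients.

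\textbf{Step 4: symmetry.} The transformation is $\mathcal S$-equivariant. In the $(r_1,\rho_1,\theta)$ chart $\mathcal S$ becomes $\mathcal S_1$, and in the tilde variables it acts as $(\widetilde r_1,\widetilde\rho_1,\widetilde\theta)\mapsto(-\widetilde r_1,\widetilde\rho_1,-\widetilde\theta+\pi/2)$. The tilde manifold is invariant under this map because $\widetilde\Phi$ is even in $\widetilde r_1$ and independent of $\widetilde\theta$. Equivariance then implies that $\mathcal S_1$ maps $\{\rho_1=\Phi\}$ to itself.

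Finally, the graph solution is unique order by order by Step 3, and $\mathcal S_1$ preserves $\Phi_0$. It follows that $\Phi(-r_1,-\theta+\pi/2)=\Phi(r_1,\theta)$ in $\mathbb C\{\theta\}[[r_1]]$.
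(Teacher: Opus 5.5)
Your proposal is correct and follows essentially the paper's route. You pull the formal graph $\widetilde\rho_1^2=\widetilde\Psi(\widetilde r_1^2)$ back through the $\mathcal S$-equivariant formal normal-form transformation and solve for $\rho_1$ by the formal implicit function theorem. The symmetry then comes from $\mathcal S_1$-invariance of the defining equation together with uniqueness of the branch through $\Phi_0$.

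The only difference is cosmetic. You take the square root before solving, so $\partial_{\rho_1}F=1$, and the coefficients become analytic in $\rho_1$ near $\Phi_0\neq 0$ rather than polynomial as you claim. The paper instead solves the squared equation $\rho_1^2+\mathcal O(r_1^2)-\widetilde\Psi(\widetilde r_1^2)=0$ directly.
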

\begin{proof}
 We obtain the statement by writing the set \eqref{formal} in the $(r_1,\rho_1,\theta)$-coordinates. For this we use the formal conjugacy \eqref{near_identity} with $N=\infty$ to obtain that
 \begin{align*}
  \widetilde r_1^2 &=  r_1^2 (1+ \mathcal O(r_1^2)),\\
  \widetilde \rho_1^2 &= \rho_1^2 + \mathcal O(r_1^2),
 \end{align*}
where both $\mathcal O(r_1^2)\in r_1^2\mathbb R\{\rho_1,\theta\}[[r_1]]$. We then obtain the resulting set by solving the equation
\begin{align*}
 F(r_1,\rho_1,\theta):=\rho_1^2 +\mathcal O(r_1^2) - \widetilde \Psi(r_1^2(1+\mathcal O(r_1^2))) =0,
\end{align*}
with $F\in \mathbb C\{\rho_1,\theta\}[[r_1]]$
for $\rho_1$ as a function of $(r_1,\theta)$. We have
\begin{align*}
 F\left(0,i\sqrt{\tfrac{1+b}{a}},\theta \right)= 0,\quad F'_{\rho_1}\left(0,i\sqrt{\tfrac{1+b}{a}},\theta \right)= -\frac{2(1+b)}{a}\ne 0.
\end{align*}
% .
We conclude that
\begin{align}
 \rho_1 = \Phi(r_1,\theta),\quad \Phi(0,\theta) = i \sqrt{\frac{1+b}{a}},\eqlab{sol}
\end{align}
with $\Phi\in \mathbb R\{\theta\}[[r_1]]$,
by the formal version of the implicit function theorem. Finally, we notice that $F$ is invariant with respect to the symmetry $\mathcal S_1$: $F(-r_1,\rho_1,-\theta+\pi/2)=F(r_1,\rho_1,\theta)$. The solution \eqref{sol} is the unique solution with $$\rho_1= i \sqrt{\frac{1+b}{a}},$$ for $(r_1,\theta)=(0,\frac{\pi}{4}+n\pi)$, $n\in \mathbb Z$ (which defines the fixed-point set of the symmetry). From this we conclude that the solution is symmetric.
% To complete the proof of the statement, we use that $\rho=x\rho_1$. % and the fact that the system is invariant with respect to $(x,\rho,\theta)\mapsto (x,-\rho,\theta+\pi)$.
\end{proof}

We now finally turn to the question of bringing \eqref{inner2} into the general form \eqref{mainvf}. For this we proceed as in \cite[Proposition 1]{bonckaert2008a} by first truncating the series \eqref{psi} and then applying a blowup.
Fix any $N\in \mathbb N$. We then define the partial sum
% \begin{align*}
% \Phi(x,\theta) = i \sqrt{-\Psi(x,\theta)}:=i \sum_{\alpha\in \mathbb N_0} \Phi_\alpha(\theta) x^\alpha\in \mathbb C\{\theta\}[[x]],
% \end{align*}
\begin{align*}
 \Phi^{N}(r_1,\theta) := \sum_{\alpha=0}^N \Phi_\alpha(\theta) r_1^\alpha,
\end{align*}
% for any $N\in \mathbb N$,
which is analytic on $B_\tau\times \mathbb T_\xi$ for $\tau>0,\xi>0$, both small enough.

\begin{lemma}\lemmalab{lemma24}
Suppose that $b\in \mathbb R\setminus [-1,0]$ and $a\in \mathbb R \setminus\{0\}$. Then for $M\in \mathbb N$ sufficiently large, $M\gg 1$, the blowup transformation $(r_1,\widetilde \rho_1,\theta)\mapsto (r_1,\rho_1,\theta)$ defined by
\begin{align}
 \rho_1 = \Phi^{2M}(r_1,\theta)+r_1^M \widetilde \rho_1,\eqlab{blowuprho1}
\end{align}
brings \eqref{entry1eqns} into the following \textnormal{prepared analytic form}:
\begin{equation}\eqlab{prepared_form}
 \begin{aligned}
  r_1' &= b r_1^2+r_1^4 Q(r_1, \widetilde\rho_1,\theta),\\%x^2\left(-b + x X( x)\right)+x^{3} F(x,r_1,\theta),\\%( x,r_1 \cos \theta, r_1 \sin \theta),\\
  \widetilde \rho_1'&= b \lambda r_1 \widetilde \rho_1 + r_1^2 R(r_1, \widetilde \rho_1,\theta),\\%r_1 r_1 \left(b\lambda+ r_1 \Lambda(r_1)\right)+r_1^{3} R(r_1,r_1,\theta),\\
  \theta' &=1,
%   r_1'&= -b r_1  y +   z+ \Lambda_N( r_1, y^2+ z^2)  y +  \Omega_N( r_1, y^2+ z^2)  z+ G_N( r_1, y, z),\\
%   \theta'&= -1-  x^2 \Omega_{N,1}( x, r_1^2)  z+ H_{N,1}( x, y, z).
\end{aligned}
\end{equation}
where
% and
\begin{align*}
 \lambda=-2b^{-1}(1+b)-M<0,
\end{align*}
after division of the right hand side by a nonzero quantity for $(r_1,\widetilde \rho_1,\theta)\in B_\tau\times B_\tau \times \mathbb T_\xi$ with $\tau,\xi>0$, both small enough.  The system is reversible  with respect to ${\mathcal S}_1\,:\,(r_1,\widetilde \rho_1,\theta)\mapsto (-r_1,\widetilde \rho_1,-\theta+\pi/2)$:
\begin{align*}
 \begin{cases} Q(-r_1,\widetilde \rho_1,-\theta+\pi/2) = Q(r_1,\widetilde \rho_1,\theta),\\
  R(-r_1,\widetilde \rho_1,-\theta+\pi/2) = -R(r_1,\widetilde \rho_1,\theta),
 \end{cases}
 \end{align*}
for all $(r_1,\widetilde \rho_1,\theta)\in B_\tau\times B_\tau \times \mathbb T_\xi$.
% Moreover, $X,F$ and $R$ are new analytic functions.
\end{lemma}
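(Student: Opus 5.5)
The plan is a direct computation in the chart coordinates. I substitute \eqref{blowuprho1} into \eqref{entry1eqns}, compute the vector field for $(r_1,\widetilde\rho_1,\theta)$, and divide by $\dot\theta$. Near $\widetilde\rho_1\in B_\tau$ we have $\rho_1=\Phi_0+\mathcal O(r_1)$ with $\Phi_0=i\sqrt{(1+b)/a}\neq 0$. So $\rho_1^{-1}$ is analytic there, and
$$\dot\theta=1+\rho_1^{-1}r_1^2(-G_1\sin\theta+H_1\cos\theta)=1+\mathcal O(r_1^2).$$
This is the nonzero quantity we divide by, after shrinking $\tau,\xi$. It gives $\theta'=1$ and changes the other components only at relative order $r_1^2$.

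\textbf{Step 1 (the $r_1$-equation).} Since $a\Phi_0^2=-(1+b)$, we get
$$-1-a\rho_1^2=b-2a\Phi_0\Phi_1(\theta)r_1+\mathcal O(r_1^2).$$
Hence, after the division, $r_1'=br_1^2+r_1^3c_1(\theta)+\mathcal O(r_1^4)$. The form \eqref{prepared_form} requires $c_1\equiv 0$.

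To obtain this I would compute $\Phi_1$ from the order-$r_1$ invariance equation, which is a $\theta$-ODE for $\Phi_1$. The equivalent route is to carry out the same computation in the normal form coordinates \eqref{model0_nfN} with $N$ large. There $\widetilde r_1'$ is even in $\widetilde r_1$ by \eqref{formel}, and the conjugacy \eqref{near_identity} only produces $\mathcal O(r_1^2)$ relative corrections, so I expect $\Phi_1$ to be such that $c_1=0$ (possibly $\Phi_1\equiv 0$). The $r_1^3F_1$ contribution must also be tracked. By \eqref{Wthird}, $F_1(0,\rho_1,\theta)$ is a cubic form in $(1,\rho_1\cos\theta,\rho_1\sin\theta)$, and the normal form removes exactly these terms up to a transformation of order $\mathcal O(|\cdot|^3)$.

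This is the step I expect to be the main obstacle. If the cancellation of $c_1$ cannot be seen directly, I would first try to include the factor $(1+r_1c_1(\theta)/b)$ in the division as well. The resulting $\mathcal O(r_1)$ correction to $\theta'$ would then have to be removed by a near-identity shift in $\theta$, which requires checking compatibility with the stated coordinates.

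\textbf{Step 2 (the $\widetilde\rho_1$-equation).} I differentiate \eqref{blowuprho1}:
$$\widetilde\rho_1'=r_1^{-M}\bigl(\rho_1'-\partial_{r_1}\Phi^{2M}\,r_1'-\partial_\theta\Phi^{2M}\,\theta'\bigr)-Mr_1^{-1}r_1'\,\widetilde\rho_1.$$
By the formal invariance of $\rho_1=\Phi(r_1,\theta)$ from the preceding lemma, the residual of the truncation $\Phi^{2M}$ in the invariance equation is $\mathcal O(r_1^{2M+1})$ uniformly in $\theta\in\mathbb T_\xi$, since each $\Phi_\alpha$ is analytic there. After multiplying by $r_1^{-M}$ this residual becomes $\mathcal O(r_1^{M+1})\subset r_1^2R$ for $M\ge 1$.

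The linear part in $\widetilde\rho_1$ comes from $\partial_{\rho_1}$ of $r_1\rho_1(1+b+a\rho_1^2)$ at $\Phi_0$, which is $r_1(1+b+3a\Phi_0^2)=-2(1+b)r_1$, together with $-Mr_1^{-1}r_1'=-Mbr_1+\mathcal O(r_1^2)$. Their sum is $r_1(-2(1+b)-Mb)=b\lambda r_1$. All remaining terms are $\mathcal O(r_1^2)$ and are analytic on $B_\tau\times B_\tau\times\mathbb T_\xi$ because $M\ge 1$, so they can be collected into $r_1^2R$. Taking $M$ large guarantees both $\lambda<0$ and that the truncation error enters only at high order.

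\textbf{Step 3 (reversibility).} The map $\mathcal S_1$ in $(r_1,\rho_1,\theta)$ is a reversing symmetry of \eqref{entry1eqns}. By the preceding lemma, $\Phi$ and hence each truncation $\Phi^{2M}$ satisfies $\Phi^{2M}(-r_1,-\theta+\pi/2)=\Phi^{2M}(r_1,\theta)$.

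Under $r_1\mapsto -r_1$ the relation \eqref{blowuprho1} maps $\widetilde\rho_1$ to $(-1)^M\widetilde\rho_1$. I would therefore take $M$ even, which is allowed since only $M\gg1$ is required. With this choice the blowup map is equivariant, and the transformed field is reversible with respect to $(r_1,\widetilde\rho_1,\theta)\mapsto(-r_1,\widetilde\rho_1,-\theta+\pi/2)$. The divisor $\dot\theta$ is invariant under the symmetry.

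Reversibility means $f\circ\mathcal S_1=-D\mathcal S_1 f$. For the $r_1$-component, with $r_1'=r_1^2(b+r_1^2Q)$, this gives the stated evenness of $Q$. For the $\widetilde\rho_1$-component, with $\widetilde\rho_1'=b\lambda r_1\widetilde\rho_1+r_1^2R$, it gives the stated oddness of $R$. The $\theta$-component is consistent since $\theta'=1$ is mapped to $1$.
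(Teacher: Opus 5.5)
Your Steps 2 and 3 match the paper's proof. The paper also writes the truncation residual as $r_1^{M+1}P^{2M}(r_1,\theta)$, reads off the linear coefficient $-2(1+b)-Mb=b\lambda$, and divides by $\dot\theta=1+\mathcal O(r_1^2)$. Your observation that $M$ must be even for $\mathcal S_1$ to act as $\widetilde\rho_1\mapsto\widetilde\rho_1$ is correct and is not made in the paper; for odd $M$ the induced symmetry sends $\widetilde\rho_1\mapsto-\widetilde\rho_1$.

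The genuine gap is Step 1, and the heuristic you give there is wrong. A $\theta$-dependent relative correction $\widetilde r_1=r_1(1+r_1^2k(\rho_1,\theta)+\cdots)$ is not harmless. Since $\theta'=1$ while $r_1'=\mathcal O(r_1^2)$, it feeds $r_1^3\partial_\theta k$ into $r_1'$, which is exactly the order you need to vanish. Concretely:
\begin{itemize}
\item The order-$r_1$ invariance equation reads $\partial_\theta\Phi_1=\Phi_0(1+b+a\Phi_0^2)=0$, so $\Phi_1$ is constant.
\item The symmetry of $\Phi$ gives $\Phi_1(\pi/2-\theta)=-\Phi_1(\theta)$, which forces $\Phi_1=0$.
\item Hence $c_1(\theta)=F_1(0,\Phi_0,\theta)$, the cubic part of $F$ evaluated at $(1,\Phi_0\cos\theta,\Phi_0\sin\theta)$.
\end{itemize}
Take the admissible reversible choice $F=x(y^2-z^2)$, $G=H=0$; here $\dot\theta=1$, so no division occurs at all. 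This gives $c_1(\theta)=-\frac{1+b}{a}\cos 2\theta\not\equiv 0$. So $c_1\equiv 0$ cannot be proved in the coordinates of the statement.

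The paper removes this term by invoking the reversing symmetry $\mathcal S_1$. But reversibility only makes $r_1'$ invariant under $\mathcal S_1$, i.e. $c_1(\pi/2-\theta)=-c_1(\theta)$. That kills $c_1$ only when $c_1$ is $\theta$-independent. What reversibility does give is $\int_0^{2\pi}c_1\,d\theta=0$. This is exactly the solvability condition missing from your fallback: removing $-r_1c_1/b$ from $\theta'$ by $\bar\theta=\theta+r_1k(\theta)$ requires a periodic $k$ with $k'=c_1/b$.

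A cleaner route keeps $\theta'=1$ and replaces $r_1$ by $\bar r_1=r_1+r_1^3h(\theta)$ with $h'=-c_1$. Then $\bar r_1'=b\bar r_1^2+\mathcal O(\bar r_1^4)$, and the $\widetilde\rho_1$-equation keeps its form. Moreover, any periodic primitive $h$ automatically satisfies $h(\pi/2-\theta)=h(\theta)$, so $\mathcal S_1$ is preserved. Either way, the prepared form is reached only after a coordinate change beyond $\rho_1=\Phi^{2M}+r_1^M\widetilde\rho_1$. Your proof needs this extra step.
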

\begin{proof}
Since $\rho_1=\Phi^{N}(r_1,\theta)$ defines a formal invariant manifold for $N\to \infty$, it follows by construction that
\begin{align*}
 P^N(r_1,\theta):=&r_1^{-N-1}\bigg( r_1 \Phi^N \left(1+b+a(\Phi^N)^2+r_1 F_1\right)+ r_1^2G_1 \cos \theta+r_1^2 H_1\sin \theta \\
 &-\frac{\partial \Phi^N}{\partial r_1} r_1^2 (-1 - a (r_1\Phi^N)^2 - r_1F_1)- \frac{\partial \Phi^N}{\partial \theta} \left(1+(\Phi^N)^{-1}r_1^2\left(-G_1\sin \theta+H_1\cos \theta\right)\right)\bigg),
\end{align*}
is well-defined and analytic on $B_\tau \times \mathbb T_\xi$ for any $N\in \mathbb N$ for $\tau>0$, $\xi>0$ small enough. Here $W_1=W_1(r_1,\Phi^N, \theta)$, recall \eqref{W1defn}. ( Notice, in particular that $P^N=0$ if and only if $\rho_1=\Phi^{N}(r_1,\theta)$ is invariant.)
Then by applying \eqref{blowuprho1} (corresponding to $N=2M$) to \eqref{inner2}, we find the following equations
\begin{align*}
r_1' &= r_1^2 (b+F^M(r_1,r_1^M \rho_1,\theta)),\\
\rho_1 ' &= r_1 A^M(r_1,r_1^M \rho_1,\theta) \rho_1 + r_1^{M+1} P^{2M}(r_1,\theta),\\
\theta' &= 1+ r_1^2 \Theta^M(r_1,r_1^M\rho_1,\theta),
\end{align*}
with $F^M(0,0,\theta)=0$ and $A^M(0,0,\theta) = -2(1+b)-Mb$. This follows from a simple calculation using the mean-value theorem. We now divide the right hand side by $1+r_1^2\Theta^M(r_1,r_1^M\rho_1,\theta)$ which is nonzero on $B_\tau^2 \times \mathbb T_\xi$ for $\tau>0$, $\xi>0$, both small enough.
The result then follows from a simple expansion of the right hand side.
In particular, we use the reversible symmetry \eqref{symmetryzerohopf} to deduce that the $r_1^3$-term in the $r_1$-equations is absent.% First, we consider \eqref{model0_nfN} (with tildes dropped) for fixed $N$ (large enough) in the cylindrical coordinates $(x,r\cos \theta,r\sin \theta)$ and then blowup $(x,r)=(0,0)$ by setting
\end{proof}
Notice that \eqref{prepared_form} takes the form \eqref{mainvf} (with $F_0\equiv 0$) after setting $$x:=br_1, \quad \mathbf y: = \widetilde \rho_1.$$ Now, recall the definition of $S^\pm=S^\pm(\delta,\chi)$ in \eqref{Spm}. We fix any $\chi \in (0,\pi)$.
% We prove our result by working on this system. In particular, we look for invariant manifold solutions of \eqref{prepared_form} of the graph form
% \begin{align*}
%  r_1 = \Psi(x,\theta),
% \end{align*}
% which are solutions of the associated invariance equation:
% \begin{align*}
% -b x^2 \Psi'_x -\Psi'_\theta  -b\lambda x \Psi + F_0 x^3 \Psi'_x =-x^2 \Psi'_x (x^2 F)+x^2 R,
% \end{align*}
% which we solve by Borel-Laplace.
% In the following, we define
% \begin{align*}
%  b^{-1} S^\pm := \{x\in \mathbb C\,:\, bx \in S^\pm\},
% \end{align*}
% recall \eqref{Spm}.
\begin{proposition} \proplab{prop25}
Suppose that $b\in \mathbb R\setminus [-1,0]$, $a\in \mathbb R \setminus\{0\}$. Then there exists two invariant manifold solutions of \eqref{entry1eqns} of the form
 \begin{align}\eqlab{zerohopfmanifold}
  \rho_1 = \Phi^\pm (r_1,\theta),\quad r_1\in S^\pm,\,\theta\in \mathbb T_\xi,
 \end{align}
with $\delta>0$, $\xi>0$, both sufficiently small. Here $\Phi^\pm\,:\,S^\pm \times \mathbb T_\xi\to \mathbb C$ are the $1$-sums (with respect to $r_1$, uniformly with respect to $\theta$) of the formal series solution \eqref{psi} along the directions defined by $r_1\in S^\pm$, respectively. The two invariant manifolds in \eqref{zerohopfmanifold} are related by the symmetry $\mathcal S_1$:
\begin{align}
 \Phi^+ (r_1,\theta)=\Phi^-(-r_1,-\theta+\pi/2)\quad \forall\, r_1\in S^+,\,\theta\in \mathbb T_\xi.\eqlab{Phipm}
\end{align}

\end{proposition}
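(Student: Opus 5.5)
The plan is to reduce to \thmref{main} via \lemmaref{lemma24}. Fix $M\gg1$ as in \lemmaref{lemma24}, so that the blowup \eqref{blowuprho1} brings \eqref{entry1eqns} into the prepared form \eqref{prepared_form}. Set $x:=br_1$ and $\mathbf y:=\widetilde\rho_1$ ($n=1$). An invariant graph $\widetilde\rho_1=\widetilde\rho_1(r_1,\theta)$ of \eqref{prepared_form} then satisfies an invariance equation of the form \eqref{main}:
\begin{align*}
x^2\mathbf y'_x\left(1+x^2F_1\right)+\mathbf y'_\theta+x\mathbf A\mathbf y=x^2\mathbf G_1,
\end{align*}
with $F_0=0$, $\mathbf A=-\lambda=2b^{-1}(1+b)+M>0$, $F_1=b^{-4}Q(x/b,\mathbf y,\theta)$ and $\mathbf G_1=b^{-1}R(x/b,\mathbf y,\theta)$. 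Both $F_1$ and $\mathbf G_1$ are analytic on a polydisc times $\mathbb T_\xi$. (We first divide by the $r_1'$-equation written as $\theta$-time, since $\theta'=1$.)

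For $b>0$ the sign $\mathbf A>0$ is exactly the hypothesis \eqref{Adiag}. For $b<-1$, $x=br_1$ reverses direction, so $S^\pm$ in $x$ corresponds to $S^\mp$ in $r_1$. We must also check $\mathbf A>0$: with $b<-1$, $2b^{-1}(1+b)>0$, so $\mathbf A>0$ holds for either sign, and the sign bookkeeping only permutes $\pm$.

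\thmref{main} then gives analytic solutions $\mathbf y^\pm$ on $S^\pm\times\mathbb T_\xi$. These are $1$-sums of a common Gevrey-$1$ series $\widetilde{\mathbf y}$ with $\mathbf y^\pm=\mathcal O(x)$. Transforming back, we set
\begin{align*}
\Phi^\pm(r_1,\theta):=\Phi^{2M}(r_1,\theta)+r_1^M\mathbf y^\pm(br_1,\theta).
\end{align*}
This is invariant for \eqref{entry1eqns}, because \eqref{blowuprho1} is a diffeomorphism for $r_1\ne0$ and the time reparametrization does not affect invariance. Adding the polynomial $\Phi^{2M}$ and multiplying by $r_1^M$ preserves $1$-summability in the same direction, uniformly in $\theta$.

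It remains to identify the asymptotic series of $\Phi^\pm$ with \eqref{psi}. The series $\Phi^{2M}+r_1^M\widetilde{\mathbf y}(br_1,\theta)$ is a formal invariant manifold with constant term $i\sqrt{(1+b)/a}$. I expect uniqueness of such a formal solution. In the blown-up variables this is the statement that the formal solution of \eqref{main} with $\mathbf y=\mathcal O(x)$ is unique: at order $x^\beta$ the Fourier coefficients solve $(i\alpha+\mathbf A\cdot\text{const})\mathbf y_{\alpha,\beta}=\dots$, and since $\mathbf A>0$ is real, $i\alpha+\beta$-dependent real terms never vanish. Uniqueness then forces agreement with \eqref{psi}, and because the truncation $\Phi^{2M}$ agrees with it, $\widetilde{\mathbf y}$ equals $r_1^{-M}(\Phi-\Phi^{2M})$.

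For the symmetry \eqref{Phipm}, let $\widetilde\Phi(r_1,\theta):=\Phi^-(-r_1,-\theta+\pi/2)$ for $r_1\in S^+$. By reversibility under $\mathcal S_1$, $\widetilde\Phi$ is again an invariant graph on $S^+$. By the symmetry of \eqref{psi}, it is Gevrey-$1$ asymptotic to the same series \eqref{psi}, and in fact it is its $1$-sum in the direction of $S^+$, since the Borel transform changes as $w\mapsto -w$. Uniqueness of the $1$-sum (Watson's lemma, opening $>\pi$) then gives $\widetilde\Phi=\Phi^+$.

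The main obstacle is the uniformity in $\theta$ together with this uniqueness argument for $\theta$-dependent asymptotics. Watson's lemma must be applied Fourier-coefficient-wise, using the uniform bounds $\mathcal O(x\e^{-|\alpha|\xi})$ from \thmref{main}.
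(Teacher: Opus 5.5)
Your proposal is correct and follows the paper's own route: apply \thmref{main} to the prepared form of \lemmaref{lemma24}, blow down via \eqref{blowuprho1}, and obtain \eqref{Phipm} from the reversibility $\mathcal S_1$ together with uniqueness of $1$-sums (Watson's lemma). Your formal-uniqueness argument identifying the asymptotic series with \eqref{psi}, and your sign bookkeeping for $b<-1$, only make explicit what the paper leaves implicit. (Minor slips: with $x=br_1$ one gets $F_1=b^{-3}Q$ and $\mathbf G_1=b^{-2}R$. Also, the formal recursion is triangular rather than diagonal, because $\partial_\theta$ preserves the $x$-degree while $x^2\partial_x+x\mathbf A$ raises it by one; the operator is still injective on $x\mathbb C\{\theta\}[[x]]$, so uniqueness holds.)
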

\begin{proof}
 We first apply \thmref{main} to \eqref{prepared_form}. This gives invariant manifold solutions of the form
 \begin{align*}
  \widetilde \rho_1 = \widetilde \rho_1^\pm (r_1,\theta),\quad r_1\in S^\pm,
 \end{align*}
 using that $b\in \mathbb R\setminus [-1,0]$ is real.
 We then obtain the desired manifold \eqref{zerohopfmanifold} by blowing back down using \eqref{blowuprho1}. The property \eqref{Phipm} is a consequence of the symmetry and the uniqueness of $\Phi^\pm$ (as $1$-sums on sectorial domains with opening greater than $\pi$ cf. Watson's lemma \cite[Proposition 11]{balser2000a}).
\end{proof}

We emphasize that \propref{zerohopf} follows from \propref{prop25} upon blowing down using \eqref{entry1}.
% We emphasize that there are separate invariant manifolds given by
% \begin{align*}
% \rho_1 =   \Phi^\pm (-r_1,-\theta+\pi/2),\quad r_1\in  S^\pm,\,\theta\in \mathbb T_\xi.
% \end{align*}
% due to reversible symmetry \eqref{symmetryzerohopf}.
% % This is a result of the symmetry of \eqref{inner2} with respect to

\section{The reversible and resonant Hopf-Hopf bifurcation}\seclab{hopfhopf}

In this section, we consider the real-analytic unfolding of the reversible and resonant Hopf-Hopf singularity with the normal form \eqref{nfhopfhopf0}, repeated here for convenience:
\begin{equation}\eqlab{nfhopfhopf}
\begin{aligned}
\dot x &=-(1+\Omega)y+z+F,\\
\dot y &=(1+\Omega)x+w+G,\\
\dot z &= -(1+\Omega)w+\Gamma x+H, \\
\dot w &=(1+\Omega )z+\Gamma y+J,
\end{aligned}
\end{equation}
with
\begin{align*}
W=W(x,y,z,w,\epsilon)=\mathcal O(\vert (x,y,z,w,\epsilon)\vert^5),\quad W=F,G,H,J,
\end{align*}
and where
\begin{align}\eqlab{Gamma}
\begin{cases}
\Gamma(\rho^2,L,\epsilon) =  \epsilon - b \rho^2 +c L,\\
 \Omega(\rho^2 ,L,\epsilon)=\alpha \epsilon + \beta \rho^2 +\gamma L,
 \end{cases}
\end{align}
for
\begin{align*}
  \rho^2:=x^2+y^2,\,L:=wx-yz,\end{align*}
see \cite[Lemma 3.17, p. 215]{haragus2011a}. The system is assumed to be real-analytic on $B_\tau^4\subset \mathbb C^4$, $\epsilon\in (-\epsilon_0,\epsilon_0)$, $\tau>0$, $\epsilon_0>0$, both sufficiently small, and reversible with respect to the involution
\begin{align}
 \mathcal S\,:\,(x,y,z,w)\mapsto (x,-y,-z,w),\eqlab{symmetry1}
\end{align}
i.e.
\begin{align*}
 \begin{cases} W(x,-y,-z,w,\epsilon)=-W(x,y,z,w,\epsilon), \quad W=F,H\\
  Q(x,-y,-z,w,\epsilon)=Q(x,y,z,w,\epsilon),\quad Q=G,J.
 \end{cases}
\end{align*}
for all $\epsilon\in (-\epsilon_0,\epsilon_0)$. We are mainly interested in $b>0$ and $\epsilon\in [0,\epsilon_0)$.

We apply the following change of coordinates $(\rho,\psi,L,\theta)\mapsto (x,y,z,w)$ defined by:
\begin{align}\eqlab{symplecticpolar}
\begin{cases}
 x = \rho \cos \theta,\\
 y =\rho \sin \theta,\\
 z =\psi \cos \theta - L\rho^{-1} \sin \theta,\\
 w = \psi \sin \theta+ L\rho^{-1} \cos \theta.
 \end{cases}
\end{align}
(This is the (symplectic polar) coordinates used in \cite{glebsky1995a} for the Swift-Hohenberg equation.)
This brings \eqref{nfhopfhopf} into the following form
\begin{equation}\eqlab{nfhopfhopf2}
\begin{aligned}
\dot \rho &=\psi + F \cos \theta+G\sin \theta,\\
\dot \psi &= \Gamma \rho + \rho^{-3} L^2 + \rho^{-2} \bigg( (H \rho^2 + GL)\cos \theta+ (J\rho^2 - FL)\sin \theta\bigg),\\
\dot L &=\rho^{-1} \left(- (G \psi \rho-J \rho^2-FL) \cos \theta+ (F\psi \rho-H\rho^2 +GL)\sin \theta \right),\\
\dot \theta &=1+\Omega + \rho^{-2} L +\rho^{-1}\left(-F\sin \theta+G\cos \theta\right),
\end{aligned}
\end{equation}
where we now (by slight abuse of notation) have
\begin{align*}
W=W(\rho,\psi,L\rho^{-1},\theta,\epsilon)=\mathcal O(\vert (\rho,\psi,L\rho^{-1},\epsilon)\vert^5),\quad W=F,G,H,J,
\end{align*}
% The system
%  is now reversible with respect to %symmetric with respect to both involutions
% \begin{align}\eqlab{symmetry1}
% %  \mathcal S_0\,:\,(\rho,\theta,\psi,L)\mapsto(-\rho,\theta+\pi,-\psi,L),\quad
%  \mathcal S\,:\,(\rho,\theta,\psi,L)\mapsto(\rho,-\theta,-\psi,L)
% \end{align}
% We define
% \begin{align}\eqlab{symmetryS}
% \mathcal S = \{\operatorname{Id},\mathcal S_0,\mathcal S,\mathcal S\circ \mathcal S_0\},
% \end{align} as the group of symmetries.

% To motivate our definition of the inner problem of \eqref{nfhopfhopf2} as the unperturbed problem (i.e. $\epsilon=0$), we again first follow Lazutkin's approach. For this,
In the following, we first consider the truncated system obtained by setting $F=G=H=J=0$:
\begin{equation}\eqlab{trunchopfhopf}
\begin{aligned}
 \dot \rho &=\psi,\\
\dot \psi &= \Gamma \rho + \rho^{-3} L^2, \\
\dot L &=0,\\
\dot \epsilon &=0,
\end{aligned}
\end{equation}
and $\dot \theta=1+\Omega + \rho^{-2} L$, which decouples. We see that $L$ is conserved. Notice that we (in anticipation of a blowup) have added an equation for $\epsilon$.
For $\epsilon=L=0$, we have
\begin{align*}
 \dot \rho &=\psi, \\
 \dot \psi &= -b \rho^3,
\end{align*}
recall \eqref{Gamma},
which has a nilpotent singularity at the origin. This motivates the following blowup:
\begin{align} \eqlab{blowuphopfhopf}
 r\ge 0,\,(\breve \rho,\breve \psi,\breve L,\breve \epsilon)\in \mathbb S^3\mapsto \begin{cases}
  \rho =r \breve \rho,\\
  \psi =r^2 \breve \psi,\\
  L=r^3\breve L,\\
  \epsilon = r^2 \breve \epsilon,
 \end{cases}
\end{align}
see also \eqref{directionalformel} below.
We first consider the scaling chart $\breve \epsilon=1$ with chart-specific coordinates $(\rho_2,\psi_2,L_2,r_2)$ defined by
\begin{align}\eqlab{hopfhopf2}
 \begin{cases}
  \rho =r_2 \rho_2,\\
  \psi = r_2^2 \psi_2,\\
  L = r_2^3 L_2,\\
  \epsilon = r_2^2.
 \end{cases}
\end{align}
This gives the following equations
\begin{equation}\eqlab{rho2psi2L2eqns}
\begin{aligned}
 \rho_2' &=\psi_2,\\
 \psi_2' &=(1-b \rho_2^2)\rho_2 +\rho_2^{-3} L_2^2+cr_2 \rho_2 L_2,\\
 L_2' &=0,
\end{aligned}
\end{equation}
and $r_2'=0$,
after dividing the right hand side by $r_2$. Within $L_2=0$, we have
\begin{equation}\eqlab{rho2psi2}
\begin{aligned}
 \rho_2' &= \psi_2,\\
 \psi_2' &=(1-b\rho_2^2)\rho_2.
\end{aligned}
\end{equation}
This is the Duffing equation (see \cite[Section 2.2]{Guckenheimer97}) with a hyperbolic saddle at the origin. There are two centers at $(\rho_2,\psi_2)= (\pm \sqrt{b^{-1}},0)$ for any $b>0$.  The phaseportrait is illustrated in \figref{hopfhopf}. We focus on the homoclinic $\Gamma_0$ with $\rho_2>0$.
It has the following parametrization
\begin{align*}
 \begin{cases} \rho_2 (t)=\sqrt{\frac{2}{b}}\operatorname{sech}(t),\\%\frac{1}{2\sqrt{2\eta}} \frac{\sinh \frac{t}{2}}{\cosh^2\frac{t}{2}},\\
 \psi_2(t) =-\sqrt{\frac{2}{b}}\operatorname{sech}(t)\tanh(t), %\frac{1}{\sqrt{2\eta}} \frac{1}{\cosh \frac{t}{2}}.
 \end{cases}
\end{align*}
with poles in the complex plane at $t=i\frac{\pi}{2}+n\pi$, $n\in\mathbb Z$, of order $1$ and $2$, respectively.

\begin{remark}
 It is again easy to see that higher order terms of \eqref{nfhopfhopf} lead to regular perturbation terms of order $\mathcal O(r_2)$ of \eqref{rho2psi2L2eqns} (on compact domains with $\rho_2$ bounded uniformly away from $0$). The symmetry $\mathcal S$ takes the following form $(\rho_2,\psi_2,L_2,\theta)\mapsto (\rho_2,-\psi_2,L_2,-\theta)$ in the $(\rho_2,\psi_2,L_2,\theta)$-coordinates. In this way, one can obtain (as in \remref{homoclinic}) existence of two symmetric homoclinic orbits for all $0<r_2\ll 1$ (due to the intersection of the stable and unstable manifolds with $\psi_2=0$, $\theta=n\pi$, $n\in \mathbb Z$, being the fixed-point set of the symmetry). The interesting problem (from the perspective of exponentially small phenomena) is then (again) an asymptotic formula for the splitting of tangent spaces and whether there are other non-symmetric homoclinic orbits. The former question is addressed in \cite{gaivao2011a} for a class of Hamiltonian systems that generalize the Swift-Hohenberg equation. Here the authors argues for the existence of an exponentially small asymptotic formula for the splitting of the tangent vectors (expressed in terms of the symplectic form). We aim to study this phenomena within the context of the general normal form using the geometric approach of the present paper and \cite{bkt,new} in future work.
\end{remark}

\begin{figure}[h!]
\begin{center}
{\includegraphics[width=.65\textwidth]{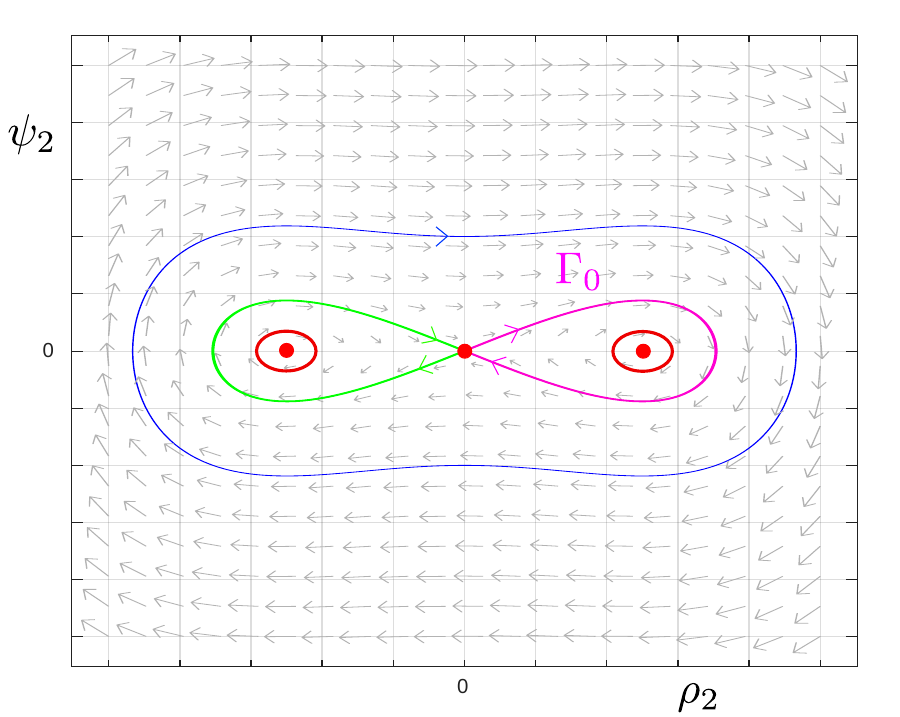}}
\end{center}
\caption{Phase portrait of \eqref{rho2psi2} for $b>0$.}
\figlab{hopfhopf}
% Remark: If $U$ smooth for $|u|<\nu$, the compact manifolds lie inside $|u|<\nu$ for large $n$
\end{figure}
To follow Lazutkin's approach one zooms in around the pole at $t=-i\frac{\pi}{2}$ (the pole at $t=i\frac{\pi}{2}$ is obtained by conjugation) by writing  $$t=t(s):=-i\frac{\pi}{2}+r_2 s,$$ and look for invariant manifolds parameterized by $s$ and $\theta$ as $r_2\to 0$. For $s\ne 0$, we have that
\begin{align*}
\begin{cases}
 \rho_2(t(s)) =i\sqrt{\frac{2}{b}}\operatorname{csch}(r_2 s)= i\sqrt{\frac{2}{b}} (r_2s)^{-1}(1+ O(r_2^2)),\\ %-\frac{i}{2\sqrt{2\eta}} \coth \left(\frac{r_2s}{2}\right)\operatorname{csch}\left(\frac{r_2s}{2}\right) = -\frac{2i}{\sqrt{2\eta} r_2^2 s^2} \left(1+\mathcal O(r_2)\right),\\
 \psi_2(t(s)) =-i\sqrt{\frac{2}{b}} \operatorname{csch}(r_2 s)\operatorname{coth}(r_2 s) = -i\sqrt{\frac{2}{b}} (r_2s)^{-2}(1+ O(r_2^2))
 %-\frac{i}{\sqrt{2\eta}} \operatorname{csch}\left(\frac{r_2 s}{2}\right)=-\frac{2i}{\sqrt{2\eta}r_2 s}\left(1+\mathcal O(r_2)\right),
\end{cases}
\end{align*}
as $r_2\to 0$ for $s\ne 0$, or in terms of $(\rho,\psi)$ by \eqref{hopfhopf2}:
\begin{align}\eqlab{t2}
\begin{cases}
 \rho(t(s)): = r_2 \rho_2(t(s)) \to i\sqrt{\frac{2}{b}} s^{-1},\\
 \psi(t(s)):=r_2^2 \psi_2(t(s)) \to -i\sqrt{\frac{2}{b}} s^{-2}.
\end{cases}
\end{align}
We then conclude that
\begin{align}
 \psi = i\sqrt{\frac{b}{2}}\rho^2,\quad \psi\in \mathbb C,\quad L=0,\quad \epsilon=0,\eqlab{this0}
\end{align}
defines an invariant set of \eqref{trunchopfhopf}. This can also easily be verified by direct insertion.
% We obtain
% $ \psi = -i\sqrt{\frac{b}{2}}\rho^2$
% upon applying the symmetry $\mathcal S_0$.
% with $L=\epsilon=0$.

This motivates the definition of the inner problem associated with $\Gamma_0$ as the unperturbed version of \eqref{nfhopfhopf2} (i.e. for $\epsilon=0$).
% \begin{align*}
% \begin{equation}\eqlab{Heqns}
% \begin{aligned}
%  \dot \rho &=\left(-\mu L +2\eta J\right) \psi+U,\\
%  \dot \theta &=1 -\mu J + V\rho^{-1},\\
%  \dot \psi &=-\rho+W\rho^{-2}L+X ,\\
%  \dot L &=Z,
%  \end{aligned}
% \end{equation}
% \begin{equation}\eqlab{nfhopfhopf2}
% \begin{aligned}
% \dot \rho &=\psi + F \cos \theta+G\sin \theta,\\
% \dot \theta &=1+\Omega + \rho^{-2} L +\rho^{-1}\left(-F\sin \theta+G\cos \theta\right),\\
% \dot \psi &= \Gamma \rho + \rho^{-3} L + \rho^{-2} \bigg( (H \rho^2 + GL)\cos \theta+ (J\rho^2 - FL)\sin \theta\bigg),\\
% \dot L &=\rho^{-1} \left(- (G \psi \rho-J \rho^2-FL) \cos \theta+ (F\psi \rho-H\rho^2 +GL)\sin \theta \right),\\
% \dot \epsilon &=0,
% \end{aligned}
% \end{equation}
% where
% \begin{align*}
% Q=Q(\rho,\theta,\psi,\rho^{-1}L) = \mathcal O(\vert (\rho,\psi,\rho^{-1} L)\vert^4),\quad Q=U,V,W,X,Z,
% \end{align*}
% are analytic functions on $B_\tau\times \mathbb T_\xi\times B_\tau\times B_\tau$, for $\tau>0$, $\xi>0$ both small enough. Here we have used that $R$ in \eqref{H} can be written as an analytic function of $(\rho,\theta,\psi,\rho^{-1} L)\in B_\tau\times \mathbb T_\xi\times B_\tau\times B_\tau$ in the new variables. The existence of $U,V,W,X,Z$ then follows from the chain rule.
% \end{align*}
In particular, we are concerned with invariant manifold solutions of the graph form
\begin{align*}
 \begin{cases}
 \psi= \psi(\rho,\theta),\\
 L=L(\rho,\theta),\end{cases} \quad \rho\in B_\tau,\,\theta\in \mathbb T_\xi,
\end{align*}
with
\begin{align*}
\begin{cases}
 \psi(\rho,\theta)=i \sqrt{\frac{b}{2}}\rho^2(1+\mathcal O(\rho)),\\ L(\rho,\theta)=\mathcal O(\rho^4).
 \end{cases}
\end{align*}
These manifolds are naturally parameterized in the $(r_1,\psi_1,L_1,\epsilon_1)$-coordinates of the $\breve \psi = 1$-chart associated with the blowup \eqref{blowuphopfhopf}:
\begin{align}\eqlab{entry1hopfhopf}
 \begin{cases}
  \rho = r_1\\
  \psi = r_1^2\psi_1,\\
  L = r_1^3 L_1,\\
  \epsilon =r_1^2\epsilon_1.
 \end{cases}
\end{align}
% Indeed, the set \eqref{this} takes the following form
%
%
%
% \begin{align}\eqlab{entry1}
%   \begin{cases}
%    x = r_1,\\
%    \rho = r_1 \rho_1,\\
%    \epsilon = r_1 \epsilon_1.
%   \end{cases}
% \end{align}
Indeed, here \eqref{this0} takes the following form
\begin{align}
 \psi_1 =i\sqrt{\frac{b}{2}},\quad r_1\in \mathbb C,\quad L_1=0,\quad \epsilon_1=0.\eqlab{this2}
\end{align}
The change of coordinates defined by \eqref{entry1hopfhopf} brings \eqref{nfhopfhopf2} with $\epsilon=0$ into the following form:
\begin{equation}\eqlab{innerhopfhopf}
\begin{aligned}
\dot r_1 &=r_1^2 \left(\psi_1+r_1^2U_1\right),\\
\dot \psi_1 &=r_1 (-b-2\psi_1^2+L_1^2)+r_1^2 Z_1,\\
\dot L_1 &=-3r_1 \psi_1 L_1+r_1^2 W_1,\\
\dot \theta &=1+r_1 L_1 +r_1^2 \beta+ r_1^3 V_1,
%  \dot x  &=x^2 \left(-1-a \rho_1^2 + x F_1\right),\\
%  \dot \rho_1 &=x\rho_1\left(1+b+a\rho_1^2+x F_1\right)+x^2 G_1 \cos \theta+x^2 H_1\sin \theta,\\
%  \dot \theta &=1+\rho_1^{-1} x^2 \left(-G_1\sin \theta+H_1\cos \theta\right),
\end{aligned}
\end{equation}
within $\epsilon_1=0$. Here the functions $Q_1=Q_1(\rho_1,r_1,L_1,\theta)$, $Q=U,V,Z,W$, are analytic on $B_\tau^3 \times \mathbb T_\xi $ with $\tau>0$, $\xi>0$ both small enough. They can obviously be expressed in terms of $F,G,H,J$ but the details of this will not be important here.
For \eqref{innerhopfhopf}, we look for invariant manifolds of the graph form
\begin{align}\eqlab{this}
\begin{cases}
\psi_1=\Phi^{\psi}(r_1,\theta),  \\ L_1=\Phi^L(r_1,\theta),\end{cases}
\end{align}
with
\begin{align*}
\begin{cases}
 \Phi^\psi(0,\theta) =i\sqrt{\frac{b}{2}},\\ \Phi^L(0,\theta)=0.\end{cases}
\end{align*}
The system \eqref{innerhopfhopf} is reversible with respect to
\begin{align}\eqlab{symmery1hopfhopf}
 \mathcal S_1\,:\,(r_1,\psi_1,L_1,\theta)\mapsto(r_1,-\psi_1,L_1,-\theta),
\end{align}
derived from \eqref{symmetry1},  \eqref{symplecticpolar} and \eqref{entry1hopfhopf}. Hence \eqref{this} is an invariant manifold solution if and only if
\begin{align*}
\begin{cases}
\psi_1=-\Phi^{\psi}(r_1,-\theta),  \\ L_1=\Phi^L(r_1,-\theta),\end{cases}
\end{align*}
is an invariant manifold solution. %Notice that these solutions are necessarily different since $\Phi^\psi(0,\theta)\ne 0$ for $b\ne 0$.

\subsection{Normal form}
In this section, we show that \eqref{innerhopfhopf} can be brought into the general form \eqref{mainvf}.
We proceed as in \secref{normalformzerohopf} and first prove existence of formal series solutions. For this we use the following result from \cite[Lemma 3.17, p. 205]{haragus2011a}: There exists a formal change of coordinates:
\begin{align}\eqlab{Tinftyhopfhopf}
 (\widetilde x,\widetilde y,\widetilde z,\widetilde w) = ( x, y, z, w) + T ( x, y, z, w),
\end{align}
with $T\in \mathbb R[[( x, y, z, w)]]$ starting with terms of order $5$, which brings \eqref{nfhopfhopf} with $\epsilon=0$ into the following formal normal form:
\begin{equation}\eqlab{nfhopfhopfformal}
\begin{aligned}
\dot{\widetilde x} &=-(1+\widetilde \Omega)\widetilde y+\widetilde z,\\
\dot{\widetilde y} &=(1+\widetilde \Omega)\widetilde x+\widetilde w,\\
\dot{\widetilde z} &= -(1+\widetilde \Omega)\widetilde w+\widetilde \Gamma \widetilde x, \\
\dot{\widetilde w} &=(1+\widetilde \Omega )\widetilde z+\widetilde \Gamma \widetilde y,
\end{aligned}
\end{equation}
where
\begin{align*}
 \widetilde Q \in \mathbb R[[\widetilde \rho^2,\widetilde L]],\quad \widetilde Q(0,0) = 0,\quad Q=\Omega,\Gamma,
\end{align*}
with
\begin{align*}
 \widetilde \rho^2 = \widetilde x^2 + \widetilde y^2,\quad \widetilde L = \widetilde w\widetilde x-\widetilde z\widetilde y.
\end{align*}
The formel change of coordinates defined by \eqref{Tinftyhopfhopf} is equivariant with respect to $\mathcal S$.
We have
\begin{align}\eqlab{Gamma1}
\widetilde \Gamma(\widetilde \rho^2,0)=- b \widetilde \rho^2 +\widetilde \rho^4 \widetilde \Gamma_1(\widetilde \rho^2),\quad \widetilde \Gamma_1\in \mathbb R[[\widetilde \rho^2]].
\end{align}
We consider the change of coordinates $(\widetilde \rho,\widetilde \psi,\widetilde L,\widetilde \theta)\mapsto (\widetilde x,\widetilde y,\widetilde z,\widetilde w)$ defined by \eqref{symplecticpolar} (with tildes added), which brings \eqref{nfhopfhopfformal} into the following system
\begin{equation}\nonumber
\begin{aligned}
\dot{\widetilde \rho} &=\widetilde \psi,\\
\dot{\widetilde \psi} &= \widetilde \Gamma (\widetilde \rho^2,\widetilde L)\widetilde \rho + \widetilde \rho^{-3} \widetilde L^2,\\
\dot{\widetilde L} &=0,
\end{aligned}
\end{equation}
and $\dot{\widetilde \theta} =1+\widetilde{\Omega} + \widetilde \rho^{-2} \widetilde L$, which decouples. This follows from a simple calculation.
In particular, within $\widetilde L=0$ we have
\begin{equation}\nonumber
\begin{aligned}
\dot{\widetilde \rho} &=\widetilde \psi,\\
\dot{\widetilde \psi} &= \widetilde \Gamma(\widetilde \rho^2,0)\widetilde \rho,
\end{aligned}
\end{equation}
with a nilpotent singularity at the origin (cf. \eqref{Gamma1}). We therefore apply the directional blowup:
\begin{align}\eqlab{directionalformel}
 (\widetilde r_1,\widetilde \psi_1)\mapsto \begin{cases}
                                            \widetilde\rho = \widetilde r_1,\\
                                            \widetilde \psi = \widetilde r_1^2 \widetilde \psi_1,
                                           \end{cases}
\end{align}
in line with \eqref{entry1hopfhopf}.
This gives
\begin{equation}\eqlab{formelhopfhopf}
\begin{aligned}
(\widetilde r_1^2)' &=2\widetilde r_1^2\widetilde \psi_1,\\
\widetilde \psi_1' &=-b-2\widetilde \psi_1^2 + \widetilde r_1^2 \widetilde \Gamma_1(\widetilde r_1^2),
\end{aligned}
\end{equation}
after division of the right hand side by $\widetilde r_1$ (desingularization). Here we have used \eqref{Gamma1}. Suppose first that $b<0$. Then $(\widetilde r_1^2,\widetilde \psi_1)=(0,\pm \sqrt{\frac{-b}{2}})$ are (formal) hyperbolic singularities and we have formal stable respectively unstable manifolds of the form:
\begin{align*}
 \widetilde \psi_1 = \pm \widetilde \Psi(\widetilde r_1^2),\quad \widetilde \Psi\in \mathbb R[[\widetilde r_1^2]], \quad \widetilde \Psi(0)=\sqrt{\frac{-b}{2}}.
\end{align*}
% For convenience, we illustrate the phaseportrait of \eqref{formelhopfhopf} (assuming that it is in convergent) for $b<0$ in \figref{formelhopfhopf}. The relevant manifolds are in magenta.
To see that the same is true for $b>0$, we replace $(\widetilde r_1,\widetilde \psi_1)$ by $i(\widetilde r_1,\widetilde \psi_1)$. This gives the following (real) system after multiplication of the right hand side by $i$:
\begin{align*}
 (\widetilde r_1^2)' &=-\widetilde r_1^2\widetilde \psi_1,\\
\widetilde \psi_1' &=-b+2\widetilde \psi_1^2 - \widetilde r_1^2 \widetilde \Gamma_1(-\widetilde r_1^2).
\end{align*}
Here
$(\widetilde r_1^2,\widetilde \psi_1)=(0,\pm \sqrt{\frac{b}{2}})$ are (formal) hyperbolic singularities and we therefore obtain the same conclusion as for $b<0$. We note that the two solutions (corresponding to $\pm$) are related by the symmetry $\mathcal S_1$.

\begin{lemma}
 Suppose that $b\in \mathbb R\setminus\{0\}$. Then there is a formal invariant manifold of \eqref{innerhopfhopf} of the form
 \begin{align*}
  \begin{cases}
   \psi_1 = \Phi^{\psi}(r_1,\theta),\\
   L_1 = \Phi^L(r_1,\theta),
  \end{cases}
 \end{align*}
with
\begin{align}\eqlab{Phihopfhopf}
\begin{cases}
\Phi^{\psi}(r_1,\theta) =\sum_{\alpha=0}^\infty \Phi^{\psi}_\alpha(\theta)r_1^\alpha \in \mathbb C\{\theta\}[[r_1]] ,\\
\Phi^{L}(r_1,\theta) =\sum_{\alpha=3}^\infty \Phi^{L}_\alpha(\theta)r_1^\alpha  \in r_1^3 \mathbb C\{\theta\}[[r_1]],
\end{cases}
\end{align}
and
\begin{align*}
 \Phi^\psi(0,\theta) = i\sqrt{\frac{b}{2}}.
\end{align*}
% The formel invariant manifold is symmetric with respect to $\mathcal S$:
\end{lemma}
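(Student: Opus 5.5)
The plan mirrors the proof of the analogous lemma for the zero-Hopf case: start from a formal invariant set of the normal form \eqref{nfhopfhopfformal}, and transport it to the $(r_1,\psi_1,L_1,\theta)$-coordinates of \eqref{innerhopfhopf} by the formal conjugacy \eqref{Tinftyhopfhopf}. In the tilde-variables the set is defined by
\begin{align*}
\widetilde L=0,\qquad \widetilde \psi_1=\widetilde \Psi^{c}(\widetilde r_1^2),\qquad \widetilde\Psi^c(0)=i\sqrt{\tfrac{b}{2}},
\end{align*}
where $\widetilde\Psi^c$ is the formal invariant manifold of \eqref{formelhopfhopf} through the hyperbolic point $(0,i\sqrt{b/2})$. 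For $b>0$ this point is obtained from the rotated system via $(\widetilde r_1,\widetilde \psi_1)\mapsto i(\widetilde r_1,\widetilde\psi_1)$; for $b<0$ it is directly $(0,\pm\sqrt{-b/2})$, written as $i\sqrt{b/2}$ with the appropriate branch. Its existence as a formal series follows from \lemmaref{formalinvman}: the linearization there has eigenvalues $2\widetilde\psi_1$ in the $\widetilde r_1^2$-direction and $-4\widetilde\psi_1$ in the $\widetilde \psi_1$-direction. These are nonzero with opposite signs, so the set is a formal stable or unstable manifold. This set is invariant because $\widetilde L$ is conserved and $\widetilde\theta$ decouples.

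Next I would express the formal conjugacy in blowup coordinates. Since $T$ starts at order $5$ and $(x,y,z,w)=\mathcal O(r_1)$ under \eqref{symplecticpolar} and \eqref{entry1hopfhopf} (with $\psi=r_1^2\psi_1$ and $L\rho^{-1}=r_1^2L_1$), I expect
\begin{align*}
\widetilde r_1 = r_1(1+\mathcal O(r_1^3)),\quad \widetilde \psi_1=\psi_1+\mathcal O(r_1^3),\quad \widetilde L_1=L_1+\mathcal O(r_1^2),\quad \widetilde\theta=\theta+\mathcal O(r_1^3).
\end{align*}
Each $\mathcal O$ is a formal series in $r_1$ with coefficients analytic in $(\psi_1,L_1,\theta)$. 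The bounds come from counting weights: a degree-$k$ monomial in $(x,y,z,w)$ is $\mathcal O(r_1^k)$. After dividing by the appropriate power of $r_1$ (namely $r_1$, $r_1^2$ and $r_1^3$), the order-$5$ terms give the stated remainders. The key check is $\widetilde L$: its correction is $\mathcal O(r_1^5)$, so $\widetilde L_1=\widetilde L/\widetilde r_1^3$ gets an $\mathcal O(r_1^2)$ correction.

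Finally I would solve the system
\begin{align*}
\mathcal F(r_1,\psi_1,L_1,\theta):=\big(\widetilde L_1,\ \widetilde\psi_1-\widetilde\Psi^c(\widetilde r_1^2)\big)=0
\end{align*}
for $(\psi_1,L_1)$ as functions of $(r_1,\theta)$, using the formal implicit function theorem in $\mathbb C\{\psi_1,L_1,\theta\}[[r_1]]$. At $r_1=0$ the system reduces to $(L_1,\psi_1-i\sqrt{b/2})$, whose Jacobian in $(\psi_1,L_1)$ is the identity. This gives $\Phi^\psi,\Phi^L\in\mathbb C\{\theta\}[[r_1]]$ with $\Phi^\psi(0,\theta)=i\sqrt{b/2}$ and $\Phi^L(0,\theta)=0$.

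The main obstacle is the claim $\Phi^L\in r_1^3\mathbb C\{\theta\}[[r_1]]$. The crude estimate above only gives $\Phi^L=\mathcal O(r_1^2)$. To remove the $r_1^2$ term, I would compare powers of $r_1$ directly in the invariance equation of \eqref{innerhopfhopf}, read off the $L_1$-component:
\begin{align*}
r_1^2(\psi_1+r_1^2U_1)\,\partial_{r_1}\Phi^L+(1+r_1\Phi^L+\cdots)\,\partial_\theta\Phi^L=-3r_1\Phi^\psi\Phi^L+r_1^2W_1.
\end{align*}
At order $r_1^\alpha$ this gives an equation of the form $\partial_\theta\Phi^L_\alpha+3i\sqrt{b/2}\,\Phi^L_{\alpha-1}=(\text{lower-order data})$. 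If $W_1=\mathcal O(r_1)$, which I expect since $J,F$ are of order $5$ and $W_1$ comes from $\dot L$ divided by $r_1^3$, then $\Phi^L_0=\Phi^L_1=0$ forces $\Phi^L_2$ to be constant in $\theta$. Matching the next order, and using that the right-hand side must have zero mean, then forces that constant to vanish. Failing that, I would directly inspect the order-$5$ terms of the $\dot L$ equation in \eqref{nfhopfhopf2} to verify that $W_1$ carries enough powers of $r_1$.
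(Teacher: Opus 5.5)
Your route is the paper's: transport $\widetilde L=0$, $\widetilde\psi_1=\widetilde\Psi(\widetilde r_1^2)$ through the formal conjugacy and apply the formal implicit function theorem. Everything up to $\Phi^L=\mathcal O(r_1^2)$ is fine. The gap is in the one non-routine part of the statement, $\Phi^L\in r_1^3\mathbb C\{\theta\}[[r_1]]$, and the ``obstacle'' you identify comes from your weight count for $\widetilde L$ being one short. Each product in $L=wx-yz$ contains a factor of weight one, so
\[
\widetilde L-L=xT_w-yT_z+wT_x-zT_y+T_xT_w-T_yT_z=\mathcal O(r_1^6),
\]
not merely $\mathcal O(r_1^5)$. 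Even with your own bound $\widetilde r_1=r_1(1+\mathcal O(r_1^3))$, this gives $\widetilde L_1=L_1+\mathcal O(r_1^3)$. The $L_1$-component of your implicit-function problem then reads $L_1+R=0$ with $R\in r_1^3\mathbb C\{\psi_1,L_1,\theta\}[[r_1]]$, so $\Phi^L=-R(r_1,\Phi^\psi,\Phi^L,\theta)=\mathcal O(r_1^3)$ immediately.

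The paper obtains the same bound differently. It compares $(\widetilde z,\widetilde w)=\widetilde r_1^2\widetilde\Psi(\cos\widetilde\theta,\sin\widetilde\theta)$ with $(z,w)=r_1^2(\psi_1\cos\theta-L_1\sin\theta,\ \psi_1\sin\theta+L_1\cos\theta)$. The $\mathcal O(r_1^5)$ corrections, divided by $r_1^2$, together with $\widetilde\theta-\theta=\mathcal O(r_1^4)$, give $L_1=\mathcal O(r_1^3)$ from the component orthogonal to $(\cos\theta,\sin\theta)$.

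Your fallback via the invariance equation does not close as written. With $\Phi^L_2=c$ constant, the order-$r_1^3$ equation is
\[
\partial_\theta\Phi^L_3=-5i\sqrt{b/2}\,c+[W_1]_1(\theta).
\]
Here the transport term $r_1^2\Phi^\psi\partial_{r_1}\Phi^L$ contributes $2i\sqrt{b/2}\,c$ in addition to your $-3i\sqrt{b/2}\,c$. So the zero-mean condition forces $c=0$ only if $\langle [W_1]_1\rangle=0$. Now $[W_1]_1$ is the $r_1^6$-coefficient of $\dot L=xJ-yH+wF-zG$, namely
\[
\cos\theta\,J_5(\cos\theta,\sin\theta,0,0)-\sin\theta\,H_5(\cos\theta,\sin\theta,0,0).
\]
This is generically nonzero, so looking for extra powers of $r_1$ in $W_1$ will not help. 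Its average vanishes only because of reversibility: $\mathcal S_1$ forces $W_1(r_1,-\psi_1,L_1,-\theta)=-W_1(r_1,\psi_1,L_1,\theta)$, and since $[W_1]_1$ depends on $\theta$ alone, it is odd in $\theta$. You never invoke this, yet it is exactly where reversibility enters. In the conjugacy argument the same input is hidden in the fact that $\widetilde L$ is conserved by the reversible formal normal form.

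A minor point: for $b>0$ the eigenvalues $2\widetilde\psi_1$ and $-4\widetilde\psi_1$ at $\widetilde\psi_1=i\sqrt{b/2}$ are purely imaginary. ``Opposite signs'' should therefore be replaced by ``ratio $-2\notin\mathbb N$'', or by the rotation trick the paper uses.
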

\begin{proof}
We write the formal series solution
\begin{align}\eqlab{formalmanhopfhopf}\begin{cases} \widetilde \psi_1= \widetilde \Psi (\widetilde r_1^2),\\
               \widetilde L_1=0,
              \end{cases}
              \end{align}
              of \eqref{nfhopfhopfformal}
 in the $(r_1,\psi_1,L_1,\theta)$-coordinates. By combining \eqref{symplecticpolar}, \eqref{entry1hopfhopf}, and \eqref{Tinftyhopfhopf}, we obtain the following equations for \eqref{formalmanhopfhopf} in the $(r_1,\psi_1,L_1,\theta)$-coordinates:
\begin{align*}
\widetilde r_1\cos \widetilde \theta &= r_1\cos \theta+\mathcal O(r_1^5),\\
\widetilde r_1  \sin \widetilde \theta &= r_1 \sin \theta+\mathcal O(r_1^5),\\
\widetilde r_1^2\widetilde \Psi   \cos \widetilde \theta &= r_1^2\psi_1 \cos \theta-r_1^2 L_1\sin \theta+\mathcal O(r_1^5),\\
\widetilde r_1^2\widetilde \Psi \sin \widetilde \theta &= r_1^2\psi_1 \sin \theta+r_1^2 L_1 \cos \theta+\mathcal O(r_1^5),
\end{align*}
where all $\mathcal O(r_1^5)\in r_1^5 \mathbb R\{\psi_1,L_1,\theta\}[[r_1]]$. From the first two equations, we directly have that
\begin{align*}
 \widetilde r_1 = r_1\left(1 + \mathcal O(r_1^4)\right),
\end{align*}
with $\mathcal O(r_1^4)\in r_1^4 \mathbb R\{\psi_1,L_1,\theta\}[[r_1]]$.
Inserting this into the last two equations, we find that
\begin{align*}
 \widetilde \Psi(r_1^2 (1+\mathcal O(r_1^4)))\times ( \cos \theta +\mathcal O(r_1^4))  &= \psi_1 \cos \theta-L_1\sin \theta+\mathcal O(r_1^3),\\
\widetilde \Psi (r_1^2 (1+\mathcal O(r_1^4)))\times ( \sin \theta +\mathcal O(r_1^4)) &= \psi_1 \sin \theta+L_1\cos \theta+\mathcal O(r_1^3).
\end{align*}
But then using $\widetilde \Psi(0)= i\sqrt{\frac{b}{2}}$, we arrive at the following formal equations
\begin{align*}
 \begin{cases} \psi_1-i\sqrt{\frac{b}{2}} +\mathcal O(r_1^3)=0,\\
L_1+\mathcal O(r_1^3)=0.
\end{cases}
\end{align*}
This follows from a simple calculation.
We then complete the proof by solving these equations for $(\psi_1,L_1)$ (as formal series in $r_1$ with real-analytic $\theta$-dependent coefficient) using the formal version of the implicit function theorem. %upon using $\psi_1=i\sqrt{\frac{b}{2}}$, $L_1=0$ for $r_1=0$ by the formal version of the implicit function theorem.
% Subsequently, by multiplying the last two equations by $\widetilde r_1 \widetilde \Phi_\infty (\widetilde r_1^2)$ and equating the resulting right side with the right sides of the first two equations, we find that
% \begin{align*}
%  \left(\sqrt{1 + L_1^2\rho_1^{-2}}+\mathcal O(r_1^4)\right)  \widetilde \Phi_\infty \left(\cos \theta-L_1\rho_1^{-1}\sin \theta+\mathcal O(r_1^4)\right) =\rho_1 \cos \theta+\mathcal O(r_1^3),\\
%   \left(\sqrt{1 + L_1^2\rho_1^{-2}}+\mathcal O(r_1^4)\right)  \widetilde \Phi_\infty \left(\sin \theta+L_1\rho_1^{-1}\cos \theta+\mathcal O(r_1^4)\right) = \rho_1 \sin \theta+\mathcal O(r_1^3),
% \end{align*}
% with $\widetilde \Phi_\infty=\widetilde \Phi_\infty\left(r_1^2\left(1 + L_1^2\rho_1^{-2}+\mathcal O(r_1^4)\right)\right)$.
%

\end{proof}

We are now finally ready to bring \eqref{innerhopfhopf} into the general form \eqref{mainvf}. For this we define $\Phi^{\psi,N}$ and $\Phi^{L,N}$, $N\in \mathbb N$, $N\ge 3$, as the partial sums:
\begin{align*}
 \Phi^{q,N}(r_1,\theta):=\sum_{\alpha=0}^N \Phi_{\alpha}^q(\theta)r_1^\alpha,\quad q=\psi,L,
\end{align*}
recall \eqref{Phihopfhopf}.

\begin{lemma}\lemmalab{lemma33}
 Suppose that $b\ne 0$. Then for $M\in \mathbb N$ sufficiently large, $M\gg 1$, the blowup transformation $(r_1,\widetilde \psi_1,\widetilde L_1,\theta)\mapsto (r_1,\psi_1,L_1,\theta)$ defined by
 \begin{align}\eqlab{blowuprho1hopfhopf}
  \begin{cases}
   \psi_1 = \Phi^{\psi,2M}(r_1,\theta)+r_1^M \widetilde \psi_1,\\
   L_1 = \Phi^{L,2M}(r_1,\theta)+r_1^M \widetilde L_1,
  \end{cases}
 \end{align}
brings \eqref{innerhopfhopf} into the following \textnormal{prepared} analytic form:
\begin{equation}\eqlab{nfhopfhopffinal}
\begin{aligned}
r_1'&=r_1^2 i\sqrt{\frac{b}{2}}+r_1^3 F_0 + r_1^4 Q(r_1,\widetilde \psi_1,\widetilde L_1,\theta),\\
\widetilde \psi_1'&= i\sqrt{\frac{b}{2}}r_1 \lambda^1 \widetilde \psi_1 +r_1^2 Z(r_1,\widetilde \psi_1,\widetilde L_1,\theta)\\
\widetilde L_1'&= i\sqrt{\frac{b}{2}} r_1 \lambda^2 \widetilde L_1 +r_1^2 Z(r_1,\widetilde \psi_1,\widetilde L_1,\theta),\\
\theta' &=1,
\end{aligned}
\end{equation}
after division of the right hand side by a nonzero quantity for $(r_1,\widetilde \psi_1,\widetilde L_1,\theta)\in B_\tau^3 \times \mathbb T_\xi$. Here $\tau>0,\xi>0$, both small enough. The system is reversible with respect to $\mathcal S_1$ (see \eqref{symmery1hopfhopf}).
Finally, the following holds
\begin{align*}
 \lambda^1=-4-M<0,\quad \lambda^2 = -3-M<0.
\end{align*}

\end{lemma}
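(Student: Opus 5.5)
The proof follows the one of \lemmaref{lemma24}. Write $\Phi^N=(\Phi^{\psi,N},\Phi^{L,N})$. Since $\Phi$ is a formal invariant manifold of \eqref{innerhopfhopf}, the residual of the invariance equation evaluated on the truncation $\Phi^N$ is $\mathcal O(r_1^{N+1})$. More precisely, the vector
\begin{align*}
P^N(r_1,\theta):=r_1^{-N-1}\Big(&\mathcal V(r_1,\Phi^N,\theta)-\partial_{r_1}\Phi^N\, r_1^2(\Phi^{\psi,N}+r_1^2U_1)\\
&-\partial_\theta \Phi^N\,(1+r_1\Phi^{L,N}+r_1^2\beta+r_1^3V_1)\Big)
\end{align*}
is analytic on $B_\tau\times\mathbb T_\xi$. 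Here $\mathcal V$ denotes the right-hand side of the $(\psi_1,L_1)$-equations, and all functions are evaluated at $(r_1,\Phi^N,\theta)$.

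Next insert \eqref{blowuprho1hopfhopf} with $N=2M$ and use the mean value theorem. In the new variables $(\widetilde\psi_1,\widetilde L_1)$ one gets
\begin{align*}
(\widetilde\psi_1,\widetilde L_1)'=r_1 \mathbf A^M(r_1,r_1^M\widetilde\psi_1,r_1^M\widetilde L_1,\theta)(\widetilde\psi_1,\widetilde L_1)^T+r_1^{M+1}P^{2M}(r_1,\theta).
\end{align*}
The derivative of $r_1^M$ contributes the term $-Mr_1\,\dot r_1/r_1^{\,2}\cdot r_1\,(\cdots)$. Evaluated at $r_1=0$, the matrix $\mathbf A^M$ is the linearization of $(-b-2\psi_1^2+L_1^2,\,-3\psi_1L_1)$ at $(\psi_1,L_1)=(i\sqrt{b/2},0)$, minus $M\,i\sqrt{b/2}$ times the identity. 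That linearization is
\begin{align*}
\begin{pmatrix}-4\psi_1 & 2L_1\\ -3L_1 & -3\psi_1\end{pmatrix}=i\sqrt{\tfrac b2}\begin{pmatrix}-4&0\\0&-3\end{pmatrix}.
\end{align*}
Hence the linear part is diagonal, with eigenvalues $i\sqrt{b/2}\,(-4-M)$ and $i\sqrt{b/2}\,(-3-M)$, which gives the stated $\lambda^1,\lambda^2$. The $r_1$-equation becomes $r_1'=r_1^2(\Phi^{\psi,2M}+r_1^M\widetilde\psi_1+r_1^2U_1)=r_1^2 i\sqrt{b/2}+r_1^3F_0+\mathcal O(r_1^4)$, with $F_0=\Phi^\psi_1$ a constant. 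Everything of order $r_1^2$ and higher, including $r_1\cdot\mathcal O(r_1)$ corrections in $\mathbf A^M$ and the inhomogeneous term $r_1^{M+1}P^{2M}$ (since $M\ge1$), is absorbed into the remainders $Z$.

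Finally divide the vector field by $\dot\theta=1+r_1\Phi^{L,2M}+r_1^{M+1}\widetilde L_1+r_1^2\beta+r_1^3V_1$. Since $\Phi^{L}=\mathcal O(r_1^3)$, this factor equals $1+\mathcal O(r_1^2)$. It is nonzero on $B_\tau^3\times\mathbb T_\xi$ for small $\tau$, and dividing by it changes only the terms of order $r_1^3$ and higher in the first equation and the $r_1^2$-remainders in the others. Reversibility with respect to $\mathcal S_1$ is inherited because $\Phi$ can be chosen symmetric and the transformation \eqref{blowuprho1hopfhopf} commutes with $\mathcal S_1$.

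\textbf{Main obstacle.} The delicate points are, first, verifying that $\Phi^{\psi}_1(\theta)$ is constant, so that $F_0\in\mathbb C$. I would try to obtain this from the $r_1^1$-order of the invariance equation, $\partial_\theta\Phi_1^\psi=(\cdots)$ with constant forcing, using the formal normal form (where $\widetilde\Psi$ is even in $\widetilde r_1$, hence $\Phi^\psi_1$ comes only from the $\theta$-independent structure). Second, checking that the off-diagonal coupling $2L_1$ vanishes at leading order, which it does because $\Phi^L(0,\theta)=0$.
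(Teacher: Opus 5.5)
Your core computation is the paper's proof. The residual of the truncation $\Phi^{2M}$ is analytic after division by $r_1^{2M+1}$. The mean-value theorem gives the linear part $r_1\,i\sqrt{b/2}\,\operatorname{diag}(-4-M,-3-M)$: the shift $-M$ comes from differentiating $r_1^M$, and the off-diagonal entries vanish because $\Phi^L(0,\theta)=0$. The term $r_1^{M+1}P^{2M}$ is absorbed into the $r_1^2$-remainders, and one divides by $\dot\theta=1+\mathcal O(r_1^2)$.

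Your treatment of the first ``obstacle'' is also correct, and it makes explicit a point the paper passes over. The order-$r_1$ part of the invariance equation reads $\partial_\theta\Phi^\psi_1=-b-2(\Phi^\psi_0)^2+(\Phi^L_0)^2=0$, so $F_0=\Phi^\psi_1$ is a constant. Since the factor you divide by is $1+\mathcal O(r_1^2)$, the division only changes $r_1'$ from order $r_1^4$ on, not from $r_1^3$ as you wrote.

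The step that fails is reversibility. $\mathcal S_1$ acts by $\psi_1\mapsto-\psi_1$, so it maps the formal manifold with $\Phi^\psi(0,\theta)=i\sqrt{b/2}$ to the \emph{other} formal manifold, which has $\psi_1=-i\sqrt{b/2}$ at $r_1=0$; the paper itself notes this. Hence $\Phi$ cannot be chosen $\mathcal S_1$-symmetric, and \eqref{blowuprho1hopfhopf} does not commute with $\mathcal S_1$.

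In fact no argument can give this. $\mathcal S_1$ fixes $r_1$, so $\mathcal S_1$-reversibility of \eqref{nfhopfhopffinal} would force $r_1'(r_1,-\widetilde\psi_1,\widetilde L_1,-\theta)=-r_1'(r_1,\widetilde\psi_1,\widetilde L_1,\theta)$. That gives $r_1'=0$ on $\{\widetilde\psi_1=0,\ \theta=0\}$, contradicting $r_1'=i\sqrt{b/2}\,r_1^2+\mathcal O(r_1^3)$. The paper's own proof is silent on this part of the statement, and nothing downstream uses it (\propref{prop34} only invokes \thmref{main}).

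If you want a correct reversibility statement, compose $\mathcal S_1$ with the deck transformation $(r_1,\psi_1,L_1,\theta)\mapsto(-r_1,-\psi_1,-L_1,\theta+\pi)$ of the coordinates \eqref{symplecticpolar}, \eqref{entry1hopfhopf}. The resulting reversor $(r_1,\psi_1,L_1,\theta)\mapsto(-r_1,\psi_1,-L_1,\pi-\theta)$ fixes $(\psi_1,L_1)=(i\sqrt{b/2},0)$. $\Phi$ is symmetric under it by uniqueness of the formal manifold through that point. The prepared system is then reversible under $(r_1,\widetilde\psi_1,\widetilde L_1,\theta)\mapsto(-r_1,(-1)^M\widetilde\psi_1,(-1)^{M+1}\widetilde L_1,\pi-\theta)$, which in fact forces $F_0=0$ here, as in the zero-Hopf case.
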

\begin{proof}
 Since $\psi_1=\Phi^{\psi,N}(r_1,\theta)$, $L_1=\Phi^{L,N}(r_1,\theta)$, defines a formal invariant manifold for $N\to \infty$, it follows by construction that
\begin{align*}
\widetilde Z^N(r_1,\theta):=&r_1^{-N-1}\bigg( r_1 (-b-2(\Psi^{\psi,N})^2+(\Psi^{L,N})^2)+r_1^2 Z_1 \\
 &-\frac{\partial \Phi^N}{\partial r_1} r_1^2 \left(\psi_1+r_1^2U_1\right)- \frac{\partial \Phi^N}{\partial \theta} \left(1+r_1 L_1 +r_1^2 \beta+ r_1^3 V_1\right)\bigg),\\
 \widetilde W^N(r_1,\theta):=&r_1^{-N-1}\bigg(-3r_1 \Psi^{\psi,N}\Psi^{L,N}+r_1^2 W_1 \\
 &-\frac{\partial \Phi^N}{\partial r_1} r_1^2 \left(\psi_1+r_1^2U_1\right)- \frac{\partial \Phi^N}{\partial \theta} \left(1+r_1 L_1 +r_1^2 \beta+ r_1^3 V_1\right)\bigg),
\end{align*}
are well-defined and analytic on $B_\tau \times \mathbb T_\xi$ for any $N\in \mathbb N$ for $\tau>0$, $\xi>0$, both small enough. Here $Q_1=Q_1(r_1,\Phi^{\psi,N},\Phi^{L,N},\theta)$, $Q_1=U,V,Z,W$, recall \eqref{W1defn}. (Notice, in particular that $(\widetilde Z^N,\widetilde W^N)=(0,0)$ if and only if $\psi_1=\Phi^{\psi,N}(r_1,\theta)$, $L_1=\Phi^{L,N}(r_1,\theta)$, is invariant.)
Then by applying \eqref{blowuprho1hopfhopf} (corresponding to $N=2M$) to \eqref{inner2}, we find the following equations
\begin{align*}
\dot r_1 &= r_1^2 \left(i\sqrt{\frac{b}{2}}+F^M(r_1,r_1^M \widetilde \psi_1,r_1^M \widetilde L_1,\theta)\right),\\
\dot{\widetilde \psi}_1 &= r_1 A^M(r_1,r_1^M \widetilde \psi_1,r_1^M \widetilde L_1,\theta) \widetilde \psi_1+r_1 B^M(r_1,r_1^M \widetilde \psi_1,r_1^M \widetilde L_1,\theta) \widetilde L_1 + r_1^{M+1} \widetilde Z^{2M}(r_1,\theta),\\
\dot{\widetilde L}_1 &= r_1 C^M(r_1,r_1^M \widetilde \psi_1,r_1^M \widetilde L_1,\theta) \widetilde \psi_1+r_1 D^M(r_1,r_1^M \widetilde \psi_1,r_1^M \widetilde L_1,\theta) \widetilde L_1 + r_1^{M+1} \widetilde W^{2M}(r_1,\theta),\\
\dot \theta &=1+r_1 \Theta^M(r_1, r_1^M \widetilde \psi_1,r_1^M\widetilde L_1,\theta),
% \theta' &= 1+ r_1^2 \Theta^M(r_1,r_1^M\rho_1,\theta),
\end{align*}
with $F^M(0,0,0,\theta)=0$, $\Theta^M(0,0,0,\theta)=0$, and
\begin{align*}
 \begin{cases}
  A^M(0,0,0,\theta)= -(4+M)i\sqrt{\frac{b}{2}},\\
B^M(0,0,0,\theta)=0,\\
C^M(0,0,0,\theta)=0,\\
  D^M(0,0,0,\theta)= -(3+M)i\sqrt{\frac{b}{2}},
 \end{cases}
\end{align*}
This follows from a simple calculation using the mean-value theorem. We now divide the right hand side by $$1+r_1\Theta^M(r_1,r_1^M\widetilde \psi_1,r_1^M \widetilde L_1,\theta),$$ which is nonzero on $B_\tau^3 \times \mathbb T_\xi$ for $\tau>0$, $\xi>0$, both small enough.
% and define
% \begin{align*}
%  \lambda_1 =  -(4+M),\quad \lambda_2=-(3+M).
% \end{align*}
% For $M\gg 1$, we have $\lambda<0$ and
The result then  follows from a simple expansion of the right hand side. (In fact, it is easy to see that $M=2$ suffices.)
\end{proof}

We see that
\begin{align}\eqlab{xhopfhopf}
x:=i\sqrt{\frac{b}{2}}r_1,\quad \mathbf y: = (\widetilde \psi_1,\widetilde L_1),
\end{align}
brings \eqref{nfhopfhopffinal} into the general form \eqref{mainvf}.
% In the following, we let $\upsilon S^\pm$, $c\in \mathbb C$, denote the sets
% \begin{align*}
%  cS^\pm : =\{x\in \mathbb C\,:\,c^{-1}x\in S^\pm\},
% \end{align*}
% recall \eqref{Spm}.
Recall that $\upsilon S^\pm$, $\upsilon\in \mathbb C$, denotes the sets
\begin{align*}
 \upsilon S^\pm : =\{x\in \mathbb C\,:\,\upsilon^{-1}x\in S^\pm\},
\end{align*}
with $S^\pm$ given by \eqref{Spm}.
\begin{proposition}\proplab{prop34}
 Suppose that $b\in \mathbb R\setminus\{0\}$ and
 define
 \begin{align}
  \upsilon:=i^{-1}\sqrt{\frac{2}{b}}.
 \end{align}
 Then for $\delta>0$, $\xi>0$, both sufficiently small, there exists two invariant manifold solutions of \eqref{innerhopfhopf} of the form:
 \begin{align}\eqlab{invhopfhopf}
  \begin{cases}
   \psi_1 = \Phi^{\psi,\pm}(r_1,\theta),\\
   L_1 = \Phi^{L,\pm}(r_1,\theta),
  \end{cases}\quad r_1\in \upsilon S^\pm,\,\theta\in \mathbb T_\xi,
 \end{align}
 where $\Phi^{\psi,\pm}(0,\theta)=i\sqrt{\frac{b}{2}}$, $\Phi^{L,\pm}(r_1,\theta)=\mathcal O(r_1^4)$, uniformly on the closure of $\upsilon S_\pm$, are
the $1$-sums of Gevrey-$1$ series in the directions defined by $r_1\in \upsilon S^\pm$.
\end{proposition}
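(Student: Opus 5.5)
The plan mirrors the proof of \propref{prop25}. By \lemmaref{lemma33} with $M$ fixed and large, the blowup \eqref{blowuprho1hopfhopf} brings \eqref{innerhopfhopf} into the prepared form \eqref{nfhopfhopffinal}. The rescaling \eqref{xhopfhopf}, $x=i\sqrt{b/2}\,r_1$, $\mathbf y=(\widetilde\psi_1,\widetilde L_1)$, then turns it into \eqref{mainvf} with $n=2$. Indeed, $\dot x = i\sqrt{b/2}\,\dot r_1 = x^2(1+ xF_0' + x^2 F_1)$ after absorbing constants. The diagonal part becomes $x\,\mathbf A\mathbf y$ with $\mathbf A=-\operatorname{diag}(\lambda^1,\lambda^2)=\operatorname{diag}(4+M,3+M)$, which has positive real entries as required by \eqref{Adiag}. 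The remainders are $\mathcal O(x^2)$ and analytic on $B_\kappa\times B_\kappa^2\times\mathbb T_\xi$.

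I would check one point carefully here. The linear part in \eqref{nfhopfhopffinal} must be exactly diagonal, i.e.\ $B^M(0,0,0,\theta)=C^M(0,0,0,\theta)=0$ as in the proof of \lemmaref{lemma33}. Any $\theta$-dependent or off-diagonal $\mathcal O(r_1)$ linear terms must vanish at $r_1=0$, so they are pushed into the $x^2\mathbf G_1$ term. The $r_1^3F_0$ term is constant by the lemma. After the rescaling it becomes $x^3F_0/(i\sqrt{b/2})^2$, so the new $F_0$ is a complex constant, which is allowed since $F_0\in\mathbb C$.

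Next I apply \thmref{main}. It gives analytic solutions $\mathbf y^\pm: S^\pm\times\mathbb T_\xi\to\mathbb C^2$ whose Fourier coefficients are $\mathcal O(x)$. These are $1$-sums in the directions $0$ and $\pi$ of a common Gevrey-$1$ series. In terms of $r_1=x/(i\sqrt{b/2})=\upsilon x$, the domain is $r_1\in \upsilon S^\pm$, with $\upsilon=i^{-1}\sqrt{2/b}$, so the sectors are rotated. One has to allow complex $\sqrt b$ when $b<0$; then $\upsilon$ is real, which is consistent. A rotation of the variable preserves Gevrey-$1$ asymptotics and $1$-summability, with the summation direction rotated accordingly. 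Blowing down via \eqref{blowuprho1hopfhopf} defines
\[
\Phi^{q,\pm}(r_1,\theta)=\Phi^{q,2M}(r_1,\theta)+r_1^M\,\widetilde q^{\pm}(r_1,\theta),\qquad q=\psi,L .
\]
Here $\Phi^{q,2M}$ is a polynomial in $r_1$ with analytic coefficients. Sums and products with polynomials and powers of $r_1$ preserve $1$-summability in a given direction. Hence $\Phi^{q,\pm}$ are $1$-sums of Gevrey-$1$ series, and these series coincide with the formal series \eqref{Phihopfhopf}. To see this, note that the asymptotic expansion of an invariant graph solves the invariance equation formally. The formal solution with the prescribed leading term is unique, by the same formal implicit-function argument as in the preceding lemma together with the hyperbolicity of $\mathbf A$ after the blowup.

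The stated properties then follow. We have $\Phi^{\psi,\pm}(0,\theta)=\Phi^\psi_0=i\sqrt{b/2}$. Also $\Phi^{L,\pm}=\mathcal O(r_1^4)$ uniformly on the closure of the sectors, because $\Phi^{L}$ starts at order $r_1^3$ with coefficient determined by the formal series, and $r_1^M\widetilde L_1^\pm=\mathcal O(r_1^{M+1})$. If the $r_1^3$ coefficient $\Phi^L_3$ is nonzero, the claimed $\mathcal O(r_1^4)$ would need that coefficient to vanish. I expect this to be the main, if minor, obstacle. I would try to show that $\Phi_3^L=0$ from the formal equation $\dot L_1=-3r_1\psi_1L_1+r_1^2W_1$ with $W_1=\mathcal O(r_1^2)$, coming from fifth-order terms. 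Otherwise the statement should be read with $\mathcal O(r_1^3)$. Uniformity on closures is obtained by applying \thmref{main} with a slightly larger $\chi$ and $\delta$.
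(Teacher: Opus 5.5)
Your argument is the paper's proof: prepare with \lemmaref{lemma33}, rescale by \eqref{xhopfhopf} (so that $\mathbf A=\operatorname{diag}(4+M,3+M)$ and the new constant is $F_0/(i\sqrt{b/2})^2$), apply \thmref{main}, and blow down via \eqref{blowuprho1hopfhopf}. The extra checks you supply are correct and only implicit in the paper: the diagonal linear part, the rotation of the sectors by $\upsilon$, the stability of $1$-summability under adding $\Phi^{q,2M}$ and multiplying by $r_1^M$, and the identification with the formal series by uniqueness.

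On the point you flag, keep your fallback and drop the attempt to prove $\Phi^L_3=0$. It is false in general, so the bound $\Phi^{L,\pm}=\mathcal O(r_1^4)$ cannot be established as stated. The paper's proof is silent on it, and the paper's own formal lemma only gives $\Phi^L\in r_1^3\mathbb C\{\theta\}[[r_1]]$.

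Your estimate $W_1=\mathcal O(r_1^2)$ is off by one. From \eqref{nfhopfhopf} one gets $\dot L=Jx+Fw-Gz-Hy$. In the chart \eqref{entry1hopfhopf} the part $Jx-Hy$ is $\mathcal O(r_1^6)$, while $Fw-Gz=\mathcal O(r_1^7)$. Hence $r_1^2W_1=\mathcal O(r_1^3)$, and the $r_1^3$-coefficient of the invariance equation for $L_1$ reads $\partial_\theta\Phi^L_3=\cos\theta\,J_5(\cos\theta,\sin\theta,0,0)-\sin\theta\,H_5(\cos\theta,\sin\theta,0,0)$, with $J_5,H_5$ the quintic parts. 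This is odd in $\theta$ but generically nonzero.

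Even when $W_1=\mathcal O(r_1^2)$ does hold, your conclusion still fails. In that case $\Phi^L_3$ is a constant fixed by averaging the $r_1^4$-coefficient: $6\Phi^\psi_0\Phi^L_3=\langle[W_1]_2\rangle$, with $[\cdot]_k$ the $r_1^k$-coefficient and $\langle\cdot\rangle$ the $\theta$-average. This need not vanish. For the reversible choice $F=x^4y$, $G=H=J=0$, one finds $[W_1]_2=\Phi^\psi_0\cos^4\theta\sin^2\theta$, hence $\Phi^L_3=\tfrac16\langle\cos^4\theta\sin^2\theta\rangle=\tfrac{1}{96}$.

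So the correct conclusion is $\Phi^{L,\pm}=\mathcal O(r_1^3)$ uniformly on the closed sectors, i.e. $L=\rho^3\Phi^{L,\pm}=\mathcal O(\rho^6)$. This is exactly what your blow-down gives once $M\ge 3$. It is also consistent with the weaker claims $L=\mathcal O(\rho^4)$ and $\Phi^{L,\pm}(0,\theta)\equiv 0$ made elsewhere in the paper.
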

\begin{proof}
 We first apply \thmref{main} to \eqref{nfhopfhopffinal}. This gives invariant manifolds of the form
\begin{align*}
  \begin{cases}
   \widetilde \psi_1 = \widetilde \Phi^{\psi,\pm}(r_1,\theta),\\
   \widetilde L_1 = \widetilde \Phi^{L,\pm}(r_1,\theta),
  \end{cases}\quad r_1\in \upsilon S^\pm,\,\theta\in \mathbb T_\xi,
 \end{align*}
 Here we have used \eqref{xhopfhopf} and that $b\ne 0$. We then obtain the desired manifold \eqref{invhopfhopf} by blowing back down using \eqref{blowuprho1hopfhopf}.
\end{proof}
By applying the symmetry $\mathcal S$, recall \eqref{symmery1hopfhopf}, we obtain a separate invariant manifolds solution:
\begin{align}\nonumber
  \begin{cases}
   \psi_1 = -\Phi^{\psi,\pm}(r_1,-\theta),\\
   L_1 = \Phi^{L,\pm}(r_1,-\theta),
  \end{cases}\quad r_1\in \upsilon S^\pm,\,\theta\in \mathbb T_\xi.
 \end{align}

 Finally, we emphasize that \propref{hopfhopf} follows from \propref{prop34} upon blowing down using \eqref{entry1hopfhopf}.

\section{A Banach-convolution-algebra-approach to Borel-Laplace}\seclab{borellaplace}
In this section, we review the Banach-convolution-algebra-approach to Borel-Laplace by Bonckaert and De Maesschalck \cite{bonckaert2008a} and extend it so that it can be applied to the PDE \eqref{main} (depending on $\theta\in \mathbb T_\xi$). We start by defining the appropriate Banach spaces and state the relevant properties  from \cite{bonckaert2008a} relating to the convolution algebra (see \lemmaref{convolution1}).% that is important for the Borel-Laplace approach.

\subsection{The Banach space $\mathcal G_\eta$}
% \note{need new symbol for $\theta$? $\varphi?$}
 We first consider analytic functions
 \begin{align}\eqlab{widehatW} \widehat W:\Omega \to \mathbb C,\end{align}  where $\Omega= \Omega(\kappa,\varphi,\nu)\subset \mathbb C$ is the star-shaped region defined by
\begin{align}\eqlab{Omega}
 \Omega = B_{\kappa}\cup S(\varphi,\nu),\quad 0<\nu<\pi.
\end{align}
see \figref{Omega}.
Here $B_\kappa\subset \mathbb C$, $\kappa>0$, is the open ball of radius $\kappa>0$ centered at the origin. Moreover,
$S(\varphi,\nu)\subset \mathbb C$ denotes the infinite sector of opening $\nu\in (0,\pi)$ centered along the direction defined by $\varphi\in \mathbb T$; notice that $S^\pm$ in \eqref{Spm} in contrast  is a \textit{local} sector of radius $\delta$ (in the $x$-space). The argument of \eqref{widehatW} is denoted by $w\in \mathbb C$ and we refer to the $w$-space as the Borel-plane. Functions in the Borel-plane  (like \eqref{widehatW})  are in this paper given a hat.

 \begin{figure}[h!]
\begin{center}
{\includegraphics[width=.5\textwidth]{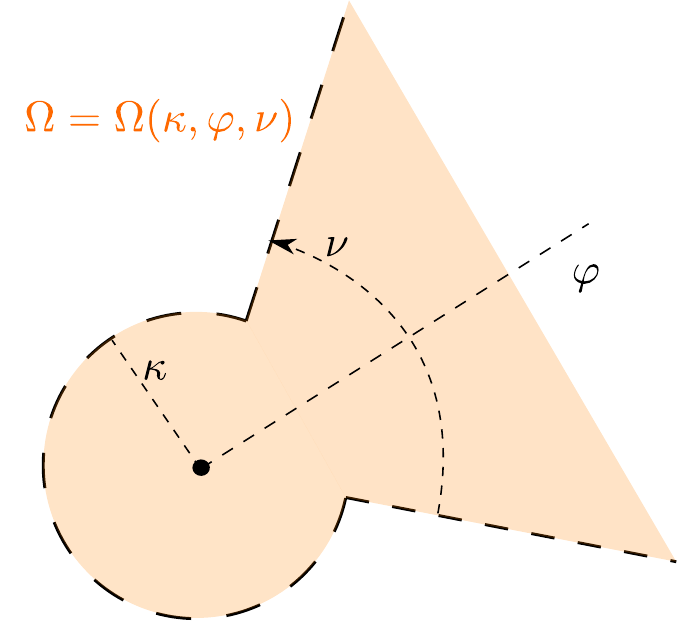}}
% \subfigure[]{\includegraphics[width=.45\textwidth]{psi0.pdf}}
\end{center}
\caption{Illustration of the set $\Omega\subset \mathbb C$. Here $\varphi\in \mathbb T=\mathbb R/(2\pi\mathbb Z)$ defines the direction of the sector, while $\nu>0$ the opening. It will suffice for our purposes to take $\varphi\in \{0,\pi\}$. Finally, $\kappa>0$ is the radius of the disc. }
\figlab{Omega}
% Remark: If $U$ smooth for $|u|<\nu$, the compact manifolds lie inside $|u|<\nu$ for large $n$
\end{figure}

It will suffice for our purposes to take
\begin{align}\eqlab{thetacond}
 \varphi \in \{0,\pi\};
\end{align}
$S(\varphi,\nu)$ will therefore be centered along the positive real axis (corresponding to $\varphi=0$) or the negative real axis (corresponding to $\varphi=\pi)$), similar to the local sectors $S^\pm$ in \eqref{Spm}.

Following \cite{bonckaert2008a}, we  define $\mathcal G_\eta$ for any $\eta>0$ as the space of analytic functions \eqref{widehatW} with
\begin{align*}
 \Vert \widehat W\Vert_{\eta}:=\sup_{w \in \Omega}\bigg\{\vert \widehat W(w)\vert  \e^{-\eta \vert w\vert}(1+ \eta^2 \vert w\vert^2)\bigg\}<\infty.
\end{align*}
% Notice that this norm is a slight modification of the norms used in \cite{}.
$\mathcal G_\eta$ with the norm $\Vert \cdot\Vert_\eta$ is a Banach space for all $\eta>0$. Since
\begin{align}\eqlab{prop}
\e^{-p}(1+p^2),
\end{align} is a decreasing function of $p\ge 0$, we have that $\mathcal G_{\eta'}\subset \mathcal G_{\eta}$ for all $\eta'>\eta$, see \cite[Proposition 2]{bonckaert2008a}.
This property of \eqref{prop} also implies that the constant function $1$ has unit norm:
\begin{align}\eqlab{unit}
 \Vert 1\Vert_\eta = 1,
\end{align}
for all $\eta>0$. The factor $\e^{-\eta\vert w\vert }$ in the norm $\Vert \cdot\Vert_\eta$ means that functions in $\mathcal G_\eta$ can be Laplace transformed (see \lemmaref{laplace} below). On the other hand, the factor $(1+ \eta^2 \vert w\vert^2)$ ensures that the convolution product:
\begin{align}\eqlab{convolution0}
 (\widehat W\star \widehat Z)(w) := w \int_0^1 \widehat W(ws) \widehat Z(w(1-s))ds,\quad \widehat W,\widehat Z:\Omega\to \mathbb C,
\end{align}
defines a bounded bilinear operator with a ``small'' operator norm (with respect to $\eta\to \infty$). %Indeed, \cite{} shows the following important result.

\begin{lemma}\lemmalab{convolution1}
\cite[Proposition 4]{bonckaert2008a} The following holds
\begin{align*}
 \Vert (\widehat W\star \widehat Z)\Vert_\eta \le 4\pi \eta^{-1} \Vert \widehat W\Vert_\eta \Vert \widehat Z\Vert_\eta,
\end{align*}
for all $\widehat W,\widehat Z\in \mathcal G_\eta$ and all $\eta>0$.
\end{lemma}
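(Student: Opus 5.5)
The plan is a direct pointwise estimate along rays, followed by an elementary one-dimensional inequality. Fix $w\in\Omega$. Since $\Omega$ is star-shaped with respect to the origin, both $ws$ and $w(1-s)$ lie in $\Omega$ for $s\in[0,1]$, so the convolution \eqref{convolution0} is well defined and analytic on $\Omega$ (differentiation under the integral). By definition of the norm,
\begin{align*}
\vert \widehat W(ws)\vert \le \Vert \widehat W\Vert_\eta \frac{\e^{\eta s\vert w\vert}}{1+\eta^2 s^2\vert w\vert^2},\quad \vert \widehat Z(w(1-s))\vert \le \Vert \widehat Z\Vert_\eta \frac{\e^{\eta (1-s)\vert w\vert}}{1+\eta^2 (1-s)^2\vert w\vert^2}.
\end{align*}
The exponentials multiply to $\e^{\eta\vert w\vert}$, which cancels against the weight $\e^{-\eta\vert w\vert}$ in the norm. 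Writing $p:=\eta\vert w\vert\ge 0$, we are left to bound
\begin{align*}
I(p):=\frac{p}{\eta}(1+p^2)\int_0^1 \frac{ds}{(1+p^2s^2)(1+p^2(1-s)^2)},
\end{align*}
and the claim follows once we show $\sup_{p\ge0} p(1+p^2)J(p)\le 4\pi$, where $J$ denotes the integral.

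For this one-dimensional bound, split the integral at $s=1/2$ and use symmetry, so that $J(p)=2\int_0^{1/2}$. On $[0,1/2]$ we have $1-s\ge 1/2$, hence $1+p^2(1-s)^2\ge 1+p^2/4\ge (1+p^2)/4$. Therefore
\begin{align*}
J(p)\le \frac{8}{1+p^2}\int_0^{1/2}\frac{ds}{1+p^2s^2}\le \frac{8}{1+p^2}\cdot\frac{1}{p}\int_0^\infty\frac{du}{1+u^2}=\frac{4\pi}{p(1+p^2)}.
\end{align*}
This gives $p(1+p^2)J(p)\le 4\pi$ for every $p>0$; at $p=0$ the bound is trivial, since the prefactor vanishes. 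Taking the supremum over $w\in\Omega$ yields $\Vert \widehat W\star\widehat Z\Vert_\eta\le 4\pi\eta^{-1}\Vert\widehat W\Vert_\eta\Vert\widehat Z\Vert_\eta$.

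The only delicate point is this elementary integral estimate. The key is that the polynomial factor $(1+p^2)$ in the weight is absorbed by the factor with $1-s\ge 1/2$, while the remaining integral contributes a $1/p$ that cancels the prefactor $p$. This cancellation is exactly why the weight $(1+\eta^2\vert w\vert^2)$ was included in the norm.
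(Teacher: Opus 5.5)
Your proof is correct and follows the paper's argument essentially step for step. It uses the pointwise weight bounds, the symmetry $s\mapsto 1-s$ to reduce to $[0,\tfrac12]$, the estimate $1+p^2(1-s)^2\ge\tfrac14(1+p^2)$ there, and the substitution $u=ps$ giving $\tfrac{\pi}{2}p^{-1}$, hence the constant $8\cdot\tfrac{\pi}{2}=4\pi$.
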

\begin{proof}
It will be useful for us to repeat the proof from \cite{bonckaert2008a} (we will use it in the proof of \lemmaref{wWstarZest} below): By the definition \eqref{convolution0}, we have the following estimate:
% The proof is elementary so we have decided to repeat here. We have
 \begin{align*}
 \vert (\widehat W\star \widehat Z)(w)\vert&\le
 \e^{\eta \vert w\vert} (1+\eta^2 \vert w\vert^2)^{-1} \Vert \widehat W\Vert_\eta \Vert \widehat Z\Vert_\eta   \int_0^{1} \frac{(1+\eta^2 \vert w\vert^2)}{(1+\eta^2 \vert w\vert^2 s^2)(1+\eta^2 \vert w\vert^2 (1-s)^2)} \vert w\vert ds\\
 &\le 8\e^{\eta \vert w\vert} (1+\eta^2 \vert w\vert^2)^{-1}  \Vert \widehat W\Vert_\eta \Vert \widehat Z\Vert_\eta\Vert   \int_0^{\frac12} \frac{1}{1+\eta^2 \vert w\vert^2 s^2}\vert w \vert ds,
%  &=8\e^{\eta \vert w\vert} (1+\eta^2 \vert w\vert^2)^{-1} \vert w\vert \Vert \widehat W\Vert_\eta \Vert \widehat Z\Vert_\eta \int_0^{\frac12} \frac{1}{1+\eta^2 \vert w\vert^2 s^2}\vert w \vert ds,
 \end{align*}
%  \begin{align*}
%  \vert (\widehat W\star \widehat Z)(w)\vert&\le
%  \e^{\eta \vert w\vert} (1+\eta^4 \vert w\vert^4)^{-1} \vert w\vert \sum_{\beta\in \mathbb Z} \Vert \widehat W_\beta\Vert_\eta \Vert \widehat Z_{\alpha-\beta}\Vert   \int_0^{1} \frac{\vert w\vert s \vert w\vert (1-s)(1+\eta^4 \vert w\vert^4)}{(1+\eta^4 \vert w\vert^4 s^4)(1+\eta^4 \vert w\vert^4 (1-s)^4} ds\\
%  &\le 2^{3}\e^{\eta \vert w\vert} (1+\eta^2 \vert w\vert^2)^{-1} \vert w\vert \sum_{\beta\in \mathbb Z} \Vert \widehat W_\beta\Vert_\eta \Vert \widehat Z_{\alpha-\beta}\Vert   \int_0^{\frac12} \frac{\vert w\vert s}{1+\eta^2 \vert w\vert^4 s^4}\vert w \vert ds\\
%  &=2^{3}\e^{\eta \vert w\vert} (1+\eta^2 \vert w\vert^2)^{-1} \vert w\vert \sum_{\beta\in \mathbb Z} \Vert \widehat W_\beta\Vert_\eta \Vert \widehat Z_{\alpha-\beta}\Vert   \int_0^{\frac12} \frac{\vert w\vert s}{1+\eta^2 \vert w\vert^4 s^4}\vert w \vert ds,
%  \end{align*}
 using the symmetry with respect to $s\mapsto 1-s$ and that $1+\eta^2 \vert w\vert^2 (1-s)^2\ge \frac{1}{4}(1+\eta^2 \vert w\vert^2)$ for all $0\le s\le \frac12$.
 We now apply the substitution $p=\eta\vert w\vert s$:
 \begin{align*}
  \int_0^{\frac12} \frac{1}{1+\eta^2 \vert w\vert^2 s^2}\vert w \vert ds\le  \eta^{-1} \int_0^{\infty} \frac{1}{1+s^2}ds  = \frac{\pi}{2}\eta^{-1},
  \end{align*}
 so that
 \begin{align*}
 \Vert  (\widehat W\star \widehat Z)\Vert_{\eta} \le 4 \pi \eta^{-1} \Vert \widehat W\Vert_\eta \Vert \widehat Z\Vert_\eta.
\end{align*}
\end{proof}

In the present paper, we will also need the following result.
\begin{lemma}\lemmalab{wWstarZest}
 Let $\widehat W,\widehat Z\in \mathcal G_\eta$ and denote the function $$w\mapsto w \widehat W(w), \quad w\in \Omega,$$ by
 \begin{align}\eqlab{wW}
 w.\widehat{ W}.
 \end{align}
 Then
 \begin{align*}
  w\mapsto w^{-1} ((w.\widehat W)\star Z)(w), \quad w\in \Omega,
 \end{align*}
 belongs to $\mathcal G_\eta$,
with the following uniform bound
%  and let $w.\widehat W$ denote the function $w\mapsto w \widehat W(w)$, $w\in \Omega$.
\begin{align*}
  \Vert w\mapsto w^{-1} ((w.\widehat W)\star Z)(w) \Vert_\eta \le 4 \pi \eta^{-1} \Vert \widehat W \Vert_\eta \Vert \widehat Z \Vert_\eta,\quad \forall\,\eta>0,
\end{align*}
% for some $C>0$ independent of $\eta$.
\end{lemma}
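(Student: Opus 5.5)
We write out the convolution explicitly and absorb the factor $w$ into the integrand. By \eqref{convolution0},
\begin{align*}
 ((w.\widehat W)\star \widehat Z)(w) = w\int_0^1 ws\,\widehat W(ws)\widehat Z(w(1-s))\,ds,
\end{align*}
so that
\begin{align*}
 w^{-1}((w.\widehat W)\star \widehat Z)(w) = w\int_0^1 s\,\widehat W(ws)\widehat Z(w(1-s))\,ds,\quad w\in \Omega.
\end{align*}
This expression is manifestly analytic on $\Omega$, including at $w=0$, so the removable singularity causes no difficulty. Since $\Omega$ is star-shaped with respect to the origin, the points $ws$ and $w(1-s)$ lie in $\Omega$ for all $s\in[0,1]$.

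Since $0\le s\le 1$, the integrand is pointwise bounded in absolute value by that of $(\widehat W\star\widehat Z)(w)$:
\begin{align*}
 \vert w^{-1}((w.\widehat W)\star \widehat Z)(w)\vert \le \vert w\vert \int_0^1 \vert \widehat W(ws)\vert\,\vert \widehat Z(w(1-s))\vert\, ds.
\end{align*}
We then bound the right-hand side exactly as in the proof of \lemmaref{convolution1}. First insert $\vert\widehat W(ws)\vert\le \Vert \widehat W\Vert_\eta \e^{\eta\vert w\vert s}(1+\eta^2\vert w\vert^2s^2)^{-1}$ and the analogous bound for $\widehat Z$, and use $\e^{\eta\vert w\vert s}\e^{\eta\vert w\vert(1-s)}=\e^{\eta\vert w\vert}$. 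Next, use the symmetry $s\mapsto 1-s$ of the resulting majorant to reduce to $[0,\tfrac12]$ at the cost of a factor $2$, together with $1+\eta^2\vert w\vert^2(1-s)^2\ge \tfrac14(1+\eta^2\vert w\vert^2)$ on that interval. Finally, substitute $p=\eta\vert w\vert s$, which gives the bound $\tfrac{\pi}{2}\eta^{-1}$ for the remaining integral. Multiplying by $\e^{-\eta\vert w\vert}(1+\eta^2\vert w\vert^2)$ and taking the supremum over $w\in\Omega$ yields
\begin{align*}
 \Vert w\mapsto w^{-1}((w.\widehat W)\star \widehat Z)(w)\Vert_\eta\le 4\pi\eta^{-1}\Vert\widehat W\Vert_\eta\Vert\widehat Z\Vert_\eta.
\end{align*}

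There is only one point needing care. The integrand $s\,\widehat W(ws)\widehat Z(w(1-s))$ is not itself symmetric under $s\mapsto 1-s$. We therefore apply the bound $s\le 1$ first, before the symmetry step, which leaves a symmetric majorant. Once that is done, the argument is exactly that of \lemmaref{convolution1}, and membership in $\mathcal G_\eta$ (analyticity plus finite norm) follows.
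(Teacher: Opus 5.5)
Your proof is correct and follows the same route as the paper. Both use $s\le 1$ to dominate $w^{-1}((w.\widehat W)\star\widehat Z)(w)$ pointwise by the majorant of $\widehat W\star\widehat Z$, then reuse the estimate from the proof of \lemmaref{convolution1} to obtain $4\pi\eta^{-1}\Vert\widehat W\Vert_\eta\Vert\widehat Z\Vert_\eta$; your points about applying $s\le1$ before the symmetry step and about analyticity at $w=0$ are ones the paper leaves implicit.
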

\begin{proof}
 The proof is similar to \lemmaref{convolution1}. Indeed we have
 \begin{align*}
 &\vert (w.\widehat W\star \widehat Z)(w)\vert\\
 &\le \e^{\eta \vert w\vert} (1+\eta^2 \vert w\vert^2)^{-1}\Vert \widehat W\Vert_\eta \Vert \widehat Z\Vert_\eta   \int_0^{1} \frac{ \vert w\vert s (1+\eta^2 \vert w\vert^2) }{(1+\eta^2 \vert w\vert^2 s^2)(1+\eta^2 \vert w\vert^2 (1-s)^2)}  \vert w\vert ds\\
 &\le \vert w\vert \left( \e^{\eta \vert w\vert} (1+\eta^2 \vert w\vert^2)^{-1}\Vert \widehat W\Vert_\eta \Vert \widehat Z\Vert_\eta   \int_0^{1} \frac{ 1+\eta^2 \vert w\vert^2 }{(1+\eta^2 \vert w\vert^2 s^2)(1+\eta^2 \vert w\vert^2 (1-s)^2)}  \vert w\vert ds\right),
 \end{align*}
 using $s\le 1$ in the final inequality.
  The last bracket is estimated above:
  \begin{align*}
   \vert w^{-1} (w.\widehat W\star \widehat Z)(w)\vert \e^{-\eta \vert w\vert} (1+\eta^2 \vert w\vert^2)\le 4 \pi \eta^{-1}  \Vert \widehat W\Vert_\eta \Vert \widehat Z\Vert_\eta,
  \end{align*}
for all $w\in \Omega$, $\eta>0$.
\end{proof}

\subsection{The Laplace transform}

Functions $\widehat W\in \mathcal G_\eta$ can be Laplace transformed:
\begin{align*}
 \mathcal L^{\varphi}[\widehat W](x)  = \int_0^{\infty\e^{i\varphi}} \widehat W(w) \e^{-w/x}dw,
\end{align*}
where the integration is along the ray defined by $w=r\e^{i\varphi}$, $r\ge 0$.
In particular, we have the following result:
\begin{lemma}\lemmalab{laplace}
\cite[Proposition 3]{bonckaert2008a} The Laplace transform $\mathcal L^{\varphi}$ defines (by analytic continuation) a linear continuous operator from $\mathcal G_\eta$ to the set of analytic bounded
functions on a sector $S(\varphi,\pi + \nu)\cap B_\kappa$ (equipped with the sup-norm) where
\begin{align}
 \kappa<\frac12 \eta^{-1} \sin \frac{\nu}{2}.\eqlab{deltacond}
\end{align}
In particular, when \eqref{deltacond} holds then we have the following bound:
\begin{align}\eqlab{Laplacebound}
  \vert \mathcal L^{\varphi}[\widehat W](x) \vert\le \eta^{-1} \Vert \widehat W \Vert_\eta\quad \forall\,\widehat W\in \mathcal G_\eta,
\end{align}
for all $x\in S(\varphi,\pi + \nu)\cap B_\kappa$.
\end{lemma}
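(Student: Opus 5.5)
The plan is to prove \lemmaref{laplace} directly from the definition of $\Vert\cdot\Vert_\eta$: first for $x$ in a sector around the direction $\varphi$, then for the remaining directions by rotating the integration ray inside $S(\varphi,\nu)$ and continuing analytically. Without loss of generality take $\varphi=0$; the case $\varphi=\pi$ follows by the substitution $w\mapsto -w$, $x\mapsto -x$.

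For $\widehat W\in\mathcal G_\eta$ we have the pointwise bound
\begin{align*}
\vert\widehat W(w)\vert\le \Vert\widehat W\Vert_\eta \e^{\eta\vert w\vert}(1+\eta^2\vert w\vert^2)^{-1}\quad\forall\,w\in\Omega.
\end{align*}
Fix a direction $\phi$ with $\vert\phi\vert<\nu/2$, so that the ray $w=r\e^{i\phi}$, $r\ge0$, lies in $\Omega$. Define
\begin{align*}
\mathcal L^{\phi}[\widehat W](x)=\int_0^{\infty \e^{i\phi}}\widehat W(w)\e^{-w/x}dw .
\end{align*}
Along this ray $\vert \e^{-w/x}\vert=\e^{-r\operatorname{Re}(\e^{i\phi}/x)}$. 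The integral therefore converges absolutely and is analytic in $x$ (by dominated convergence / Morera) whenever $\operatorname{Re}(\e^{i\phi}/x)>\eta$.

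Set $x^{-1}=\vert x\vert^{-1}\e^{-i\arg x}$. The condition becomes $\cos(\phi-\arg x)>\eta\vert x\vert$. If $\vert\phi-\arg x\vert\le \frac{\pi}{2}-\frac{\nu}{4}$, then $\cos(\phi-\arg x)\ge\sin\frac{\nu}{4}$. So it suffices that $\vert x\vert<\kappa$ with $\eta\kappa$ small compared to $\sin\frac{\nu}{4}$, which is comparable to $\sin\frac{\nu}{2}$. I expect that the stated constant $\kappa<\frac12\eta^{-1}\sin\frac{\nu}{2}$ comes out once the $\phi$ are chosen carefully (e.g.\ $\phi=\pm\nu/2$ minus a small margin, with the relevant half-plane condition $\cos(\phi-\arg x)\ge\sin(\nu/2)$ up to the factor $1/2$).

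Next, the definitions for different $\phi$ agree on overlaps by Cauchy's theorem. On the arc $\vert w\vert=R$ between two rays the integrand is bounded by $C\e^{(\eta-c/\vert x\vert)R}$, which tends to $0$ as $R\to\infty$. Letting $\phi$ range over $(-\nu/2,\nu/2)$, the union of the admissible $x$-domains covers $\vert\arg x\vert<\frac{\pi+\nu}{2}$ (up to the margin), intersected with $B_\kappa$. This yields the analytic continuation to $S(0,\pi+\nu)\cap B_\kappa$, which is the sector in the statement.

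For the bound \eqref{Laplacebound}, estimate along an admissible ray:
\begin{align*}
\vert \mathcal L[\widehat W](x)\vert\le \Vert\widehat W\Vert_\eta\int_0^\infty \frac{\e^{(\eta-\operatorname{Re}(\e^{i\phi}/x))r}}{1+\eta^2r^2}dr .
\end{align*}
Since the exponent is $\le 0$ under \eqref{deltacond}, this is at most $\Vert\widehat W\Vert_\eta\int_0^\infty(1+\eta^2r^2)^{-1}dr=\frac{\pi}{2}\eta^{-1}\Vert\widehat W\Vert_\eta$.

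The main obstacle is sharpening this to exactly $\eta^{-1}$. To do so I would keep the exponential decay: with $\eta\vert x\vert\le\frac12\cos(\phi-\arg x)$ one gets $\operatorname{Re}(\e^{i\phi}/x)-\eta\ge \eta$. Then
\begin{align*}
\int_0^\infty \e^{-\eta r}(1+\eta^2r^2)^{-1}dr\le \int_0^\infty \e^{-\eta r}dr=\eta^{-1},
\end{align*}
which gives \eqref{Laplacebound}. This is where the factor $\frac12$ in \eqref{deltacond} is used. Linearity and continuity of $\mathcal L^\varphi$ into the bounded analytic functions with the sup-norm then follow immediately from this uniform bound.
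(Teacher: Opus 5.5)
Your ray-rotation argument (Cauchy's theorem to glue the integrals over different rays, Morera for analyticity, and $\int_0^\infty \e^{-\eta r}dr=\eta^{-1}$ once $\operatorname{Re}(\e^{i\phi}/x)\ge 2\eta$) is the natural proof of \cite[Proposition 3]{bonckaert2008a}, which the paper only cites. The gap is the covering step. You claim that, letting $\phi$ range over $(-\frac\nu2,\frac\nu2)$, the admissible $x$-domains cover $S(0,\pi+\nu)\cap B_\kappa$ ``up to the margin''. Rays must stay in $\Omega=B_\kappa\cup S(0,\nu)$, so $\vert\phi\vert<\frac\nu2$. For $x=\vert x\vert\e^{i\psi}$ your estimate guarantees convergence only if $\eta\vert x\vert<\cos(\phi-\psi)$. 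Optimizing over $\phi$ gives the radius $\eta^{-1}\cos(\vert\psi\vert-\frac\nu2)$ for $\vert\psi\vert\ge\frac\nu2$, and this radius tends to $0$ as $\vert\psi\vert\to\frac{\pi+\nu}{2}$. So no uniform $\kappa>0$ exists, and the ``margin'' is exactly the missing part. With the constant in \eqref{deltacond} it is worse. Your requirement $2\eta\vert x\vert\le\cos(\phi-\psi)$ for all $\vert x\vert<\kappa$ forces $\vert\phi-\psi\vert\le\arccos(2\eta\kappa)$. Hence it only reaches $\vert\psi\vert<\frac\nu2+\arccos(2\eta\kappa)$, which is strictly below $\frac{\pi+\nu}{2}$ and tends to $\frac\pi2$ as $\kappa$ approaches the threshold in \eqref{deltacond}. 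So with the stated $\kappa$ you essentially get a half-disc, not a sector of opening $\pi+\nu$.

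No cleverer choice of rays can close this, because with the paper's conventions (Borel sector $S(\varphi,\nu)$ of opening $\nu$, $x$-sector $S(\varphi,\pi+\nu)$) the statement is false. Here is a counterexample.
\begin{itemize}
\item Take $\varphi=0$, $0<\delta'<\frac\nu2$, $\beta=\frac{\pi+\nu}{2}-\delta'$, and $c=A\e^{-i\beta}$ with $A\sin\delta'<\eta$. Set $\widehat W(w)=\e^{cw}$.
\item For $\vert\arg w\vert<\frac\nu2$ one checks $\operatorname{Re}(cw)\le A\sin\delta'\,\vert w\vert$, and $\widehat W$ is bounded on the Borel disc. Hence $\widehat W\in\mathcal G_\eta$.
\item Since $\operatorname{Re}c<0$, for $x>0$ you get $\mathcal L^0[\widehat W](x)=x/(1-cx)$. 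Its continuation has a pole at $x=A^{-1}\e^{i\beta}\in S(0,\pi+\nu)$.
\item Taking $A$ large and $\delta'$ small puts this pole inside $B_\kappa$ for any $\kappa>0$.
\end{itemize}

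What your argument does prove is an amended statement. Take $0<\nu'<\nu$ and $\kappa<\frac12\eta^{-1}\sin\frac{\nu-\nu'}{2}$. Every $x\in S(\varphi,\pi+\nu')\cap B_\kappa$ admits a ray with $\vert\phi\vert<\frac\nu2$ and $\vert\phi-\psi\vert<\frac{\pi-(\nu-\nu')}{2}$, hence $\operatorname{Re}(\e^{i\phi}/x)>2\eta$. Your gluing argument and the bound $\int_0^\infty\e^{-\eta r}dr=\eta^{-1}$ then give \eqref{Laplacebound} on that smaller sector. Your $\sin\frac\nu4$ computation is the case $\nu'=\frac\nu2$. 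Equivalently, the stated constants are correct if the Borel sector in $\Omega$ has half-opening $\nu$ rather than opening $\nu$. Presumably this factor $2$ was lost when the paper translated the constants of the cited result. Downstream the fix is harmless: in \thmref{main} take $\nu\in(\chi,\pi)$, $\nu'=\chi$, and $\delta<\frac12\eta^{-1}\sin\frac{\nu-\chi}{2}$.
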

\begin{proof}
 We refer to \cite[Proposition 3]{bonckaert2008a} for the proof. For comparison, we emphasize that we denote their $(\mu,\epsilon/2,\nu)$ by $(\eta,\nu,\delta)$ in the present manuscript. The bound \eqref{Laplacebound} follows from the last estimate of the proof of \cite[Proposition 3]{bonckaert2008a} using \eqref{deltacond}.
\end{proof}
We recall that
\begin{align}\eqlab{laplacewn}
 \mathcal L^{\varphi}\left[w\mapsto w^{\beta-1}\right](x)=(\beta-1)! x^\beta\quad \forall\,\beta\in \mathbb N,\,\varphi\in \mathbb T.
\end{align}

\begin{remark}\remlab{borellaplace}
There is a different perspective of Borel-Laplace (resurgence \cite{sauzin2015}). Indeed, one could start from a Gevrey-1 series:
\begin{align}\eqlab{series}
 W(x) = \sum_{\beta\in \mathbb N} W_\beta x^\beta,\quad \vert W_\beta\vert\le A \tau^\beta (\beta-1)!
\end{align}
Then the Borel transform is given by
\begin{align*}
 \widehat W(w):=\sum_{\beta\in \mathbb N} W_{\beta} \frac{w^{\beta-1}}{(\beta-1)!},
\end{align*}
which is now convergent on the disc $\vert w\vert<\tau^{-1}$. Then the series \eqref{series} is said to be $1$-summable in the direction $\varphi$ if (a) the analytic function $\widehat W$ can be endlessly continued to $\widehat W^\varphi$ along the ray defined by $w=r\e^{i\varphi}$, $r\ge 0$, and (b) it is of at most exponential growth of order $1$ along this ray, i.e.
\begin{align*}
 \e^{-\eta \vert w\vert} \vert \widehat W^\varphi(w)\vert =\mathcal O(1) \quad \mbox{for}\quad w\to \infty\e^{i\varphi},
\end{align*}
for some $\eta>0$ large enough, see \cite{balser2000a,de2020a}.
Therefore, when the series is $1$-summable, then the extension $\widehat W^\varphi$ of  $\widehat W$ can be Laplace-transformed $\mathcal L^{\varphi}[\widehat W^\varphi]$ to obtain an analytic function $W^\varphi$ of $x$ in some local sector (like $S^\pm$ in \eqref{Spm}), see \lemmaref{laplace}.

In this paper, our viewpoint is slightly different. Instead of trying to establish the Gevrey-property of the asymptotic series first and then perform the continuation of the Borel transform, we achieve this more indirectly by following \cite{bonckaert2008a}. The idea is to set up an appropriate nonlinear equation in $\mathcal G_\eta$ for the Borel transform $\widehat W=\widehat W^\varphi$ of our unknown function. Importantly, this equation in the Borel plane is perturbative by \lemmaref{convolution1} for $\eta\gg 1$ and it can therefore (easily!) be solved by a fixed-point argument. Finally, the desired solution is obtained through the Laplace transform (which is well-defined by \lemmaref{laplace}). In this way, we also obtain the Gevrey-$1$ asymptotic series in a more indirect way: We can write $\widehat W$ as a convergent series $\widehat W(w) = \sum_{\beta\in \mathbb N} \widehat W_\beta w^{\beta-1}$, $\vert W_\beta\vert\le A \tau^\beta$, $A>0,\tau>0$, near the origin (this is the motivation for including the open ball $B_\kappa$ in $\Omega$, see \eqref{Omega}). We then obtain the Gevrey-$1$ asymptotic series by applying the formal Laplace transformation (recall \eqref{laplacewn}):
\begin{align*}
 W(x) \sim_1 \sum_{\alpha \in \mathbb N} \widehat W_\beta(\beta-1)! x^\beta,\quad x\in S(\varphi,\pi+\nu)\cap B_\kappa.
\end{align*}
% for $x\in .$
% along a local sector centered around $\varphi$ and with opening greather than $\pi$.

To transform the equation \eqref{main} into an equation in the Borel plane for the Borel-transform $\widehat{\mathbf y}$ of $\mathbf y$, we will use the following basic properties of the Laplace transform:
\begin{align}
 \mathcal L^{\varphi}[\widehat W\star \widehat Z] =\mathcal L^{\varphi}[\widehat W]\mathcal L^{\varphi}[\widehat Z],\eqlab{prop1}
\end{align}
and
\begin{align}
 x^2 \frac{d}{dx} \mathcal L^{\varphi}[\widehat W](x) = \mathcal L^{\varphi}[w.\widehat W](x).\eqlab{prop2}
\end{align}
together with \lemmaref{final}. See \secref{equations} for further details.
% $\widehat W$ can be continued
% Now, $\widehat W\in \mathcal G_\eta$ has a convergent power series expansion at the origin:
% \begin{align*}
%  \widehat W (w) = \sum_{\beta \in \mathbb N_0} \widehat W_n w^n,\quad \vert \widehat W_n\vert \le A B^n,
% \end{align*}
% with $0<B^{-1}<\delta$. Since $\mathcal L^{\varphi}(w^n)(x) = n! x^{n+1}$, we obtain an asymptotic Gevrey-1 series of  $\mathcal L^{\varphi}(\widehat W)(x)$.
%
% Gevrey-1, $1$-summable series, resurgence.
\end{remark}

In the following section, we consider Fourier series with coefficients in $\mathcal G_\eta$.

\subsection{The Banach space $\mathcal G_\eta\{\e^{i\theta}\}$}
  Next, we consider analytic functions:
  \begin{align}\eqlab{mathcalWtheta}
   \widehat W\,:\,\Omega \times \mathbb T_\xi \to \mathbb C,\quad \mathbb T_\xi := \mathbb C/(2\pi \mathbb Z),
  \end{align}
% Here $\mathbb T_\xi$ denotes $\mathbb T=\mathbb R/(2\pi\mathbb Z)$ extended into the complex plane such that $\theta\in \mathbb T_\xi$ if and only if $\operatorname{Re}(\theta)\in \mathbb R/(2\pi \mathbb Z)$ and $0\le \vert \operatorname{Im}\theta \vert<\xi$.
which we write as Fourier series:
% We write $\widehat W$ as a Fourier series:
\begin{align*}
 \widehat W (w,\theta ) = \sum_{\alpha\in \mathbb Z}\widehat W_\alpha(w) \e^{i\alpha \theta},
\end{align*}
and suppose that $\widehat W_\alpha\in \mathcal G_\eta$ for all $\alpha\in \mathbb Z$. In particular, we define $\mathcal G_\eta\{\e^{i\theta}\}$, $\eta>0$, as the space of analytic functions \eqref{mathcalWtheta} with
%
% Notice that the convolution is well-defined on $\mathcal G$.
% We now equip $\mathcal G$ with the $\eta$-dependent Banach norm
\begin{align}\eqlab{Wert}
 \Wert \widehat W\Wert_{\eta}:= \sup_{\theta \in \mathbb T_\xi} \left\{\sum_{\alpha\in \mathbb Z} \Vert \widehat W_\alpha\Vert_{\eta} \e^{-\alpha \operatorname{Im}\theta}\right\}<\infty.
\end{align}
$\mathcal G_\eta\{\e^{i\theta}\}$ with the norm $\Wert \cdot\Wert_\eta$ is also a Banach space for all $\eta>0$, with $\mathcal G_{\eta'}\{\e^{i\theta}\}\subset \mathcal G_\eta\{\e^{i\theta}\}$ for all $\eta'>\eta$.
\begin{remark}\remlab{normeqv}
Notice that
\begin{align*}
   \frac12 \sum_{\alpha\in \mathbb Z} \Vert \widehat W_\alpha\Vert_{\eta} \e^{\vert \alpha\vert \xi} \le \Wert \widehat W\Wert_{\eta}\le \sum_{\alpha\in \mathbb Z} \Vert \widehat W_\alpha\Vert_{\eta} \e^{\vert \alpha\vert \xi}.
\end{align*}
The upper bound is obvious. For the lower bound, we set $\operatorname{Im}\theta=\pm \xi$:
\begin{align*}
\sum_{-\alpha\in \mathbb N} \Vert \widehat W_\alpha\Vert_{\eta} \e^{-\alpha \xi} \le \sum_{\alpha\in \mathbb Z} \Vert \widehat W_\alpha\Vert_{\eta} \e^{-\alpha \xi}\le \Wert \widehat W\Wert_{\eta},\\
\sum_{\alpha\in \mathbb N_0} \Vert \widehat W_\alpha\Vert_{\eta} \e^{\alpha \xi} \le \sum_{\alpha\in \mathbb Z} \Vert \widehat W_\alpha\Vert_{\eta} \e^{\alpha \xi}\le \Wert \widehat W\Wert_{\eta},
\end{align*}
proving that
\begin{align*}
 \sum_{\alpha\in \mathbb Z} \Vert \widehat W_\alpha\Vert_{\eta} \e^{\vert \alpha\vert \xi}\le 2\Wert \widehat W\Wert_{\eta}
\end{align*}
It follows that $\widehat W\in \mathcal G_\eta\{\e^{i\theta}\}$ implies that
\begin{align*}
 \Vert \widehat W_\alpha \Vert_{\eta}\e^{\vert \alpha\vert \xi}\to 0 \quad \mbox{for $\alpha\to \infty$}.
\end{align*}
Although not essential, we prefer to work with the norm $\Wert \widehat W\Wert_\eta$ (rather than e.g. the equivalent norm $\sum_{\alpha\in \mathbb Z} \Vert \widehat W_\alpha\Vert_{\eta} \e^{\vert \alpha\vert \xi}$) as it performs well under multiplication (see \lemmaref{convolution2}).
\end{remark}
We Laplace transform the functions \eqref{mathcalWtheta}  term-wise, i.e.
\begin{align}\eqlab{laplacedefn}
 \mathcal L^{\varphi}[\widehat W][(x,\theta): = \sum_{\alpha\in\mathbb Z} \mathcal L^{\varphi} [\widehat W_\alpha](x) \e^{i\alpha \theta}.
\end{align}
We have the following:
\begin{lemma}\lemmalab{laplace1}
$\mathcal L^{\varphi}$ is a bounded linear operator from $\mathcal G_\eta\{\e^{i\theta}\}$ to
%  If $\widehat W\in \mathcal G_\eta\{\e^{i\theta}\}$ then this function is
the space of analytic defined on $(x,\theta)\in (S(\varphi,\pi+\nu)\cap B_\kappa)\times \mathbb T_\xi$ for $\kappa>0$ small enough, see \eqref{deltacond}. In particular, whenever \eqref{deltacond} holds then we have the following estimate:
\begin{align*}
  \vert \mathcal L^{\varphi}[\widehat W](x,\theta)\vert\le \eta^{-1} \Wert W\Wert_\eta,
\end{align*}
for all $(x,\theta)\in (S(\varphi,\pi+\nu)\cap B_\kappa)\times \mathbb T_\xi$.
\end{lemma}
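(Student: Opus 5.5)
The plan is to reduce the statement to \lemmaref{laplace}, applied to each Fourier coefficient separately, and then sum the resulting Fourier series using the weights built into the norm \eqref{Wert}. Fix $\widehat W\in \mathcal G_\eta\{\e^{i\theta}\}$ with Fourier coefficients $\widehat W_\alpha\in\mathcal G_\eta$, and suppose $\kappa$ satisfies \eqref{deltacond}. By \lemmaref{laplace}, each $\mathcal L^{\varphi}[\widehat W_\alpha]$ is analytic and bounded on $S(\varphi,\pi+\nu)\cap B_\kappa$, with
\begin{align*}
\vert \mathcal L^{\varphi}[\widehat W_\alpha](x)\vert\le \eta^{-1}\Vert \widehat W_\alpha\Vert_\eta .
\end{align*}
Hence each term $\mathcal L^{\varphi}[\widehat W_\alpha](x)\e^{i\alpha\theta}$ in \eqref{laplacedefn} is analytic in $(x,\theta)$. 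Its modulus is bounded by $\eta^{-1}\Vert\widehat W_\alpha\Vert_\eta \e^{-\alpha\operatorname{Im}\theta}$, because $\vert \e^{i\alpha\theta}\vert=\e^{-\alpha \operatorname{Im}\theta}$.

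Next I sum over $\alpha$. For every fixed $\theta\in\mathbb T_\xi$, the definition \eqref{Wert} gives
\begin{align*}
\sum_{\alpha\in\mathbb Z}\vert \mathcal L^{\varphi}[\widehat W_\alpha](x)\e^{i\alpha\theta}\vert \le \eta^{-1}\sum_{\alpha\in\mathbb Z}\Vert \widehat W_\alpha\Vert_\eta \e^{-\alpha\operatorname{Im}\theta}\le \eta^{-1}\Wert \widehat W\Wert_\eta .
\end{align*}
This is exactly the claimed estimate, once we know the series converges.

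For analyticity we need the convergence to be uniform, since then the limit of analytic partial sums is analytic. Pointwise domination by the supremum in \eqref{Wert} does not give uniformity by itself. Instead, I use \remref{normeqv}, which yields
\begin{align*}
\sum_{\alpha}\Vert\widehat W_\alpha\Vert_\eta \e^{\vert\alpha\vert\xi}\le 2\Wert\widehat W\Wert_\eta<\infty .
\end{align*}
Since $\vert \e^{i\alpha\theta}\vert\le \e^{\vert\alpha\vert\xi}$ on $\mathbb T_\xi$, this is a $\theta$-independent summable majorant. The Weierstrass M-test then gives uniform convergence on $(S(\varphi,\pi+\nu)\cap B_\kappa)\times\mathbb T_\xi$, so the sum is analytic there. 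Linearity of $\mathcal L^{\varphi}$ is immediate, and boundedness as an operator into the sup-norm space is the estimate above.

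The only point needing care is the uniformity just discussed. The equivalence of norms in \remref{normeqv} resolves it, so I expect no genuine obstacle beyond checking that $\kappa$ is chosen according to \eqref{deltacond} independently of $\alpha$. That holds because \eqref{deltacond} involves only $\eta$ and $\nu$.
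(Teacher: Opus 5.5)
Your proposal is correct and follows essentially the same route as the paper: apply \lemmaref{laplace} to each Fourier coefficient and sum with the weights $\e^{-\alpha\operatorname{Im}\theta}$ from \eqref{Wert}. The paper simply asserts that analyticity follows from \lemmaref{laplace}, whereas you supply the uniform-convergence step explicitly, via the Weierstrass M-test with the $\theta$-independent majorant from \remref{normeqv}.
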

\begin{proof}
By \lemmaref{laplace}, it suffices to show the bound. We find that
\begin{align*}
 \vert \mathcal L^{\varphi}[\widehat W](x,\theta)\vert &\le \eta^{-1} \sum_{\alpha\in\mathbb Z}  \Vert \widehat W_\alpha\Vert_\eta \e^{-\alpha \operatorname{Im}(\theta)}\le \eta^{-1} \Wert W\Wert_\eta,
\end{align*}
by \eqref{Laplacebound} and \eqref{laplacedefn}.
\end{proof}

Now, the product of Fourier series is given by ``convolution of the coefficients'' (Cauchy's product rule):
\begin{align}\eqlab{cauchy}
 \sum_{\alpha \in \mathbb Z}f_\alpha\e^{i\alpha \theta}\sum_{\beta \in \mathbb Z}g_\beta\e^{i\beta\theta} = \sum_{\alpha\in \mathbb Z}\left(\sum_{\beta\in \mathbb Z} f_\beta g_{\alpha-\beta}\right)\e^{i\alpha \theta},
\end{align}
and we therefore define the convolution on $\mathcal G_\eta\{\e^{i\theta}\}$ at the level of the coefficients in the following way:
\begin{align}\eqlab{convolution2}
 (\widehat W\star \widehat Z)(w,\theta):=\sum_{\alpha\in \mathbb Z} \left(\sum_{\beta \in \mathbb Z}  \widehat W_\beta \star \widehat Z_{\alpha-\beta}\right) \e^{i\alpha \theta}\quad \forall\,\widehat W,\,\widehat Z\in \mathcal G_\eta\{\e^{i\theta}\}.
\end{align}
In this way,
\begin{align}\eqlab{laplacefs}
\mathcal L^{\varphi}[\widehat W\star \widehat Z]= \mathcal L^{\varphi}[\widehat W]\mathcal L^{\varphi}[\widehat Z]\quad \forall\,\widehat W,\widehat Z\in \mathcal G_\eta\{\e^{i\theta}\}.
\end{align}

% We have:
\begin{lemma}\lemmalab{convolution2}
 The convolution defines a bounded bilinear operator $\mathcal G_\eta\{\e^{i\theta}\}\times \mathcal G_\eta\{\e^{i\theta}\}\to \mathcal G_\eta\{\e^{i\theta}\}$:
 \begin{align*}
 \Wert \widehat W\star \widehat Z\Wert_\eta \le 4 \pi \eta^{-1}\Wert \widehat W\Wert_\eta \Wert \widehat Z\Wert_\eta,
 \end{align*}
 for all $\widehat W,\widehat Z\in \mathcal G_\eta\{\e^{i\theta}\}$ and any $\eta>0$.
\end{lemma}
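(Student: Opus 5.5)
The plan is to reduce everything to the scalar estimate of \lemmaref{convolution1} and then handle the Fourier part with the Cauchy product rule and the exponential weights. Fix $\widehat W,\widehat Z\in\mathcal G_\eta\{\e^{i\theta}\}$ and write $\widehat U:=\widehat W\star\widehat Z$, with coefficients
\begin{align*}
\widehat U_\alpha=\sum_{\beta\in\mathbb Z}\widehat W_\beta\star\widehat Z_{\alpha-\beta},
\end{align*}
as in \eqref{convolution2}.

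First I show that each $\widehat U_\alpha$ is a well-defined element of $\mathcal G_\eta$. By \lemmaref{convolution1} and the triangle inequality,
\begin{align*}
\Vert \widehat U_\alpha\Vert_\eta\le 4\pi\eta^{-1}\sum_{\beta\in\mathbb Z}\Vert\widehat W_\beta\Vert_\eta\Vert\widehat Z_{\alpha-\beta}\Vert_\eta.
\end{align*}
The right-hand side is finite because, by \remref{normeqv}, $\Vert\widehat W_\beta\Vert_\eta$ and $\Vert\widehat Z_\beta\Vert_\eta$ are summable (even with weight $\e^{\vert\beta\vert\xi}$). Hence the series defining $\widehat U_\alpha$ converges absolutely in the Banach space $\mathcal G_\eta$, and its limit is analytic on $\Omega$ because convergence is locally uniform.

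Next, fix $\theta\in\mathbb T_\xi$. I use the factorization
\begin{align*}
\e^{-\alpha\operatorname{Im}\theta}=\e^{-\beta\operatorname{Im}\theta}\,\e^{-(\alpha-\beta)\operatorname{Im}\theta}.
\end{align*}
With it, summing over $\alpha$ and applying Tonelli (all terms are nonnegative) gives
\begin{align*}
\sum_{\alpha}\Vert\widehat U_\alpha\Vert_\eta\e^{-\alpha\operatorname{Im}\theta}\le 4\pi\eta^{-1}\Big(\sum_\beta\Vert\widehat W_\beta\Vert_\eta\e^{-\beta\operatorname{Im}\theta}\Big)\Big(\sum_\gamma\Vert\widehat Z_\gamma\Vert_\eta\e^{-\gamma\operatorname{Im}\theta}\Big).
\end{align*}
Each factor on the right is bounded by $\Wert\widehat W\Wert_\eta$ and $\Wert\widehat Z\Wert_\eta$, respectively. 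Taking the supremum over $\theta$ then yields the claimed bound. This same-$\theta$ factorization is exactly why the $\sup_\theta$ norm \eqref{Wert} behaves well under products, as anticipated in \remref{normeqv}.

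Finally, I verify that $\widehat U$ is an analytic function on $\Omega\times\mathbb T_\xi$. The bound just obtained, together with $\vert\widehat U_\alpha(w)\vert\le \Vert\widehat U_\alpha\Vert_\eta\e^{\eta\vert w\vert}$, gives normal convergence of $\sum_\alpha\widehat U_\alpha(w)\e^{i\alpha\theta}$ on compacts, so $\widehat U$ is analytic and lies in $\mathcal G_\eta\{\e^{i\theta}\}$. Bilinearity is immediate.

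The only step needing any care is the first one: justifying the exchange of sums and the absolute convergence of the inner $\beta$-series. This is handled by the summability from \remref{normeqv}, so I expect no real obstacle beyond it.
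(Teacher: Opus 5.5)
Your proposal is correct and matches the paper's proof. Both apply \lemmaref{convolution1} coefficientwise to get $\Vert(\widehat W\star\widehat Z)_\alpha\Vert_\eta\le 4\pi\eta^{-1}\sum_\beta\Vert\widehat W_\beta\Vert_\eta\Vert\widehat Z_{\alpha-\beta}\Vert_\eta$, split the weight $\e^{-\alpha\operatorname{Im}\theta}$ at a fixed $\theta$, resum as a product, and take the supremum; your added checks on absolute convergence of the $\beta$-series and analyticity of the result are sound details the paper leaves implicit.
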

\begin{proof}
 By the definition \eqref{convolution2}, we have $\widehat W\star \widehat Z=\sum_{\alpha\in \mathbb Z}(\widehat W\star \widehat Z)_\alpha \e^{i\theta}$ with
\begin{align*}
 (\widehat W\star \widehat Z)_\alpha(w)=\sum_{\beta\in \mathbb Z} w \int_0^1 \widehat W_{\beta}(ws) \widehat Z_{\alpha-\beta}(w(1-s)) ds.
\end{align*}
From \lemmaref{convolution1}, we have
% \begin{align*}
%  \vert (\widehat W\star \widehat Z)_\alpha(w)\vert&\le \e^{\eta \vert w\vert} (1+\eta^4 \vert w\vert^4)^{-1} \vert w\vert  2\pi \eta^{-2}\sum_{\beta\in \mathbb Z}\Vert \widehat W_\beta\Vert_\eta \Vert \widehat Z_{\beta-\alpha}\Vert_\eta.
% %  \e^{\eta \vert w\vert} (1+\eta^4 \vert w\vert^4)^{-1} \vert w\vert \sum_{\beta\in \mathbb Z} \Vert \widehat W_\beta\Vert_\eta \Vert \widehat Z_{\alpha-\beta}\Vert   \int_0^{1} \frac{\vert w\vert s \vert w\vert (1-s)(1+\eta^4 \vert w\vert^4)}{(1+\eta^4 \vert w\vert^4 s^4)(1+\eta^4 \vert w\vert^4 (1-s)^4} ds\\
% %  &\le 2^{3}\e^{\eta \vert w\vert} (1+\eta^2 \vert w\vert^2)^{-1} \vert w\vert \sum_{\beta\in \mathbb Z} \Vert \widehat W_\beta\Vert_\eta \Vert \widehat Z_{\alpha-\beta}\Vert   \int_0^{\frac12} \frac{\vert w\vert s}{1+\eta^2 \vert w\vert^4 s^4}\vert w \vert ds\\
% %  &=2^{3}\e^{\eta \vert w\vert} (1+\eta^2 \vert w\vert^2)^{-1} \vert w\vert \sum_{\beta\in \mathbb Z} \Vert \widehat W_\beta\Vert_\eta \Vert \widehat Z_{\alpha-\beta}\Vert   \int_0^{\frac12} \frac{\vert w\vert s}{1+\eta^2 \vert w\vert^4 s^4}\vert w \vert ds,
%  \end{align*}
% %  using the symmetry with respect to $s\mapsto 1-s$.
%  We now apply the subsitution $p=\eta\vert w\vert s$:
%  \begin{align*}
%   \int_0^{\frac12} \frac{\vert w\vert s}{1+\eta^2 \vert w\vert^4 s^4}\vert w \vert ds\le  \eta^{-2} \int_0^{\infty} \frac{s}{1+s^4}ds  = \frac{\pi}{4}\eta^{-2},
%   \end{align*}
%  so that
 \begin{align*}
 \Vert  (\widehat W\star \widehat Z)_\alpha\Vert_{\eta} \le 4 \pi \eta^{-1} \sum_{\beta\in \mathbb Z} \Vert \widehat W_\beta\Vert_\eta \Vert \widehat Z_{\alpha-\beta}\Vert.
\end{align*}
This leads to the following estimate (upon using \eqref{cauchy}) for any $\theta\in \mathbb T_\xi$
\begin{align*}
 \sum_{\alpha\in \mathbb Z} \Vert (\widehat W\star \widehat Z)_\alpha \Vert_\eta \e^{-\alpha \operatorname{Im}\theta} &\le 4 \pi \eta^{-1} \sum_{\alpha\in \mathbb Z}\sum_{\beta\in \mathbb Z} \Vert  W_\beta\Vert_\eta e^{-\beta \operatorname{Im}\theta} \Vert \widehat Z_{\alpha-\beta}\Vert e^{-(\alpha-\beta) \operatorname{Im}\theta} \\
 &= 4 \pi \eta^{-1} \sum_{\alpha'\in \mathbb Z}\Vert \widehat W_{\alpha'}\Vert_\eta e^{-\alpha' \operatorname{Im}\theta} \sum_{\beta'\in \mathbb Z}\Vert \widehat Z_{\beta'} \Vert_\eta e^{-\beta' \operatorname{Im}\theta}\\
 &\le4 \pi \eta^{-1} \Wert \widehat W\Wert_\eta \Wert \widehat Z\Wert_\eta.
\end{align*}
Here we have taken the sup over $\theta\in \mathbb T_{\xi}$ in the last inequality, recall \eqref{Wert}. The result then follows.

\end{proof}

\subsection{The Banach spaces $\mathcal G^n_\eta$ and $\mathcal G^n_\eta\{\e^{i\theta}\}$}
Finally, we define $\mathcal G^n_{\eta}$ ($\mathcal G^n_\eta\{\e^{i\theta}\}$) as the space of analytic functions $\widehat{\mathbf{W}}=(\widehat W^1,\ldots,\widehat W^n)\,:\Omega \to \mathbb C^n$ ( $\widehat{\mathbf{W}}=(\widehat W^1,\ldots,\widehat W^n)\,:\Omega\times \mathbb T_\xi \to \mathbb C^n$) with each component $\widehat{W}^j$, $j\in \{1,\ldots,n\}$, belonging to $\mathcal G_\eta$ ($\mathcal G_\eta\{\e^{i\theta}\}$, respectively). We then equip these spaces with the following Banach norms
\begin{align}
\Vert \widehat{\mathbf W}\Vert_\eta:=\sup_{j\in \{1,\ldots,n\}} \Vert \widehat W^j\Vert_\eta,\eqlab{Vertn}
\end{align}
and
\begin{align}
\Wert \widehat{\mathbf W}\Wert_\eta:=\sup_{j\in \{1,\ldots,n\}} \Wert \widehat W^j\Wert_\eta,\eqlab{Wertn}
\end{align}
 respectively. We use bold to denote symbols in $\mathbb C^n$ and therefore it should be clear from the context whether the norms are understood on $\mathbb C$ (as on the right hand sides of \eqref{Vertn} and \eqref{Wertn}) or on $\mathbb C^n$ (as on the left hand sides of \eqref{Vertn} and \eqref{Wertn}).
% \end{align*}
\subsection{Further results}
We will need a few additional results regarding $\mathcal G^n_\eta$, $n\ge 1$, from \cite{bonckaert2008a}, that we list as follows.
\begin{enumerate}
 \item \label{item10} Firstly, suppose that $Q(x) = \sum_{\beta\in \mathbb N} Q_\beta x^\beta$ is a convergent series, $\vert Q_\beta\vert \le A \tau^\beta$, $A,\tau>0$. Then the Borel-transform of $Q$:
 \begin{align*}
  \widehat Q(w): = \sum_{\beta\in \mathbb N} Q_\beta \frac{w^{\beta-1}}{(\beta-1)!},
 \end{align*}
is an entire function with at most exponential growth of order $1$ for $w\to \infty$:
\begin{align*}
 \vert \widehat Q(w)\vert \le A\e^{\tau w}\quad\forall\,w\in \mathbb C.
\end{align*}
In particular, $\Vert \widehat Q\Vert_\eta \le A$ for $\eta>0$ large enough, see \cite[pp. 313--314]{bonckaert2008a}.
\item \label{this} Next, we consider a convergent series $Q(x,\mathbf z)=\sum_{\beta\in \mathbb N^n} Q_\beta(x){ \mathbf z}^\beta$, ${\mathbf z}=(z^1,\ldots,z^n)\in B_\kappa^n\subset \mathbb C^n$, with $Q_\beta(x)=\sum_{\gamma=1}^\infty Q_{\beta,\gamma} x^\gamma$, $\vert Q_{\beta,\gamma}\vert\le A \tau^{\vert \beta\vert+\gamma}$, $\vert \beta\vert = \beta^1+\cdots +\beta^n$, $A>0,\tau>0$.  Here ${ \mathbf z}^\beta = (z^1)^{\beta^1}\cdots (z^n)^{\beta^n}$ as usual. We then define
\begin{align*}
 \widehat Q(\widehat{\mathbf z}) :=\sum_{\beta\in \mathbb N^n} \widehat Q_\beta\star \widehat{\mathbf{z}}^{\star \beta},
\end{align*}
where $\widehat Q_\beta(w) = \sum_{\gamma=1}^\infty Q_{\beta,\gamma} \frac{w^{\gamma-1}}{(\gamma-1)!}$ denotes the Borel transform of $Q_\beta$. By item (\ref{item10}), we have that $\Vert \widehat Q_\beta\Vert_\eta\le A \tau^{\vert \beta\vert}$.
Now, suppose that $\widehat{\mathbf  z}\in \mathcal G^n_\eta$ and that $\Vert \widehat{\mathbf  z}\Vert_\eta\le C_1$, for any fixed $C_1>0$. Then there is a $\eta_0(C_1)>0$ large enough such that
\begin{align*}
\Vert \widehat Q(\widehat{\mathbf{z}})\Vert_\eta\le 2^n A\tau,
%  \widehat F(\widehat{\mathbf{z}})\in \mathcal G_\eta,
\end{align*}
for all $\eta>\eta_0$,
see \cite[Proposition 5]{bonckaert2008a}. Moreover,
\begin{align*}
 \mathcal L[\widehat Q(\widehat{\mathbf z})](x) = Q(x,\mathcal L[\widehat{\mathbf z}](x)),
\end{align*}
for all $x\in S(\varphi,\pi + \nu)\cap B_\kappa$.
\item Finally, $\widehat{\mathbf z}\mapsto \widehat Q(\widehat{\mathbf z})$, $\Vert \widehat{\mathbf z}\Vert_\eta\le C_1$, $\eta>\eta_0$, is $C^1$, see \cite[Eq. (18)]{bonckaert2008a}, with
\begin{align*}
 D\widehat Q(\widehat{\mathbf z})(\widehat{\mathbf w}): = \sum_{i=1}^n \widehat{\left(\frac{\partial Q}{\partial z^j} \right)} \star \widehat w^j,
\end{align*}
where $\widehat{\mathbf q}=(\widehat q^1,\ldots,\widehat q^n)$, $\mathbf q=\mathbf z,\mathbf w$, with the following  bound
\begin{align*}
\Vert D\widehat Q(\widehat{\mathbf z})\Vert_\eta  \le \mathcal O(\eta^{-1}),
\end{align*}
for all $\Vert \widehat{\mathbf z}\Vert_\eta\le C_1$.

% for all $\eta>\eta_0$.
% for some $C_2>0$.
%which is well-defined .

\end{enumerate}

Consider $Q$ as in item (\ref{this}). In the present  paper, we then define $\widehat Q(\widehat{\mathbf z})$ analogously for $\widehat{\mathbf z}\in \mathcal G_\eta^n\{\e^{i\theta}\}$:
\begin{align}
 \widehat Q(\widehat{\mathbf z}) :=\sum_{\beta\in \mathbb N^n} \widehat Q_\beta(w)\star \widehat{\mathbf{z}}^{\star \beta}.\eqlab{widehatFalpha}
\end{align}
We similarly have that $\widehat Q(\widehat{\mathbf z}) \in \mathcal G_\eta\{\e^{i\theta}\}$ for all $\Wert \widehat{\mathbf z}\Wert_\eta\le C_1$ provided that $\eta>\eta_0$.
Indeed, by \lemmaref{convolution2}, we have that
\begin{align*}
 \Wert \widehat Q(\widehat{\mathbf z})\Wert_\eta&\le  \sum_{\beta\in \mathbb N^n} \Vert \widehat Q_\beta\Vert_\eta \left(4\pi \eta^{-1} \Wert \widehat{\mathbf z}\Wert\right)^{\vert \beta\vert}\\
 &\le  A\sum_{\beta\in \mathbb N^n} \left(4\pi \eta^{-1} \tau \Wert \widehat{\mathbf z}\Wert\right)^{\vert \beta\vert}\\
 &=A \left(\sum_{\vert \beta\vert=1}^\infty \left(4\pi \eta^{-1} \tau \Wert \widehat{\mathbf z}\Wert\right)^{\vert \beta\vert}\right)^n\\
 &\le  A (1-4\pi \eta^{-1}\tau C_1)^{-n},
\end{align*}
using that $\Wert \widehat Q_\beta\Wert_\eta = \Vert \widehat Q_\beta\Vert_\eta\le A\tau ^{\vert \beta\vert}$ since $\widehat Q_\beta$ is independent of $\theta$. For any $\eta>8\pi \tau C_1$, we therefore have that
\begin{align*}
\Wert \widehat Q(\widehat{\mathbf z})\Wert_\eta\le 2^{n} A.
\end{align*}
We also have that
\begin{align*}
 \mathcal L^{\varphi}( \widehat Q(\widehat{\mathbf z}))(x,\theta) = Q(x,\mathcal L^{\varphi}(\widehat{\mathbf z})(x,\theta)),
\end{align*}
for all $x\in S(\varphi,\pi + \nu)\cap B_\kappa$, $\theta\in \mathbb T_\xi$.
This follows from \lemmaref{laplace1} and \eqref{laplacefs}.

We now finally turn to the Borel transform
of analytic Fourier series
\begin{align*}
 Q(x,\mathbf z,\theta) = \sum_{\alpha\in \mathbb Z} Q_{\alpha}(x,\mathbf z) \e^{i\alpha \theta},
\end{align*}
with
\begin{align}\eqlab{cond1}
 Q_\alpha(x,\mathbf  z) = \sum_{\beta \in \mathbb N^n} Q_{\alpha,\beta} (x) \mathbf z^\beta,\quad
 Q_{\alpha,\beta} (x) = \sum_{\gamma=1}^n Q_{\alpha,\beta,\gamma} x^\gamma.
\end{align}
In further details, we suppose that $Q$ is analytic on $(x,\mathbf z,\theta) \in B_\kappa \times B_\kappa^n \times \mathbb T_\zeta$ with $\zeta>\xi$, $\kappa>0$.
We therefore suppose that
\begin{align}\eqlab{cond2}
 \vert Q_{\alpha,\beta,\gamma}\vert \le A \tau^{\vert\beta\vert +\gamma} \e^{-\zeta \vert \alpha\vert}\quad \forall\,\alpha \in \mathbb Z,\,\beta\in \mathbb N^n,\,\gamma\in \mathbb N,
\end{align}
with $A>0$, $\tau>0$ large enough, after possibly decreasing $\zeta>0$ and $\kappa>0$.
We then define the Borel transform of $Q$ as follows:
\begin{align}
 \widehat Q(\widehat{\mathbf z})(w,\theta):=\sum_{\beta\in \mathbb N^n} \left(\sum_{\alpha\in \mathbb Z} \widehat Q_{\alpha,\beta}(w) \e^{i\alpha \theta}\right) \star \widehat{\mathbf z}^{\star \beta},\eqlab{widehatF}
\end{align}
where $\widehat Q_{\alpha,\beta}$ is the Borel transform of $Q_{\alpha,\beta}$:
\begin{align}
 \widehat Q_{\alpha,\beta}(w):= \sum_{\gamma=1}^\infty Q_{\alpha,\beta,\gamma} \frac{w^{\gamma-1}}{(\gamma-1)!}.\eqlab{Qab}
\end{align}
% For completeness, we give a proof of the following essential result.
\begin{lemma}\lemmalab{final}
 Consider \eqref{widehatF} and \eqref{Qab}, satisfying \eqref{cond1} and \eqref{cond2}. Then
 \begin{align}
  \widehat{Q}\,:\, B_{C_1}^{\mathcal G} \to \mathcal G_\eta \{\e^{i\theta}\},
 \end{align}
where $B_{C_1}^{\mathcal G}$ is the open ball of radius $C_1>0$ in $\mathcal G^n_\eta\{\e^{i\theta}\}$, is well-defined and $C^1$ for all $\eta>\eta_0(C_1)>0$ with the following uniform bounds:
\begin{align}\eqlab{Qbound}
\begin{cases}
\sup_{\widehat{\mathbf z}\in B_{C_1}^{\mathcal G}} \Wert \widehat Q(\widehat{\mathbf z})\Wert_\eta \le \frac{2^{n+1} A}{1-\e^{-(\zeta-\xi)}},\\
\sup_{\widehat{\mathbf z}\in B_{C_1}^{\mathcal G}}\Vert D\widehat Q(\widehat{\mathbf z})\Vert =\mathcal O(\eta^{-1}).
\end{cases}
\end{align}
% operator norm:
% \begin{align*}
% \Wert D\widehat{Q}(\widehat{\mathbf z})\Wert = \mathcal O(\eta^{-1}).
% \end{align*}
Finally,
\begin{align*}
\mathcal L^{\varphi}(\widehat{Q}(\widehat{\mathbf z}))(x,\theta)= Q(x,\mathcal L^{\varphi}(\widehat{\mathbf z})(x,\theta),\theta),
\end{align*}
for all $(x,\theta)\in (S(\varphi,\theta+\nu)\cap B_\kappa)\times \mathbb T_\xi$ with $\kappa>0$ small enough.
% for all $\eta>\eta_0(C)>0$.
\end{lemma}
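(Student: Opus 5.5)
The plan is to reduce everything to estimates on the $\theta$-independent coefficients. We then reassemble the Fourier sum using \lemmaref{convolution2} and the weight $\e^{-\alpha\operatorname{Im}\theta}$.

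\textbf{Coefficient series.} For each $\alpha\in\mathbb Z$ and $\beta\in\mathbb N^n$, the coefficients satisfy $\vert Q_{\alpha,\beta,\gamma}\vert\le A\e^{-\zeta\vert\alpha\vert}\tau^{\vert\beta\vert}\tau^\gamma$ by \eqref{cond2}. So item (\ref{item10}) applied to $x\mapsto \tau^{-\vert\beta\vert}\e^{\zeta\vert\alpha\vert}Q_{\alpha,\beta}(x)$ gives $\Vert\widehat Q_{\alpha,\beta}\Vert_\eta\le A\tau^{\vert\beta\vert}\e^{-\zeta\vert\alpha\vert}$, uniformly in $\alpha,\beta$ for $\eta$ larger than some $\eta_1(\tau)$. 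Set $\widehat Q_\beta(w,\theta):=\sum_\alpha\widehat Q_{\alpha,\beta}(w)\e^{i\alpha\theta}$. Since $\vert\operatorname{Im}\theta\vert<\xi$, we get
\begin{align*}
\Wert\widehat Q_\beta\Wert_\eta\le A\tau^{\vert\beta\vert}\sum_{\alpha\in\mathbb Z}\e^{-(\zeta-\xi)\vert\alpha\vert}\le \frac{2A\tau^{\vert\beta\vert}}{1-\e^{-(\zeta-\xi)}}.
\end{align*}
In particular $\widehat Q_\beta\in\mathcal G_\eta\{\e^{i\theta}\}$.

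\textbf{Bound on $\widehat Q$.} Repeating the computation after \eqref{widehatFalpha}, \lemmaref{convolution2} gives $\Wert\widehat Q_\beta\star\widehat{\mathbf z}^{\star\beta}\Wert_\eta\le \Wert\widehat Q_\beta\Wert_\eta(4\pi\eta^{-1}C_1)^{\vert\beta\vert}$. Summing over $\beta$ as a product of $n$ geometric series, with $\eta>8\pi\tau C_1$, yields the first bound in \eqref{Qbound}. The series \eqref{widehatF} converges absolutely in the Banach space $\mathcal G_\eta\{\e^{i\theta}\}$, so $\widehat Q$ is well-defined; analyticity of the limit in $(w,\theta)$ follows from uniform convergence. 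Each term is a bounded multilinear (polynomial) map of $\widehat{\mathbf z}$, and the series converges uniformly on $B^{\mathcal G}_{C_1}$, even on a slightly larger ball if $\eta_0$ is enlarged. Hence $\widehat Q$ is $C^1$ (indeed analytic), and its derivative is obtained termwise: $D\widehat Q(\widehat{\mathbf z})\widehat{\mathbf v}=\sum_j\widehat{(\partial_{z^j}Q)}(\widehat{\mathbf z})\star\widehat v^j$. Here $\partial_{z^j}Q$ satisfies \eqref{cond2} with modified constants, for instance $A\vert\beta\vert\tau^{\vert\beta\vert}$, absorbed by replacing $\tau$ with $2\tau$. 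One more application of \lemmaref{convolution2} then gives $\Vert D\widehat Q\Vert\le C\eta^{-1}$.

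\textbf{Laplace identity.} By \lemmaref{laplace1}, \eqref{laplacefs}, and continuity and linearity of $\mathcal L^\varphi$ on $\mathcal G_\eta\{\e^{i\theta}\}$, we may interchange $\mathcal L^\varphi$ with the convergent sum:
\begin{align*}
\mathcal L^\varphi[\widehat Q(\widehat{\mathbf z})]=\sum_\beta\mathcal L^\varphi[\widehat Q_\beta]\,(\mathcal L^\varphi[\widehat{\mathbf z}])^\beta.
\end{align*}
It remains to show $\mathcal L^\varphi[\widehat Q_\beta](x,\theta)=\sum_\alpha Q_{\alpha,\beta}(x)\e^{i\alpha\theta}$. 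For entire Borel transforms of convergent series, $\mathcal L^\varphi[\widehat Q_{\alpha,\beta}]=Q_{\alpha,\beta}$ for small $\vert x\vert$, by termwise use of \eqref{laplacewn}; this is dominated convergence, since $\sum\vert Q_{\alpha,\beta,\gamma}\vert r^{\gamma-1}/(\gamma-1)!\le A\e^{-\zeta\vert\alpha\vert}\tau^{\vert\beta\vert}\tau\e^{\tau r}$. The identity then extends by analytic continuation to the sector. Finally, $\vert\mathcal L^\varphi[\widehat{\mathbf z}]\vert\le\eta^{-1}C_1<\kappa$ for large $\eta$, so the power series of $Q$ converges there.

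\textbf{Main obstacle.} The delicate step is the uniformity in $\alpha$ of the constant from item (\ref{item10}). Its proof in \cite{bonckaert2008a} bounds $\Vert\widehat Q\Vert_\eta$ by $A$ once $\eta\ge\tau$ plus a margin, using the monotonicity of \eqref{prop}. I would recheck this directly: $\vert\widehat Q_{\alpha,\beta}(w)\vert\le A\e^{-\zeta\vert\alpha\vert}\tau^{\vert\beta\vert+1}\e^{\tau\vert w\vert}$, and $\sup_r\e^{(\tau-\eta)r}(1+\eta^2r^2)$ is bounded independently of $\alpha,\beta$ for $\eta\ge2\tau$. Any resulting constant factors are harmless.
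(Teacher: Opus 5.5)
Your proposal is correct and follows the same route as the paper. You bound $\Wert\sum_\alpha\widehat Q_{\alpha,\beta}\e^{i\alpha\theta}\Wert_\eta\le 2A\tau^{\vert\beta\vert}/(1-\e^{-(\zeta-\xi)})$ via item (\ref{item10}) and $\vert\operatorname{Im}\theta\vert<\xi$, then apply \lemmaref{convolution2} and sum the geometric series for $\eta>8\pi\tau C_1$. Beyond this, your termwise derivative bound, your justification of the Laplace identity, and your observation that item (\ref{item10}) really produces an extra factor $\tau$ (so the first bound in \eqref{Qbound} holds only up to a harmless constant, equally for the paper) supply details the paper only asserts or calls ``analogous''.
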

\begin{proof}
First, we notice that
\begin{align*}
 \Wert \sum_{\alpha\in \mathbb Z} \widehat Q_{\alpha,\beta}(w) \e^{i\alpha \theta}\Wert_\eta \le A \tau^{\vert\beta\vert} \sum_{\alpha \in\mathbb Z} \e^{-\vert \alpha\vert(\zeta-\xi)}\le\frac{ 2A \tau^{\vert \beta\vert} }{1-\e^{-(\zeta-\xi)}},
\end{align*}
recall item (\ref{item10}) above.
But then by \lemmaref{convolution2}, we find that
\begin{align*}
  \Wert \widehat Q(\widehat{\mathbf z})\Wert_\eta&\le\sum_{\beta\in \mathbb N^n} \frac{ 2A \tau^{\vert \beta\vert} }{1-\e^{-(\zeta-\xi)}} \left(4\pi \eta^{-1} \Wert \widehat{\mathbf z}\Wert_\eta\right)^{\vert \beta\vert}\\
  &\le \frac{2^{n+1} A}{1-\e^{-(\zeta-\xi)}},
\end{align*}
for all $\Wert \widehat{\mathbf z}\Wert_\eta\le C_1$ provided that $\eta>8\pi \tau C_1$. The fact that $\widehat Q$ is $C^1$ proceeds completely analagously, see also the text around  \cite[Eq. (18)]{bonckaert2008a}. %Moreover, \eqref{mathcalL} is identical to the proof of \cite[]{}
\end{proof}

\section{Equations in the Borel plane}\seclab{equations}
In this section, we now finally turn to solving \eqref{main}, repeated here for convenience
\begin{align}\eqlab{inveqn}
x^2 \mathbf y'_x (1+xF_0+x^2 F_1)+\mathbf y'_\theta+x \mathbf A \mathbf y = x^2 \mathbf G_1,
\end{align}
with $()'_z=\frac{\partial}{\partial z}$, $z=x,\theta$.  For this we will solve an associated equation (\eqref{boreleqn} below) for the Borel transform $\widehat{\mathbf y}\in \mathcal G_\eta^n\{\e^{i\theta}\}$ of $\mathbf y$. In turn, we obtain $\mathbf y$ upon application of the Laplace transform. For the following statement, we define
$1 \star \widehat{\mathbf Q}$
component-wise, i.e.
\begin{align*}
 1 \star \widehat{\mathbf Q} = (1 \star \widehat Q^1,\ldots,1\star \widehat Q^n),
\end{align*}
for any $\widehat{\mathbf Q}=(\widehat Q^1,\ldots,\widehat Q^n)\in \mathcal G_\eta^n\{\e^{i\theta}\}$.
\begin{lemma}\lemmalab{boreleqn}
Suppose that $\widehat{\mathbf y}\in \mathcal G_\eta^n\{\e^{i\theta}\}$ solves
\begin{align}\eqlab{boreleqn}
 w. \widehat{\mathbf y}+F_0 \star (w. \widehat{\mathbf y}) + \widehat{\mathbf y}'_\theta+1 \star \mathbf A \widehat{\mathbf y} = 1\star\left(- (w.\widehat{\mathbf y}) \star \widehat{(x. F_1)}+ \widehat{(x. \mathbf G_1)}\right);
\end{align}
recall the notations introduced in \eqref{wW} and  \eqref{widehatF}.
% where
%
 Then
\begin{align*}
\mathbf y(x,\theta) =\mathcal L^{\varphi} [\widehat{\mathbf y}](x,\theta)
\end{align*}
solves \eqref{inveqn} on $(x,\theta)\in (S(\varphi,\pi+\nu)\cap B_\kappa)\times \mathbb T_\xi$ with $\kappa>0$ satisfying \eqref{deltacond} and $0<\xi<\zeta$.
\end{lemma}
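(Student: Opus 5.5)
The plan is to apply $\mathcal L^{\varphi}$ term by term to \eqref{boreleqn} and check that every term becomes the corresponding term of \eqref{inveqn}, divided by $x$. The identity holds in $\mathcal G_\eta^n\{\e^{i\theta}\}$, and $\mathcal L^\varphi$ is linear and bounded by \lemmaref{laplace1}. Hence $\mathcal L^\varphi[\text{LHS}]=\mathcal L^\varphi[\text{RHS}]$ as analytic functions on $(S(\varphi,\pi+\nu)\cap B_\kappa)\times\mathbb T_\xi$, provided $\kappa$ satisfies \eqref{deltacond}.

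The translation dictionary comes from the properties already recorded. By \eqref{laplacewn} with $\beta=1$, $\mathcal L^\varphi[1]=x$. By \eqref{prop2}, $\mathcal L^\varphi[w.\widehat{\mathbf y}]=x^2\mathbf y'_x$. By \eqref{laplacefs}, convolutions go to products. The constant $F_0$ acts as $F_0\star\widehat W = F_0(1\star \widehat W)$, so its transform is $F_0 x\,\mathcal L^\varphi[\widehat W]$. Applying these, the left-hand side transforms to
\[
x^2\mathbf y'_x+F_0x^3\mathbf y'_x+\mathcal L^\varphi[\widehat{\mathbf y}'_\theta]+x\mathbf A\mathbf y .
\]
Using \lemmaref{final} with $Q=x.F_1$ and $Q=x.\mathbf G_1$, the right-hand side transforms to $x\bigl(-x^2\mathbf y'_x\, xF_1(x,\mathbf y,\theta)+x\mathbf G_1(x,\mathbf y,\theta)\bigr)$. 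The factor $x.$ is inserted in $x.F_1$ and $x.\mathbf G_1$ precisely so that their Taylor series start at $x^1$, which makes the Borel transforms \eqref{Qab} well defined. After this the two sides match \eqref{inveqn} exactly.

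Two points need checking. The first is $\theta$-differentiation: one must show $\mathcal L^\varphi[\widehat{\mathbf y}'_\theta]=\mathbf y'_\theta$. This follows from the termwise definition \eqref{laplacedefn}, because $\widehat{\mathbf y}'_\theta=\sum i\alpha\widehat{\mathbf y}_\alpha \e^{i\alpha\theta}$. This series converges in $\mathcal G_\eta$ for $\theta$ in a slightly smaller strip, thanks to the decay $\Vert\widehat{\mathbf y}_\alpha\Vert_\eta \e^{|\alpha|\xi}\to0$ from \remref{normeqv}. It also lies in the space by hypothesis, since the equation is posed there. Differentiation of the uniformly convergent Fourier series of analytic functions can then be exchanged with $\mathcal L^\varphi$, giving the claim on open subsets, and by analyticity on all of $\mathbb T_\xi$. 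The second is justifying \eqref{prop2}, i.e. the integration by parts $x^2\frac{d}{dx}\int_0^{\infty \e^{i\varphi}}\widehat W \e^{-w/x}dw=\int w\widehat W \e^{-w/x}dw$. This requires that $w.\widehat W$ still be Laplace transformable. Exponential growth of $\widehat{\mathbf y}$ gives that for $|x|$ small, and analytic continuation in the rotated direction extends it to the whole sector. These are the main technical, though routine, steps. The rest is bookkeeping of the identities, applied componentwise in $n$.
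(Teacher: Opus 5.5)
Your proposal is correct and is essentially the paper's proof. The paper applies $\mathcal L^{\varphi}$ termwise using \eqref{prop1} (equivalently \eqref{laplacefs}), \eqref{prop2}, \lemmaref{final} and $\mathcal L^{\varphi}[1]=x$ from \eqref{laplacewn}; your remarks on $\widehat{\mathbf y}'_\theta$ and on justifying \eqref{prop2} (which is differentiation under the integral sign, not integration by parts) fill in details the paper leaves implicit. One small slip: the transformed equation is \eqref{inveqn} itself, not \eqref{inveqn} divided by $x$, as your own displayed computation shows.
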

\begin{proof}
 The result follows from \eqref{prop1}, \eqref{prop2}, \lemmaref{final} and the fact that the Borel transform of $x$ is the constant function $1$, recall \eqref{laplacewn}. %The central observation is the following:
\end{proof}

In the following, we will solve \eqref{boreleqn} using the contraction mapping theorem in $\mathcal G_\eta^n\{\e^{\i\theta}\}$ using \lemmaref{final} with $\eta>0$ large enough (in the same fashion as \cite{bonckaert2008a}).
% %
% The idea is to Borel transform this equation to obtain an equation for $\widehat{\mathbf y}$ in the Borel plane. The aim is then to solve this equation using the contraction mapping theorem in $\mathcal G_\eta^n\{\e^{i\theta}\}$ taking $\eta>0$ large enough, recall \lemmaref{final}. In this way, we obtain a solution $\mathbf y = \mathcal L^{\varphi}(\widehat{\mathbf y})$ of \eqref{main} by taking the Laplace transformation.

% % Recall that we look for $\mathbf y = \mathbf y(x,\theta)$ of \eqref{mainvf}, which we repeat here convinience:
% \begin{align*}
%  \dot x &=  x^2 (1+xF_0+ x^2 F_1(x,\mathbf y,\theta)),\\
%  \dot{\mathbf y} & =  x\left( -\mathbf A {\mathbf y} +x \mathbf G_1(x,{\mathbf y},\theta)\right),\\
%  \dot \theta &=1,
% \end{align*}
% with $b\ne 0$, $F_0\in \mathbb C$, and
% \begin{align*}
%  \mathbf A=  \operatorname{diag}(\lambda^1,\ldots,\lambda^n),\quad \forall\,j\in \{1,\ldots,n\}\,:\,\lambda^j>0.
% \end{align*}
% Recall also that $$F_1:B_\tau(0)\times B_{\tau}^n(0)\times \mathbb T_\zeta\rightarrow \mathbb C,\quad \mathbf G_1:B_\tau(0)\times B_\tau^n(0)\times \mathbb T_\zeta\to \mathbb C^n,$$ are assumed to be analytic functions. The invariant manifolds of
%
%
%  The invariant manifolds satisfy the PDE:

  \subsection{Auxiliary equation}
%   We now consider the auxiliary equation
%   \begin{align*}
%    -bx^2 \mathbf y'_x -b  xF_0  x^2 \mathbf y'_x+\mathbf y'_\theta-b \mathbf A\mathbf y = b \mathbf H,
%   \end{align*}
% for $\mathbf y(x,\theta)$ for a given analytic function $\mathbf H:(S(\varphi,\pi+\nu)\cap B_\delta)\times \mathbb T_\xi \to \mathbb C^n$. In fact, we will suppose that
Before we turn to solving \eqref{boreleqn}, we first consider an auxiliary equation
  \begin{align}\eqlab{aux}
   w .\widehat{\mathbf y}+ F_0 \star (w. \widehat{\mathbf y}) + \widehat{\mathbf y}'_\theta+1 \star \textbf{A} \widehat{\mathbf y} = 1\star \widehat{\mathbf H},
  \end{align}
  with $\widehat{\mathbf H}=w.\widehat{\mathbf W}+\widehat{\mathbf Z}$ where
  \begin{align}\eqlab{condH}
  \widehat{\mathbf Q}\in \mathcal G^n_\eta\{\e^{i\theta}\},\quad \widehat{\mathbf Q}=\widehat{\mathbf W},\widehat{\mathbf Z}.
  \end{align}
  We consider this as an equation for $\widehat{\mathbf y}\in \mathcal G^n_\eta\{\e^{i\theta}\}$.

  To solve \eqref{aux} for $\widehat{\mathbf y}$, we insert $$\widehat{\mathbf Q} = \sum_{\alpha\in \mathbb Z} \widehat{\mathbf Q}_\alpha\e^{i\alpha\theta}, \quad \widehat{\mathbf Q}=\widehat{\mathbf y},\widehat{\mathbf H},\widehat{\mathbf W},\widehat{\mathbf Z}.$$
%
%   and insert
%   \begin{align*}
%    \widehat{\mathbf y} = \sum_{\alpha\in \mathbb Z} \widehat{\mathbf y}_\alpha \e^{i\alpha \theta}.
%   \end{align*}
This gives
\begin{align}
 w. \widehat{\mathbf y}_0 + F_0 \star (w. \widehat{\mathbf y}_0) + 1\star \mathbf  A \widehat{\mathbf y}_0 = 1\star  \widehat{\mathbf H}_0,\quad \widehat{\mathbf  H}_0 =w.\widehat{\mathbf W}_{0}+\widehat{\mathbf Z}_{0},\eqlab{alpha0eqn}
\end{align}
for $\alpha=0$ and
\begin{align}
 w.\widehat{\mathbf y}_\alpha +F_0 \star (w. \widehat{\mathbf y}_\alpha) +i\alpha \widehat{\mathbf y}_\alpha + 1\star \mathbf A \widehat{\mathbf y}_\alpha = 1\star \widehat{\mathbf H}_\alpha,\quad \widehat{\mathbf H}_\alpha=w.\widehat{\mathbf W}_{\alpha}+\mathbf Z_{\alpha},\eqlab{alphaneq0eqn}
\end{align}
for $\alpha\ne 0$.

\subsubsection{The case $\alpha=0$}
We first focus on $\alpha=0$ and write $\widehat{\mathbf y}_0 = (\widehat{y}_0^1,\ldots,\widehat{ y}_0^n)$. Then by differentiating \eqref{alpha0eqn} with respect to $w$, and using that $(1\star Z)'(w) = Z(w)$ (cf. \eqref{convolution0}), we find that
\begin{align*}
 w \frac{d}{dw}\widehat{y}_0^j = -(\lambda^j+1) \widehat{y}_0^j - F_0 w \widehat{y}_0^j+H_0^j,\quad j\in \{1,\ldots,n\}.
\end{align*}
Recall that $F_0$ is a constant.
We consider the associated vector-field:
\begin{align*}
  \frac{d}{dt}\widehat{y}_0^j &= -(\lambda^j+1) \widehat{y}_0^j - F_0 w \widehat{y}_0^j+H_0^j,\quad j\in \{1,\ldots,n\}\\
  \frac{d}{dt} w &= w.
  \end{align*}
  Here $(\widehat{y}_0^j,w)=(0,0)$ is a hyperbolic singularity, with eigenvalues $-(\lambda^j+1),1$ of the linearization.
 Indeed, given that $\lambda^j>0$, we have an unstable manifold as a graph over $w$. In fact, a simple calculation shows that it takes the following explicit graph form
\begin{align}\eqlab{y0j}
 \widehat{y}_0^j(w) = \int_0^1 H_0^j(ws) \e^{-F_0w(1-s)} s^{\lambda^j} ds,\quad H_0^j = w.W_{0}^j+Z_{0}^j,\quad j\in \{1,\ldots,n\}.
\end{align}
We recall (cf. \eqref{condH}) that $\widehat Q_{0}^j\in \mathcal G_\eta$, $\widehat Q=\widehat W,\widehat Z$, for all $j\in\{1,\ldots,n\}$.

\begin{lemma}\lemmalab{haty0est}
Let $\widehat{\mathbf y}_0=(\widehat y_0^1,\ldots,\widehat y_0^n)$ be given by \eqref{y0j}. Then there is a constant $C>0$ such that
 \begin{align}\eqlab{haty0est2}
  \Vert \widehat{\mathbf y}_0 \Vert_\eta\le C\left(\eta^{-1} \Vert \mathbf W_0\Vert_\eta + \Vert \mathbf Z_0 \Vert_\eta\right).
 \end{align}
for all $\eta>2\vert F\vert$.
\end{lemma}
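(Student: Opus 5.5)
We estimate each component $\widehat y_0^j$ separately, working directly from the explicit formula \eqref{y0j}. Since the norm \eqref{Vertn} is a sup over $j$, it suffices to prove the bound for one fixed $j$. We split $\widehat y_0^j=I_W+I_Z$, where
\begin{align*}
I_W(w)=\int_0^1 ws\,\widehat W_0^j(ws)\e^{-F_0w(1-s)}s^{\lambda^j}ds,\qquad I_Z(w)=\int_0^1 \widehat Z_0^j(ws)\e^{-F_0w(1-s)}s^{\lambda^j}ds.
\end{align*}
Both integrands are analytic in $w\in\Omega$, because $\Omega$ is star-shaped with respect to the origin and so $ws\in\Omega$ for $s\in[0,1]$. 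Hence $\widehat y_0^j$ is analytic on $\Omega$, and only the weighted sup bound needs checking.

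\textbf{Step 1: the $Z$ term.} For $w\in\Omega$, insert $|\widehat Z_0^j(ws)|\le \Vert \widehat Z_0^j\Vert_\eta \e^{\eta|w|s}(1+\eta^2|w|^2s^2)^{-1}$ and $|\e^{-F_0w(1-s)}|\le \e^{|F_0||w|(1-s)}$, and drop $s^{\lambda^j}\le 1$. After multiplying by the weight $\e^{-\eta|w|}(1+\eta^2|w|^2)$ we need a uniform bound on
\begin{align*}
\int_0^1 \e^{-(\eta-|F_0|)|w|(1-s)}\,\frac{1+\eta^2|w|^2}{1+\eta^2|w|^2s^2}\,ds .
\end{align*}
Split the integral at $s=\tfrac12$. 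On $[\tfrac12,1]$ the fraction is at most $4$, so that part is bounded by $4$. On $[0,\tfrac12]$ the exponential is at most $\e^{-(\eta-|F_0|)|w|/2}\le \e^{-\eta|w|/4}$, using $\eta>2|F_0|$. The fraction is at most $1+\eta^2|w|^2$, and $\e^{-p/4}(1+p^2)$ is bounded uniformly in $p=\eta|w|\ge0$. This gives $\Vert I_Z\Vert_\eta\le C\Vert \widehat Z_0\Vert_\eta$.

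\textbf{Step 2: the $W$ term.} This is the main point, because we need a gain of $\eta^{-1}$. The same manipulation gives the integral
\begin{align*}
\int_0^1 |w|s\,\e^{-(\eta-|F_0|)|w|(1-s)}\,\frac{1+\eta^2|w|^2}{1+\eta^2|w|^2s^2}\,ds .
\end{align*}
On $[\tfrac12,1]$ we bound the fraction by $4$ and $s\le1$, leaving $4|w|\int_0^{1/2}\e^{-(\eta/2)|w|u}du\le 8\eta^{-1}$. Here we again use $\eta-|F_0|\ge \eta/2$, which is exactly where the hypothesis $\eta>2|F_0|$ enters. On $[0,\tfrac12]$ we bound the integrand by $|w|\,\e^{-\eta|w|/4}(1+\eta^2|w|^2)$. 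This equals $\eta^{-1}$ times a bounded function of $p=\eta|w|$, namely $p\,\e^{-p/4}(1+p^2)$. Hence $\Vert I_W\Vert_\eta\le C\eta^{-1}\Vert \widehat W_0\Vert_\eta$.

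Adding the two bounds and taking the sup over $j$ gives \eqref{haty0est2}, with $C$ depending only on absolute constants (the $\lambda^j$ enter only through $s^{\lambda^j}\le1$). The only delicate step is securing the $\eta^{-1}$ factor in Step~2 near $s=1$. That is handled by integrating the exponential decay $\e^{-(\eta-|F_0|)|w|(1-s)}$ against the single extra factor $|w|$, which is why $\eta$ must dominate $|F_0|$.
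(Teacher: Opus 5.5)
Your proposal is correct and follows the paper's proof essentially step for step. The paper also bounds $\vert \widehat y_0^j(w)\vert\e^{-\eta\vert w\vert}(1+\eta^2\vert w\vert^2)$ by $\Vert \widehat W_0^j\Vert_\eta I_1+\Vert \widehat Z_0^j\Vert_\eta I_2$ and splits both integrals at $s=\tfrac12$. On $[0,\tfrac12]$ it uses a bounded function of $p\propto\eta\vert w\vert$, and on $[\tfrac12,1]$ it integrates the exponential decay to get $8\eta^{-1}$, with $\eta>2\vert F_0\vert$ entering exactly where you use it. The only cosmetic differences are that you keep the factor $s$ coming from $ws\,\widehat W_0^j(ws)$, whereas the paper drops it via $s\le 1$. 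You also check analyticity via the star-shapedness of $\Omega$, which the paper leaves implicit.
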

\begin{proof}
 We estimate
\begin{align*}
 \vert \widehat{y}_0^j(w)\vert \e^{-\eta \vert w\vert} (1+\eta^2 \vert w\vert^2 ) \le \Vert W_0^j \Vert_\eta I_1+ \Vert Z_0^j \Vert_\eta I_2,
\end{align*}
with
\begin{align*}
 I_1 &=\int_0^1 \e^{-(\eta-\vert F_0\vert) \vert w\vert(1-s)}\frac{1+\eta^2 \vert w\vert^2}{1+\eta^2 \vert w\vert^2 s^2}\vert w\vert ds,\\
I_2 &=\int_0^1 \e^{-(\eta-\vert F_0\vert) \vert w\vert(1-s)}\frac{1+\eta^2 \vert w\vert^2}{1+\eta^2 \vert w\vert^2 s^2} ds.
\end{align*}
Here we have used that $\lambda^j>0$ for all $i\in\{1,\ldots,n\}$. We now split the integrals into $s\in [0,\frac12]$ and $s\in [\frac12,1]$. We first consider $I_1$. For $s\in [0,\frac12]$ we have
\begin{align*}
 \int_0^{\frac12} \e^{-(\eta-\vert F_0\vert)\vert w\vert (1-s)}\frac{1+\eta^2 \vert w\vert^2}{1+\eta^2 \vert w\vert^2 s^2}\vert w\vert ds&\le \frac12 \e^{-(\eta-\vert F_0\vert) \vert w\vert\frac12 }(1+\eta^2 \vert w\vert^2)\vert w\vert\\
 &\le \eta^{-1} \sup_{p\ge 0} \left\{p \e^{-p/2} (1+4p^2)\right\},
\end{align*}
for $\eta>2\vert F_0\vert$. Clearly, $$\sup_{p\ge 0} \left\{p \e^{-p/2} (1+4p^2)\right\}<\infty.$$ Using CAS, we find that $$\sup_{p\ge 0} \left\{p \e^{-p/2} (1+4p^2)\right\}\approx 43.32,$$ but the precise number will not be important to us. Next, for $s\in [\frac12,1]$ we find
\begin{align*}
 \int_{\frac12}^1 \e^{-(\eta-\vert F_0\vert)\vert w\vert (1-s)}\frac{1+\eta^2 \vert w\vert^2}{1+\eta^2 \vert w\vert^2 s^2}\vert w\vert ds&\le 4 \e^{-(\eta-\vert F_0\vert)\vert w\vert}  \int_{\frac12}^1 \e^{(\eta-\vert F_0\vert) \vert w\vert s} \vert w\vert ds\le \frac{8}{\eta},
\end{align*}
for any $\eta>2\vert F_0\vert$.
We therefore conclude that $I_1\le C\eta^{-1}$ for all $\eta>2\vert F_0\vert$ for some $C>0$ large enough. By estimating $I_2$ in the same way, we find that $I_2\le C$ for all $\eta>2\vert F_0\vert$.
%
% This directly leads to
% \begin{align*}
%  I_1\le C\eta^{-1},\quad I_2\le C,
% \end{align*}
% with $C>0$ large enough,
% for any $\eta>2\vert F\vert$.
This concludes the proof upon using that
\begin{align*}
 \Vert \widehat{\mathbf y}_0\Vert_\eta &:= \sup_{j\in \{1,\ldots,n\}} \Vert \widehat y_0^j\Vert_\eta\\
 &\le C\sup_{j\in \{1,\ldots,n\}}\left\{\eta^{-1}  \Vert W_0^j \Vert_\eta+ \Vert Z_0^j \Vert_\eta \right\}\\
 &=C \left(\eta^{-1} \Vert \mathbf W_0 \Vert_\eta+ \Vert \mathbf Z_0 \Vert_\eta \right).
\end{align*}

\end{proof}

\subsubsection{The case $\alpha\ne 0$}
Next for $\alpha\ne 0$, we consider the operator $\mathbf T_\alpha\,:\,\mathcal G^n_\eta\to \mathcal G^n_\eta$ defined by
\begin{align}\eqlab{fixpointyp}
 \mathbf T_\alpha[\widehat{\mathbf y}_\alpha](w) = \widehat{\mathbf y}_\alpha(w) +\frac{1}{w + i\alpha} \big(F_0 (1\star  w.\widehat{\mathbf y}_\alpha)(w) - (1\star \mathbf A \widehat{\mathbf y}_\alpha)(w)\big).
\end{align}
Then it is elementary to write \eqref{alphaneq0eqn} as
\begin{align}\eqlab{alphaneq0eqn2}
 \mathbf T_\alpha[\widehat{\mathbf y}_\alpha](w) = \frac{1}{w+i\alpha} (1\star \widehat{\mathbf H}_\alpha)(w).
\end{align}

\begin{lemma}\lemmalab{init_bound}
Suppose that $\nu\in (0,\pi)$, and that \eqref{thetacond} holds. Then for $\delta>0$ small enough, there is a constant $c_1>0$ such that
 \begin{align*}
  \vert w+i\alpha\vert \ge c_1(\vert w\vert+\vert \alpha\vert)\ge c_1>0,\quad \forall\,w\in \Omega,\,\alpha\in \mathbb Z\setminus\{0\}.
 \end{align*}

\end{lemma}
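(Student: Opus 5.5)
The plan is to bound $\vert w+i\alpha\vert$ from below by splitting $\Omega=B_\kappa\cup S(\varphi,\nu)$ into its two pieces and treating each by elementary geometry. The key point is that, by \eqref{thetacond}, the sector $S(\varphi,\nu)$ is centered on the real axis with opening $\nu<\pi$. It therefore stays a definite angle away from the imaginary axis, where the points $-i\alpha$ lie. (The small radius in the statement plays the role of the disc radius $\kappa$ in \eqref{Omega}; I will take it at most $\frac12$.)

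\emph{Disc part, $w\in B_\kappa$ with $\kappa\le\frac12$.} Since $\vert\alpha\vert\ge 1$, we have $\vert w+i\alpha\vert\ge \vert\alpha\vert-\vert w\vert\ge \vert\alpha\vert-\frac12\ge \frac12\vert\alpha\vert$. Also $\vert w\vert+\vert\alpha\vert\le \frac12+\vert\alpha\vert\le \frac32\vert\alpha\vert$. Combining the two gives $\vert w+i\alpha\vert\ge \frac13(\vert w\vert+\vert\alpha\vert)$.

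\emph{Sector part, $w\in S(\varphi,\nu)$.} Write $w=r\e^{i\phi}$ with $\vert\phi-\varphi\vert<\nu/2$. Because $\varphi\in\{0,\pi\}$, this gives $\vert\operatorname{Re} w\vert=r\vert\cos\phi\vert\ge r\cos(\nu/2)$, and $\cos(\nu/2)>0$ since $\nu<\pi$. I will then use
\begin{align*}
\vert w+i\alpha\vert^2=(\operatorname{Re} w)^2+(\operatorname{Im} w+\alpha)^2 .
\end{align*}
A slick route is to note that the angle between $w$ and $-i\alpha$ is at least $\frac{\pi}{2}-\frac{\nu}{2}$. Hence $\vert w+i\alpha\vert\ge \sin\!\big(\tfrac{\pi-\nu}{4}\big)\,(\vert w\vert+\vert\alpha\vert)$. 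This follows from the general inequality $\vert a-b\vert\ge \sin(\gamma/2)(\vert a\vert+\vert b\vert)$, valid whenever the angle between $a$ and $b$ is at least $\gamma$. I would prove that inequality via the law of cosines: $\vert a-b\vert^2\ge \vert a\vert^2+\vert b\vert^2-2\vert a\vert\vert b\vert\cos\gamma$, and then use $2\vert a\vert\vert b\vert\le \frac12(\vert a\vert+\vert b\vert)^2$.

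Taking $c_1=\min\{\frac13,\sin(\frac{\pi-\nu}{4})\}$ then yields the first inequality on all of $\Omega$. The second inequality, $c_1(\vert w\vert+\vert\alpha\vert)\ge c_1$, is immediate from $\vert\alpha\vert\ge 1$.

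The only mildly delicate step is the elementary angle inequality; everything else is direct. Note that the hypothesis $\nu<\pi$ is exactly what keeps the sector away from the imaginary axis, and it is what makes $c_1$ positive.
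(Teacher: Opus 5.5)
Your proof is correct and is essentially the paper's argument. You split $\Omega$ into the disc (radius at most $\frac12$) and the sector, and your law-of-cosines bound for an angle of at least $\frac{\pi-\nu}{2}$ is exactly the paper's minimization over $\operatorname{arg}(w)$, which yields $\vert w\vert^2+\alpha^2-2\vert\alpha\vert\vert w\vert\sin(\nu/2)$. Even the sector constant agrees, since $\sin^2\!\big(\tfrac{\pi-\nu}{4}\big)=\tfrac12\big(1-\sin(\nu/2)\big)$, and the disc part differs only in giving $\frac13$ instead of the paper's $\frac14$.
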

\begin{proof}
 Consider first $w\in S(\varphi,\nu)$ with $\varphi\in\{0,\pi\}$. Then
 \begin{align*}
  \vert w+i\alpha\vert^2 & \ge  \left(\vert w\vert^2 +\alpha^2 - 2\vert \alpha \vert \vert w\vert \sin (\nu/2)\right),
 \end{align*}
 upon writing $w=\vert w\vert\e^{\operatorname{arg}(w)}$ and minimizing the left hand side over ${\operatorname{arg}(w)}$. We now use Young's inequality:
 \begin{align}\eqlab{cs}
  -2 \vert \alpha \vert \vert w\vert  \ge -\vert \alpha  \vert^2 - \vert w\vert^2,
 \end{align}
so that
\begin{align*}
 \vert w+i\alpha\vert^2\ge  (1-\sin(\nu/2))(\vert w\vert^2 +\alpha^2)\ge\frac12( 1-\sin(\nu/2))(\vert w\vert +\vert \alpha\vert)^2.
\end{align*}
The final equality also follows from \eqref{cs}.
% The result then follows from application of Young's inequality again:
% \begin{align*}
%  \vert w'\vert^2 +\alpha^2\ge \frac12 (\vert w'\vert+\vert \alpha\vert)^2.
% \end{align*}
% We then obtain the desired inequality for $0<\nu\ll 1$ by applying Cauchy-Schwarz:
% \begin{align}
%  \vert w'\vert^2 +\alpha^2 \ge \frac12 (\vert w\vert +\vert \alpha \vert)^2.\eqlab{cs}
% \end{align}
Next for $w\in B_{\delta}(0)$, we similarly find that
\begin{align*}
  \vert  w+i\alpha\vert^2 & \ge  \left(\vert \alpha \vert-\vert w\vert \right)^2\ge\left(\vert \alpha \vert-\delta \right)^2\ge \frac14 \vert \alpha \vert^2\ge \frac18 (\vert \alpha \vert^2+\vert w\vert^2)\ge \frac{1}{16}(\vert w\vert+\vert\alpha\vert)^2,
\end{align*}
for any $0\le\vert w\vert<\delta<\frac12 \le \frac12 \vert \alpha \vert$. In conclusion, there is a constant $c_1>0$ for $\delta>0$ small enough such that
\begin{align*}
 \vert w+i \alpha\vert\ge  c_1 (\vert w\vert+\vert \alpha\vert)\ge c_1>0,
\end{align*}
for all $w\in \Omega$, $\alpha\in \mathbb Z\setminus\{0\}$. %We put $c_1:=c\min(\vert b\vert,1)$.

\end{proof}

\begin{lemma}\lemmalab{Talphainv}
 There is a constant $\eta_0>0$ independent of $\alpha\in \mathbb Z\setminus\{0\}$, such that linear operator $\mathbf T_\alpha\,:\,\mathcal G^n_\eta\to \mathcal G^n_\eta$ is an isomorphism for all $\eta>\eta_0$ large enough. In particular, the operator norm $\Vert \mathbf T_\alpha^{-1} \Vert$ for the inverse is uniformly bounded by $2$:
 \begin{align*}
 \Vert \mathbf T_\alpha^{-1} \Vert<  2\quad \forall\,\alpha\in \mathbb Z\setminus\{0\},
\end{align*}
for all $\eta>\eta_0$.
\end{lemma}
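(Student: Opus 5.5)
The plan is to write $\mathbf T_\alpha = \operatorname{Id} + \mathbf K_\alpha$, where
\begin{align*}
\mathbf K_\alpha[\widehat{\mathbf y}](w) := \frac{1}{w+i\alpha}\big(F_0 (1\star w.\widehat{\mathbf y})(w) - (1\star \mathbf A\widehat{\mathbf y})(w)\big),
\end{align*}
and to show that $\Vert \mathbf K_\alpha\Vert \le \frac12$ on $\mathcal G^n_\eta$, uniformly in $\alpha\in\mathbb Z\setminus\{0\}$, once $\eta>\eta_0$. A Neumann series then shows that $\mathbf T_\alpha$ is an isomorphism with $\Vert \mathbf T_\alpha^{-1}\Vert\le (1-\Vert\mathbf K_\alpha\Vert)^{-1}\le 2$. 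To get the strict inequality, I would aim for $\Vert \mathbf K_\alpha\Vert\le \frac14$ instead. Since $\mathbf A$ is diagonal, the estimate can be done componentwise in $\mathcal G_\eta$, using the sup norm \eqref{Vertn}.

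\textbf{The $\mathbf A$-term.} By \lemmaref{convolution1} and \eqref{unit},
\begin{align*}
\Vert 1\star \widehat y\Vert_\eta\le 4\pi\eta^{-1}\Vert\widehat y\Vert_\eta .
\end{align*}
Dividing by $w+i\alpha$ is bounded on $\mathcal G_\eta$ because of \lemmaref{init_bound}: $\vert w+i\alpha\vert^{-1}\le c_1^{-1}$ for $w\in\Omega$. Hence
\begin{align*}
\Vert \tfrac{1}{w+i\alpha}(1\star\mathbf A\widehat{\mathbf y})\Vert_\eta \le 4\pi c_1^{-1}\max_j\lambda^j\,\eta^{-1}\Vert\widehat{\mathbf y}\Vert_\eta ,
\end{align*}
which is independent of $\alpha$.

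\textbf{The $F_0$-term (main obstacle).} Here $w.\widehat{\mathbf y}$ need not lie in $\mathcal G_\eta$, so \lemmaref{convolution1} cannot be applied directly. I would first use \lemmaref{wWstarZest} with $\widehat Z=1$, which gives
\begin{align*}
\Vert w^{-1}(1\star w.\widehat{\mathbf y})\Vert_\eta\le 4\pi\eta^{-1}\Vert \widehat{\mathbf y}\Vert_\eta .
\end{align*}
I would then write
\begin{align*}
\frac{1}{w+i\alpha}(1\star w.\widehat{\mathbf y}) = \frac{w}{w+i\alpha}\cdot w^{-1}(1\star w.\widehat{\mathbf y}),
\end{align*}
and bound the multiplier using \lemmaref{init_bound} again: $\left\vert \frac{w}{w+i\alpha}\right\vert\le \frac{\vert w\vert}{c_1(\vert w\vert+\vert\alpha\vert)}\le c_1^{-1}$, uniformly in $w\in\Omega$ and $\alpha\neq0$. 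This gives
\begin{align*}
\Vert \tfrac{F_0}{w+i\alpha}(1\star w.\widehat{\mathbf y})\Vert_\eta\le 4\pi c_1^{-1}\vert F_0\vert\eta^{-1}\Vert\widehat{\mathbf y}\Vert_\eta .
\end{align*}
It is precisely the inequality $\vert w+i\alpha\vert\ge c_1(\vert w\vert+\vert\alpha\vert)$, rather than just $\vert w+i\alpha\vert\ge c_1$, that absorbs the extra factor of $w$.

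\textbf{Conclusion.} Combining the two estimates,
\begin{align*}
\Vert\mathbf K_\alpha\Vert\le 4\pi c_1^{-1}\big(\vert F_0\vert+\max_j\lambda^j\big)\eta^{-1},
\end{align*}
which is at most $\frac14$ for $\eta>\eta_0:=16\pi c_1^{-1}(\vert F_0\vert+\max_j\lambda^j)$, independently of $\alpha$. Finally, I would note that $\mathbf T_\alpha$ preserves analyticity on $\Omega$, since $w+i\alpha\neq 0$ there. Therefore $\mathbf K_\alpha$ maps $\mathcal G^n_\eta$ into itself, and the Neumann series converges in $\mathcal G^n_\eta$.
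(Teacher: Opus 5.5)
Your proposal is correct and follows essentially the same route as the paper. The paper also writes $\mathbf T_\alpha$ as the identity plus an $\mathcal O(\eta^{-1})$ perturbation and inverts it by a Neumann series. It bounds the $\mathbf A$-term via \lemmaref{convolution1} together with $\vert w+i\alpha\vert^{-1}\le c_1^{-1}$, and the $F_0$-term via \lemmaref{wWstarZest} together with $\vert w\vert/\vert w+i\alpha\vert\le c_1^{-1}$ from \lemmaref{init_bound}, uniformly in $\alpha\neq 0$.
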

\begin{proof}
% We can now show that $\mathbf T_\alpha$ is well-defined.
By \lemmaref{wWstarZest} and \eqref{unit}, we have
\begin{align*}
 \vert (1\star w.\widehat{y}_\alpha^j)(w)\vert \e^{-\eta\vert w\vert}\left(1+\eta^2\vert w\vert^2\right) \le 4\pi \vert w\vert \eta^{-1} \Vert \widehat y_\alpha^j\Vert_\eta\quad \forall\,w\in \Omega.
\end{align*}
Moreover, by \lemmaref{init_bound}, we have that
\begin{align}\eqlab{init_bound_est}
 \frac{\vert w\vert}{\vert w+i\alpha \vert}\le c_1^{-1},\quad \frac{1}{\vert w+i\alpha \vert}\le c_1^{-1}\quad \forall\, \alpha\in \mathbb Z\setminus\{0\}, \,w\in \Omega.
\end{align}
This together with
\lemmaref{convolution1} then gives
\begin{align*}
 \Vert \mathbf T_\alpha(\widehat{\mathbf y}_\alpha)(w)-\widehat{\mathbf y}_\alpha\Vert_\eta\le C\eta^{-1} \Vert \widehat{\mathbf y}\Vert_\eta\le \frac12 \Vert \widehat{\mathbf y}\Vert_\eta,
\end{align*}
for all $\eta>2C$. Here $C>0$ is a sufficiently large constant independent of $\alpha\in \mathbb Z\setminus\{0\}$ and $\eta>0$. This shows that $\mathbf T_\alpha$ has a bounded inverse $\mathbf T_\alpha^{-1}$ with operator norm
\begin{align*}
 \Vert \mathbf T_\alpha^{-1} \Vert\le \sum_{\beta=0}^\infty (C\eta^{-1})^\beta< 2,
\end{align*}
for all $\eta>2C$.
\end{proof}
We then write the solution \eqref{alphaneq0eqn} as
\begin{align}\eqlab{yalpha}
\widehat{\mathbf y}_\alpha = \mathbf T_\alpha^{-1} \left[w\mapsto \frac{1}{w+i\alpha} (1\star \widehat{\mathbf H}_\alpha)(w)\right]\quad \forall\,\alpha\in \mathbb Z\setminus\{0\},
\end{align}
recall also \eqref{alphaneq0eqn2}.

\begin{lemma}
Consider $\widehat{\mathbf y}_\alpha$, $\alpha\in \mathbb Z\setminus\{0\}$, given by \eqref{yalpha} and let $\delta>0$ be small enough (recall \lemmaref{init_bound}). Then there exists a constant $C>0$ such  that
 \begin{align}
  \Vert \widehat{\mathbf y}_\alpha\Vert_\eta\le C\eta^{-1} \left( \Vert \widehat{\mathbf W}_\alpha\Vert_\eta+\Vert \widehat{\mathbf Z}_\alpha\Vert_\eta\right),\eqlab{yalphaest}
 \end{align}
for all $\alpha\in \mathbb Z\setminus\{0\}$ and all $\eta>0$ large enough.
\end{lemma}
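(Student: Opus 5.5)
The plan is to use the representation \eqref{yalpha}: by \lemmaref{Talphainv} we have $\Vert \mathbf T_\alpha^{-1}\Vert<2$ uniformly in $\alpha\neq 0$ for $\eta>\eta_0$, so
\begin{align*}
\Vert \widehat{\mathbf y}_\alpha\Vert_\eta\le 2\left\Vert w\mapsto \frac{1}{w+i\alpha}(1\star \widehat{\mathbf H}_\alpha)(w)\right\Vert_\eta .
\end{align*}
It therefore suffices to show that this right hand side is bounded by $C\eta^{-1}(\Vert \widehat{\mathbf W}_\alpha\Vert_\eta+\Vert\widehat{\mathbf Z}_\alpha\Vert_\eta)$, with $C$ independent of $\alpha$ and $\eta$. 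Since $\widehat{\mathbf H}_\alpha=w.\widehat{\mathbf W}_\alpha+\widehat{\mathbf Z}_\alpha$, we split the estimate into two terms and argue componentwise, taking the supremum over $j$ at the end as in \eqref{Vertn}.

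For the $\widehat{\mathbf Z}_\alpha$-term, we use \lemmaref{convolution1} together with \eqref{unit}, which give $\Vert 1\star \widehat Z^j_\alpha\Vert_\eta\le 4\pi\eta^{-1}\Vert \widehat Z_\alpha^j\Vert_\eta$. Multiplication by $(w+i\alpha)^{-1}$ costs at most the factor $c_1^{-1}$ by \eqref{init_bound_est}, because the weight in $\Vert\cdot\Vert_\eta$ is pointwise in $w$. This yields the bound $4\pi c_1^{-1}\eta^{-1}\Vert\widehat{\mathbf Z}_\alpha\Vert_\eta$.

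For the $w.\widehat{\mathbf W}_\alpha$-term, $w.\widehat W^j_\alpha$ need not lie in $\mathcal G_\eta$, so we cannot apply \lemmaref{convolution1} directly. Instead we use \lemmaref{wWstarZest} with $\widehat Z=1$, again relying on \eqref{unit}. This gives the pointwise bound
\begin{align*}
\vert(1\star w.\widehat W^j_\alpha)(w)\vert\,\e^{-\eta\vert w\vert}(1+\eta^2\vert w\vert^2)\le 4\pi\eta^{-1}\vert w\vert\,\Vert\widehat W_\alpha^j\Vert_\eta ,
\end{align*}
exactly as in the proof of \lemmaref{Talphainv}. The extra factor $\vert w\vert$ is then absorbed by $(w+i\alpha)^{-1}$ through the first inequality in \eqref{init_bound_est}, $\vert w\vert/\vert w+i\alpha\vert\le c_1^{-1}$. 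The result is the bound $4\pi c_1^{-1}\eta^{-1}\Vert\widehat{\mathbf W}_\alpha\Vert_\eta$.

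Combining the two terms gives \eqref{yalphaest} with $C=16\pi c_1^{-1}$, valid for $\eta>\eta_0$. The only delicate point is the $w.\widehat{\mathbf W}_\alpha$-term, and in particular the uniformity in $\alpha$. This is exactly where \lemmaref{init_bound} is needed: its lower bound $\vert w+i\alpha\vert\ge c_1(\vert w\vert+\vert\alpha\vert)$ controls the growth in $\vert w\vert$ uniformly on $\Omega$, provided $\delta$ is small. That $\delta$ is the radius of the disc in $\Omega$, i.e. the $\kappa$ in \eqref{Omega}.

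Unlike \lemmaref{haty0est}, where the $\widehat{\mathbf Z}_0$-term carries no factor $\eta^{-1}$, here the factor $i\alpha$ keeps the denominator away from zero near $w=0$. That is why we gain $\eta^{-1}$ on both terms.
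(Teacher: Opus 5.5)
Your proposal is correct and follows essentially the same route as the paper: you bound $\Vert \mathbf T_\alpha^{-1}\Vert<2$, estimate $1\star\widehat{\mathbf H}_\alpha$ via \lemmaref{convolution1} and \lemmaref{wWstarZest} (using $\Vert 1\Vert_\eta=1$), and absorb the extra factor $\vert w\vert$ with \eqref{init_bound_est}, uniformly in $\alpha$. Your constant $16\pi c_1^{-1}$ is valid but not sharp; the same bookkeeping gives $2\cdot 4\pi c_1^{-1}=8\pi c_1^{-1}$, which is the constant the paper obtains.
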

\begin{proof}
The proof is similar to the proof of \lemmaref{Talphainv}. First by \lemmaref{convolution1} and \lemmaref{wWstarZest}, we have that
\begin{align*}
 \vert (1\star \widehat{H}_\alpha^j)(w)\vert \e^{-\eta\vert w\vert}(1+\eta^2 \vert w\vert^2)\le 4\pi  \eta^{-1}\left(\vert w\vert\Vert \widehat{W}_\alpha^j\Vert_\eta+\Vert \widehat Z_\alpha^j\Vert_\eta\right) \quad \forall\,w\in \Omega.
\end{align*}
We therefore conclude that
\begin{align*}
 \Vert \widehat{\mathbf y}_\alpha \Vert_\eta\le \Vert \mathbf T_\alpha^{-1}\Vert 4\pi c_1 ^{-1}\eta^{-1} \sup_{j\in \{1,\ldots,n\}} \left( \Vert \widehat{W}_\alpha^j\Vert_\eta+\Vert \widehat Z_\alpha^j\Vert_\eta\right)\le 8\pi c_1^{-1} \eta^{-1} \left( \Vert \widehat{\mathbf W}_\alpha\Vert_\eta+\Vert \widehat{\mathbf  Z}_\alpha\Vert_\eta\right),
\end{align*}
upon using \lemmaref{init_bound}, recall \eqref{init_bound_est}.

\end{proof}

We now summarize our findings on the auxiliary problem \eqref{aux}:
\begin{proposition}\proplab{aux}
The following holds for all $\eta>\eta_0$ with $\eta_0>0$ large enough:
The auxiliary equation \eqref{aux} has a unique solution
$$\widehat{\mathbf y}=:\mathbf T^{-1}[\widehat{\mathbf W},\widehat{\mathbf Z}] \in \mathcal G^n_\eta\{\e^{i\theta}\},$$ for any $\widehat{\mathbf W},\widehat{\mathbf Z}\in \mathcal G^n_\eta\{\e^{i\theta}\}$. In particular, the solution operator $\mathbf T^{-1}\,:\,\mathcal G^n_\eta\{\e^{i\theta}\} \times \mathcal G^n_\eta\{\e^{i\theta}\}\to \mathcal G^n_\eta\{\e^{i\theta}\}$ is linear and bounded, i.e. there is a (smallest) constant $\Wert \mathbf T^{-1}\Wert>0$ such that
 \begin{align*}
  \Wert \mathbf T^{-1}[\widehat{\mathbf W},\widehat{\mathbf Z}]\Wert_\eta \le \Wert \mathbf T^{-1}\Wert \left(\eta^{-1} \Wert \mathbf W\Wert_\eta+\Wert \mathbf Z\Wert_\eta\right)\quad \forall\,\widehat{\mathbf W},\widehat{\mathbf Z}\in \mathcal G^n_\eta\{\e^{i\theta}\}.
 \end{align*}

\end{proposition}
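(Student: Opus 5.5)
The plan is to assemble the solution of \eqref{aux} Fourier mode by Fourier mode, using the results just established. First I decompose $\widehat{\mathbf W}=\sum_\alpha \widehat{\mathbf W}_\alpha\e^{i\alpha\theta}$ and $\widehat{\mathbf Z}=\sum_\alpha\widehat{\mathbf Z}_\alpha\e^{i\alpha\theta}$. By \remref{normeqv} each coefficient lies in $\mathcal G^n_\eta$, with $\sum_\alpha \Vert \widehat{\mathbf Q}_\alpha\Vert_\eta \e^{\vert\alpha\vert\xi}\le 2\Wert\widehat{\mathbf Q}\Wert_\eta$. Since $\theta$-differentiation acts diagonally ($\partial_\theta \mapsto i\alpha$), and convolution with $1$, $F_0$ and $\mathbf A$ acts mode-wise, \eqref{aux} is equivalent to the decoupled system \eqref{alpha0eqn}, \eqref{alphaneq0eqn}. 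I then define $\widehat{\mathbf y}_0$ by \eqref{y0j} and $\widehat{\mathbf y}_\alpha$, $\alpha\neq0$, by \eqref{yalpha}. The function $\widehat{\mathbf y}_0$ solves \eqref{alpha0eqn}: differentiating, the $w$-derivative identity holds, and integrating back from $w=0$ recovers the convolution equation, since both sides vanish at $w=0$. The functions $\widehat{\mathbf y}_\alpha$ solve \eqref{alphaneq0eqn2}, which is equivalent to \eqref{alphaneq0eqn} because $w+i\alpha\ne 0$ on $\Omega$ by \lemmaref{init_bound}.

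For the bound, I combine \lemmaref{haty0est} with \eqref{yalphaest}. For $\alpha\ne0$,
\[
\Vert\widehat{\mathbf y}_\alpha\Vert_\eta\le C\eta^{-1}\Vert\widehat{\mathbf W}_\alpha\Vert_\eta + C\eta^{-1}\Vert\widehat{\mathbf Z}_\alpha\Vert_\eta\le C\eta^{-1}\Vert\widehat{\mathbf W}_\alpha\Vert_\eta + C\Vert\widehat{\mathbf Z}_\alpha\Vert_\eta
\]
for $\eta\ge1$. Hence, for every mode,
\[
\Vert \widehat{\mathbf y}_\alpha\Vert_\eta\le C(\eta^{-1}\Vert\widehat{\mathbf W}_\alpha\Vert_\eta+\Vert\widehat{\mathbf Z}_\alpha\Vert_\eta),
\]
with $C$ independent of $\alpha$ and $\eta>\eta_0$. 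The uniformity in $\alpha$ is the essential point, and it comes from \lemmaref{Talphainv} and \lemmaref{init_bound}. Multiplying by $\e^{-\alpha\operatorname{Im}\theta}$, summing over $\alpha$ and taking the supremum over $\theta\in\mathbb T_\xi$ gives
\[
\Wert\widehat{\mathbf y}\Wert_\eta\le C(\eta^{-1}\Wert\widehat{\mathbf W}\Wert_\eta+\Wert\widehat{\mathbf Z}\Wert_\eta)
\]
componentwise. This yields the claimed estimate, and the smallest such constant defines $\Wert\mathbf T^{-1}\Wert$.

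It remains to check that $\widehat{\mathbf y}(w,\theta)=\sum_\alpha\widehat{\mathbf y}_\alpha(w)\e^{i\alpha\theta}$ is analytic on $\Omega\times\mathbb T_\xi$ and actually solves \eqref{aux}. The series converges absolutely and locally uniformly, because $\vert\widehat{\mathbf y}_\alpha(w)\vert\le \e^{\eta\vert w\vert}\Vert\widehat{\mathbf y}_\alpha\Vert_\eta$ and the weighted sum is finite. This gives analyticity on the open strip. The $\theta$-derivative, however, produces a factor $\alpha$, and the norm \eqref{Wert} alone does not control $\sum \vert\alpha\vert\,\Vert\widehat{\mathbf y}_\alpha\Vert_\eta\e^{-\alpha\operatorname{Im}\theta}$. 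I expect this to be the main obstacle. I would handle it by reading \eqref{aux} termwise. On compact subsets of the open strip $\vert\operatorname{Im}\theta\vert<\xi$ the differentiated series converges, since $\vert\alpha\vert\e^{-\vert\alpha\vert(\xi-\xi')}$ is bounded, so $\widehat{\mathbf y}'_\theta$ exists and \eqref{aux} holds pointwise by mode-wise equality.

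Uniqueness follows the same route. If $\widehat{\mathbf y}$ solves the homogeneous equation, then each mode solves the homogeneous mode equation. For $\alpha\ne0$, injectivity of $\mathbf T_\alpha$ gives $\widehat{\mathbf y}_\alpha=0$. For $\alpha=0$, the homogeneous ODE $w\,\tfrac{d}{dw}\widehat y^j_0=-(\lambda^j+1+F_0w)\widehat y^j_0$ has the general solution $c\, w^{-(\lambda^j+1)}\e^{-F_0w}$. Analyticity at $w=0$ forces $c=0$, since $\lambda^j+1>0$. Linearity of $\mathbf T^{-1}$ is then immediate from the mode formulas, because each mode formula is linear in $(\widehat{\mathbf W}_\alpha,\widehat{\mathbf Z}_\alpha)$.
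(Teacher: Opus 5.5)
Your proposal is correct and follows the paper's route. The paper's proof is exactly your central step: it combines the mode estimates \eqref{haty0est2} and \eqref{yalphaest}, whose constants are uniform in $\alpha$, weights them by $\e^{-\alpha\operatorname{Im}\theta}$, sums, and takes the supremum over $\theta\in\mathbb T_\xi$. Your additional checks are sound and supply details the paper leaves implicit: that \eqref{y0j} and \eqref{yalpha} actually solve \eqref{alpha0eqn} and \eqref{alphaneq0eqn}, convergence and termwise $\theta$-differentiation on compact substrips, and uniqueness via injectivity of $\mathbf T_\alpha$ together with the singular ODE at $w=0$.
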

\begin{proof}
 By \eqref{haty0est2} and \eqref{yalphaest}, we have
 \begin{align*}
  \Wert \widehat{\mathbf y}\Wert_\eta &:= \sup_{\theta\in \mathbb T_\xi}\left\{ \sum_{\alpha\in \mathbb Z}\Vert \widehat{\mathbf y}_\alpha\Vert_\eta \e^{-\alpha \operatorname{Im}(\theta)}\right\}\\
  &\le C\sup_{\theta\in \mathbb T_\xi}\left\{ \sum_{\alpha\in \mathbb Z}\left(\eta^{-1} \Vert  {\mathbf W}_\alpha\Vert_\eta +\Vert {\mathbf Z}_\alpha\Vert_\eta\right) \e^{-\alpha \operatorname{Im}(\theta)}\right\}\\
  &=C \left(\eta^{-1} \Wert \mathbf W\Wert_\eta+\Wert \mathbf Z\Wert_\eta\right),
 \end{align*}
 for all $\eta>0$ large enough,
as claimed.
\end{proof}

\subsection{Completing the proof of \thmref{main}}
We are now ready to solve \eqref{boreleqn}, repeated here for convenience:
\begin{align}\eqlab{boreleqn2}
  w.\widehat{\mathbf y}+ F_0 \star (w. \widehat{\mathbf y}) + \widehat{\mathbf y}'_\theta+1 \star \mathbf A \widehat{\mathbf y} = 1\star\left( -(w.\widehat{\mathbf y}) \star \widehat{(x. F_1)}+\widehat{(x. \mathbf G_1)}\right),
\end{align}
for $\widehat{\mathbf y}\in \mathcal G^n_\eta\{\e^{i\theta}\}$. For this, we use the bounded solution operator $\mathbf T^{-1}$ of the auxiliary problem, recall \propref{aux}, to write \eqref{boreleqn2} in the fixed-point form:
\begin{align}
 \widehat{\mathbf y} = \mathbf T^{-1} \left[-w^{-1}.((w.\widehat{\mathbf y})\star \widehat{(x. F_1)}),\widehat{(x.\mathbf G_1)}\right].\eqlab{fixpoint}
\end{align}
We consider the open ball $B_{C_1}^{\mathcal G}\subset \mathcal G^n_\eta\{\e^{i\theta}\}$ of radius $C_1>0$ large enough centered at the origin.
Let $\widehat{\mathbf y}\mapsto \Theta(\widehat{\mathbf y})\in \mathcal G^n_\eta\{\e^{i\theta}\}$, be the mapping defined by the right hand side of \eqref{fixpoint}.

It follows from \lemmaref{final} that
$\widehat{\mathbf y}\mapsto \widehat{(x Q_1)}\in \mathcal G^n_\eta\{\e^{i\theta}\}$, $\widehat{\mathbf y}\in B_{C_1}^{\mathcal G}$, for ${Q}=F,{\mathbf G}$, are $C^1$ for all $\eta>\eta_0(C_1)$. In particular, \eqref{Qbound} holds for some $A>0,\zeta>\xi$ (independent of $C_1$). In turn, upon also using \lemmaref{wWstarZest} and \propref{aux}, we conclude that $\Theta\,:\,B_{C_1}^{\mathcal G}\to B_{C_1}^{\mathcal G}$ is a contraction for all $\eta\gg 1$. Therefore there is a unique solution of \eqref{fixpoint} in $B_{C_1}^{\mathcal G}$ for all such $\eta>0$. In turn, we have that the Laplace transform $\mathbf y^{\varphi} :=\mathcal L^{\varphi}[\widehat{\mathbf y}]$ solves \eqref{inveqn}, recall \lemmaref{boreleqn}. This completes the proof of \thmref{main}, setting $\mathbf y^+:=\mathbf y^0$ (corresponding to $\varphi=0$) and $\mathbf y^-:=\mathbf y^\pi$ (corresponding to $\varphi=\pi$).

\section{Study of the difference}\seclab{diff}
In this section, we prove \thmref{main2}. For this, we consider $\Delta{\mathbf y}(x,\theta):=\mathbf y^+(x,\theta)-\mathbf y^-(x,\theta)$ which is well-defined for
$$x\in \Delta S:=S^+\cap S^-,\quad \theta\in \mathbb T_\xi,$$ with asymptotic series $\Delta{\mathbf y}\sim_1 0$ for $x\to 0$ in $\Delta S$, uniformly with respect to $\theta\in \mathbb T_\xi$. Since $\mathbf y^+(x,\theta)$ and $\mathbf y^-(x,\theta)$ are both solutions of \eqref{main}, we obtain the following equation for the difference
\begin{align}\eqlab{Deltayyeqn}
 x^2 (1+xF_0+x^2 \widetilde F_1(x,\theta)) \Delta{\mathbf y}'_x + \Delta{\mathbf y}'_\theta + x \mathbf A \Delta{\mathbf y} =x^2 \widetilde{ \mathbf G}_1(x,\theta) \Delta{\mathbf y},
\end{align}
by using the mean-value theorem. Here
\begin{align}\eqlab{Qfourier}
%  \begin{cases}
Q(x,\theta) = \sum_{\alpha\in \mathbb Z} Q_{\alpha}(x)\e^{i\alpha\theta},
% \\ \widetilde{\mathbf G}_1(x,\theta) = \sum_{\alpha\in \mathbb Z} \widetilde{\mathbf G}_{1,\alpha}(x)\e^{i\alpha\theta},
%    \Delta {\mathbf y}(x,\theta) = \sum_{\alpha\in \mathbb Z} \Delta {\mathbf y}_{\alpha}(x)\e^{i\alpha\theta},
%     \end{cases}
\end{align}
where
\begin{align}\eqlab{Qnorm0}
 \Vert Q\Vert:=\sup_{(x,\theta) \in \Delta S\times \mathbb T_\xi} \sum_{\alpha\in \mathbb Z} \vert Q_\alpha(x)\vert \e^{-\alpha\operatorname{Im}\theta}<\infty,
%  \Vert \widetilde{\mathbf G}_{1,\alpha}(x,\dot) \Vert&:=\sup_{\theta\in \mathbb T_\xi} \sum_{\alpha\in \mathbb Z} \vert \widetilde{\mathbf G}_1(x)\vert \e^{-\alpha\operatorname{Im}\theta}<\infty,
\end{align}
with $\vert \cdot\vert$ denoting the norm in $\mathbb C^n$, $\mathbb C$, $\mathbb C^{n\times n}$ for $Q=\Delta \mathbf y,\widetilde F_1,\widetilde{\mathbf G}_1$ (along with their partial derivatives with respect to $x$),  respectively. This follows from \lemmaref{laplace1} and \thmref{main}.

In this section, we find it convenient to divide \eqref{Deltayyeqn} by the factor $1+xF_0+x^2 \widetilde F_1(x,\theta)$. This brings the equation into the following form
\begin{align}\eqlab{Deltayyeqn2}
 x^2 \Delta{\mathbf y}'_x =-  (1-xF_0)\Delta{\mathbf y}'_\theta - x \mathbf A \Delta{\mathbf y} +x^2 \left(\widetilde F_1(x,\theta) \Delta{\mathbf y}'_\theta+ \widetilde{ \mathbf G}_1(x,\theta) \Delta{\mathbf y}\right),
\end{align}
for some new (!) functions $\widetilde F_1\,:\,\Delta S\times \mathbb  T_\xi\to \mathbb C$, $\widetilde{\mathbf G}\,:\,\Delta S\times \mathbb T_\xi\to \mathbb C^{n\times n}$, that are also bounded in the norm \eqref{Qnorm}. Henceforth we drop the tildes.

To study \eqref{Deltayyeqn2}, we write $\Delta \mathbf y,F_1,{\mathbf G}_1$ as Fourier series:
\begin{align*}
%  \begin{cases}
Q(x,\theta) = \sum_{\alpha\in \mathbb Z} Q_{\alpha}(x)\e^{i\alpha\theta},\quad Q=\Delta \mathbf y,F_1,{\mathbf G}_1.
% \\ \widetilde{\mathbf G}_1(x,\theta) = \sum_{\alpha\in \mathbb Z} \widetilde{\mathbf G}_{1,\alpha}(x)\e^{i\alpha\theta},
%    \Delta {\mathbf y}(x,\theta) = \sum_{\alpha\in \mathbb Z} \Delta {\mathbf y}_{\alpha}(x)\e^{i\alpha\theta},
%     \end{cases}
\end{align*}
This leads to the following equations
\begin{align*}
x^2  \Delta{\mathbf y}'_\alpha &= \left(-i\alpha \left(1-xF_0\right) -x \mathbf A\right)
\Delta{\mathbf y}_\alpha \\
&+x^2\left(\sum_{\beta \in \mathbb Z} F_{1,\alpha-\beta}  i\beta \Delta{\mathbf y}_{\beta} +\sum_{\beta\in \mathbb Z} \mathbf G_{1,\alpha-\beta} \Delta{\mathbf y}_{\beta}\right),
\end{align*}
for all $\alpha\in \mathbb Z$.

Notice that $\Delta S$ is the union of two sectors of the complex plane centered along the directions defined by $\pm \frac{\pi}{2}$, each with opening $\chi\in (0,\pi)$. We therefore write $x= i p$ and consider $p$ real: $p\in [-p_0,p_0]$, $p_0>0$. This leads us to study the infinite dimensional system:
\begin{align*}
\frac{d}{dt} p &=-p^2,\\
 \frac{d}{dt} \Delta{\mathbf y}_\alpha &= \left(\alpha \left(1-i pF_0\right) +p \mathbf A\right)
\Delta{\mathbf y}_\alpha \\
&-i p^2 \left(\sum_{\beta \in \mathbb Z} F_{1,\alpha-\beta}  i\beta \Delta{\mathbf y}_{\beta} + \sum_{\beta\in \mathbb Z} \mathbf G_{1,\alpha-\beta} \Delta{\mathbf y}_{\beta}\right),\quad \alpha\in \mathbb Z.
\end{align*}
In fact, we restrict attention to $p\in (0,p_0]$, $0<p_0<\delta$, and are interested in bounded solutions for $t\ge 0$.
\subsection{Banach spaces} \seclab{banachdiff} For analytic functions
\begin{align}\eqlab{Wfourier}
W\,:\,\mathbb T_\xi \to \mathbb C,\quad  W(\theta) = \sum_{\alpha\in \mathbb Z} W_{\alpha}\e^{i\alpha\theta},
\end{align}
we define
\begin{align}\eqlab{Qnorm}
\begin{cases}
\Vert W\Vert_0 : =
 \sup_{\theta\in \mathbb T_\xi} \left\{\sum_{\alpha\in \mathbb Z} \vert W\vert \e^{-\alpha\operatorname{Im}\theta}\right\}, \\ \Vert W\Vert_{\frac12}:= \sup_{\theta\in \mathbb T_\xi} \left\{\sum_{\alpha\in \mathbb Z} \vert \alpha W\vert \e^{-\alpha\operatorname{Im}\theta}\right\},
 \end{cases}
 %  \Vert \widetilde{\mathbf G}_{1,\alpha}(x,\dot) \Vert&:=\sup_{\theta\in \mathbb T_\xi} \sum_{\alpha\in \mathbb Z} \vert \widetilde{\mathbf G}_1(x)\vert \e^{-\alpha\operatorname{Im}\theta}<\infty,
\end{align}
% where the $\sup$ is taken over ${(t,\theta) \in [0,\infty) \times \mathbb T_\xi}$.
and consider the Banach spaces $\mathcal F_0$ and $\mathcal F_1$ of Fourier series \eqref{Wfourier} with
\begin{align*}
\Vert W\Vert_0<\infty,\quad
\Vert W\Vert_1:=\max \{\Vert \cdot\Vert_0,\Vert \cdot \Vert_{\frac12}\}<\infty,
\end{align*}
% \end{align*}
respectively.
% for $Q=F_1,{\mathbf G}_1,\Delta \mathbf y$,
%  We equip $\mathcal F$ with the Banach norm
% \begin{align*}
%  \Vert \cdot\Vert:=\max \{\Vert \cdot\Vert_0,\Vert \cdot \Vert_1\}.
% \end{align*}
Similarly, we define $\mathcal F_k^n$, $k\in \{0,1\}$, as the Banach spaces of functions $\mathbf W=(W^1,\ldots,W^n)\,:\, \mathbb T_\xi \to \mathbb C^n$ where each component $W^j$, $j\in \{1,\ldots,n\}$, belongs to $\mathcal F_k$  and define $\Vert \mathbf W\Vert_k:=\max_{j\in \{1,\ldots,n\}} \Vert W^j\Vert_k$.

Finally, $\mathcal F_{1,\pm}^n\subset \mathcal F_1^n$ denotes the subset of $\mathcal F_1^n$ consisting of (one-sided) Fourier series
\begin{align}\eqlab{Wpm}
\mathbf W_- := \sum_{-\alpha \in  \mathbb N_0} \mathbf W_\alpha \e^{i\alpha\theta},\quad \mathbf W_+ := \sum_{\alpha \in  \mathbb N} \mathbf W_\alpha \e^{i\alpha\theta},
\end{align} respectively.
In particular, given a series $\mathbf W := \sum_{\alpha \in  \mathbb Z} \mathbf W_\alpha \e^{i\alpha\theta}\in \mathcal F_1^n$, then we define $$\widehat W_- := \sum_{-\alpha \in  \mathbb N_0} \mathbf W_\alpha \e^{i\alpha\theta}\in \mathcal F_{1,-}^n,\quad \mathbf W_+ := \sum_{\alpha \in  \mathbb N} \mathbf W_\alpha \e^{i\alpha\theta}\in \mathcal F_{1,+}^n.$$ We let $\operatorname{L}(\mathcal F_{1,-}^n,\mathcal F_{1,+}^n)$ denote the set of linear bounded operators from $\mathcal F_{1,-}^n$ to $\mathcal F_{1,+}^n$.

\subsection{Auxiliary equation}

 We now consider the auxiliary system:
 \begin{equation}\eqlab{auxdiff}
\begin{aligned}
\frac{d}{dt} p &=-p^2,\\
 \frac{d}{dt} \Delta{\mathbf y}_\alpha &= \left(\alpha \left(1-ip F_0\right) +p \mathbf A\right)
\Delta{\mathbf y}_\alpha + p^2 \mathbf H_\alpha,
\end{aligned}
\end{equation}
with $p\in (0,p_0]$, $0<p_0<\delta$ and where $\mathbf H = \sum_{\alpha\in \mathbb Z} \mathbf H_\alpha \e^{i\alpha \theta}\in C_b([0,\infty);\mathcal F_0^n)$, $\Vert \mathbf H\Vert_0<\infty$.

By the usual variation of constant formulation (see e.g. \cite[Lemma 5.2]{meiss2007a}), we find that bounded solutions of \eqref{auxdiff}
are given by
\begin{align}\eqlab{Deltayalphaaux}
 \footnotesize{\begin{cases}
\Delta \mathbf y_\alpha = \left(\frac{p(0)}{p(t)}\right)^{\mathbf A} \e^{\alpha \left(t- i F_0 \log\frac{p(0)}{p(t)}\right)}\Delta \mathbf y_\alpha(0) +\int_0^t  \left(\frac{p(s)}{p(t)}\right)^{\mathbf A} \e^{\alpha \left((t-s)- i F_0 \log\frac{p(s)}{p(t)}\right) } p(s)^2 \mathbf H_\alpha(s)ds & \alpha<0,\\
\Delta \mathbf y_0 =  \int_{\infty}^t  \left(\frac{p(s)}{p(t)}\right)^{\mathbf A} p(s)^2 \mathbf H_0(s) ds &\alpha=0,\\
  \Delta \mathbf y_\alpha =\int_{\infty}^t  \left(\frac{p(s)}{p(t)}\right)^{\mathbf A} \e^{\alpha \left((t-s)- i F_0 \log\frac{p(s)}{p(t)}\right)} p(s)^2 \mathbf H_\alpha(s) ds &\alpha>0,
 \end{cases}}
\end{align}
with $p'(t) = -p(t)^2$, $p(0)\in [0,p_0]$. In these expressions, %we have for simplicity suppressed the dependency of $s\in [0,\infty)$ on $\mathbf H_\alpha$ (writing $\mathbf H_\alpha$ instead of $\mathbf H_\alpha(s)$). Moreover,
we have defined
\begin{align*}
 q^{\mathbf A} := \operatorname{diag}(q^{\lambda^1},\dots,q^{\lambda^n}),\quad q\ge 0,
\end{align*}
and used that $\lambda^j>0$, recall \eqref{Adiag}.  This follows from simple calculations.

\begin{remark}The following lemma shows that $\Delta \mathbf y=\sum_{\alpha\in \mathbb Z} \Delta \mathbf y_\alpha \e^{i\alpha \theta}$ given by \eqref{Deltayalphaaux}  belongs to $C_b([0,\infty);\mathcal F_1^n)$. To emphasize this, we therefore write $\Delta \mathbf y(t,\theta)$ as $$\Delta \mathbf y(t)(\theta),$$ in the following. In this way, we can then write $$\Delta \mathbf y(t)\in \mathcal F_1^n,$$ for all $t\ge 0$ without confusion.
\end{remark}

%
% In particular, we have the following.
\begin{lemma}\lemmalab{mathbbT}
 Consider $\Delta \mathbf y = \sum_{\alpha\in \mathbb Z} \Delta \mathbf y_\alpha \e^{i\alpha \theta}$ given by \eqref{Deltayalphaaux} with $\mathbf H=\sum_{\alpha\in \mathbb Z} \mathbf H_\alpha \e^{i\alpha\theta}\in C_b([0,\infty);\mathcal F_0^n)$ and
  \begin{align*}
   \Delta \mathbf y_-(0)  = \sum_{-\alpha\in \mathbb N} \Delta \mathbf y_\alpha(0)\e^{i\alpha\theta}\in \mathcal F_{1,-}^n.
  \end{align*}
Then for any $0<p_0\ll 1$, we have \begin{align*}
\Delta \mathbf y\in C_b([0,\infty);\mathcal F_1^n),
 \end{align*}
in particular
\begin{align*}
 \Vert \Delta \mathbf y(t)\Vert_1 \le  \Vert \Delta \mathbf y_-(0)\Vert_1 + Cp_0^2 \Vert \mathbf H(t) \Vert_0\quad \forall\,t\ge 0,
 \end{align*}
with $C>0$ large enough.

\end{lemma}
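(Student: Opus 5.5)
We estimate each Fourier mode of \eqref{Deltayalphaaux} separately and then sum with the weights $\e^{-\alpha\operatorname{Im}\theta}$. The mode estimate is a bound of the form
\begin{align*}
\vert\Delta\mathbf y_\alpha(t)\vert\le \vert\Delta\mathbf y_\alpha(0)\vert+\frac{Cp_0^2}{1+\vert\alpha\vert}\sup_{s\ge0}\vert\mathbf H_\alpha(s)\vert,
\end{align*}
with $C$ independent of $\alpha$, with $\Delta\mathbf y_\alpha(0)$ replaced by $0$ for $\alpha\ge0$. The factor $(1+\vert\alpha\vert)^{-1}$ is what upgrades $\mathcal F_0$ control of $\mathbf H$ to $\mathcal F_1$ control (the $\Vert\cdot\Vert_{\frac12}$ part) of $\Delta\mathbf y$. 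Summing over $\alpha$ gives $\Vert\Delta\mathbf y(t)\Vert_1\le\Vert\Delta\mathbf y_-(0)\Vert_1+Cp_0^2\sup_s\Vert\mathbf H(s)\Vert_0$. This is the claimed estimate, reading $\Vert\mathbf H(t)\Vert_0$ as the sup norm in $C_b$. Continuity in $t$ follows from uniform convergence of the series.

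The first step is the elementary facts along the flow $p'=-p^2$. These are $p(t)=p(0)/(1+p(0)t)\le p_0$ decreasing, and $p(s)/p(t)\le1$ for $s\ge t$, $\ge1$ for $s\le t$. For the $\alpha\ge0$ cases, $s\in[t,\infty)$ gives $\vert(p(s)/p(t))^{\mathbf A}\vert\le1$ since $\lambda^j>0$. The factor $\vert\e^{-i\alpha F_0\log(p(s)/p(t))}\vert=(p(s)/p(t))^{\alpha\operatorname{Im}F_0}$ has to be absorbed into $\e^{-\alpha(s-t)}$. Since $\log(p(t)/p(s))=\log(1+p(t)(s-t))\le p_0(s-t)$, we get $\e^{-\alpha(s-t)}(p(t)/p(s))^{\alpha\vert F_0\vert}\le\e^{-\alpha(1-p_0\vert F_0\vert)(s-t)}\le\e^{-\alpha(s-t)/2}$ for $p_0$ small. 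Integrating then gives $\int_t^\infty\e^{-\alpha(s-t)/2}p(s)^2ds\le 2p_0^2/\alpha$ for $\alpha>0$. For $\alpha=0$ we use $\int_t^\infty p(s)^2ds=p(t)\le p_0$ instead; this yields only $p_0$, not $p_0^2$. To get $p_0^2$ I would use $p(s)^2\le p_0p(s)$ and accept the constant, or simply note that the stated bound with $Cp_0^2$ may need to read $Cp_0$ for the zero mode. I expect this discrepancy is harmless for what follows, and I would flag it.

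For $\alpha<0$, on $s\in[0,t]$ we have $p(s)/p(t)=1+p(s)(t-s)\le\e^{p_0(t-s)}$, so $\vert(p(s)/p(t))^{\mathbf A}\vert\le\e^{\lambda_{\max}p_0(t-s)}$. The $F_0$ factor is bounded similarly by $\e^{\vert\alpha\vert\vert F_0\vert p_0(t-s)}$. Hence the kernel is bounded by $\e^{-\vert\alpha\vert(1-p_0\vert F_0\vert)(t-s)+\lambda_{\max}p_0(t-s)}\le\e^{-\vert\alpha\vert(t-s)/2}$ once $p_0$ is small, using $\vert\alpha\vert\ge1$. The same bound applied to the homogeneous term gives $\vert(\cdot)\Delta\mathbf y_\alpha(0)\vert\le\vert\Delta\mathbf y_\alpha(0)\vert$. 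Integrating over $[0,t]$ with $p(s)^2\le p_0^2$ gives $2p_0^2/\vert\alpha\vert$, uniformly in $t$.

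The main obstacle is making the kernel contractivity uniform in $\alpha$ for $\alpha<0$. There the growth from $p^{\mathbf A}$, which is independent of $\alpha$, must be beaten by the decay $\e^{-\vert\alpha\vert(t-s)}$. This works only because $\vert\alpha\vert\ge1$ and $p_0$ is chosen small relative to $\lambda_{\max}$ and $\vert F_0\vert$. Everything else is bookkeeping: multiply the mode bounds by $\e^{-\alpha\operatorname{Im}\theta}$, sum (and sum again after multiplying by $\vert\alpha\vert$), and take the sup over $\theta\in\mathbb T_\xi$.
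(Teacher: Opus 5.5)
Your mode-by-mode estimates follow the paper's proof exactly. For $p_0$ small, the kernel in \eqref{Deltayalphaaux} is bounded by $\e^{-\frac12\vert\alpha\vert\,\vert t-s\vert}$ for $\alpha\ne0$, which integrates to $2p_0^2/\vert\alpha\vert$; the paper uses $\frac{\log z}{z-1}\le 1$ where you use $\log(1+u)\le u$. Your flag on the zero mode is right. The paper's own proof only gets $(\lambda^j+1)^{-1}p_0\sup_t\vert H_0^j(t)\vert$ there, and $p_0^2$ is in fact false as stated: for $\mathbf H$ constant in time, $\Delta y_0^j(t)=-p(t)H_0^j/(\lambda^j+1)$ exactly. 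Your suggested repair via $p(s)^2\le p_0p(s)$ does not help, because $\int_t^\infty p(s)\,ds=\infty$. The correct computation is $\int_t^\infty (p(s)/p(t))^{\lambda^j}p(s)^2\,ds=p(t)/(\lambda^j+1)$, using $dp=-p^2\,ds$.

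The genuine gap is the summation step. Your mode bound is in terms of $\sup_s\vert\mathbf H_\alpha(s)\vert$. Summing it produces $\sum_\alpha\sup_s\vert\mathbf H_\alpha(s)\vert\e^{-\alpha\operatorname{Im}\theta}$, which dominates $\sup_s\Vert\mathbf H(s)\Vert_0$ rather than being dominated by it, and it can be infinite.

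For $\Vert\cdot\Vert_0$ this is repairable. Keep $\mathbf H_\alpha(s)$ under the integral, use the $\alpha$-uniform bound $\e^{-\frac12\vert t-s\vert}$ for $\alpha\ne0$, and interchange sum and integral. This gives $\Vert\Delta\mathbf y(t)\Vert_0\le\Vert\Delta\mathbf y_-(0)\Vert_0+Cp_0\sup_s\Vert\mathbf H(s)\Vert_0$.

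For $\Vert\cdot\Vert_{\frac12}$ the same trick would need $\int_0^\infty\sup_{\alpha\ne0}\vert\alpha\vert\e^{-\frac12\vert\alpha\vert u}\,du<\infty$, which diverges at $u=0$. In fact the $\mathcal F_1$ conclusion is false. Take $n=1$, $F_0=0$, $\Delta\mathbf y_-(0)=0$, $H_\alpha\equiv0$ for $\alpha\le0$, and for $k\in\mathbb N$, $2^k\le\alpha<2^{k+1}$,
\[
H_\alpha(s)=\tfrac1k\,2^{-k}\,\e^{-\alpha\xi}\,\phi(2^ks),
\]
with $\phi\ge0$ continuous, supported in $[\frac12,1]$, and equal to $1$ on $[\frac58,\frac78]$.
\begin{itemize}
\item Different blocks have disjoint supports in $s$, so $\Vert H(s)\Vert_0\le 1/k$ on the $k$-th support. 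Hence $H(s)\to0$ as $s\to0^+$, and $H\in C_b([0,\infty);\mathcal F_0)$.
\item With $p(0)=p_0$, every $\alpha$ in the $k$-th block satisfies $\vert\alpha\,\Delta y_\alpha(0)\vert\e^{\alpha\xi}\ge c\,p_0^2k^{-1}2^{-k}$.
\item Summing over the $2^k$ modes of each block and letting $\operatorname{Im}\theta\to-\xi$ gives $\Vert\Delta y(0)\Vert_{\frac12}\ge c\,p_0^2\sum_k k^{-1}=\infty$.
\end{itemize}

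The paper's proof has the same gap: it bounds each mode by $\sup_{t\ge0}\vert H_\alpha(t)\vert$ and then asserts that the statement follows. Your argument becomes correct if you change the space so that the time-sup sits inside the sum. That is, use $\sum_\alpha\sup_{t\ge0}\vert\mathbf W_\alpha(t)\vert\e^{\vert\alpha\vert\xi}$, together with its $\vert\alpha\vert$-weighted analogue. Your mode bounds then sum verbatim, including the $\vert\alpha\vert^{-1}$ gain, with $Cp_0$ in place of $Cp_0^2$. Since $\e^{\vert\alpha\vert\xi}\le\e^{\vert\alpha-\beta\vert\xi}\e^{\vert\beta\vert\xi}$, this space is still stable under Fourier convolution with the coefficients of $F_1$ and $\mathbf G_1$ (after shrinking $\xi$). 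The fixed-point argument of \propref{Wcs} can therefore be run there.
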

\begin{proof}
 We first estimate $\Delta \mathbf y_0$ in \eqref{Deltayalphaaux}. Using $p'(t)=-p(t)^2$, we obtain
 \begin{align*}
  \vert \Delta \mathbf y_0^j(t)\vert \le (\lambda^j+1)^{-1} p_0 \sup_{t\ge 0}\vert H_0^j(t) \vert<\infty.
 \end{align*}
For $\alpha\in \mathbb N$, we find for any $s\in [t,\infty)$:
\begin{align*}
 &\log \left\vert \left(\frac{p(s)}{p(t)}\right)^{\lambda^j} \e^{\alpha \left((t-s)- i F_0 \log\frac{p(s)}{p(t)}\right)}\right\vert\\
 &=\alpha\left(p(t)^{-1}-p(s)^{-1}\right)\times \\
 &\left(1+\alpha^{-1}(\lambda^j+\alpha\operatorname{Re}(-i F_0))p(t) \frac{1}{p(t)/p(s)-1} \log (p(t)/p(s)) \right)\\
 &\le
\frac12 \alpha\left(p(t)^{-1}-p(s)^{-1}\right), \end{align*}
 for $p_0>0$ small enough.
 Here we have used (a):
  \begin{align*}
   t-s = \frac{1}{p(t)}-\frac{1}{p(s)}\le 0,
  \end{align*}
  which follows from $p'=-p^2$, (b): $0\le p(t)\le p_0$ for all $t\in [0,\infty)$,
  along with (c):
  \begin{align*}
   \frac{1}{z-1}\log z \le 1\quad \forall\,z\ge 1.
  \end{align*}
 This leads to
\begin{align*}
\vert \Delta y_\alpha^j(t)\vert &\le p_0^2 \int_{t}^\infty\e^{\frac12 \alpha(t-s)} ds \sup_{t\ge 0}\vert H_\alpha(t)\vert \\
&=\frac{2 p_0^2}{\vert \alpha\vert} \sup_{t\ge 0}\vert H_\alpha(t)\vert,
\end{align*}
and hence $\vert \Delta y_\alpha^j(t)\vert\le {2 p_0^2} \sup_{t\ge 0}\vert H_\alpha(t)\vert<\infty$
for all $j\in \{1,\ldots,n\}$.
For $-\alpha\in\mathbb N$, we similarly find that
\begin{align*}
  \vert \Delta y_\alpha^j(t)\vert&\le \e^{\frac12 \alpha t} \vert \Delta y_\alpha^j(0)\vert+\int_0^t \e^{\frac12 \alpha (t-s)} ds  p_0^2 \sup_{t\ge 0}\vert H_\alpha(t)\vert\\
  &\le  \vert \Delta y_\alpha^j(0)\vert-\frac{2}{\alpha}p_0^2 \sup_{t\ge 0}\vert H_\alpha(t)\vert,
  \end{align*}
  and therefore also $\vert \Delta y_\alpha^j(t)\vert \le  \vert \Delta y_\alpha^j(0)\vert+2p_0^2 \sup_{t\ge 0}\vert H_\alpha(t)\vert<\infty$.
The statements now easily follow. %completes the proof.
% \int_{\infty}^t  \left(\frac{p(s)}{p(t)}\right)^{\mathbf A} \e^{-\alpha \left((t-s)-b^{-1} i F_0 \log\frac{p(s)}{p(t)}\right)} p(s)^2 \mathbf H_\alpha ds
\end{proof}

The previous result shows that the linear operator $$\mathbf T(\Delta \mathbf y_-(0),p(0))\,:\, \mathbf H\mapsto \Delta \mathbf y=\mathbf T(\Delta \mathbf y_-(0),p(0))\left[\mathbf H\right],$$ defined by the right hand side of \eqref{Deltayalphaaux}, is bounded from $C_b([0,\infty);\mathcal F_0^n)$ to $C_b([0,\infty);\mathcal F_1^n)$.
Here the dependency with respect to $$ \Delta \mathbf y_-(0)= \sum_{-\alpha\in \mathbb N} \Delta \mathbf y_\alpha(0)\e^{i\alpha\theta}\in \mathcal F_{1,-}^n,$$ is affine whereas the dependency on $p(0)\in [0,p_0]$ is $C^\infty$. To show the latter, we just use that
\begin{align}
t = \frac{1}{p(t)}-\frac{1}{p(0)},\quad t\ge 0,\eqlab{teqn}
 \end{align}
and $$
\frac{p(0)}{p(t)}= \frac{p(0)^2}{1+p(0) t},\quad t\ge 0.$$

This leads to the following fixed-point formulation for \eqref{Deltayyeqn2} for $p\in [0,p_0]$, $0<p_0\ll 1$: $\Delta \mathbf y \in C_b([0,\infty);\mathcal F_1^n)$ if and only if
\begin{align*}
 \Delta \mathbf y = -i  \mathbf T(\Delta \mathbf y_-(0),p(0))\left[  F_1 \Delta{\mathbf y}'_\theta+{ \mathbf G}_1 \Delta{\mathbf y} \right].
\end{align*}
In the following result, we ask the reader to recall the notation in \secref{banachdiff} for the one-sided Fourier series, see \eqref{Wpm}.

\begin{proposition}\proplab{Wcs}
There exists a center-stable manifold $W^{cs}$ of \eqref{Deltayyeqn2} of the graph form
 \begin{align}
  \Delta \mathbf y_+ = p \widetilde{\mathbf M}^{cs}(p)\Delta \mathbf y_-,\quad p\in [0,p_0],\eqlab{centerstable}
 \end{align}
 with $\widetilde{\mathbf M}^{cs}\,:\,[0,p_0]\to \operatorname{L}(\mathcal F_{1,-}^n,\mathcal F_{1,+}^n)$ being $C^\infty$-smooth.
% with $\Delta \mathbf y_\pm\in \mathcal F_{1,\pm }^n$.
% Here
\end{proposition}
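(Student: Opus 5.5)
The plan is a Lyapunov--Perron argument in $C_b([0,\infty);\mathcal F_1^n)$ built on the fixed-point formulation stated just before the proposition,
\begin{align*}
 \Delta \mathbf y = -i\, \mathbf T(\Delta \mathbf y_-(0),p(0))\left[F_1 \Delta \mathbf y'_\theta+\mathbf G_1\Delta \mathbf y\right].
\end{align*}
For fixed data $(\Delta\mathbf y_-(0),p(0))\in \mathcal F_{1,-}^n\times[0,p_0]$, the right-hand side is an affine map $\mathcal K$ on $C_b([0,\infty);\mathcal F_1^n)$. First I check that $\mathbf W\mapsto F_1\mathbf W'_\theta+\mathbf G_1\mathbf W$ is bounded from $\mathcal F_1^n$ to $\mathcal F_0^n$, uniformly in $t$. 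This holds because $\Vert \mathbf W'_\theta\Vert_0\le \Vert\mathbf W\Vert_{\frac12}$, and because $\Vert\cdot\Vert_0$ is submultiplicative by the Cauchy product estimate used in \lemmaref{convolution2}. The coefficients $F_1,\mathbf G_1$ are evaluated at $x=ip(t)$; they are bounded in the norm \eqref{Qnorm0} and, being analytic in $x$, smooth in $p$. \lemmaref{mathbbT} then gives
\begin{align*}
\Vert\mathcal K(\mathbf U)-\mathcal K(\mathbf V)\Vert_1\le Cp_0^2\,\sup_t\Vert\mathbf U-\mathbf V\Vert_1,
\end{align*}
so $\mathcal K$ is a contraction for $p_0$ small. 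Its unique fixed point $\Delta\mathbf y^*(\Delta\mathbf y_-(0),p(0))$ depends linearly on $\Delta\mathbf y_-(0)$, with norm at most $2\Vert\Delta\mathbf y_-(0)\Vert_1$. The manifold is then defined by
\begin{align*}
\mathbf M^{cs}(p(0))\Delta\mathbf y_-(0):=\left(\Delta\mathbf y^*(0)\right)_+,
\end{align*}
the $\alpha\ge1$ part of the fixed point at $t=0$. Note that the $\alpha=0$ mode, which is weakly stable through $p^{\mathbf A}$, belongs to the ``$-$'' side by the convention in \eqref{Wpm}, which is consistent with it being a free parameter.

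The prefactor $p$ in \eqref{centerstable} comes from looking at the $\alpha>0$ component at $t=0$ in \eqref{Deltayalphaaux}. It is an integral from $\infty$ to $0$ with weight $p(s)^2$ and exponential factor $\e^{\frac12\alpha(p(0)^{-1}-p(s)^{-1})}$. Substituting $ds=-dp/p^2$ gives
\begin{align*}
\int_0^{p(0)}\e^{-\frac{\alpha}{2}(p^{-1}-p(0)^{-1})}\,dp\le C p(0)^2/\alpha .
\end{align*}
This estimate is actually $\mathcal O(p(0)^2)$, so in particular it is $\mathcal O(p(0))$, and I set $\widetilde{\mathbf M}^{cs}(p):=p^{-1}\mathbf M^{cs}(p)$, a bounded operator in $\operatorname{L}(\mathcal F_{1,-}^n,\mathcal F_{1,+}^n)$. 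Invariance follows from uniqueness of the fixed point and the time-translation property of the variation-of-constants formula, in the standard way: a solution starting on the graph remains bounded, so its restart at time $t$ is again the fixed point for the data at time $t$.

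The main obstacle is $C^\infty$ smoothness in $p$ down to $p=0$, where $p(s)=p(0)/(1+p(0)s)$ degenerates and derivatives in $p(0)$ of kernels such as $\e^{\alpha(p(0)^{-1}-p(s)^{-1})}$ generate polynomial factors in $\alpha$ and $s$. My first attempt is to change the integration variable from $s$ to $p=p(s)$ in each integral of \eqref{Deltayalphaaux}, as in the computation above. This writes the fixed-point problem on the fixed interval $p\in(0,p(0)]$ with kernels $\e^{-\alpha(p^{-1}-p(0)^{-1})}$, which, for $\alpha>0$, are flat as functions of $p(0)$ near $0$ and whose derivatives are controlled by powers of $\alpha$ times $\e^{-c\alpha/p}$ (and similarly for $\alpha<0$ with the roles reversed). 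These extra powers of $\alpha$ are absorbed by shrinking $\xi$ slightly at each differentiation, a finite loss at each order $k$. I then apply the fiber-contraction theorem, or equivalently the implicit function theorem in a scale of spaces $\mathcal F_1^n$ with decreasing $\xi$, to conclude that for each $k$ the map $p\mapsto\widetilde{\mathbf M}^{cs}(p)$ is $C^k$ on $[0,p_0(k)]$. If $p_0$ must shrink with $k$, I recover a uniform $p_0$ by propagating the manifold with the linear flow, which is smooth for $p>0$.
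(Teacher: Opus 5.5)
Your core construction is the paper's. You contract $\Delta\mathbf y\mapsto -i\mathbf T(\Delta\mathbf y_-(0),p(0))[F_1\Delta\mathbf y'_\theta+\mathbf G_1\Delta\mathbf y]$ on $C_b([0,\infty);\mathcal F_1^n)$ using the product estimate and \lemmaref{mathbbT}, use linearity in the data, and read $W^{cs}$ off at $t=0$. Your checks of the prefactor $p$ and of invariance go beyond what the paper writes.

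However, your remark on the $\alpha=0$ mode is wrong, and taken literally it breaks the construction. Since $\frac{d}{dt}\Delta\mathbf y_0=p\mathbf A\Delta\mathbf y_0+\dots$ with all $\lambda^j>0$, the linear flow is $(p(0)/p(t))^{\mathbf A}=(1+p(0)t)^{\mathbf A}$, which grows. So this mode is weakly \emph{unstable} in forward time, not weakly stable. That is exactly why \eqref{Deltayalphaaux} determines $\Delta\mathbf y_0$ by integrating from $\infty$, and why \lemmaref{mathbbT} only takes initial data with $-\alpha\in\mathbb N$. In other words, $\Delta\mathbf y_0(0)$ is an output of the fixed point, not a free parameter. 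If you let $\Delta\mathbf y_-$ carry an arbitrary $\alpha=0$ component, the fixed-point problem never sees it. Your graph then contains points whose forward orbits are not bounded, and your invariance argument fails at those points. The graph has to be over the modes $\alpha\le-1$, with all modes $\alpha\ge0$ determined. This is also how $W^{cs}$ is used afterwards in \eqref{Wcs0}, so the $\mathbb N_0$ in \eqref{Wpm} should not be taken at face value here. Your prefactor check must then also cover $\Delta\mathbf y_0(0)$; the bound from \lemmaref{mathbbT} gives $\mathcal O(p(0))$ there, which suffices.

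On smoothness, the paper only asserts it via \eqref{teqn}, so your attention to it is welcome, but the mechanism you describe does not work as stated. After substituting $p=p(s)$, the interval $(0,p(0)]$ moves with $p(0)$. Moreover, $\e^{-\alpha(p^{-1}-p(0)^{-1})}$ equals $1$ at $p=p(0)$, so it is not flat. Each $p(0)$-derivative produces a factor $\alpha p(0)^{-2}$, which is compensated only after integrating over the layer of width $p(0)^2/\alpha$ where the kernel is not small.

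It is cleaner to stay in $t$, where $p(s)=p(0)/(1+p(0)s)$. There the $p(0)$-derivatives of the kernels in \eqref{Deltayalphaaux} grow only polynomially in $t$, $s$ and $\vert\alpha\vert\vert s-t\vert$. The powers of $\alpha$ are absorbed by part of $\e^{-\frac12\vert\alpha\vert\vert s-t\vert}$, so no shrinking of $\xi$ is needed. The growth in $t$ is absorbed by exponential weights, as in standard $C^k$ invariant-manifold theory, and this gives $C^k$ on $[0,p_0(k)]$.

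Also, smoothness of $F_1(ip,\theta)$ and $\mathbf G_1(ip,\theta)$ at $p=0$ comes from the asymptotic expansions of $\mathbf y^\pm$, not from analyticity, since $x=0$ is a vertex of $\Delta S$. Your last step, upgrading to $C^\infty$ on a fixed $[0,p_0]$, is fine. $W^{cs}$ is the set of forward-bounded solutions, defined without a cut-off, so it is unique and the pieces agree.
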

\begin{proof}
We first write
\begin{align*}
 F_1 \Delta{\mathbf y}'_\theta=\sum_{\alpha\in \mathbb Z} \left(\sum_{\beta\in \mathbb Z}F_{1,\alpha-\beta} i\beta\Delta{\mathbf y}_{\beta} \right)\e^{i\alpha\theta},\quad { \mathbf G}_1 \Delta{\mathbf y}  = \sum_{\alpha\in \mathbb Z} \left(\sum_{\beta\in \mathbb Z} \mathbf G_{1,\alpha-\beta} \Delta \mathbf y_{\beta}\right)\e^{i\alpha\theta}.
\end{align*}
Then by \eqref{Qnorm0} with $x=i p\in \Delta S$, we have
\begin{align*}
\Vert F_1 \Delta{\mathbf y}'_\theta\Vert_0\le
\sup_{\theta\in \mathbb T_\xi} \sum_{\alpha\in \mathbb Z} \left(\vert F_{1,\alpha-\beta} \vert \e^{-(\alpha-\beta)\operatorname{Im}(\theta)} \vert \beta\vert \vert \Delta{\mathbf y}_{\beta} \vert \e^{-\beta\operatorname{Im}(\theta)} \right)\le \Vert F_{1}\Vert_0 \Vert \Delta \mathbf y\Vert_1,
\end{align*}
and similarly
\begin{align*}
%  \begin{align*}
\Vert \mathbf G_1 \Delta{\mathbf y}\Vert_0\le \Vert  \mathbf  G_1\Vert_0 \Vert \Delta \mathbf y\Vert_0\le \Vert \mathbf  G_1\Vert_0 \Vert \Delta \mathbf y\Vert_1.
\end{align*}
It then follows from \lemmaref{mathbbT} that
% We then obtain the desired result from the fact that
\begin{align*}
\Delta \mathbf y \mapsto -i\mathbf T(\Delta \mathbf y_-(0),p(0))\left[  F_1 \Delta{\mathbf y}'_\theta+{ \mathbf G}_1 \Delta{\mathbf y} \right],
\end{align*}
has a unique fixpoint $\Delta \mathbf y^*(\Delta \mathbf y_-(0),p(0))\in C_b([0,\infty);\mathcal F_1^n)$ for all $p(0)\in (0,p_0]$, $p_0>0$ sufficiently small. We believe that this is clear enough. The desired manifold is then given by
\begin{align*}
\Delta \mathbf y_+ = \Delta \mathbf y^*_+(\Delta \mathbf y_-,p)(t=0),
\end{align*}
for any $\Delta \mathbf y_-\in \mathcal F_{1,-}^n$, $p\in [0,p_0]$.
The result then follows from the linearity of the problem.
\end{proof}

\subsection{Normal form on $W^{cs}$}
%To complete the proof of \thmref{main2},
In the following, we restrict to the center-stable manifold $W^{cs}$.
This gives the following system
\begin{equation}\eqlab{Wcs0}
\begin{aligned}
%  \begin{align*}
\frac{d}{dt} p &=-p^2,\\
 \frac{d}{dt} {\Delta{\mathbf y}}_\alpha &= \left(\alpha \left(1-i pF_0\right) +p \mathbf A\right)
{\Delta{\mathbf y}}_\alpha \\
% &-i p^2 \left(\sum_{\gamma \in \mathbb N_0} F_{1,\alpha-\gamma}  i\gamma p^2 [\widetilde{\mathbf M}^{cs}(p){\Delta \mathbf y}_-]_\gamma+\mathbf G_{1,\alpha-\gamma} p^2 [\widetilde{\mathbf M}^{cs}(p){\Delta \mathbf y}_-]_\gamma\right)\\
&-i p^2 \left(\sum_{-\beta \in \mathbb N}  \widetilde F_{1,\alpha-\beta}  i\beta {\Delta \mathbf y}_\beta+\sum_{-\beta \in \mathbb N}\widetilde{\mathbf G}_{1,\alpha-\beta} {\Delta \mathbf y}_\beta\right),\quad -\alpha\in \mathbb N,
% \end{align*}
\end{aligned}
\end{equation}
% and define $\widetilde{\Delta \mathbf y}_\alpha(t)$ for all $-\alpha\in \mathbb N$ by
% \begin{align*}
%  \Delta \mathbf  y_\alpha(t) = %\left(\frac{p(0)}{p(t)}\right)^{\mathbf A}
%  \e^{-\left(t-i F_0 \log\frac{p(0)}{p(t)}\right)}\widetilde{\Delta \mathbf y}_\alpha(t).
% \end{align*}
% This leads to the following system on $W^{cs}$:
% \begin{equation}\eqlab{Wcs}
% \begin{aligned}
% %  \begin{align*}
% \frac{d}{dt} p &=-p^2,\\
%  \frac{d}{dt} \widetilde{\Delta{\mathbf y}}_\alpha &= \left((\alpha+1) \left(1-i pF_0\right) +p \mathbf A\right)
% \widetilde{\Delta{\mathbf y}}_\alpha \\
% % &-i p^2 \left(\sum_{\gamma \in \mathbb N_0} F_{1,\alpha-\gamma}  i\gamma p^2 [\widetilde{\mathbf M}^{cs}(p)\widetilde{\Delta \mathbf y}_-]_\gamma+\mathbf G_{1,\alpha-\gamma} p^2 [\widetilde{\mathbf M}^{cs}(p)\widetilde{\Delta \mathbf y}_-]_\gamma\right)\\
% &-i p^2 \left(\sum_{-\gamma \in \mathbb N} \widetilde F_{1,\alpha-\gamma}  i\gamma \widetilde{\Delta \mathbf y}_\gamma+\sum_{-\gamma \in \mathbb N}\widetilde{\mathbf G}_{1,\alpha-\gamma} \widetilde{\Delta \mathbf y}_\gamma\right),\quad -\alpha\in \mathbb N,
% % \end{align*}
% \end{aligned}
% \end{equation}
for some new smooth functions
\begin{align*}
\widetilde Q_1=\sum_{\gamma \in \mathbb Z} \widetilde Q_{1,\gamma} \e^{i\gamma \theta},\quad Q=F,\mathbf G,
\end{align*} that are bounded (along with their partial derivatives with respect to $p$) in the norm \eqref{Qnorm0}.

We drop the tildes henceforth.
% where $[\cdot]_\alpha$ denotes the $\alpha$th Fourier coefficient. We see that the linear part of the equation for $\widetilde{\Delta \mathbf y}_{-1}$ vanishes. We therefore have a two-dimensional center-space, spanned by $(p,\widetilde{\Delta \mathbf y}_{-1})$.
Moreover, we let $\mathcal F_{1,-\star}^n\subset \mathcal F_{1,-}^n$ denote the closed subspace consisting of the series:
\begin{align*}
\mathbf W_{-\star}  = \sum_{-\alpha\in \mathbb N\setminus{\{1\}}}\mathbf W_\alpha \e^{i\alpha \theta}=\sum_{\alpha=-2}^{-\infty} \mathbf W_\alpha \e^{i\alpha \theta}.
\end{align*}
\begin{proposition}
Fix any $k\in \mathbb N$. Then we have the following for $p_0=p_0(k)>0$ small enough: On $W^{cs}$ there exist $C^k$-smooth functions
$\widetilde{\mathbf M}^{ss}:[0,p_0]\to \operatorname{L}(\mathcal F_{1,-\star}^n,\mathbb C^n)$, $\widetilde{\mathbf M}^c\,:\,[0,p_0]\to \operatorname{L}(\mathbb C^n,\mathcal F_{1,-\star}^n)$ such that the change of coordinates $(p,\widetilde{\Delta \mathbf y}_{-1},\widetilde{\Delta \mathbf y}_{-\star})\mapsto (p,\Delta \mathbf y_{-1},\Delta \mathbf y_{-\star}) $ defined by
\begin{align}\eqlab{ccfinaldiff}
 \begin{cases}
 {\Delta \mathbf y}_{-1} &= \widetilde{\Delta \mathbf y}_{-1} + p \widetilde{\mathbf M}^{ss}(p) \widetilde{\Delta \mathbf y}_{-\star},\\
 {\Delta \mathbf y}_{-\star} &=p \widetilde{\mathbf M}^{c}(p) \widetilde{\Delta \mathbf y}_{-1}+ \widetilde{\Delta \mathbf y}_{-\star},
 \end{cases}
\end{align}
brings \eqref{Wcs} into the following (diagonalized) normal form:
\begin{equation}\eqlab{nffibercoord}
\begin{aligned}
%  \begin{align*}
\frac{d}{dt} p &=-p^2,\\
\frac{d}{dt} \widetilde{\Delta{\mathbf y}}_{-1} &= \left(-(1-ip F_0) +p\mathbf A+p^2 \widetilde{\mathbf  H}_{-1}(p)\right) \widetilde{
\Delta{\mathbf y}}_{-1},\\
\frac{d}{dt} \widetilde{\Delta{\mathbf y}}_{\alpha } &= \left(\alpha (1-ip F_0) +p\mathbf A\right) \widetilde{
\Delta{\mathbf y}}_{\alpha}+ p^2 \widetilde{\mathbf  H}_{\alpha}(p)\widetilde{
\Delta{\mathbf y}}_{-\star},\quad -\alpha \in \mathbb N\setminus\{1\},
% % -i p \left(\widetilde F_{1,0}  i(-1)+\widetilde{\mathbf G}_{1,0} \right)+p^2 \widetilde H_{1,0}(p\right)\widetilde{
% % \Delta{\mathbf y}}_{-1}\\
% % &-ip^4 \left(\sum_{-\gamma \in \mathbb N} \widetilde F_{1,\alpha-\gamma}  i\gamma [\widetilde{\mathbf M}^c(p)]_\gamma +\sum_{-\gamma \in \mathbb N}\widetilde{\mathbf G}_{1,\alpha-\gamma}  [\widetilde{\mathbf M}^c(p)]_\gamma\right)\widetilde{
% % \Delta{\mathbf y}}_{-1}
%  \frac{d}{dt} \widetilde{\Delta{\mathbf y}}_\alpha &= \left((\alpha+1) \left(1-i pF_0\right) +p \mathbf A\right)
% \widetilde{\Delta{\mathbf y}}_\alpha \\
% % &-i p^2 \left(\sum_{\gamma \in \mathbb N_0} F_{1,\alpha-\gamma}  i\gamma p^2 [\widetilde{\mathbf M}^{cs}(p)\widetilde{\Delta \mathbf y}_-]_\gamma+\mathbf G_{1,\alpha-\gamma} p^2 [\widetilde{\mathbf M}^{cs}(p)\widetilde{\Delta \mathbf y}_-]_\gamma\right)\\
% &-i p^2 \left(\sum_{-\gamma \in \mathbb N} \widetilde F_{1,\alpha-\gamma}  i\gamma \widetilde{\Delta \mathbf y}_\gamma+\sum_{-\gamma \in \mathbb N}\widetilde{\mathbf G}_{1,\alpha-\gamma} \widetilde{\Delta \mathbf y}_\gamma\right),\quad -\alpha\in \mathbb N,
% \end{align*}
\end{aligned}
\end{equation}
Here $\widetilde{\mathbf  H}_{-1}:[0,p_0]\to \mathbb C^{n\times n}$ and $\widetilde{\mathbf  H}_{\alpha}:[0,p_0]\to \operatorname{L}(\mathcal F_{1,-\star}^n,\mathbb C^{n})$,  $\Vert \widetilde{\mathbf  H}_{\alpha}(p)\Vert =\mathcal O(\e^{-\vert \alpha\vert \xi})$, $-\alpha \in \mathbb N\setminus\{1\}$, are $C^k$-smooth functions.

\end{proposition}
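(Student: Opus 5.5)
We treat this as a block-diagonalization of a linear, non-autonomous system on $W^{cs}$, driven by $\dot p=-p^2$. The two blocks are the $\alpha=-1$ block, with linear part $-(1-ipF_0)+p\mathbf A$, and the block $\mathcal F^n_{1,-\star}$, with $\alpha\le -2$, whose linear parts are at least $-2+\mathcal O(p)$ in real part. There is a spectral gap of size $1$ between them, uniform in $\alpha$. The coupling between the blocks is $\mathcal O(p^2)$, and the plan is to remove it by two successive graph transformations, in the spirit of invariant-fibration or Fenichel normal form arguments (cf. \cite{jones_1995}).

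\textbf{Step 1: invariant linear graph.} First construct an invariant linear subspace of the form $\Delta\mathbf y_{-\star}=p\widetilde{\mathbf M}^c(p)\Delta\mathbf y_{-1}$. This is a "center-like" graph over the slow $(p,\Delta\mathbf y_{-1})$-directions.

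Substituting into \eqref{Wcs0} gives a homological-type ODE for $\mathbf M:=p\widetilde{\mathbf M}^c$ along $p'=-p^2$:
\begin{align*}
-p^2\mathbf M'(p)=\mathbf L_{-\star}(p)\mathbf M-\mathbf M\mathbf L_{-1}(p)+p^2\big(\mathbf B_{\star,-1}(p)+\mathbf B_{\star\star}\mathbf M-\mathbf M\mathbf B_{-1,\star}\mathbf M-\mathbf M \mathbf B_{-1,-1}\big).
\end{align*}
Here $\mathbf L$ are the diagonal parts and $\mathbf B$ the $\mathcal O(1)$ coupling blocks, which are bounded in $\Vert\cdot\Vert_0\to\Vert\cdot\Vert_1$ norms as in the proof of \propref{Wcs}.

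For each $\alpha\le-2$ the operator $\mathbf M\mapsto \mathbf L_{-\star}\mathbf M-\mathbf M\mathbf L_{-1}$ acts diagonally with eigenvalue approximately $(\alpha+1)(1-ipF_0)$. Its real part is $\le -1$ uniformly. So I would solve this equation as a fixed point in the space of $\mathcal O(p)$ operator-valued functions, using the variation-of-constants formula as in \eqref{Deltayalphaaux}. Integrating from $t=0$, where $p(0)$ is given, the backward-in-time $s$-integral has the exponentially decaying kernel $\e^{(\alpha+1)(t-s)}$. This is exactly the mechanism of \lemmaref{mathbbT}.

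Equivalently, one can solve formally in powers of $p$ up to order $k$, and then close the remainder by contraction. The quadratic term $p^2\mathbf M\mathbf B\mathbf M$ is small because $\mathbf M=\mathcal O(p)$ and $p_0\ll1$. The factor $1/\vert\alpha+1\vert$ coming from the kernel compensates the $\vert\beta\vert$ weights in the $F_1$-terms, as in the $\Vert\cdot\Vert_{\frac12}$ estimate.

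Smoothness to order $C^k$ in $p$ I would obtain by the standard fiber-contraction argument. This is where the restriction $p_0=p_0(k)$ enters: differentiating $k$ times produces factors that must be dominated by the gap, which holds only for $p$ small depending on $k$.

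\textbf{Step 2: invariant foliation.} Next construct the complementary change $\Delta\mathbf y_{-1}=\widetilde{\Delta\mathbf y}_{-1}+p\widetilde{\mathbf M}^{ss}(p)\widetilde{\Delta\mathbf y}_{-\star}$, i.e. the linear strong-stable fibers. This gives the analogous Sylvester-type equation
\begin{align*}
-p^2\mathbf N'=\mathbf L_{-1}\mathbf N-\mathbf N\mathbf L_{-\star}+p^2(\cdots).
\end{align*}
Its diagonal operator has eigenvalue $(-1-\alpha)(1-ipF_0)$, with positive real part $\ge1$. So now integrate forward from $t=\infty$, as in the $\alpha>0$ case of \eqref{Deltayalphaaux}, where the kernel is again exponentially decaying.

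\textbf{Step 3: read off the normal form.} In the new coordinates both off-diagonal blocks vanish. The remaining $\alpha=-1$ block is then $-(1-ipF_0)+p\mathbf A+p^2\widetilde{\mathbf H}_{-1}(p)$, where $\widetilde{\mathbf H}_{-1}$ collects the $\beta=-1$ self-coupling and the corrections from $\mathbf M,\mathbf N$. The $-\star$ block retains a $p^2\widetilde{\mathbf H}_\alpha$ coupling.

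The decay $\Vert\widetilde{\mathbf H}_\alpha\Vert=\mathcal O(\e^{-\vert\alpha\vert\xi})$ is inherited from the weights $\e^{-\alpha\operatorname{Im}\theta}$ in \eqref{Qnorm0}, via the convolution estimate of \lemmaref{convolution2}-type.

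\textbf{Main obstacle.} I expect the hardest part to be the uniform-in-$\alpha$ control in infinite dimensions. The $F_1\Delta\mathbf y'_\theta$ term loses one derivative, i.e. a factor $\vert\beta\vert$, which must be absorbed by the $1/\vert\alpha+1\vert$ gain from the exponential kernels. This has to be done together with the $C^k$ estimates, where each $p$-derivative produces polynomial-in-$t$ growth that the gap must beat.
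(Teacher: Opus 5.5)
Your proposal is correct and is essentially the paper's argument. The paper also constructs, inside $W^{cs}$, the center bundle $W^c$ as the graph $\Delta\mathbf y_{-\star}=p\widetilde{\mathbf M}^c(p)\Delta\mathbf y_{-1}$ and the strong-stable bundle $W^{ss}$ as the graph $\Delta\mathbf y_{-1}=p\widetilde{\mathbf M}^{ss}(p)\Delta\mathbf y_{-\star}$, and uses them as fiber coordinates; the only difference is that it obtains them by Lyapunov--Perron on trajectories after the rescalings $\Delta\mathbf y_\alpha=\e^{-\left(t-iF_0\log\frac{p(0)}{p(t)}\right)}\Delta\mathbf z_\alpha$ (with a cutoff $\chi(p/p_0)$ and solutions bounded in backward time) and $\Delta\mathbf y_\alpha=\e^{-\frac32\left(t-iF_0\log\frac{p(0)}{p(t)}\right)}\Delta\mathbf z_\alpha$ (solutions bounded in forward time), rather than through your Riccati/Sylvester equations for the graph operators. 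Prescribing $\mathbf M$ at $p=p_0$ instead of using a cutoff just selects another, equally valid center bundle (they differ by exponentially flat terms), and two small slips should be fixed: the $\alpha\le-2$ rates have real part at most (not at least) $-2+\mathcal O(p)$, and the coupling blocks map $\mathcal F_1$ into $\mathcal F_0$, not the other way around.
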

\begin{proof}
%  The proof
% \end{proof}
The coordinates $(p,\widetilde{\Delta \mathbf y}_{-1},\widetilde{\Delta \mathbf y}_{-\star})$ are ``fiber coordinates'' associated with the stable foliation of a center manifold $W^c$ inside $W^{cs}$, see e.g. \cite{chow1991a,chow1988a}. To obtain $W^c$, we
first introduce the (intermediate) coordinates
${\Delta{\mathbf z}}_\alpha(t)$ for all $-\alpha\in \mathbb N$ defined by
\begin{align*}
 \Delta \mathbf  y_\alpha(t) = %\left(\frac{p(0)}{p(t)}\right)^{\mathbf A}
 \e^{-\left(t-i F_0 \log\frac{p(0)}{p(t)}\right)}{\Delta{\mathbf z}}_\alpha(t).
\end{align*}
This leads to the following system on $W^{cs}$:
\begin{equation}\eqlab{Wcs}
\begin{aligned}
%  \begin{align*}
\frac{d}{dt} p &=-p^2,\\
 \frac{d}{dt} {\Delta{\mathbf z}}_\alpha &= \left((\alpha+1) \left(1-i pF_0\right) +p \mathbf A\right)
{\Delta{\mathbf z}}_\alpha \\
% &-i p^2 \left(\sum_{\gamma \in \mathbb N_0} F_{1,\alpha-\gamma}  i\gamma p^2 [\widetilde{\mathbf M}^{cs}(p)\widetilde{\Delta \mathbf y}_-]_\gamma+\mathbf G_{1,\alpha-\gamma} p^2 [\widetilde{\mathbf M}^{cs}(p)\widetilde{\Delta \mathbf y}_-]_\gamma\right)\\
&-i p^2 \left(\sum_{-\beta \in \mathbb N}  F_{1,\alpha-\beta}  i\beta {\Delta{\mathbf z}}_\beta+\sum_{-\beta \in \mathbb N}{\mathbf G}_{1,\alpha-\beta} {\Delta{\mathbf z}}_\beta\right),\quad -\alpha\in \mathbb N.
% \end{align*}
\end{aligned}
\end{equation}
In these coordinates,  the linear part of the equation for ${\Delta{\mathbf z}}_{-1}$ now vanishes (we see this from setting $\alpha=-1$ in \eqref{Wcs}) and we have a two-dimensional center space spanned by $(p,{\Delta{\mathbf z}}_{-1})$.
We therefore proceed in the usual way, by multiplying the right hand side by $\chi(\frac{p}{p_0})$, $0<p_0<\delta$, with $\chi$ denoting a $C^\infty$-smooth cut-off function where $\operatorname{supp} \chi\in [0,2)$ and $\chi(q)=1$ for all $q\in [0,1]$, and look for solutions bounded in backward time, i.e. we work in $C_b((-\infty,0];\mathcal F_{1,-})$. This leads to $W^c$ as a  graph
\begin{align*}
%  \{\widetilde{ \Delta \mathbf y}_{-\alpha}\}_{\alpha=2}^{\infty} =
{\Delta{\mathbf z}}_{-\star} =
p \widetilde{\mathbf M}^{c}(p) {\Delta{\mathbf z}}_{-1},
\end{align*}
with $\widetilde{\mathbf M}^c\,:\,[0,p_0]\to\operatorname{L}(\mathbb C^n, \mathcal F_{1,-\star}^n)$ being $C^k$-smooth for $p_0=p_0(k)>0$ small enough.
In particular, the proof is identical to the construction of $W^{cs}$ in \propref{Wcs}, and further details are therefore left out for simplicity. See also \cite{chow1991a,chow1988a}.

% On $W^c$, we have the following reduced problem:
%  \begin{align*}
% \frac{d}{dt} p &=-p^2,\\
% \frac{d}{dt} \widetilde{\Delta{\mathbf y}}_{-1} &= p \left(\mathbf A+p \widetilde{\mathbf  H}_{-1}(p)\right) \widetilde{
% \Delta{\mathbf y}}_{-1},
% % -i p \left(\widetilde F_{1,0}  i(-1)+\widetilde{\mathbf G}_{1,0} \right)+p^2 \widetilde H_{1,0}(p\right)\widetilde{
% % \Delta{\mathbf y}}_{-1}\\
% % &-ip^4 \left(\sum_{-\gamma \in \mathbb N} \widetilde F_{1,\alpha-\gamma}  i\gamma [\widetilde{\mathbf M}^c(p)]_\gamma +\sum_{-\gamma \in \mathbb N}\widetilde{\mathbf G}_{1,\alpha-\gamma}  [\widetilde{\mathbf M}^c(p)]_\gamma\right)\widetilde{
% % \Delta{\mathbf y}}_{-1}
%  \end{align*}
% with $\widetilde H_{-1}:[0,p_0]\to \mathbb C$ being $C^k$-smooth.
% % Finally, we use the stable fiber projection on $W^{cs}$ to write the system in the following normal form:

Finally, we turn to the strong stable manifold $W^{ss}\subset W^{cs}$. This will give the stable foliation. To obtain this, we define the new (intermediate) coordinates ${\Delta \mathbf z}_\alpha$ for all $-\alpha\in \mathbb N$ defined by
\begin{align*}
%  \begin{align*}
 \Delta \mathbf  y_\alpha(t) = %\left(\frac{p(0)}{p(t)}\right)^{\mathbf A}
 \e^{-\frac32 \left(t-i F_0 \log\frac{p(0)}{p(t)}\right)}{\Delta \mathbf z}_\alpha(t).
% \end{align*}
\end{align*}
This gives
\begin{equation}\eqlab{Wss}
\begin{aligned}
%  \begin{align*}
\frac{d}{dt} p &=-p^2,\\
 \frac{d}{dt} {\Delta{\mathbf z}}_\alpha &= \left((\alpha+\tfrac32) \left(1-i pF_0\right) +p \mathbf A\right)
{\Delta{\mathbf z}}_\alpha \\
% &-i p^2 \left(\sum_{\gamma \in \mathbb N_0} F_{1,\alpha-\gamma}  i\gamma p^2 [\widetilde{\mathbf M}^{cs}(p)\widetilde{\Delta \mathbf y}_-]_\gamma+\mathbf G_{1,\alpha-\gamma} p^2 [\widetilde{\mathbf M}^{cs}(p)\widetilde{\Delta \mathbf y}_-]_\gamma\right)\\
&-i p^2 \left(\sum_{-\beta \in \mathbb N} F_{1,\alpha-\beta}  i\beta {\Delta \mathbf z}_\beta+\sum_{-\beta \in \mathbb N}{\mathbf G}_{1,\alpha-\beta} {\Delta \mathbf z}_\beta\right),\quad -\alpha\in \mathbb N,
% \end{align*}
\end{aligned}
\end{equation}
and we look for solutions bounded  in forward time, i.e. we work in $C_b([0,\infty);\mathcal F_{1,-}^n)$. This gives an invariant manifold of the graph form
\begin{align*}
 {\Delta{\mathbf z}}_{-1} = p \widetilde{\mathbf M}^{ss}(p){\Delta \mathbf z}_{-\star},
\end{align*}
with $\widetilde{\mathbf M}^{ss}:[0,p_0]\to \operatorname{L}(\mathcal F_{1,-\star}^n,\mathbb C^n)$ being $C^k$-smooth. The details are again similar to \propref{Wcs} and further details are therefore left out.  The statement then follows (upon using the linearity with respect to $\Delta \mathbf y_{-}$)  from a simple calculation.
\end{proof}
% This leads to the following:

% \begin{proof}
%  s
% \end{proof}
\subsection{Completing the proof of \thmref{main2}}
We now study the normal form \eqref{nffibercoord}. Let $\Phi(\cdot,\cdot)$ denote the state transition matrix associated with
\begin{align*}
  \mathbf{Q}'(p)  = -\mathbf H_{-1}(p) \mathbf Q(p),\quad p\in [0,p_0].
\end{align*}
$\Phi$ is locally well-defined.
Then a basic calculation shows that
\begin{align*}
 \widetilde{\Delta \mathbf y}_{-1}(t) = \e^{-\left( t-iF_0\log \frac{p(0)}{p(t)}\right)} \left(\frac{p(0)}{p(t)}\right)^{\mathbf A} \Phi(p(t),p(0)) \widetilde{\Delta \mathbf y}_{-1}(0).
\end{align*}
We can also easily estimate
\begin{align*}
 \Vert \widetilde{\Delta \mathbf y}_{-\star}(t)\Vert_0  = \e^{-\left( t-iF_0\log \frac{p(0)}{p(t)}\right)} \left(\frac{p(0)}{p(t)}\right)^{\mathbf A} \mathcal O(\e^{-\frac12 t})\Vert \widetilde{\Delta \mathbf y}_{-\star}(0)\Vert_0,
\end{align*}
for $p_0>0$ small enough and $t\ge 0$. (It is clearly possible to improve the $\mathcal O(\e^{-\frac12 t})$-remainder, but this will not be important here.)
In the following, we put $p(0)=p_0$.
% for any $c>0$ small enough.
Then by \eqref{teqn}, \eqref{centerstable} and \eqref{ccfinaldiff} we conclude that
\begin{equation}\eqlab{Deltayfinal}
\begin{aligned}
 \Delta \mathbf y_{-1}(t) &= \e^{-\frac{1}{p(t)}-iF_0\log p(t)}p(t)^{-\mathbf A} \left(\mathbf C_{-1}+\mathcal O(p(t))\right),\\
 \Delta \mathbf y_{-*}(t) &=\e^{-\frac{1}{p(t)}-iF_0\log p(t)}p(t)^{-\mathbf A} \mathcal O(p(t))\in \mathcal F_{1,-*}^n,\\
  \Delta \mathbf y_{+}(t) &=\e^{-\frac{1}{p(t)}-iF_0\log p(t)}p(t)^{-\mathbf A} \mathcal O(p(t))\in \mathcal F_{1,+}^n,
\end{aligned}
\end{equation}
for $t\to \infty$ ($p(t)\to 0^+$)
% upon using \eqref{teqn}
with
\begin{align*}
 \mathbf C_{-1}:=\e^{\frac{1}{p_0}+iF_0\log p_0}p_0^{\mathbf A} \Phi(0,p_0) \widetilde{\Delta \mathbf y}_{-1}(0)\in \mathbb C^n.
\end{align*}
Here we have used the group property of the state transition matrix to write:
\begin{align*}
 \Phi(p(t),p_0) = \Phi(p(t),0)\Phi(0,p_0) = \left(\mathbf{Id}+\mathcal O(p(t)\right) \Phi(0,p_0).
\end{align*}
%
% We define
% \begin{align*}
%  C_+ : =
% \end{align*}
This completes the proof of \thmref{main2}. Notice in particular that the remainder $\mathcal O(p(t))$ in \eqref{Deltayfinal} is $C^k$-smooth with respect to $p(t)\in [0,p_0(k)]$ for any $k$. But as $\mathbf y^\pm(ip,\theta)$ are unique and $C^\infty$ for $p\in (0,\delta)$, then one can easily deduce that they are in fact $C^\infty$ as claimed. For the estimate of $\mathbf R_\alpha$, we recall \remref{normeqv}.
\subsection*{Acknowledgement}
The author was funded by Danish Research Council (DFF) grant 4283-00014B.
\newpage
\bibliography{refs}

\begin{thebibliography}{10}

\bibitem{baldom2013a}
I.~Baldom\'a, O.~Castej\'on, and T.~M. Seara.
\newblock {Exponentially small heteroclinic breakdown in the generic Hopf-Zero
  singularity}.
\newblock {\em Journal of Dynamics and Differential Equations}, 25(2):335--392,
  2013.

\bibitem{baldoma2018a}
I.~Baldom\'a, O.~Castej\'on, and T.~M. Seara.
\newblock {Breakdown of a 2D heteroclinic connection in the Hopf-Zero
  singularity (II): The generic case}.
\newblock {\em Journal of Nonlinear Science}, 28(4):1489--1549, 2018.

\bibitem{MR4455359}
I.~Baldom\'a, M.~Giralt, and M.~Guardia.
\newblock Breakdown of homoclinic orbits to {$L_3$} in the {RPC}3{BP}({I}).
  {C}omplex singularities and the inner equation.
\newblock {\em Adv. Math.}, 408:Paper No. 108562, 64, 2022.

\bibitem{MR4621957}
I.~Baldom\'a, M.~Giralt, and M.~Guardia.
\newblock Breakdown of homoclinic orbits to {$L_3$} in the {RPC}3{BP}({II}).
  {A}n asymptotic formula.
\newblock {\em Adv. Math.}, 430:Paper No. 109218, 72, 2023.

\bibitem{MR4940205}
I.~Baldom\'a, M.~Guardia, and D.~E. Pelinovsky.
\newblock On a countable sequence of homoclinic orbits arising near a
  saddle-center point.
\newblock {\em Comm. Math. Phys.}, 406(9):215, 65, 2025.

\bibitem{baldoma2012a}
I.~Baldoma and P.~Martin.
\newblock The inner equation for generalized standard maps.
\newblock {\em Siam Journal on Applied Dynamical Systems}, 11(3):1062--1097,
  2012.

\bibitem{MR4743478}
I.~Baldom\'a, T.~M. Seara, and R.~Moreno.
\newblock Splitting of separatrices for rapid degenerate perturbations of the
  classical pendulum.
\newblock {\em SIAM J. Appl. Dyn. Syst.}, 23(2):1159--1198, 2024.

\bibitem{balser2000a}
W.~Balser.
\newblock {\em Formal Power Series and Linear Systems of Meromorphic Ordinary
  Differential Equations}.
\newblock Springer, 2000.

\bibitem{bonckaert2008a}
P.~Bonckaert and P.~De~Maesschalck.
\newblock Gevrey normal forms of vector fields with one zero eigenvalue.
\newblock {\em Journal of Mathematical Analysis and Applications},
  344(1):301--321, 2008.

\bibitem{chow1991a}
S.~N. Chow, X.~B. Lin, and K.~N. Lu.
\newblock Smooth invariant foliations in infinite dimensional spaces.
\newblock {\em Journal of Differential Equations}, 94(2):266--291, 1991.

\bibitem{chow1988a}
S.~N. Chow and K.~Lu.
\newblock {Invariant manifolds for flows in Banach spaces}.
\newblock {\em Journal of Differential Equations}, 74(2):285--317, 1988.

\bibitem{costin2009}
O.~Costin.
\newblock {\em Asymptotics and {B}orel summability}, volume 141 of {\em Chapman
  \& Hall/CRC Monographs and Surveys in Pure and Applied Mathematics}.
\newblock CRC Press, Boca Raton, FL, 2009.

\bibitem{de2021a}
P.~De~Maesschalck, F.~Dumortier, and R.~Roussarie.
\newblock {\em Canard Cycles: From Birth to Transition}, volume~73.
\newblock Springer Science and Business Media Deutschland GmbH, 2021.

\bibitem{de2020a}
P.~De~Maesschalck and K.~Kenens.
\newblock Gevrey asymptotic properties of slow manifolds.
\newblock {\em Nonlinearity}, 33(1):341--387, 2020.

\bibitem{DMS2016}
P.~De~Maesschalck and S.~Schecter.
\newblock The entry-exit function and geometric singular perturbation theory.
\newblock {\em J. Differ. Equations}, 260(8):6697--6715, 2016.

\bibitem{gaivao2011a}
J.~P. Gaiv{\~a}o and V.~Gelfreich.
\newblock Splitting of separatrices for the hamiltonian-hopf bifurcation with
  the swift-hohenberg equation as an example.
\newblock {\em Nonlinearity}, 24(3):677--698, 2011.

\bibitem{gelfreich2001a}
V.~Gelfreich and D.~Sauzin.
\newblock {Borel summation and splitting of separatrices for the H\'enon map}.
\newblock {\em Annales De L'institut Fourier}, 51(2):513--567, 2001.

\bibitem{gelfreich1999a}
V.~G. Gelfreich.
\newblock A proof of the exponentially small transversality of the separatrices
  for the standard map.
\newblock {\em Communications in Mathematical Physics}, 201(1):155--216, 1999.

\bibitem{glebsky1995a}
L.~Y. Glebsky and L.~M. Lerman.
\newblock {On small stationary localized solutions for the generalized 1-D
  Swift-Hohenberg equation}.
\newblock {\em Chaos}, 5(2):424--431, 1995.

\bibitem{MR4892796}
O.~M.~L. Gomide, M.~Guardia, T.~M. Seara, and C.~Zeng.
\newblock On small breathers of nonlinear {K}lein-{G}ordon equations via
  exponentially small homoclinic splitting.
\newblock {\em Invent. Math.}, 240(2):661--777, 2025.

\bibitem{Guckenheimer97}
J.~Guckenheimer and P.~Holmes.
\newblock {\em Nonlinear Oscillations, Dynamical Systems and Bifurcations of
  Vector Fields}.
\newblock Springer Verlag, 5th edition, 1997.

\bibitem{haragus2011a}
M.~Haragus and G.~Iooss.
\newblock {\em Local Bifurcations, Center Manifolds, and Normal Forms in
  Infinite-Dimensional Dynamical Systems}.
\newblock EDP Sciences, 2011.

\bibitem{hayes2016a}
M.~G. Hayes, T.~J. Kaper, P.~Szmolyan, and M.~Wechselberger.
\newblock Geometric desingularization of degenerate singularities in the
  presence of fast rotation: A new proof of known results for slow passage
  through hopf bifurcations.
\newblock {\em Indagationes Mathematicae}, 27(5):1184--1203, 2016.

\bibitem{jones_1995}
C.~K. R.~T. Jones.
\newblock {\em Geometric Singular Perturbation Theory, Lecture Notes in
  Mathematics, Dynamical Systems (Montecatini Terme)}.
\newblock Springer, Berlin, 1995.

\bibitem{Gucwa2009783}
I.~Kosiuk and P.~Szmolyan.
\newblock Geometric singular perturbation analysis of an autocatalator model.
\newblock {\em Discrete and Continuous Dynamical Systems - Series S},
  2(4):783--806, 2009.

\bibitem{kristiansen2024a}
K.~U. Kristiansen.
\newblock Blowup analysis of a hysteresis model based upon singular
  perturbations.
\newblock {\em Journal of Nonlinear Science}, 34(1):6, 2024.

\bibitem{kristiansen2025a}
K.~U. Kristiansen.
\newblock Improved gevrey-1 estimates of formal series expansions of center
  manifolds.
\newblock {\em Studies in Applied Mathematics}, 154(6):e70063, 2025.

\bibitem{bkt}
K.~U. Kristiansen.
\newblock {A geometric approach to exponentially small splitting: The generic
  zero-Hopf bifurcation of co-dimension two}.
\newblock {\em arXiv-preprint::2603.12103}, 2026.

\bibitem{new}
K.~U. Kristiansen.
\newblock {A geometric approach to exponentially small splitting: Zero-Hopf
  bifurcations of arbitrary co-dimension}.
\newblock {\em arXiv-preprint::2603.12115}, 2026.

\bibitem{MR4855745}
K.~U. Kristiansen and P.~Szmolyan.
\newblock Analytic weak-stable manifolds in unfoldings of saddle-nodes.
\newblock {\em Nonlinearity}, 38(2):025019, 70, 2025.

\bibitem{krupa_extending_2001}
M.~Krupa and P.~Szmolyan.
\newblock Extending geometric singular perturbation theory to nonhyperbolic
  points - fold and canard points in two dimensions.
\newblock {\em {SIAM} Journal on Mathematical Analysis}, 33(2):286--314, 2001.

\bibitem{kuehn2015a}
C.~Kuehn and P.~Szmolyan.
\newblock Multiscale geometry of the olsen model and non-classical relaxation
  oscillations.
\newblock {\em Journal of Nonlinear Science}, 25(3):583--629, 2015.

\bibitem{lazutkin2005a}
V.~F. Lazutkin.
\newblock {Splitting of separatrices for the Chirikov standard map}.
\newblock {\em Journal of Mathematical Sciences}, 128(2):2687--2705, 2005.

\bibitem{meiss2007a}
J.~D. Meiss.
\newblock {\em Differential dynamical systems}, volume~14.
\newblock Society for Industrial and Applied Mathematics, 2007.

\bibitem{MR4445442}
F.~Merle, P.~Rapha\"{e}l, I.~Rodnianski, and J.~Szeftel.
\newblock {On the implosion of a compressible fluid {I}: {S}mooth self-similar
  inviscid profiles}.
\newblock {\em Ann. of Math. (2)}, 196(2):567--778, 2022.

\bibitem{neishtadt1987a}
A.~I. Neishtadt.
\newblock Persistence of stability loss for dynamical bifurcations .1.
\newblock {\em Differential Equations}, 23(12):1385--1391, 1987.

\bibitem{neishtadt1988a}
A.~I. Neishtadt.
\newblock Persistence of stability loss for dynamical bifurcations .2.
\newblock {\em Differential Equations}, 24(2):171--176, 1988.

\bibitem{sauzin2015}
D.~Sauzin.
\newblock Nonlinear analysis with resurgent functions.
\newblock {\em Ann. Sci. \'{E}c. Norm. Sup\'{e}r. (4)}, 48(3):667--702, 2015.

\bibitem{yang1997a}
T.~S. Yang and T.~R. Akylas.
\newblock On asymmetric gravity-capillary solitary waves.
\newblock {\em Journal of Fluid Mechanics}, 330:215--232, 1997.

\end{thebibliography}
\bibliographystyle{plain}
\newpage
\appendix
\section{Formal series invariant manifolds}\applab{formalinvman}
We consider the formal system
\begin{align*}
 \dot x &= \lambda_1 x+Q(x,y),\\
 \dot y &=\lambda_2 y +P(x,y),
\end{align*}
with $\lambda_1\lambda_2<0$ and $Q,P\in \mathbb R[[x,y]]$ containing no constant nor linear terms. The origin is therefore a formal saddle.
In this appendix, we will prove the following:
\begin{lemma}\lemmalab{formalinvman}
 There exists formal stable and unstable manifolds of the form:
%  formal stable and unstable manifolds
 \begin{align*}
  y = m^s(x)\in x^2 \mathbb R[[x]],\quad x = m^u(y)\in y^2\mathbb R[[y]],
 \end{align*}
 respectively.
\end{lemma}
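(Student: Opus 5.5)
We construct the formal stable manifold $y=m^s(x)$ order by order; the unstable manifold $x=m^u(y)$ then follows by exchanging the roles of $(x,\lambda_1)$ and $(y,\lambda_2)$. Write $m^s(x)=\sum_{k\ge 2} m_k x^k$, viewed as an unknown in $x^2\mathbb R[[x]]$. Invariance of the graph $y=m^s(x)$ under the formal flow is equivalent to the formal invariance equation
\begin{align*}
 (m^s)'(x)\big(\lambda_1 x+Q(x,m^s(x))\big) = \lambda_2 m^s(x)+P(x,m^s(x)),
\end{align*}
an identity in $\mathbb R[[x]]$. Since $m^s\in x^2\mathbb R[[x]]$, the compositions $Q(x,m^s(x))$ and $P(x,m^s(x))$ are well-defined formal series: each monomial $x^ay^b$ with $a+b\ge 2$ contributes only terms of order at least $a+2b\ge 2$. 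So the equation makes sense.

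Next we compare coefficients of $x^k$ for $k\ge 2$. The linear parts give $(k\lambda_1-\lambda_2)m_k$. Collecting everything else, we get
\begin{align*}
 (k\lambda_1-\lambda_2)\, m_k = [P(x,m^s(x))]_k - \big[(m^s)'(x)\,Q(x,m^s(x))\big]_k .
\end{align*}
The key observation is that the right-hand side depends only on $m_2,\ldots,m_{k-1}$.

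For $P$, a monomial $x^a y^b$ with $a+b\ge2$ and $b\ge1$ contributes to order $k$ only through products of $m_j$ with $\sum j = k-a$. Since $b\ge1$ and $a+b\ge2$, either $a\ge1$ (so $k-a\le k-1$) or $b\ge 2$ (so each $j\le k-2$). In both cases only $m_j$ with $j\le k-1$ appear, and the terms with $b=0$ are just the known coefficients of $P(x,0)$.

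For the $Q$-term, $(m^s)'$ contributes $jm_jx^{j-1}$ and $Q(x,m^s)=\mathcal O(x^2)$. Matching order $k$ therefore requires $j-1\le k-2$, i.e.\ $j\le k-1$, and $Q(x,m^s)$ at order at most $k-j+1\le k-1$ involves only lower coefficients by the same argument as for $P$.

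The only genuine obstacle is the nonresonance condition $k\lambda_1-\lambda_2\neq0$ for all $k\ge2$. This is exactly where the saddle assumption $\lambda_1\lambda_2<0$ enters: $k\lambda_1$ and $\lambda_2$ have opposite signs and both are nonzero, so $k\lambda_1-\lambda_2\ne 0$. (Note that this construction needs no distinction between stable and unstable; the labels simply refer to the signs.) Hence each $m_k$ is uniquely and recursively determined, and the coefficients are real because $P$, $Q$ and the $\lambda_i$ are real. This yields $m^s\in x^2\mathbb R[[x]]$, unique in that class. The same argument, with $\ell\lambda_2-\lambda_1\neq 0$ for $\ell\ge 2$, gives $m^u\in y^2\mathbb R[[y]]$.
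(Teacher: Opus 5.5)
Your proof is correct, but it is organized differently from the paper's. You solve the invariance equation directly by undetermined coefficients. The $x^k$-coefficient gives $(k\lambda_1-\lambda_2)m_k$ equal to a polynomial in $m_2,\dots,m_{k-1}$. Your order bookkeeping correctly establishes this triangular structure: either $a\ge 1$ or $b\ge 2$ for the $P$-terms, and $j\le k-1$ with $Q$-order at most $k-1$ for the $Q$-term. The saddle condition then gives $k\lambda_1\neq\lambda_2$. (Orders $0$ and $1$ of the equation vanish identically by the same order count, which you use implicitly.)

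The paper proceeds in four steps:
\begin{enumerate}
\item It normalizes $\lambda_1=1$, $\lambda_2=-\lambda$.
\item It performs the directional blowup $y=xy_1$ to reach the prepared form $x\,dy_1/dx=-(\lambda+1)y_1+xF(x,y_1)$.
\item It inverts the linear part with the term-wise integral operator $y_1\mapsto x\int_0^1 t^{\lambda+1}F(xt,y_1(xt))\,dt$.
\item It applies the contraction mapping principle in $\mathbb R[[x]]$ with the $x$-adic metric $d(F,G)=2^{-K}$, where $K$ is the lowest order at which $F$ and $G$ differ.
\end{enumerate}

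The two arguments rest on the same mechanism. The contraction factor $\tfrac12$ in the $x$-adic metric says exactly that order $k$ only involves lower-order coefficients. The paper's divisors $\alpha+\lambda+1$ are your $k\lambda_1-\lambda_2$ with $k=\alpha+1$.

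Your route is more elementary and self-contained: it needs no blowup and no completeness argument, and it makes both the nonresonance condition and uniqueness explicit. The paper's route casts the problem as ``prepared form plus integral operator,'' the same shape it uses in the analytic setting (compare the integral formula for $\widehat y_0^j$ in the Borel plane). That makes it the natural starting point for upgrading the formal statement to Gevrey or analytic estimates.
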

It clearly suffices to focus on the formal stable manifold only. Notice that the invariance is understood in the formal sense, i.e. $y=m^s(x)$ satisfies
\begin{align}
 (\lambda_1 x+Q(x,y)) \frac{dy}{dx} = \lambda_2 y + P(x,y),\quad y(0)=0, \,y'(0)=0.\eqlab{inv1}
\end{align}
as an equation for a formal series $y\in x^2 \mathbb R[[x]]$. By dividing through by $\lambda_1$, we see that it is without loss of generality to take
\begin{align}\eqlab{lambda12}\lambda_1=1, \quad \lambda_2=-\lambda<0.\end{align} To study \eqref{inv1}, we then apply the directional blowup defined by
\begin{align}
 y = x y_1.
\end{align}
Then $Q(x,y)=x^2 Q_1(x,y_1)$, $P(x,y)=x^2P_1(x,y_1)$, where $W_1\in \mathbb R[[x,y]]$, $W=Q,P$. This brings \eqref{inv1} into the following prepared normal form
\begin{align*}
 x \frac{dy_1}{dx} = -(\lambda +1)y_1 +x F(x,y_1),\quad y_1\in  x\mathbb R[[x]],
\end{align*}
with $F\in \mathbb R[[x,y_1]]$. This follows from a simple calculation. Consider first the auxiliary equation:
\begin{align}
 x \frac{dy_1}{dx} = -(\lambda+1) y_1 +x G(x),\quad y_1\in  x\mathbb R[[x]],\eqlab{y1eqn}
\end{align}
with $G=\sum_{\alpha=0}^\infty G_\alpha x^\alpha\in \mathbb R[[x]]$ given. Then a basic calculation shows that $y_1$ satisfies
\begin{align*}
 y_1 = \sum_{\alpha =1}^\infty \frac{G_{\alpha-1}}{\alpha+\lambda+1}x^\alpha = \int_0^1 t^{\lambda+1} x G(xt) dt,
\end{align*}
with the integration understood term-wise.
This leads to the following fixed-point formulation of \eqref{y1eqn}:
\begin{align}\eqlab{fixedy1}
 y_1 = \mathcal T[y_1](x):=x\int_0^1 t^{\lambda+1}  F(xt, y_1(xt)) dt.
\end{align}
Here $F(xt, y_1(xt))\in \mathbb R\{t\}[[x]]$ for any $y_1\in \mathbb R[[x]]$ and the integration is again understood term-wise. Due to the multiplication of $x$, the right hand side of \eqref{fixedy1} therefore defines a formal series in $x\mathbb R[[x]]$ for any $y\in x\mathbb R[[x]]$:
\begin{align}\eqlab{Tprop}
 \mathcal T[y_1]\in x\mathbb R[[x]]\quad \forall\,y_1\in \mathbb R[[x]].
\end{align}
We now follow \cite[Section 2.1]{de2020a} and equip $\mathbb R[[x]]$ with the metric
\begin{align*}
\operatorname{d}(F,G) = 2^{-K},\quad K =  \min\{\alpha \in \mathbb N_0 \,:\,F_\alpha-B_\alpha=0\},
\end{align*}
where $F=\sum_{\alpha=0}^\infty F_\alpha x^\alpha$, $G=\sum_{\alpha=0}^\infty G_\alpha x^\alpha$. $\mathbb R[[x]]$ is then a complete metric space. We have the following obvious properties of $\operatorname{d}$:
\begin{align}\eqlab{dprop}
 \operatorname{d}(F,G)=\operatorname{d}(F-G,0),\quad \operatorname{d}(FG,0)\le \operatorname{d}(F,0)\operatorname{d}(G,0).
\end{align}
Due to \eqref{Tprop}, we have
\begin{align*}
\operatorname{d}(\mathcal T[y_1],0)\le \frac12\quad \forall\,y_1\in \mathbb R[[x]],
\end{align*}
and similarly by using \eqref{dprop}:
\begin{align*}
\operatorname{d}(\mathcal T[y_1],\mathcal T[y_2])\le \frac12 \operatorname{d}(y_1,y_2)\quad \forall\,y_1,y_2\in \mathbb R[[xx]].
\end{align*}
It follows that $\mathcal T$ is a contraction on the closed set $x\mathbb R[[x]]$ where $\operatorname{d}(y_1,0)\le \frac12$.

\end{document}